\setlist[enumerate]{leftmargin=.7cm,label=\roman*)}
\newtheorem{theorem}{Theorem}[section]
\newtheorem*{theorem*}{Theorem}
\newtheorem*{cor*}{Corollary}
\newtheorem{lemma}[theorem]{Lemma}
\newtheorem{prop}[theorem]{Proposition}
\newtheorem{cor}[theorem]{Corollary}
\theoremstyle{definition}
\newtheorem{defn}[theorem]{Definition}
\newtheorem{rem}[theorem]{Remark}
\newtheorem{ex}[theorem]{Example}
\DeclareMathOperator{\id}{id}
\DeclareMathOperator{\map}{Map}
\DeclareMathOperator{\Top}{Top}
\DeclareMathOperator*{\hocolim}{hocolim}
\DeclareMathOperator*{\holim}{holim}
\DeclareMathOperator{\THH}{THH}
\DeclareMathOperator{\THR}{THR}
\DeclareMathOperator{\hofib}{hof}
\DeclareMathOperator{\conn}{Conn}
\DeclareMathOperator{\KR}{KR}
\DeclareMathOperator{\Sp}{Sp}
\DeclareMathOperator{\Hom}{Hom}
\DeclareMathOperator*{\iter}{\bigcirc}
\begin{document}
\begin{center}\LARGE{Higher Equivariant Excision}
\end{center}

\begin{center}\Large{Emanuele Dotto}

\end{center}
\begin{center}
\textit{\begin{tabular}{c}
dotto@mit.edu\\
Massachusetts Institute of Technology\\
77 Massachusetts Avenue\\
Cambridge MA, USA
\end{tabular}}
\end{center}

\vspace{.1cm}

\abstract{We develop a theory of Goodwillie calculus for functors between $G$-equivariant homotopy theories, where $G$ is a finite group. We construct $J$-excisive approximations for any finite $G$-set $J$. These combine into a poset, the Goodwillie tree, that extends the classical Goodwillie tower. We prove convergence results for the tree of a functor on pointed $G$-spaces that commutes with fixed-points, and we reinterpret the Tom Dieck-splitting as an instance of a more general splitting phenomenon that occurs for the fixed-points of the equivariant derivatives of these functors. 
As our main example we describe the layers of the tree of the identity functor in terms of the equivariant Spanier-Whitehead duals of the partition complexes.
}

\vspace{.7cm}

Keywords: \textit{Functor calculus; Equivariant; Excision}

\tableofcontents

\section*{Introduction}

Goodwillie calculus of functors was developed in the seminal papers \cite{calcI},\cite{calcII} and \cite{calcIII} for functors between the categories of pointed spaces and spectra, and it was later extended to a model categorical framework in \cite{BR}. The key idea is to approximate a homotopy invariant functor between model categories $F\colon\mathscr{C}\to\mathscr{D}$ by a tower of weaker and weaker homology theories
\[F\to \big(\dots\longrightarrow P_nF\longrightarrow P_{n-1}F\longrightarrow\dots\longrightarrow P_1F\big).\]
This theory has found important applications in homotopy theory and in geometric topology (see for example \cite{McCarthy}, \cite{calcI}, \cite{WJR} and \cite{AM}). If $G$ is a finite group and $F\colon\mathscr{C}^G\to\mathscr{D}^G$ is a homotopy functor between model categories of $G$-objects (for example pointed $G$-spaces or genuine $G$-spectra) one can certainly apply Goodwillie's calculus machinery, but with a fundamental flaw. The Goodwillie tower above approximates $F$ by ``na\"{i}ve'' equivariant homology theories, as opposed to ``genuine'' ones. For example the first stage of the tower of the identity functor on pointed $G$-spaces is equivalent to
\[P_1I(X)\simeq \hocolim_{n}\Omega^n\Sigma^n X\]
and not to the stabilization of $X$ by the representations of $G$. The aim of this paper is to adapt Goodwillie calculus to a genuine equivariant context.

The central idea of equivariant calculus is to replace the cubes used in \cite{calcIII} to construct the Goodwillie tower with equivariant cubes that are indexed on finite $G$-sets. This is analogous to the way one replaces the natural numbers with the representations of $G$ in the construction of genuine $G$-spectra.
The paper is based on the foundations of homotopy theory of equivariant diagrams of \cite{Gdiags}, and particularly on the notion of cartesian and cocartesian equivariant cubes. A part of this material is recollected in \S\ref{sec:preliminaries}. 

In \S\ref{stronglycocart} we introduce the concept of strongly cocartesian $J_+$-cube, which constitutes the building block of the definition of $J$-excision. Here $J_+$ is a finite $G$-set $J$ with an added disjoint basepoint. A $J_+$-cube is strongly cocartesian if it is extended by iterated equivariant homotopy pushouts from the subcategory
\[\bigcup_{o\in J/G}\mathcal{P}(o_+)\backslash o_+\]
of the poset category $\mathcal{P}(J_+)$ of subsets of $J_+$ (see Definition \ref{strongly}). A homotopy functor $\Phi\colon\mathscr{C}^G\to\mathscr{D}^G$ is defined to be $J$-excisive if it sends strongly cocartesian $J_+$-cubes in $\mathscr{C}$ to cartesian $J_+$-cubes in $\mathscr{D}$ (\ref{J-exc}). When $J=\{1,\dots,n\}$ has the trivial $G$-action we recover Goodwillie's notion of $n$-excision (see \ref{rem:excision}), and when $J=G$ is free and transitive we recover the notion of $G$-excision of \cite{Blumberg}, \cite{Gcalc} and \cite{Gdiags}. The following examples of equivariantly excisive functors are in \ref{ex:excision}.
\begin{enumerate}
\item The $G$-equivariant Dold-Thom construction and the identity functor on $G$-spectra are $G$-excisive, and the Real topological Hochschild homology functor associated to a split square-zero extension of Wall-antistructures is $\mathbb{Z}/2$-excisive,
\item For every $G$-spectrum $E$ and every finite $G$-set $K$, the functor $E\wedge (-)^{\wedge K}$ from pointed $G$-spaces to $G$-spectra is $(K\times G)$-excisive, and for every $G$-spectrum $E$ with $\Sigma_n$-action $(E\wedge (-)^{\wedge n})_{h\Sigma_n}$ is $(n\times G)$-excisive,
\item The functor $(E\wedge (-)^{\wedge n})\wedge_{\Sigma_n}E\mathcal{F}_n$ is $(n\times G)$-excisive, where $E\mathcal{F}_n$ is a universal space for $(G,\Sigma_n)$-vector bundles, in the sense of \cite{BH}.
\end{enumerate}

The first main result of the paper is the existence of the universal $J$-excisive approximations, which is proved in \ref{exisiveapprox}.
\begin{theorem*}(Equivariant excisive approximations)
For every homotopy functor $\Phi\colon\mathscr{C}^G\to\mathscr{D}^G$ and every finite $G$-set $J$ there is a $J$-excisive functor $P_J\Phi\colon\mathscr{C}^G\to\mathscr{D}^G$ and a natural transformation $\Phi\to P_J\Phi$ which is essentially initial among the maps from $\Phi$ to a $J$-excisive functor.
\end{theorem*}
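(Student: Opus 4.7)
The plan is to imitate Goodwillie's construction of the excisive approximations $P_n$ in \cite{calcIII}, with the cubes indexed on subsets of $\{1,\dots,n\}$ replaced by equivariant cubes indexed on subsets of $J_+$. I would build an endofunctor $T_J$ on the category of homotopy functors $\mathscr{C}^G\to\mathscr{D}^G$ together with a natural transformation $t_\Phi\colon\Phi\to T_J\Phi$ whose transfinite iterate $P_J\Phi:=\hocolim_k T_J^k\Phi$ realises the claimed initial object.

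First, for every $X\in\mathscr{C}^G$ I would construct a canonical strongly cocartesian $J_+$-cube $\mathcal{C}_X\colon\mathcal{P}(J_+)\to\mathscr{C}^G$ with $\mathcal{C}_X(\emptyset)=X$ and $\mathcal{C}_X(J_+)\simeq *$. Such a cube is determined by its restriction to $\bigcup_{o\in J/G}\mathcal{P}(o_+)\backslash o_+$ via iterated equivariant homotopy pushouts, and can be produced orbit by orbit by coning off $X$ along each $o\in J/G$. I would then set
\[T_J\Phi(X):=\holim_{\emptyset\neq U\subseteq J_+}\Phi(\mathcal{C}_X(U)),\]
with $t_\Phi$ the canonical map from $\Phi(X)=\Phi(\mathcal{C}_X(\emptyset))$ into the homotopy limit over the punctured cube. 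Defining $P_J\Phi:=\hocolim_k T_J^k\Phi$ then equips us with a natural transformation $\Phi\to P_J\Phi$.

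The universal property would then split into two assertions. The first is that if $\Phi$ is already $J$-excisive, then $t_\Phi$ is an equivalence: the defining property of $J$-excisive applied to the strongly cocartesian cube $\mathcal{C}_X$ forces the cube $\Phi(\mathcal{C}_X)$ to be cartesian, i.e.\ forces $\Phi(X)$ to agree with the homotopy limit of $\Phi$ over the punctured sub-cube. Consequently $\Phi\to P_J\Phi$ is an equivalence on $J$-excisive functors, and any map $\Phi\to\Psi$ with $\Psi$ $J$-excisive factors essentially uniquely through $P_J\Phi$. The second is that for an arbitrary $\Phi$ the functor $P_J\Phi$ is itself $J$-excisive: when applied to a strongly cocartesian $J_+$-cube $\mathcal{X}$, the total fibre of $T_J^k\Phi(\mathcal{X})$ becomes arbitrarily highly equivariantly connected as $k$ increases, so its hocolim vanishes.

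The hard part will be this last estimate, which is the equivariant analogue of the key technical lemma of \cite{calcIII}. One must show that if $\mathcal{X}$ is a strongly cocartesian $J_+$-cube of a given equivariant connectivity, then for every subgroup $H\leq G$ the $H$-fixed total fibre of $T_J\Phi(\mathcal{X})$ gains connectivity in a manner controlled by the $H$-fixed points of the orbits of $J$. The essential input is that the $J_+$-indexing of the punctured homotopy limit provides enough ``directions'' along every orbit of $J$ simultaneously to cancel the failure of $\Phi(\mathcal{X})$ to be cartesian; this is precisely what the strongly cocartesian hypothesis of Definition \ref{strongly} encodes, and it is the reason that the indexing set must be a $G$-set rather than a mere cardinality. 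Once this estimate is in place, the fact that $P_J\Phi$ is $J$-excisive follows by passing to the hocolim, and the universal property is a formal consequence of it together with the equivalence $\Phi\simeq T_J\Phi$ on $J$-excisive functors.
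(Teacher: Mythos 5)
Your construction of $T_J$ and $P_J$ matches the paper's: the cube $\mathcal{C}_X$ you describe is exactly the $\Lambda^J(X)$ built orbit-by-orbit from star categories, and the universal property of $p_J\colon\Phi\to P_J\Phi$ is deduced formally the same way. The genuine gap is in the step you yourself flag as "the hard part": proving $P_J\Phi$ is $J$-excisive. You propose to show that after applying $T_J^k\Phi$ to a strongly cocartesian $J_+$-cube the total fibre becomes arbitrarily highly equivariantly connected, so that it vanishes in the sequential hocolim. This cannot work in the generality the theorem claims: $\mathscr{C}$ and $\mathscr{D}$ are arbitrary (pointed, finitely presentable) $G$-model categories, and there is no notion of connectivity available. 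Connectivity estimates in Goodwillie's theory are used for \emph{convergence} results, not for the construction of $P_n$; even in \cite{calcIII} the $n$-excisive approximation is built without them.

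The paper instead follows the Rezk-style factorization argument. For a $G$-strongly cocartesian $J_+|_H$-cube $X$ one exhibits a factorization of $t_J\Phi\colon\Phi(X)\to T_J\Phi(X)$ through an auxiliary $J_+|_H$-cube
\[
Y_T=\holim_{\mathcal{P}_0(J_+)}\Phi\bigl(K(-,T)\bigr),\qquad
K(U,T)=\hocolim_{S\in St(J_+)}X_{(S\cap U)\cup T},
\]
and shows that $Y$ is already cartesian whenever $X$ is $G$-strongly cocartesian (via the identification $K(U,T)\simeq X_{U\cup T}$, obtained from the decomposition Lemma \ref{unionintersectionthing}). Thus $P_J\Phi(X)$ is a sequential hocolim of cartesian cubes; that such a hocolim is itself cartesian is exactly what local finite presentability of $\mathscr{D}^H$ buys (a commutation of sequential homotopy colimits with finite equivariant homotopy limits), with no connectivity hypothesis ever invoked. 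You should replace the connectivity estimate with this factorization; the real equivariant work is then in the construction and analysis of the $G$-diagram $K$, and in the covering/decomposition arguments over the star category $St(J_+)$ by the subcubes $\mathcal{P}_1(o_+)$, which is where the orbit structure of $J$ enters.
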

In \S\ref{sec:changegroup} we prove a formula that expresses the restriction of $P_J\Phi$ to a subgroup $H$ of $G$ in terms of the excisive approximations of $\Phi$ by certain $H$-subsets of $J$. It follows that the underlying non-equivariant homotopy type of $P_J\Phi$ is that of $P_{J/G}\Phi$. That is to say, for every object $c\in\mathscr{C}^G$ on which $G$-acts trivially there is a non-equivariant equivalence
\[P_{J/G}\Phi(c)\stackrel{\simeq}{\longrightarrow}P_{J}\Phi(c)\]
where $P_{J/G}$ is Goodwillie's $n$-excisive approximation, for the integer $n=|J/G|$. Thus we think of $P_{J}\Phi$ as an equivariant enhancement of $P_n\Phi$ that builds in the orbit types of $J$. This is even more apparent in the case of a transitive $G$-set $T$ ($n=1$) where, if $\Phi$ is reduced, there is an equivalence
\[P_T\Phi\simeq \hocolim_k\Omega^{k T}\Phi \Sigma^{k T}\]
where $\Omega^{k T}$ and $\Sigma^{k T}$ are respectively the loops and the suspension by the $k$-fold direct sum of the permutation representation of $T$ (see \ref{ex:transapprox}).

In \S\ref{sec:tree} we assemble the various $J$-excisive approximations $P_J\Phi$ into a diagram which restricts to the standard Goodwillie tower of $\Phi$ on the $G$-sets with the trivial action. The functoriality of $P_J\Phi$ in $J$ is somewhat richer than it is non-equivariantly. Not only we have maps $P_J\Phi\to P_K\Phi$ when $K$ is a subset of $J$, as one would expect, but we do any time that there is a $G$-map $K\to J$ which induces an injective map on orbits $K/G\hookrightarrow J/G$. This is because in the presence of such a map $K$-excision implies $J$-excision (see \ref{injonorb}). The various $J$-excisive approximations of $\Phi\colon\mathscr{C}^G\to\mathscr{D}^G$ combine into a contravariant diagram shaped over the poset of isomorphism classes of finite $G$-sets ordered by the relation $K\leq J$ if there is a $G$-map $K\to J$ injective on orbits:
\vspace{-.7cm}

\[\xymatrix@R=3pt@C=8pt{
\vdots\ar[d]&&&&&&\vdots\ar[d]&&&&&&&&\vdots\ar[d]
\\
P_{n}\Phi\ar[ddd]|{\stackrel{\vdots}{\phantom{a} }}\ar[rrrrrr]\ar@/_.6pc/[drrrrrrrr]&&&&&&\overset{\phantom{A} }{\underset{\ \ }\dots}\ar[dddr]|<<<<<<{\phantom{ffff} }\ar[rrrrrrrr]|<<<<<<<<<<<<<<<<<<{\ \ }&&\vdots\ar[d]&&&&&&P_{nG}\Phi\ar[ddd]|{\stackrel{\vdots}{\phantom{a} }}
\\
&&&&&&&&\overset{\phantom{A} }{\underset{\ \ }\dots}\ar[ddl]\ar@/_.6pc/[urrrrrr] &&&&&&&
\\
&&&&&&\overset{\phantom{A} }{\underset{\ \ }\dots}\ar@/^.6pc/[drrrrrrrr]&&\ar@{}[dr]|{\ddots}&&&&&&
\\
P_{2}\Phi\ar@/^.6pc/[urrrrrr]\ar[rrr]\ar[dddd]&&&\dots\ar[rrr]&&&&\hspace{-1.6cm}P_{G/H\amalg G/K}\Phi\hspace{-1.3cm}\ar[ddddl]\ar[dddddr] &\ar[rrr]&&&\dots\ar[rrr]&&& P_{2G}\Phi\ar[dddd]
\\
\\
\\
\\
P_{1}\Phi\ar@/_.6pc/[drrrrrrrr]\ar[rrr]&&&\dots\ar[rrr]&&&P_{G/H}\Phi \ar[rrrrr]|<<<<<<<<<<<<{\ \  }&&&&&\dots\ar[rrr]&&& P_{G}\Phi
\\
&&&&&&&&P_{G/K}\Phi\ar@/_.6pc/[urrrrrr]
&&\ar@{}[ul]|<<<<<{\ddots}
}\]
We think of the number of orbits of $J$ as the order of the excision, and of the orbit types of $J$ as ``how genuine'' the excision is. A subtower is the restriction of this diagram to a filtration of proper inclusions of finite $G$-sets
\[\underline{J}=\big(J_1\subset \dots\subset J_n\subset J_{n+1}\subset\dots\big).\]
On the left-hand side lies the Goodwillie tower, corresponding to the ``na\"{i}ve'' excisive approximations, and all the way to the right we find the approximations by the free $G$-sets $nG=n\times G$, that we call the ``genuine'' subtower. The middle part of the diagram consists of the ``subtowers on incomplete universes''. In particular we observe that it is $P_G\Phi$, and no longer $P_1\Phi$, that receives a map from all the excisive approximations. We calculate the genuine tower of the symmetric indexed power functors associated to families of subgroups in \S\ref{indexedpowers}.

In \S\ref{sec:convergence} we discuss the convergence properties of this diagram for the homotopy functors $\Phi\colon \Top^{G}_\ast\to \Top^{G}_\ast$ which commute with fixed-points (see \ref{def:split}). We use the maps which induce isomorphisms on orbits (drawn horizontally in the diagram) to compare the homotopy limits of the different subtowers.
The following is proved in \ref{convergence} and \ref{convanalytic}.
\begin{theorem*}(Convergence)
Let $\Phi\colon \Top^{G}_\ast\to \Top^{G}_\ast$ be a homotopy functor which commutes with fixed-points. A map of filtrations $\underline{f}\colon \underline{K}\to \underline{J}$ such that $f_n\colon K_n\to J_n$ is an isomorphism on $G$-orbits induces a $G$-equivalence on homotopy limits
\[\underline{f}^\ast\colon\holim_{n}P_{J_n}\Phi(X)\stackrel{\simeq}{\longrightarrow}\holim_{n}P_{K_n}\Phi(X)\]
for every pointed $G$-space $X$.
It follows that if the $H$-fixed-points of $\Phi$ are $\rho_H$-analytic for every subgroup $H$ of $G$, the canonical map to the limit of any subtower
\[\Phi(X)\stackrel{\simeq}{\longrightarrow}\holim_{n}P_{J_n}\Phi(X)\]
is a $G$-equivalence for every $X$ with $\rho_H$-connected $H$-fixed-points.
\end{theorem*}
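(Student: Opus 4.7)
To prove the claimed $G$-equivalence it suffices to verify that the induced map is a weak equivalence on $H$-fixed points for every subgroup $H\le G$. My plan is to combine the change-of-groups formula of \S\ref{sec:changegroup} with the hypothesis that $\Phi$ commutes with fixed points in order to reduce each of these $H$-fixed-point statements to a comparison of non-equivariant Goodwillie towers associated to the fixed-point functor $\Phi^H$ evaluated at $X^H$, together with contributions from the $\Phi^K$ for larger subgroups $K\ge H$. The horizontal comparison maps of the tree --- those indexed by $G$-maps isomorphic on orbits --- are precisely the data appearing in the theorem, which makes this reduction natural.

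For the first assertion, although $f_n\colon K_n\to J_n$ is an isomorphism on $G$-orbits, on $H$-orbits for a proper subgroup it is in general only a surjection, since a $G$-orbit may split into several $H$-orbits upon restriction; hence $|K_n/H|\ge|J_n/H|$. On the other hand, because both filtrations consist of proper inclusions of finite $G$-sets, each additional $G$-orbit contributes at least one $H$-orbit to either count, so both $|J_n/H|$ and $|K_n/H|$ strictly increase with $n$ and tend to infinity. Applying the change-of-groups formula on each $H$-fixed-point subspace, both subtowers become cofinal in a common non-equivariant tower of approximations of $\Phi^H(X^H)$ extracted from the formula. The comparison $\underline{f}^\ast$ then realises the canonical identification between the two cofinal homotopy limits, giving a weak equivalence on each $H$-fixed-point subspace, hence the desired $G$-equivalence.

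For the second assertion, the first part reduces convergence to a single conveniently chosen subtower; I take the genuine one $J_n=nG$, because $(P_{nG}\Phi(X))^H$ admits a particularly explicit description via the change-of-groups formula in terms of Goodwillie approximations of the functors $\Phi^K$ of order proportional to $n$. The hypothesis that each $\Phi^H$ is $\rho_H$-analytic is precisely what the classical Goodwillie convergence theorem requires to conclude that $\Phi^H(Y)\simeq\holim_n P_n\Phi^H(Y)$ whenever $Y$ is $\rho_H$-connected. Applied to $Y=X^H$, and using that both $\Phi$ and $\holim$ commute with $H$-fixed points, this yields the required weak equivalence on $H$-fixed points of $\Phi(X)\to\holim_n P_{nG}\Phi(X)$, hence the claimed $G$-equivalence; the same statement for any other subtower then follows by combining with the first assertion. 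The principal technical obstacle is the precise identification of $(P_J\Phi)^H$ via the change-of-groups formula, and in particular its interaction with the Tom Dieck-type splittings alluded to in the abstract, which must be handled carefully in order to extract the expected non-equivariant Goodwillie approximations of $\Phi^H$ from the $H$-fixed points of $P_J\Phi$.
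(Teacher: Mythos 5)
There is a genuine gap in your argument for the first assertion, and it concerns exactly the step where the hypothesis that $\Phi$ commutes with fixed points must do its work.

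After the reductions you make, both towers do arrive at homotopy limits of diagrams whose $n$-th term is some iterated composition of excisive approximations of $\Phi|_H$, and it is true that the relevant orbit counts both tend to infinity. But the claim that the two subtowers ``become cofinal in a common non-equivariant tower'' is not justified: you have not said what the common tower is, nor how the two given towers embed in it compatibly. This is not a formality. The paper's own proof constructs that common tower explicitly, and the construction requires a specific device you have not identified: a natural \emph{retraction}
\[
r\colon (P_{J}\Phi)^G\longrightarrow P_{J/G}\Phi^G
\]
for the $G$-fixed-point restriction of $p^\ast\colon P_{J/G}\Phi\to P_{J}\Phi$, and its analogue on $H$-fixed points via iterating the change-of-groups formula. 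This retraction is what permits one to interleave $(P_{J_n}\Phi)^G$ with $P_n\Phi^G$ into a single tower (the composites $p^\ast j^\ast r$), after which a further nontrivial zig-zag argument, using the universal property of $P_J$, identifies the interleaved structure maps with the ones $\iota^\ast$ coming from the inclusions $J_n\subset J_{n+1}$. Crucially, the existence of $r$ is exactly where ``$\Phi$ commutes with fixed points'' enters: the restriction map $(T_J^{(k)}\Phi)^G\to\holim_{\mathcal{P}_0(J/G_+)^k}\Phi(\Lambda^{J,k})^G$ is followed by an \emph{isomorphism} back to $T_{J/G}^{(k)}\Phi^G$, and that map is an isomorphism only because $\Phi$ commutes with fixed points. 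Your write-up invokes this hypothesis in the reduction to $P_n\Phi^H(X^H)$, but a retraction is not free just because the $n$-th terms are identified: one needs the arrows. Without the retraction, there is no canonical way to map $(P_{J_n}\Phi)^H$ back into the Goodwillie tower of $\Phi^H$, and cofinality alone has nothing to grab onto. So the first assertion has a real hole.

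Your sketch of the second assertion is essentially right and agrees in content with the paper's argument: once the first part is established, $(P_n\Phi(X))^H\cong P_n\Phi(X^H)^H$ by commuting with fixed points, and the classical Goodwillie convergence theorem applied to $\Phi^H(X^H)$ gives convergence of the trivial-action subtower; the first assertion then transports this to every subtower. The only cosmetic difference is that you route the argument through $J_n=nG$ and then transfer, while the paper goes directly via the projections $J_n\to J_n/G$, which is slightly cleaner since no explicit description of $(P_{nG}\Phi)^H$ is actually required.
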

For the identity functor $I\colon \Top^{G}_\ast\to \Top^{G}_\ast$ on pointed $G$-spaces this theorem is closely related to Carlsson's Theorem \cite[II.7]{Carlsson}: the totalizations of the cosimplicial $G$-spaces associated respectively to the na\"{i}ve and to the genuine stable homotopy monads
\[Q(X)=\Omega^{\infty}\Sigma^{\infty}X \ \ \ \ \ \ \ \ \ \ \ \ \ \mbox{and}\ \ \ \ \ \ \ \ \ \ \ \ \ Q_G(X)=\Omega^{\infty G}\Sigma^{\infty G}X\]
are $G$-equivalent. In fact these totalizations, $\holim_{n}P_{n}I(X)$, and $\holim_{n}P_{nG}I(X)$, are all $G$-equivalent (see \ref{Carlssonthing}). The convergence theorem provides a tower which starts at the stable equivariant homotopy of a pointed $G$-space
\[\dots \longrightarrow P_{nG}I(X)\longrightarrow\dots\to P_{2G}I(X)\longrightarrow P_{G}I(X)\simeq Q_{G}(X)\]
and which converges to $X$ if all the fixed-points of $X$ are simply-connected (or in fact nilpotent). In \S\ref{sec:Id} we describe the layers of this tower
\[D_{nG}I=\hofib\big(P_{nG}I\longrightarrow P_{(n-1)G}I\big)\]
that we call the $nG$-differential.
The layers of the Goodwillie tower of the identity on pointed spaces were first calculated in \cite{BJ}, and they were later described in terms of the partition complexes in \cite{AM} and \cite{Arone}.
The partition complex $T_{k}$ is a pointed $\Sigma_k$-space defined from the poset of unordered partitions of the set $\{1,\dots,k\}$. We consider $T_{k}$ as a $G\times\Sigma_k$-space on which $G$ acts trivially. The following is proved in \S\ref{sec:Id} by analyzing the iterated equivariant Snaith-splitting in a fashion which is analogous to \cite{Arone}.
\begin{theorem*}(Layers of the identity)
For every pointed $G$-space $X$ and every integer $n\geq 0$, there is a natural $G$-equivalence
\[D_{nG}I(X)\simeq \Omega^{\infty G}\big(\bigvee_{k=n}^{n|G|}Map_\ast (T_k,\mathbb{S}_G\wedge X^{\wedge k})\wedge_{\Sigma_k}\overline{E}\mathcal{F}_k(n)\big)\]
where $\mathbb{S}_G$ is the $G$-equivariant sphere spectrum, and $\overline{E}\mathcal{F}_k(n)$ is the universal pointed $G\times \Sigma_k$-space of the set of $H$-actions on a $k$-elements set with $n-1$ orbits, and the trivial $H$-sets when $k=n$, for subgroups $H$ of $G$ (see \ref{DnGsym}).
\end{theorem*}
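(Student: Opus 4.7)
The plan is to adapt the strategy of Arone \cite{Arone} for the non-equivariant tower of the identity to the genuine equivariant setting. The key ingredients already available are the convergence theorem, the identification $P_G I\simeq Q_G$, and the computation of the genuine tower of symmetric indexed power functors carried out in \S\ref{indexedpowers}.

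First, I would use the convergence theorem applied to the genuine subtower to express the identity on a nilpotent pointed $G$-space $X$ as the totalization of the cosimplicial object $Q_G^{\bullet+1}(X)$ built from the monad structure of $Q_G=\Omega^{\infty G}\Sigma^{\infty G}$. This is the equivariant analogue of the cosimplicial resolution of \cite{AM} and \cite{Arone}, available here because $P_G I\simeq Q_G$ is the bottom stage of a tower that converges to $I$. Since the layers $D_{nG}I$ are homogeneous, their formula is determined by their values on any such class of test objects, so there is no loss of generality in restricting to nilpotent $X$ during the computation.

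Second, I would apply the equivariant Snaith splitting after one suspension spectrum to decompose each cosimplicial level $\Sigma^{\infty G}Q_G^{m+1}(X)$ as an indexed wedge of symmetric indexed powers of the form $(X^{\wedge k}\wedge E\mathcal{F}_k)_{h\Sigma_k}$. Iterating across the cosimplicial direction yields a double-indexed wedge; applying $P_{nG}$ commutes with this wedge, and the computation of \S\ref{indexedpowers} controls which summands survive. The combinatorics of the coface and codegeneracy maps between iterated $Q_G$'s, restricted to the $k$-fold smash part, is governed by the poset of partitions of $\{1,\dots,k\}$, and totalizing this diagram extracts $\mathrm{Map}_\ast(T_k,\mathbb{S}_G\wedge X^{\wedge k})$ exactly as in the classical argument, the partition complex $T_k$ appearing by the same Koszul-dual mechanism.

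Third, I would extract the $nG$-layer by comparing adjacent stages of the genuine tower. A $k$-element $G$-set has between $\lceil k/|G|\rceil$ and $k$ orbits, so the calculation of the genuine tower of the $k$-th symmetric indexed power forces the summand indexed by $k$ to contribute to the $nG$-layer only when $n\leq k\leq n|G|$, and the residual universal space attached to it should be exactly $\overline{E}\mathcal{F}_k(n)$ via its description in terms of $H$-orbit counts on the $k$-element set. The main obstacle will be the simultaneous compatibility of the iterated Snaith splitting with both the cosimplicial structure and the genuine $nG$-excisive approximation, and in particular the precise identification of the universal space as $\overline{E}\mathcal{F}_k(n)$ with the correct family-of-subgroups description: this requires a careful partition-poset filtration argument comparing the totalization of the cosimplicial wedge decomposition with the genuine tower of the symmetric indexed powers, and is where the bulk of the work will concentrate.
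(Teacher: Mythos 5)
Your proposal follows essentially the same route as the paper: express the identity as the totalization of the cobar construction $Q_G^{\bullet}(X)$ via the convergence theorem, iterate the equivariant Snaith splitting to produce the partition complexes, and use the computation of the genuine tower of the symmetric indexed powers from \S\ref{indexedpowers} to isolate the summands with $n\leq k\leq n|G|$ and to identify the universal space as $\overline{E}\mathcal{F}_k(n)$. You also correctly anticipate the Arone--Kankaanrinta--type compatibility issue between the Snaith decompositions and the cosimplicial (coface/codegeneracy) structure, which the paper acknowledges in Remark~\ref{Gregmodel}.

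One step is missing from your outline and needs to be made explicit: $D_{nG}$ commutes with \emph{finite} equivariant homotopy limits but not a priori with the infinite totalization $\mathrm{Tot}\,Q_G^{\bullet}$, so exchanging $D_{nG}$ with $\mathrm{Tot}$ requires a separate argument. The paper proves this in Lemma~\ref{commuteDntot} by showing, via the connectivity estimates coming from Lemma~\ref{layerstot} and the vanishing of $D_{nG}$ of the $k$-th indexed symmetric power for $k>n|G|$, that the restricted tower $\{D_{nG}\mathrm{Tot}_m Q_G^{\bullet}(X)\}_m$ stabilizes and that the fibers of $\mathrm{Tot}\to\mathrm{Tot}_m$ become highly connected, so that $T_{nG}^{(l)}$ of the layers diverges in connectivity. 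Without such a stabilization/connectivity argument the passage from the totalization of the layers of $Q_G^{\bullet}$ to the layer of the identity would be unjustified; otherwise your plan is sound.
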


In particular $D_{nG}I(X)$ is the infinite loop space of a $G$-spectrum whose geometric $H$-fixed-points are dual to the partition complexes of $H$-sets with $n-1$ orbits (see \ref{equivariantpart}). As for the genuine fixed-points, the Tom Dieck-splitting gives a decomposition of the $G$-fixed points of $D_GI(X)\simeq Q_G(X)$, which we generalize to higher layers in \S\ref{tomdieckspl}. We say that a homotopy functor $\Phi$ is $J$-homogeneous if it is $J$-excisive and if $P_K\Phi$ is equivariantly contractible for every proper subset $K$ of $J$. In \S\ref{layers} we describe a procedure for extracting a $J$-homogeneous functor $D_J\Phi$ from the excisive approximations of $\Phi$. We think of this functor as the  ``$J$-th layer'' of the Goodwillie tree of $\Phi$, and in particular for trivial and free $G$-sets these are equivalent to the actual layers
\[D_{n}\Phi\simeq\hofib\big(P_n\Phi\to P_{n-1}\Phi\big)\ \ \ \ \ \  \ \ \ \ \ \ \ \ D_{nG}\Phi\simeq\hofib\big(P_{nG}\Phi\to P_{(n-1)G}\Phi\big).\]
In \ref{higertomdieck} we prove the following decomposition result.
\begin{theorem*}(Generalized Tom Dieck-splitting)
Let $\Phi\colon \Top_{\ast}^G\to \Top_{\ast}^{G}$ be a homotopy functor which commutes with fixed-points. For every finite $G$-set $J$ and every pointed $G$-space $X$ there is an equivalence
\[(D_{J}\Phi(X))^G\simeq \prod_{H\lhd G}\hofib\big((D_{J/H}\Phi(X))^G\to \holim_{H<L\lhd G}(D_{J/L}\Phi(X))^G\big)\]
where the product runs over the normal subgroups of $G$. In particular $(D_{J}\Phi(X))^G$ contains the classical differential $(D_{n}\Phi(X))^G$ as a factor, where $n=|J/G|$.
\end{theorem*}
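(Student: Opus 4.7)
The plan is to decompose $(D_J\Phi(X))^G$ along the poset $N(G)$ of normal subgroups of $G$, with each $H\lhd G$ contributing the prescribed homotopy fiber. The central observation is that for every $H\lhd G$ the quotient map $J\to J/H$ is $G$-equivariant (with $G$ acting on $J/H$ through $G/H$) and is bijective on $G$-orbits. By the functoriality established in \ref{injonorb} it induces natural transformations $P_J\Phi\to P_{J/H}\Phi$ and, passing to the $J$-homogeneous layers of \S\ref{layers}, $D_J\Phi\to D_{J/H}\Phi$. These assemble into a covariant diagram $N(G)\to \Top_\ast^G$, $H\mapsto D_{J/H}\Phi(X)$, indexed by the lattice of normal subgroups of $G$.

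I would then pass to $G$-fixed-points and apply the change-of-group formula of \S\ref{sec:changegroup}. Combined with the hypothesis that $\Phi$ commutes with fixed-points, this reduces $(D_{J/H}\Phi(X))^G$ to an expression in terms of non-equivariant Goodwillie differentials applied to the fixed-point spaces $X^L$ for various normal $L\supseteq H$. Geometrically, under this reduction the orbits of $J$ with stabilizer $K\subseteq H$ become trivial after passing through $J/H$, so $(D_{J/H}\Phi(X))^G$ only sees the $J$-strata with small stabilizer relative to $H$, and the natural maps above implement the ``forget the strata above $H$'' operation on these decompositions.

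For each $H\lhd G$ I define the candidate summand
\[F_H(X)\ :=\ \hofib\bigl((D_{J/H}\Phi(X))^G\longrightarrow \holim_{H<L\lhd G}(D_{J/L}\Phi(X))^G\bigr),\]
and construct the desired splitting by downward induction on $H$ in $N(G)$. The base case $H=G$ yields $F_G(X)\simeq(D_n\Phi(X))^G$ because the indexing poset is empty, recovering the classical differential. At each inductive step, the tautological fiber sequence
\[F_H(X)\longrightarrow (D_{J/H}\Phi(X))^G\longrightarrow \holim_{H<L\lhd G}(D_{J/L}\Phi(X))^G\]
is shown to split, with $F_H(X)$ identified as the genuinely new contribution arising from orbits of $J$ of stabilizer exactly $H$. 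Summing the resulting inclusions produces the natural comparison map $\prod_{H\lhd G}F_H(X)\to (D_J\Phi(X))^G$.

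The main obstacle is the splitting itself: showing that the fiber sequences above are actually product projections, i.e.\ that the contributions from different normal subgroups are independent. This should follow from the $J$-homogeneity of $D_J\Phi$ established in \S\ref{layers} together with a Tom Dieck-style analysis of how the $G$-fixed-points of a homogeneous equivariant functor decompose across the normal-stabilizer strata of $J$; the orbits of stabilizer $K$ with $K$ not normal contribute trivially after taking $G$-fixed-points, so only the normal strata survive and they do so independently. The case $\Phi=I$ and $J=G$ reproduces the classical Tom Dieck-splitting of $(Q_G X)^G$, which serves as both sanity check and template, with the passage to general $J$ and $\Phi$ being essentially formal given the reduction of Step~2.
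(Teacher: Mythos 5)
There is a genuine gap, and in fact a circularity, in your proposal. Two specific points are off:

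First, the direction of the canonical map is reversed. The projection $p\colon J\to J/H$ is injective (indeed bijective) on orbits, so by the functoriality of \S\ref{sec:tree} it induces a natural transformation $p^\ast_{J,H}\colon P_{J/H}\Phi\to P_J\Phi$, not $P_J\Phi\to P_{J/H}\Phi$. The map going the other way, the retraction $r_{J,H}\colon (P_J\Phi)^G\to (P_{J/H}\Phi)^G$, exists only after passing to $G$-fixed points and requires precisely the hypothesis that $\Phi$ commutes with fixed-points: it is built by first applying the fixed-points restriction map of (\ref{fixedptsdiag}) to the Bousfield--Kan formula for $T_J\Phi$ (which lands in a homotopy limit over $\mathcal{P}_0(J/H_+)$), and then using the isomorphism $\eta_p^H$ coming from the fact that $\Phi(\Lambda^J(c))^H\cong\Phi(\Lambda^{J/H}(c))^H$ whenever $\Phi$ commutes with fixed-points. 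Your proposal never constructs these retractions, and they are the load-bearing object here.

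Second, the crucial content --- turning the homotopy fiber sequences $F_H \to (D_{J/H}\Phi)^G\to\holim_{H<L}(D_{J/L}\Phi)^G$ into a product decomposition of $(D_J\Phi)^G$ --- does not come for free from $J$-homogeneity. Your claim that the splitting ``should follow from $J$-homogeneity of $D_J\Phi$ together with a Tom Dieck-style analysis'' is circular: the Tom Dieck-style splitting is exactly what is being proved. What the paper actually does is more specific and cannot be skipped: it proves an abstract decomposition lemma (\ref{decomp}) stating that a finite diagram in a \emph{stable} category in which every map admits a section, with compatible sections, splits as a wedge of the iterated fibers $F_H$. To use this lemma one needs (i) the sections, which are the retractions above, and (ii) a stable setting. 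Since $\Top_\ast$ is not stable, the paper invokes Proposition~\ref{liftonfixedpts}, which uses the delooping Theorem \ref{Jdelooping} to lift both $p^\ast$ and its retraction $r$ to split monomorphisms $\big(\widehat{D}_{J/H}\Phi\big)^G\to\big(\widehat{D}_J\Phi\big)^G$ between $J$-spectra. Your proposal contains no analogue of this delooping/lifting step, and a downward induction in unstable $G$-spaces will not produce a product decomposition from fiber sequences without sections.

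Finally, the heuristic in your middle paragraph --- that the change-of-group formula of \S\ref{sec:changegroup} reduces $(D_{J/H}\Phi(X))^G$ to non-equivariant Goodwillie differentials of fixed-point spaces --- is not how the proof works (and a verbatim reduction to classical differentials would not distinguish between the different $J/H$, since all $P_{J/H}\Phi$ have the same underlying non-equivariant homotopy type). The actual proof works directly with the equivariant punctured cubes on $\mathcal{P}_0(J/H_+)$ and the fixed-points restriction maps between them. You should construct the sections $r_{J,H}$ explicitly from the $(\ref{fixedptsdiag})$-maps and the commutes-with-fixed-points hypothesis, verify the compatibility squares for varying $H\leq L$, lift everything to $\Sp^G_J$ via the deloopings of \S\ref{delooping}, and only then invoke a stable splitting lemma of the form of \ref{decomp}.
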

We show in \ref{transTD} and \ref{nonAb} that when $n=1$ and $\Phi$ is the identity this is indeed the Tom Dieck-splitting. In \ref{fixderI} we calculate the terms of this decomposition for the identity functor, for larger $n$. Given a normal subgroup $H\lhd G$ we let $\mathcal{Q}_{k,H}$ be the set of actions on a $k$-elements set of subgroups $L$ of $G$ which contain $H$, and where $H$ is the largest subgroup of $L$ normal in $G$ acting trivially. We let $\mathcal{Q}_{k,H}(n)$ be the subset of the actions with $n-1$ orbits and the trivial actions when $k=n$, and $\overline{E}\mathcal{Q}_{k,H}(n)$ its universal pointed $G\times\Sigma_k$-space. 

\begin{cor*}(Higher Tom Dieck-splitting) For every pointed $G$-space $X$ there is a weak equivalence
\[(D_{nG}I(X))^G\simeq \prod_{H\lhd G}\prod_{k=n}^{n|G|}\Big(\Omega^{\infty G/H}Map_\ast\big(T_k,\mathbb{S}_{G/H}\wedge (X^H)^{\wedge k}\big)\wedge_{\Sigma_k}\overline{E}\mathcal{Q}_{k,H}(n)\Big)^{G/H}\]
where $\mathbb{S}_{G/H}$ is the $G/H$-sphere spectrum, and $G/H$ acts freely on $\overline{E}\mathcal{Q}_{k,H}(n)$. 
\end{cor*}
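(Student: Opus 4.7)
The plan is to combine the two preceding theorems. First, I would specialise the Generalized Tom Dieck-splitting to $\Phi = I$ and the free $G$-set $J = nG$. Since $nG/H = n(G/H)$ is a free $G/H$-set on $n$ elements for every normal $H \lhd G$, this gives
\[
(D_{nG}I(X))^G \simeq \prod_{H \lhd G} \hofib\bigl((D_{n(G/H)}I(X))^G \longrightarrow \holim_{H < L \lhd G}(D_{n(G/L)}I(X))^G\bigr).
\]

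Next, for each $H \lhd G$ I would compute $(D_{n(G/H)}I(X))^G$ as the $G/H$-fixed points of $D_{n(G/H)}I(X^H)$, using that the identity commutes with fixed-points so that its $H$-fixed-points functor agrees with the identity on pointed $G/H$-spaces evaluated at $X^H$. Applying the Layers of the identity theorem to the group $G/H$ identifies this with the $G/H$-fixed points of
\[
\Omega^{\infty G/H}\bigvee_{k=n}^{n|G/H|} Map_\ast(T_k, \mathbb{S}_{G/H}\wedge (X^H)^{\wedge k}) \wedge_{\Sigma_k} \overline{E}\mathcal{F}_k^{G/H}(n).
\]
The maps in the hofib are induced by the quotients $G/H \to G/L$ and $X^H \to X^L$; on the right-hand side these correspond to the change-of-group maps $\mathbb{S}_{G/H} \to \mathbb{S}_{G/L}$ together with the maps of universal spaces $\overline{E}\mathcal{F}_k^{G/H}(n) \to \overline{E}\mathcal{F}_k^{G/L}(n)$ coming from the inclusion of families.

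The final step is to identify the hofib stratum-by-stratum. The family $\mathcal{F}_k^{G/H}(n)$ stratifies over the poset of normal overgroups $H \leq L \lhd G$ according to the largest normal-in-$G$ subgroup of the stabiliser acting trivially, and by definition the stratum at $L = H$ is precisely $\mathcal{Q}_{k,H}(n)$. Taking hofib over the punctured diagram $H < L$ extracts this stratum, and commuting hofib through $\Omega^{\infty G/H}$, $Map_\ast(T_k,-)$ and $\mathbb{S}_{G/H}\wedge(X^H)^{\wedge k}\wedge_{\Sigma_k}(-)$ assembles to the displayed product. The padding $k > n|G/H|$ is harmless because $\overline{E}\mathcal{Q}_{k,H}(n)$ is then contractible.

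The main obstacle is this last step: verifying that the hofib of the cubical diagram of universal spaces $(\overline{E}\mathcal{F}_k^{G/H}(n))_{H \leq L \lhd G}$ cleanly extracts the stratum $\mathcal{Q}_{k,H}(n)$. This is an inclusion–exclusion over the generally non-linear lattice of normal subgroups of $G$ containing $H$, and one must check that the resulting fibre sequences are preserved after smashing with $\mathbb{S}_{G/H}\wedge(X^H)^{\wedge k}$, mapping out of the partition complex $T_k$, and taking $G/H$-fixed points — the last of which is made tractable by the stipulated freeness of the $G/H$-action on $\overline{E}\mathcal{Q}_{k,H}(n)$.
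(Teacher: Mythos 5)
The skeleton of your proposal matches the paper's: specialize Theorem~\ref{higertomdieck} to $\Phi = I$, $J = nG$, compute each $(D_{nG/H}I(X))^G$ via Theorem~\ref{derI}, and identify the homotopy fibre with the stratum of universal classifying spaces corresponding to $\mathcal{Q}_{k,H}(n)$. Your stratum identification of $\mathcal{F}_k^{G/H}(n)$ by the largest normal-in-$G$ subgroup acting trivially, and your remark that the range $k > n|G/H|$ contributes nothing, are both correct.

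The gap is precisely where you flag your ``main obstacle,'' but it is more serious than a commutation check. In your framing each term $(D_{n(G/L)}I(X))^G = (\Omega^{\infty G/L}\bigvee_k Map_\ast(T_k,\mathbb{S}_{G/L}\wedge(X^L)^{\wedge k})_{h\mathcal{F}_k^{G/L}(n)})^{G/L}$ is built from a different loop-space functor $\Omega^{\infty G/L}$ and a different fixed-points functor $(-)^{G/L}$ as $L$ ranges over normal overgroups of $H$, so the holim indexed by $L$ does not sit inside a single stable category in which one can push the fibre sequence through $Map_\ast(T_k,-)$, $(-)_{h\mathcal{F}_k(n)}$, etc. What makes the paper's proof go is a change-of-universe step you have not supplied: using $F_{G/H}^G(E_{h\mathcal{F}_{k,H}(n)}) \simeq F_G^G((E\wedge\widetilde{E\mathcal{F}}_H)_{h\mathcal{F}_{k,H}(n)})$ to express every term uniformly as $F_G^G$ applied to the fixed $G\times\Sigma_k$-spectrum $M_k(X)$ smashed with a $G\times\Sigma_k$-pointed classifying space $\mathcal{I}_{k,L}(n) = \overline{E}\mathcal{F}_{k,L}(n)\wedge\widetilde{E\mathcal{F}}_L$. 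Only after this does the hofib-and-holim reduce to a calculation of $\Gamma$-fixed points of $\holim_{H<L}\hofib(\mathcal{I}_{k,H}(n)\to\mathcal{I}_{k,L}(n))$, where each map is either an identity or a basepoint inclusion on each $\Gamma$-fixed set, and one checks the limit is nontrivial exactly when $\Gamma \in \mathcal{Q}_{k,H}(n)$; the result is then pushed back to the stated form by one more application of $\Phi^H$ and the triviality of the $H$-action on $T_k$ and $\overline{E}\mathcal{Q}_{k,H}(n)$. Your proposal names the destination but omits the $\widetilde{E\mathcal{F}}_H$ device that carries it there, and without some equivalent mechanism the ``commuting hofib through $\Omega^{\infty G/H}$'' step does not type-check across the varying $L$.
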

The key ingredient needed to obtain these splitting results is to deloop $J$-homogeneous functors by the permutation representation of $J$. This is done in \S\ref{delooping} using equivariant enhancements of the techniques of \cite[\S 2]{calcIII}. These delooping results also constitute the first step towards a classification of $J$-homogeneous functors. Section \ref{multilinear} studies the relationship between the cross-effect functor and equivariant excision. As a result of this analysis we obtain a classification of the finitary homotopy functors $\Top^{G}_\ast\to\Top^{G}_\ast$ which are $J$-excisive and $n$-reduced, where $n=|J/G|$. We call these functors strongly $J$-homogeneous, since this condition implies $J$-homogeneity. In \ref{classification} we prove that strongly $J$-homogeneous functors are classified by $G$-spectra on the universe $\bigoplus_k\mathbb{R}[J]$ with a na\"{i}ve $\Sigma_n$-action. The statement of \ref{classification} can be simplified by the following.
\begin{theorem*}(Classification of strongly homogeneous functor) For any strongly $J$-homogeneous homotopy functor $\Phi\colon \Top^{G}_\ast\to \Top_{\ast}^G$ there is a natural equivalence
\[\Phi(X)\simeq \Omega^{\infty J}(E_\Phi\wedge X^{\wedge n})_{h\Sigma_{n}}\]
for every finite pointed ``$J$-CW-complex'' $X$, where $n=|J/G|$ and $E_\Phi$ is a certain $G$-spectrum with na\"{i}ve $\Sigma_n$-action which is constructed from the $n$-th cross effect of $\Phi$.
\end{theorem*}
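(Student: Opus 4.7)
The strategy is to adapt Goodwillie's classical classification of $n$-homogeneous functors via symmetric multilinear functors and their associated spectra, now with $J$ encoding both the order $n = |J/G|$ and the ambient universe $\bigoplus_k \mathbb{R}[J]$.

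First I would extract the $n$-th cross effect $cr_n \Phi$. Under the strong $J$-homogeneity hypothesis, the analysis of Section~\ref{multilinear} should imply that $cr_n \Phi$ is multi-reduced and multilinear in the appropriate $J$-equivariant sense, and it carries a natural action of $\Sigma_n$ permuting its variables. Because $G$ acts trivially on the orbit set $J/G$, this $\Sigma_n$-action commutes with the $G$-action in a way that produces only a naive $\Sigma_n$-equivariance. I would then construct $E_\Phi$ by stabilising $cr_n \Phi$ along the representation sphere $S^{\mathbb{R}[J]}$: heuristically
\[
E_\Phi(k\mathbb{R}[J]) = cr_n\Phi\bigl(S^{k\mathbb{R}[J]},\ldots,S^{k\mathbb{R}[J]}\bigr),
\]
with structure maps coming from multilinearity. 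This yields a genuine $G$-spectrum on the universe $\bigoplus_k \mathbb{R}[J]$ carrying a naive $\Sigma_n$-action, as prescribed by the statement.

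Next I would construct a natural comparison
\[
\alpha_X\colon \Omega^{\infty J}\bigl(E_\Phi\wedge X^{\wedge n}\bigr)_{h\Sigma_n}\longrightarrow \Phi(X)
\]
by combining three ingredients: the delooping equivalence for strongly $J$-homogeneous functors from Section~\ref{delooping}, which accounts for $\Omega^{\infty J}$; the diagonal map $\Phi(X) \to cr_n\Phi(X,\ldots,X)$ relating $\Phi$ to its cross effect; and the passage to $\Sigma_n$-homotopy orbits, which identifies the symmetric contribution of this diagonal. To verify that $\alpha_X$ is an equivalence I would induct on the cell structure of the finite $J$-CW-complex $X$. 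The base case $X = S^V$ for a $J$-representation $V$ reduces both sides to $\Omega^{\infty J}(E_\Phi \wedge S^{nV})_{h\Sigma_n}$ via the stabilisation formula; the inductive step attaches cells along maps of the form $G/H_+ \wedge S^V \to X$ and uses $J$-excision on both sides to convert the cell-attachment pushouts into cartesian $J_+$-cubes whose total fibres are computed via the cross-effect formalism of Section~\ref{multilinear}.

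The main obstacle will be matching both the universe and the flavour of the $\Sigma_n$-action. The classical proof rests on a Yoneda-type identification between multi-reduced multilinear functors and spectra; equivariantly, one must show simultaneously that the universe $\bigoplus_k \mathbb{R}[J]$ is precisely what detects $J$-excision of $n$-fold multilinear functors and that the $\Sigma_n$-action available from the cross effect is indeed only naive. A related subtle point is that taking homotopy orbits for a naively-acting $\Sigma_n$ interacts controllably with the genuine equivariant $\Omega^{\infty J}$, which is where the hypothesis that $G$ acts trivially on $J/G$ is used decisively. Finally, the restriction to $J$-CW-complexes is essential: the cell-by-cell induction described above relies on being able to build $X$ from equivariant cells indexed by $J$-representations, while the finitariness hypothesis of \ref{classification} is what allows the cross-effect construction of $E_\Phi$ to be compatible with sequential colimits along $S^{\mathbb{R}[J]}$.
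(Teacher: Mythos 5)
Your outline mirrors the paper's overall architecture --- deloop into $J$-spectra via Theorem~\ref{Jdelooping}, then cross-effect plus stable classification, then a finite $J$-CW assembly argument --- but two constituent steps are not correct as you have phrased them, and one of them is the heart of the theorem.

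\textbf{Construction of $E_\Phi$.} Your levelwise formula $E_\Phi(k\mathbb{R}[J]) = cr_n\Phi\bigl(S^{k\mathbb{R}[J]},\ldots,S^{k\mathbb{R}[J]}\bigr)$ does not define a symmetric $G$-spectrum on $\bigoplus_k\mathbb{R}[J]$ with a sensible $\Sigma_n$-action. Because $\Sigma_n$ permutes the $n$ slots of the cross effect, the only $\Sigma_n$-equivariant structure map available goes by $S^{n\mathbb{R}[J]}$ (hitting each variable once), not by $S^{\mathbb{R}[J]}$, so the objects you wrote are over-suspended by $(n-1)k\mathbb{R}[J]$ and do not assemble into a spectrum indexed on $\bigoplus_k\mathbb{R}[J]$. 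The paper corrects for this by looping off the reduced standard representation $\overline{V}_n$ of $\Sigma_n$, setting $(E_\Phi)_k = \Omega^{k\overline{V}_n}\,cr_n\Phi(\Delta S^k)$ with $\Delta S^k$ a non-equivariant sphere, and it is precisely this twist that makes the structure maps $\Sigma_n$-equivariant in spectrum level $1$; the identification at the $G$-set $kJ$ with $\Omega^{kJ\overline{V}_n}cr_n\Phi(\Delta S^{kJ})$ then follows and does not appear in your formula.

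\textbf{The cell induction.} The paper also inducts over cells, but only at the very last step and only for a $J$-\emph{multilinear} functor: Lemma~\ref{assembly} establishes the assembly equivalence $M(\Delta S^0)\wedge X^{\wedge n}\simeq M(\Delta X)$ by induction on a finite $J$-CW-complex, and the induction closes precisely because each variable of $M$ is $1$-excisive and therefore converts a cell-attachment pushout into a (co)cartesian square variable-by-variable. Your plan attempts to run a single cell induction to compare $\Phi(X)$ directly with $\Omega^{\infty J}(E_\Phi\wedge X^{\wedge n})_{h\Sigma_n}$, but $\Phi$ is $n$-excisive and not $1$-excisive for $n>1$: attaching one cell to $X'$ does not expose $\Phi(X)$ as a fiber or cofiber involving $\Phi(X')$ alone, and relating them involves the full tower of cross effects $cr_j\Phi(X',\text{cell},\ldots,\text{cell})$ for $1\le j\le n$. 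Your outline does not say how to close that induction, and in fact closing it would amount to reproving Goodwillie's stable classification. The paper avoids this by first applying \cite[3.5]{calcIII} verbatim in the stable category $\Sp_J^G$ (this is Proposition~\ref{classspectra}, and it is here that stability of $\Sp_J^G$ is used), obtaining the identification $\widehat{\Phi}(X)\simeq (cr_n\widehat\Phi)(\Delta X)_{h\Sigma_n}$ for \emph{all} pointed $G$-spaces $X$, and only afterwards reserving the cell induction for the multilinear assembly step. Without a substitute for that stable classification input, your induction has no base from which to get off the ground.
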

We suggest that general $J$-homogeneous functors should be classified by a theory of equivariant spectra parametrized by a category of finite $H$-sets with $n-1$ orbits, where the orbit type of these sets depend on $J$ (see \ref{rem:classification}(iv)). A convenient model for the homotopy theory of such spectra is provided in \cite{Mackey} in the setting of spectral Mackey functors. This thesis is supported by the calculation of the layers of the identity functor, which seems to suggest that $nG$-homogeneous functors are classified by the spectral Mackey functors on the category of all  $H$-sets with $n-1$ orbits, and the trivial $H$-set $n$. This classification will be investigated in future work.

\section*{Acknowledgments}
I am thankful to Greg Arone and Mike Hill for very valuable insights and suggestions that ultimately improved the quality of the paper. I am indebted to Tomer Schlank for discussing some of the very technical aspects of this work.
I thank Rosona Eldred for several useful correspondences.
Finally I want to thank the Bourbon Seminar: Clark Barwick, Saul Glasman, Marc Hoyois, Denis Nardin, Sune Precht Reeh, Tomer Schlank and Jay Shah for the many inspiring conversations.

\section{Preliminaries on equivariant diagrams}\label{sec:preliminaries}

We review some of the foundations of equivariant diagrams, recollected from \cite{Gdiags} and \cite{GdiagTop}.

\subsection{Equivariant diagrams}
Let $G$ be a finite group. A $G$-model category is a category $\mathscr{C}$ together with a model structure on the category of $H$-objects $\mathscr{C}^H=Fun(H,\mathscr{C})$ for every subgroup $H$ of $G$. These model structures are required to be compatible under induction and coinduction, and to be suitably enriched over simplicial $H$-sets (see \cite[2.1]{Gdiags}). We also assume that every $\mathscr{C}^H$ is cofibrantly generated and locally finitely presentable. Examples of $G$-model categories include the $G$-category of simplicial sets where $sSet^H$ is equipped with the fixed-points model structure, and the $G$-category of orthogonal spectra where $\Sp_{O}^H$ has the stable model structure associated to the universe of $H$-representations which are restrictions of representations in a complete $G$-universe (see \cite[2.5]{Gdiags}).

Let $I$ be a category with $G$-action, by which we mean a functor $a\colon G\to Cat$. We denote by $I$ the value of $a$ at the unique object of the category $G$, and by $g\colon I\to I$ the action automorphisms.

\begin{defn}(\cite[2.2]{js}, \cite[3.1]{vilf})
An $I$-shaped $G$-diagram in $\mathscr{C}$ is a functor $X\colon I\to \mathscr{C}$ together with a $G$-structure: natural transformations $\phi_g\colon X\to X\circ g$ for every $g$ in $G$, which satisfy $\phi_1=\id_X$ and $\phi_g|_{h}\circ\phi_h=\phi_{gh}$, where $\phi_g|_{h}$ is the restriction of $\phi_g$ along $h\colon I\to I$.
A morphism of $G$-diagrams is a natural transformation $f\colon X\to Y$ which commutes with the $G$-structure. This defines a category of $G$-diagrams, which we denote $\mathscr{C}_{a}^I$.
\end{defn}

We observe that the category $\mathscr{C}_{a}^I$ is isomorphic to the category of diagrams $\mathscr{C}^{G\ltimes_a I}$, where $G\ltimes_a I$ is the Grothendieck construction of the functor $a\colon G\to Cat$. Although this description is often useful (for example we see that $\mathscr{C}_{a}^I$ is complete and co-complete), we think of the morphisms of $G\ltimes_a I$ coming from $I$ and of the morphisms coming from $G$ as two separate structures: an object of $\mathscr{C}_{a}^I$ is a diagram in $\mathscr{C}$ of shape $I$ together with a $G$-structure. Throughout the paper we will mostly be concerned with $G$-diagrams of cubical shape.

\begin{ex}\label{Gcubes}
Let $J$ be a finite $G$-set, and let $\mathcal{P}(J)$ be the poset category of all the subsets of $J$ ordered by inclusion. The category $\mathcal{P}(J)$ has a $G$-action, defined by sending a subset $U\subset J$ to the image $g\cdot U=g(U)$ by the map $g\colon J\to J$. A $G$-diagram $X\in \mathscr{C}_{a}^{\mathcal{P}(J)}$ is called a $J$-cube.
\end{ex}

The presence of a $G$-model structure on $\mathscr{C}$ allows us to define equivariant homotopy limits and colimits of $G$-diagrams with suitable homotopy invariant properties. Indeed, if $X\in \mathscr{C}_{a}^I$ is a $G$-diagram, the Bousfield-Kan formulas for the underlying diagram $X\colon I\to \mathscr{C}$
\[\Hom_I(NI/-,X)\ \ \ \ \ \ \ \ \ \ \ \ \ \ \ \ \ \ \mbox{and}\ \ \ \ \ \ \ \ \ \  \ \ \ \ \ \ \ \ N(-/I)^{op}\otimes_I X\]
have natural $G$-actions constructed from the $G$-structure of $X$, respectively ``by conjugation'' and ``diagonally'' (see \cite[1.16]{Gdiags}). 

\begin{ex}\label{loopsusp}
Let $J$ be a finite $G$-set, and let $J_+$ be the $G$-set $J$ with an added fixed base-point. Let $\mathcal{P}_0(J_+)$ be the subposet of $\mathcal{P}(J_+)$ of non-empty subsets. Given a pointed $G$-space $X$, we define a $G$-diagram $\omega^{J}X\in (\Top_\ast)_{a}^{\mathcal{P}_0(J_+)}$ with values
\[(\omega^{J}X)_U=\left\{\begin{array}{ll}\ast&\mbox{if $U\neq J_+$}\\
X&\mbox{if $U= J_+$}
\end{array}
\right.\]
A proper inclusion $U\to V$ is sent to the base-point inclusion if $V=J_+$, and to the identity otherwise. The $G$-structure maps $(\omega^{J}X)_U\to (\omega^{J}X)_{gU}$ are defined by the identity on the point if $U$ is a proper subset of $J_+$, and by the $G$-action of $X$ for $U=J_+$. There is a natural $G$-equivariant homeomorphism
\[\Hom_I(N\mathcal{P}_0(J_+)/-,\omega^JX)\cong \Omega^JX\]
where $\Omega^J=Map_\ast(S^J,-)$ is the loop space by the permutation representation sphere $S^J=\mathbb{R}[J]^+$, with the conjugation action.
Similarly, there is a dual $G$-diagram $\sigma^{J}X\in (\Top_\ast)_{a}^{\mathcal{P}_1(J_+)}$ shaped on the category $\mathcal{P}_1(J_+)$ of proper subsets of $J_+$, and there is a natural $G$-homeomorphism
\[N(-/\mathcal{P}_1(J_+))^{op}\otimes_I \sigma^JX\cong \Sigma^JX\]
where $\Sigma^J$ is the suspension by the permutation representation sphere $S^J$. The details can be found in \cite[\S 1.3]{Gdiags}.

A similar construction makes sense if we drop the extra basepoint from $J_+$. In this case we recover the loop and suspension spaces by the reduced regular representation of $J$.
\end{ex}

The value of a $G$-diagram $X\in \mathscr{C}_{a}^I$ at an object $i$ of $I$ has an action of the stabilizer group $G_i=\{g\in G\ |\ gi=i\}$ which is induced by the $G$-structure. Moreover the value of a morphism of $G$-diagrams $f\colon X\to Y$ at $i$ is a $G_i$-equivariant map $f_i\colon X_i\to Y_i$.

\begin{defn}\label{def:eqGdiags} Let $\mathscr{C}$ be a $G$-model category, and let $I$ be a category with $G$-action.
A morphism of $G$-diagrams $f\colon X\to Y$ in $\mathscr{C}_{a}^I$ is an equivalence if for every object $i$ of $I$ the map $f_i\colon X_i\to Y_i$ is an equivalence in $\mathscr{C}^{G_i}$.
\end{defn}

It turns out that the equivalences of $G$-diagrams are the weak equivalences of a ``$G$-projective'' model structure on $\mathscr{C}_{a}^I$ (see \cite[2.6]{Gdiags}). We say that $X\in \mathscr{C}_{a}^I$ is pointwise cofibrant (resp. fibrant) if for every $i$ in $I$ the value $X_i$ is a cofibrant (resp. fibrant) object of $\mathscr{C}^{G_i}$. The Bousfield-Kan formulas define functors
\[\Hom_I(NI/-,-)\ ,\ N(-/I)^{op}\otimes_I (-)\colon \mathscr{C}_{a}^I\longrightarrow \mathscr{C}^G\]
which preserve the equivalences between respectively the pointwise fibrant and the pointwise cofibrant $G$-diagrams (see \cite[2.22]{Gdiags}). The existence of the $G$-projective model structure on $\mathscr{C}_{a}^I$ insures that every diagram can be replaced by an equivalent pointwise fibrant or pointwise cofibrant diagram (see \cite[2.10]{Gdiags}). Given a $G$-diagram $X\in\mathscr{C}_{a}^I$, we define $G$-objects in $\mathscr{C}$
\[\holim_I X=\Hom_I(NI/-,FX)\ \ \ \ \ \ \ \ \ \ \ \ \ \ \ \ \hocolim_I X=N(-/I)^{op}\otimes_I QX\]
where $X\stackrel{\simeq}{\to} FX$ and $QX\stackrel{\simeq}{\to} X$ are respectively pointwise fibrant and pointwise cofibrant replacements. Since $\mathscr{C}_{a}^I$ is a simplicial cofibrantly generated model category these replacements can be chosen to be functorial, by \cite[6.3]{RSS}. Thus homotopy limits and homotopy colimits define homotopy invariant functors 
\[\holim_I,\hocolim_I\colon\mathscr{C}_{a}^I\longrightarrow \mathscr{C}^G.\]

\subsection{Homotopy (co)limits and fixed points}\label{sec:fixedpts}

Let $G$ be a finite group, and let us consider the $G$-model category of pointed compactly generated Hausdorff spaces $\mathscr{C}=\Top_\ast$, where the category of $H$-objects $\Top_{\ast}^H$ has the fixed-points model structure for every subgroup $H$ of $G$.

Let $I$ be a category with $G$-action. The category of $G$-diagrams $(\Top_\ast)_{a}^I$ has a fixed-points diagram functor
\[(-)^H\colon (\Top_\ast)_{a}^I\longrightarrow \Top_{\ast}^{I^H}\]
where $I^H$ is the subcategory of $I$ which consists of the objects and the morphisms of $I$ that are fixed (strictly) by the $H$-action. For every $X\in (\Top_\ast)_{a}^I$, the diagram $X^H\colon I^H\to \Top_\ast$ sends an object $i$ to $X^{H}_i$. Observe that since $i$ is $H$-fixed the $G$-structure on $X$ provides $X_i$ with an $H$-action. Moreover a morphism of $G$-diagrams $f\colon X\to Y$ is an equivalence if and only if it restricts to pointwise equivalences $f^H\colon X^H\to Y^H$ for every subgroup $H$ of $G$. 
\begin{ex}\label{ex:fixedcube}
If $J$ is a finite $G$-set and $I=\mathcal{P}(J)$ is of cubical shape, there is an isomorphism of posets $\mathcal{P}(J)^H\cong \mathcal{P}(J/H)$. Thus the $H$-fixed points diagram of a $J$-cube is a $J/H$-cube.
\end{ex}
The pointwise inclusions $X^{G}_i\to X_i$, for $i\in I^G$, induce natural maps
\begin{equation}\label{fixedptsdiag}
\hocolim_{I^G}X^G\stackrel{\cong}{\longrightarrow} (\hocolim_{I}X)^G\ \ \ \ \ \ \ \ \ \ \ \ \ \ \ \ (\holim_{I}X)^G\stackrel{res}{\longrightarrow}\holim_{I^G}X^G
\end{equation}
The first map is a homeomorphism and the second map is a fibration (see \cite[1.2.1,1.3.3]{GdiagTop}). These are analogous respectively to the homeomorphism $(K\wedge Y)^G\cong K^G\wedge Y^G$ and to the restriction map $\map_\ast(K,Y)^G\to \map_\ast(K^G,Y^G)$ for pointed $G$-spaces $K$ and $Y$, where $K$ is cofibrant.

Now suppose that $\mathscr{C}=\Sp_O$ is the $G$-model category of orthogonal spectra, and let $X\in (\Sp_O)^{I}_a$ be a $G$-diagram. We construct similar geometric fixed-points diagrams, and maps analogous to (\ref{fixedptsdiag}). We define the geometric $G$-fixed-points of a spectrum $E\in \Sp^{G}_O$ to be the spectrum $\Phi^G(E)$ with $k$-th level space
\[\Phi^G(E)_k=E_{k\rho}^G\]
where $E_{k\rho}$ is the value of $E$ at the $k$-fold direct sum of the regular representation $\rho$ of $G$ (see \cite{Schwede}). If $i$ is an object of $I^G$, the $i$-vertex of a $G$-diagram $X\in (\Sp_O)^{I}_a$ is an orthogonal $G$-spectrum $X_i\in \Sp_{O}^{G}$, and the geometric fixed-points spectra $\Phi^G(X_i)$ define a diagram
\[\Phi^G(X)\colon I^G\longrightarrow \Sp_O.\]

\begin{prop}\label{prop:geomholim}
For every $G$-diagram of orthogonal spectra $X\in (\Sp_O)^{I}_a$ there is an isomorphism of orthogonal spectra
\[\Phi^G(\hocolim_{I}X)\cong \hocolim_{I^G}\Phi^G(X)\]
and a restriction map $\Phi^G(\holim_{I}X)\to \holim_{I^G}\Phi^G(X)$. If $J$ is a finite $G$-set and $X\in(\Sp_O)^{\mathcal{P}_0(J)}_a$ is a punctured $J$-cube, the restriction map
\[\Phi^G(\holim_{\mathcal{P}_0(J)}X)\stackrel{\simeq}{\longrightarrow} \holim_{\mathcal{P}_0(J)^G}\Phi^G(X)\cong \holim_{\mathcal{P}_0(J/G)}\Phi^G(X)\]
is a stable equivalence of orthogonal spectra.
\end{prop}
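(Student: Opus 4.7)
The plan is to work level by level in the orthogonal spectrum, using that homotopy (co)limits of $G$-diagrams of orthogonal spectra are computed spacewise: for any orthogonal $G$-representation $V$ and any $G$-diagram $X\in(\Sp_O)_a^I$, the $V$-th level of $\hocolim_I X$ (respectively $\holim_I X$) is the homotopy colimit (respectively limit) in $(\Top_\ast)_a^I$ of the diagram of pointed $G$-spaces $X_V$ obtained by evaluating $X$ at $V$. Since $\Phi^G(E)_k = E_{k\rho}^G$ by definition, both constructions will fall out of the spacewise fixed-points formulas (\ref{fixedptsdiag}).

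For the hocolim isomorphism, applying the space-level homeomorphism from (\ref{fixedptsdiag}) at each level gives
\[\Phi^G(\hocolim_I X)_k = (\hocolim_I X_{k\rho})^G \;\cong\; \hocolim_{I^G}(X_{k\rho})^G = \hocolim_{I^G}\Phi^G(X)_k.\]
I would then check that these assemble into an isomorphism of orthogonal spectra, which amounts to verifying compatibility with the structure maps $S^\rho\wedge E_{k\rho}\to E_{(k+1)\rho}$; this reduces to the identity $(S^\rho\wedge Y)^G\cong S^1\wedge Y^G$ applied inside the Bousfield-Kan coend. The restriction map on holim is constructed in exactly the same way, using instead the space-level restriction map from (\ref{fixedptsdiag}) and the fact that taking $G$-fixed points commutes with the mapping complex $\Hom_I(NI/-,-)$ up to a restriction.

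The main obstacle is to show that the restriction map is a stable equivalence when $I=\mathcal{P}_0(J)$. My approach is by induction on $|J|$. The base case $|J|=1$ is trivial since $\mathcal{P}_0(J)$ is a point and the restriction map is the identity. For the inductive step, fix a $G$-orbit $Go\subseteq J$ and decompose $\holim_{\mathcal{P}_0(J)}X$ as a homotopy pullback involving the homotopy limits over $\mathcal{P}_0(J\setminus Go)$ and a smaller punctured cube built from subsets meeting $Go$. Under the identification $\mathcal{P}_0(J)^G\cong \mathcal{P}_0(J/G)$ (via the correspondence between $G$-invariant subsets of $J$ and subsets of the orbit set), the analogous decomposition holds after taking $G$-fixed points. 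The inductive step then reduces the statement to the fact that $\Phi^G$ preserves homotopy pullbacks of orthogonal $G$-spectra up to stable equivalence, which in turn follows from $\Phi^G$ preserving cofiber sequences (it commutes with homotopy colimits, being modelable by smashing with $\widetilde{E}\mathcal{P}$ followed by categorical fixed-points) together with the agreement of finite homotopy fibers and cofibers up to a shift in the stable category.
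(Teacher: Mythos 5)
Your treatment of the formal parts is sound and essentially matches the paper: computing $\Phi^G(\hocolim_I X)$ levelwise and invoking the space-level homeomorphism from (\ref{fixedptsdiag}), using $(S^{\rho}\wedge Y)^G\cong S^1\wedge Y^G$ to handle the structure maps, and defining the restriction map via the space-level restriction map are all correct. The genuine problem lies in the inductive argument for the punctured $J$-cube.

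Your induction on $|J|$ removes a $G$-orbit $Go$ and decomposes $\holim_{\mathcal{P}_0(J)}X$ as a pullback of the limits over $\mathcal{P}_0(J\setminus Go)$, the poset of subsets meeting $Go$, and their intersection. This is a valid equivariant cover, and the reduction works whenever $J$ has at least two orbits, since then $|J\setminus Go|<|J|$ and $|Go|<|J|$. But it makes no progress when $J=T$ is a single orbit with $|T|>1$: removing the unique orbit leaves the empty set, and there is no nontrivial $G$-invariant cover of $\mathcal{P}_0(T)$ by smaller punctured cubes, because the only $G$-invariant subsets of $T$ are $\emptyset$ and $T$. This is precisely where the equivariant content sits: $\mathcal{P}_0(T)^G=\{T\}$, so the claim is that $\Phi^G(\holim_{\mathcal{P}_0(T)}X)\to\Phi^G(X_T)$ is a stable equivalence, and this does not follow from your declared base case $|J|=1$ (which is trivial) nor from the assertion that $\Phi^G$ preserves pullback squares. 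The point is that $X$ is a $G$-diagram, not an ordinary diagram in $\Sp_O^G$ over $\mathcal{P}_0(T)$ — the vertex $X_U$ carries only a $G_U$-action — so $\holim_{\mathcal{P}_0(T)}X$ cannot be written as an iterated pullback of $G$-spectra to which exactness of $\Phi^G$ could be applied.

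The paper circumvents this by a uniform argument that does not partition by orbits: it extends $X$ to a full $J$-cube $\overline{X}$ with $\overline{X}_\emptyset=\holim_{\mathcal{P}_0(J)}X$, which is cartesian by construction, then quotes \cite[3.35]{Gdiags} that cartesian and cocartesian equivariant $J$-cubes of orthogonal spectra coincide — the genuinely equivariant incarnation of stability. Since $\Phi^G$ commutes with homotopy colimits (the first part of the proposition), $\Phi^G\overline{X}$ is a cocartesian $J/G$-cube of ordinary spectra, hence cartesian by non-equivariant stability, and this is exactly the desired equivalence. To repair your argument you would need to import this equivariant cartesian-equals-cocartesian fact at the transitive base case, after which the rest of your induction for multiple orbits would go through — but once you have it for a general $T$, the same argument works for a general $J$ in one stroke, making the orbit-by-orbit induction superfluous.
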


\begin{proof}
The claim about homotopy colimits follows immediately from (\ref{fixedptsdiag}) by observing that there is an equivariant homeomorphism
\[(\hocolim_{i\in I} X_i)_{V}\cong \hocolim_{i\in I} (X_i)_{V}\]
for every $G$-representation $V$. The restriction map on homotopy limits is defined as follows. Let $\iota\colon I^G\to I$ be the inclusion of the fixed-points category, and let $j\colon (\iota/-)\to (I/-)$ be the induced map of $I$-diagrams. Precomposition along $j$ defines a map of $G$-spectra 
\[\Hom_I(NI/-,FX)\stackrel{j^\ast}{\longrightarrow} \Hom_I(N\iota/-,FX)\cong\Hom_{I^G}(NI^G/-,(FX)|_{I^G})\]
where $FX$ is a point-wise fibrant replacement of $X$.
The restriction map is defined by taking the geometric fixed-points of this map
\[\Phi^G(\holim_{I}X)=\Phi^G(\Hom_I(NI/-,FX))\stackrel{}{\longrightarrow} \Phi^G(\Hom_{I^G}(NI^G/-,(FX)|_{I^G}))\cong  \holim_{I^G}\Phi^G(X).\]
The last isomorphism is immediate from the definition of $\Phi^G$, because the category $I^G$ indexing the diagram $NI^G/-\colon I^G\to \Sp_{O}^G$ has the trivial action. Also notice that $\Phi^GFX_i$ is a fibrant spectrum for every $i\in I^G$.

Let us show that the restriction map is an equivalence in the cubical case. Suppose that $X\in(\Sp_O)^{\mathcal{P}_0(J)}_a$ is a punctured $J$-cube, and let $\overline{X}\in(\Sp_O)^{\mathcal{P}(J)}_a$ be the extension of $X$ to a $J$-cube defined by setting 
\[\overline{X}_\emptyset=\holim_{\mathcal{P}_0(J)}X.\]
To be precise this is only a coherently commutative diagram, but one can replace $X$ with a Reedy fibrant diagram (with an argument dual to \cite[A.2]{Gdiags}) and take the categorical indexed limit, obtaining a strictly commutative diagram equivalent to $\overline{X}$. The restriction map is an equivalence precisely when the $J/G$-cube of orthogonal spectra $\Phi^G\overline{X}$ is cartesian. Since $\Sp_O$ is stable, it is sufficient to show that $\Phi^G\overline{X}$ is cocartesian. Because of the isomorphism
\[\hocolim_{\mathcal{P}_1(J/G)}\Phi^G\overline{X}\cong \Phi^G(\hocolim_{\mathcal{P}_1(J)}\overline{X})\]
the $J/G$-cube $\Phi^G\overline{X}$ is cocartesian if $\overline{X}$ is a cocartesian $J$-cube (in the sense of \cite[3.3]{Gdiags}). This is the case because cartesian and cocartesian $J$-cubes of spectra agree (\cite[3.35]{Gdiags}), and because $\overline{X}$ is cartesian by construction.
\end{proof}

\subsection{Equivariant homotopy functors}\label{sec:htpyfctr}

Let $G$ be a finite group and let $\mathscr{C}$ and $\mathscr{D}$ be $G$-model categories.
In the paper we will focus on functors $\Phi\colon \mathscr{C}\to\mathscr{D}^G$. These have the advantage, as opposed to functors $\mathscr{C}^G\to\mathscr{D}^G$, of inducing functors \[\Phi_\ast\colon \mathscr{C}^{I}_a\longrightarrow\mathscr{D}^{I}_a\]
for every category with $H$-action $I$, and every subgroup $H$ of $G$. An $H$-diagram $X\in \mathscr{C}^{I}_a$ is sent to the diagram $\Phi(X)$ in $\mathscr{D}$ with the $H$-structure defined by the maps
\[\Phi(X_i)\stackrel{h}{\longrightarrow}\Phi(X_i)\stackrel{\Phi(h)}{\longrightarrow}\Phi(X_{hi})\]
where the first map is the action of $h$ on the $G$-object $\Phi(X_i)$ and the second map is the image by $\Phi$ of the $H$-structure map of $X$. By setting $I$ to be the trivial category with $H$-action we obtain a functor $\Phi\colon \mathscr{C}^H\to\mathscr{D}^H$ for every subgroup $H$ of $G$. 

\begin{defn}(\cite[3.6]{Gdiags})\label{def:htpyfctr} A functor $\Phi\colon \mathscr{C}\to\mathscr{D}^G$ is called a homotopy functor if its extension $\Phi\colon \mathscr{C}^H\to\mathscr{D}^H$ preserves equivalences of cofibrant objects for every subgroup $H$ of $G$.
\end{defn}

Although our input is a functor $\Phi\colon \mathscr{C}\to\mathscr{D}^G$ we always think of $\Phi$ as the collection of extensions $\mathscr{C}^H\to\mathscr{D}^H$ over the subgroups $H$ of $G$. The same goes for equivalences of functors. A natural transformation $\Gamma\colon \Phi\to\Psi$ of functors $\Phi,\Psi\colon \mathscr{C}\to\mathscr{D}^G$ extends to a natural transformation $\Gamma\colon \Phi_\ast\to\Psi_\ast$ between the extensions $\Phi_\ast,\Psi_\ast\colon \mathscr{C}^{I}_a\to\mathscr{D}^{I}_a$ for every category with $H$-action $I$. In particular there is a map $\Gamma_c\colon\Phi(c)\to\Psi(c)$ in $\mathscr{D}^H$ for every $H$-object $c\in\mathscr{C}^H$.

\begin{defn}(\cite[3.8]{Gdiags})
A natural transformation $\Gamma\colon \Phi\to\Psi$ is a weak equivalence if for every subgroup $H$ of $G$ and every object $c\in\mathscr{C}^H$ the map $\Gamma_c\colon\Phi(c)\to\Psi(c)$ is an equivalence in $\mathscr{D}^H$.
\end{defn}

\begin{ex}\label{ex:htpyfctrs}
The following are all examples of homotopy functors in the sense of Definition \ref{def:htpyfctr}.
\begin{enumerate}
\item The inclusion $\mathscr{C}\to \mathscr{C}^G$ of objects with the trivial $G$-action is always a homotopy functor. Its extension $\mathscr{C}^H\to \mathscr{C}^H$ is the identity functor.
\item Let $\Psi\colon \Top_\ast\to\Top_\ast$ be an endofunctor on the category of pointed compactly generated Hausdorff spaces. Suppose that $\Psi$ preserve weak equivalences of pointed spaces and that it commutes with fixed-points (limits over the category $H$, for every subgroup $H$ of $G$). The composition of $\Psi$ with the inclusion of spaces with the trivial $G$-action 
\[\Phi\colon \Top_\ast\stackrel{\Psi}{\longrightarrow}\Top_{\ast}\longrightarrow\Top_{\ast}^G\]
is automatically a homotopy functor. Its extension $\Phi\colon \Top_{\ast}^H\to\Top_{\ast}^H$ satisfies $\Phi(X)^H=\Phi(X^H)$ for every subgroup $H$ of $G$.
\item Waldhausen's $A$-theory functor $A\colon \Top\to\Top$ preserves equivalences and it commutes with fixed-points. Composing it with the inclusion of spaces with the trivial $G$-action yields to a homotopy functor $A\colon \Top\to\Top^G$. The fixed-points space of the extension to $H$-spaces $A(X)^H$ is the $K$-theory of the category of finite retractive spaces over $X^H$. We refer to this functor as na\"{i}ve equivariant $A$-theory.
\item There is a genuine equivariant $A$-theory functor $A_G\colon \Top\to\Top^G$ defined by Merling in \cite[8.8]{Mona} as follows. We let $R_X$ be the Waldhausen category of finite retractive spaces over $X$, and let $EG$ be the translation category of the $G$-set $G$ (its classifying space is $G$-free and contractible). The category of functors $Cat(EG,R_X)$ inherits a Waldhausen structure (see \cite[\S 8.1]{Mona}), and its $K$-theory space is denoted 
\[A_G(X)=K(Cat(EG,R_X)).\]
This space inherits a $G$ action from the action on $EG$, defining a homotopy functor $A_G\colon \Top\to\Top^G$. For every $H$-space $X$ the fixed-points space $A(X)^H$ is the $K$-theory of finite retractive $H$-spaces over $X$ with the na\"{i}ve $H$-equivalences (see \cite[8.9]{Mona}).
\item For any $G$-CW-complex $K$ the functors $K\wedge (-),\map_{\ast}(K,-)\colon  \Top_\ast\to\Top^{G}_\ast$ are homotopy functors. Their extensions to $\Top^{H}_\ast$ are the standard smash and mapping spaces respectively with the diagonal and the conjugation actions.
\item Similarly for any orthogonal $G$-spectrum $E$ the functor $E\wedge (-)\colon  \Top_\ast\to\Sp_{O}^{G}$ is a homotopy functor, and if $E$ is cofibrant so is $E\wedge (-)\colon  \Sp_{O}\to\Sp_{O}^{G}$.
\item For any Abelian group with $G$-action $M$ the Dold-Thom construction $M\colon \Top_\ast\to \Top^{G}_\ast$ that sends a pointed space $X$ to the space of reduced configurations of points in $X$ with labels in $M$ is a homotopy functor. For a $G$-space $X$ the $G$-action on $M(X)$ is both on the points of $X$ and on the labels.
\end{enumerate}
\end{ex}

\section{The equivariant Goodwillie tree}

Let $G$ be a finite group, let $\mathscr{C}$ and $\mathscr{D}$ be $G$-model categories, and let $\Phi\colon\mathscr{C}\to\mathscr{D}^G$ be a homotopy functor. Given a finite $G$-set $J$, we approximate $\Phi$ by a ``$J$-excisive'' functor (to be made precise in \ref{J-exc}). When $J$ is the set $\underline{n}=\{1,\dots,n\}$ with the trivial $G$-action this is Goodwillie's $n$-excisive approximation. When $J=G$ is transitive and free this is the best approximation of $\Phi$ by a genuine $G$-homology theory.

The section is organized as follows. In \S\ref{stronglycocart} we study suitable strongly cocartesian equivariant cubes, which will be the building blocks of the notion of $J$-excision. Section \ref{sec:higherexc} focuses on $J$-excision. In \S\ref{sec:ecxapprox} we construct the $J$-excisive approximations. Section \ref{sec:tree} compares the various $J$-excisive approximations, assembling them into a tree which extends the standard Goodwillie tower. In \S\ref{sec:changegroup} we give formulas for the restrictions of the excisive approximations to subgroups of $G$. In \S\ref{sec:convergence} we discuss the convergence of the subtowers of the tree.

\subsection{Strongly cocartesian equivariant cubes}\label{stronglycocart}

Let $J$ be a finite $G$-set and let $\mathscr{C}$ be a $G$-model category. We recall that the poset category $\mathcal{P}(J)$ of subsets of $J$ ordered by inclusion inherits a $G$-action $a$, and that a $J$-cube in $\mathscr{C}$ is a $G$-diagram $X\in \mathscr{C}^{\mathcal{P}(J)}_a$. Following \cite[3.3]{Gdiags}, a $J$-cube $X$ is called cartesian if the canonical map
\[X_\emptyset\longrightarrow \holim_{\mathcal{P}_0(J)}X\]
is an equivalence in the model category of $G$-objects $\mathscr{C}^G$. Notice that since the empty set is fixed by the $G$-action $X_\emptyset$ is indeed a $G$-object.
Dually, $X$ is cocartesian if
\[\hocolim_{\mathcal{P}_1(J)}X\longrightarrow X_J\]
is an equivalence in $\mathscr{C}^G$.

We want to extend Goodwillie's notion of strongly cocartesian cubes to equivariant cubes. This amounts to choosing a subcategory of $\mathcal{P}(J)$ which determines the cube up to iterated homotopy pushouts. In order to define such a subcategory we need to assume that our indexing $G$-set has a fixed-point, in other words that it is of the form $J_+$. This fixed base-point is analogous to the standard requirement in stable equivariant homotopy theory that the $G$-representations considered contain a copy of the trivial representation (cf. \ref{loopsusp}), and it is consistent with the classical fact that $n$-excision is about $(n+1)$-cubes.

Let $J$ be a finite $G$-set, and define $St(J_+)$ to be the subposet of $\mathcal{P}(J_+)$ defined by the union
\[St(J_+)=\bigcup_{o\in J/G}\mathcal{P}_1(o_+)\]
taken over the set of orbits of $J$. In order to formulate $J$-excision we will have to consider cubes of size $J_+$ that only carry the action of a subgroup of $G$. To this end we let $J_+|_{H}$ be the restriction of the $G$-action on $J_+$ to a subgroup $H$ of $G$.

\begin{defn}\label{strongly} Let $\mathscr{C}$ be a $G$-model category and let $J$ be a finite $G$-set. We say that a $J_+|_H$-cube $X\in \mathscr{C}^{\mathcal{P}(J_+|_{H})}_a$ is $G$-strongly cocartesian if, for every subset $S\subset J_+$ which does not belong to $St(J_+)$, the restriction $X|_{\mathcal{P}(S)}$ is a cocartesian $S$-cube.
\end{defn}
The restriction $X|_{\mathcal{P}(S)}$ is an $S$-cube, where $S$ has an action of the stabilizer group $H_S$ of $S$ in $\mathcal{P}(J_+|_{H})$. Definition \ref{strongly} requires that the map $\hocolim_{\mathcal{P}_1(S)}X\to X_S$
is an equivalence in $\mathscr{C}^{H_S}$ for every $S$ outside $St(J_+)$.
\begin{rem}\label{rem:stronglycocart}\
\begin{enumerate}
\item If $J$ is the set $\underline{n}=\{1,\dots,n\}$ with $n$-elements and the trivial $G$-action, the category $St(\underline{n}_+)$ consists of the initial maps $\emptyset\to \{j\}$, for $j\in \underline{n}_+$. The complement of $St(\underline{n}_+)$ in $\mathcal{P}(\underline{n}_+)$ consists of the subsets which contain at least two elements. Thus the $G$-strongly cocartesian $\underline{n}_+|_H$-cubes in the sense of Definition \ref{strongly} are the same as the strongly cocartesian $(n+1)$-cubes in Goodwillie's sense (\cite[2.1]{calcII}), in the category $\mathscr{C}^H$ of $H$-objects in $\mathscr{C}$,
\item If $J=T$ is a transitive $G$-set, the $G$-strongly cocartesian $T_+|_H$-cubes are just the cocartesian $T_+|_H$-cubes,
\item If $G$ acts non-trivially on $J$, the property of being strongly cocartesian is not preserved by the restriction of the group action to a subgroup. For example, a $J_+|_H$-cocartesian cube which is $G$-strongly cocartesian does not need to be $H$-strongly cocartesian.
This is because the $G$-orbit decomposition of $J$ and the $H$-orbit decomposition of $J|_H$ are usually different. In particular if $J$ has $n$-elements, a $G$-strongly cocartesian  $J_+$-cube does not need to be strongly cocartesian as an $(n+1)$-cube.
\item There is an unbased version of strong cocartesianity, for cubes indexed on an unbased $G$-set $J$. These are the $J$-cubes for which the restrictions $X|_{\mathcal{P}(S)}$ are cocartesian, for every subset $S$ of $J$ outside the subcategory $\bigcup_{o\in J/G}\mathcal{P}_1(o)$ of $\mathcal{P}(J)$. This definition does not extend Goodwillie's definition of strongly cocartesian cube, since for $J=\underline{n}$ these are the $n$-cubes in which all the maps are equivalences. These unbased strongly cocartesian cubes will however lead to a certain notion of ``reduced $J$-excision''. We will compare it to the actual notion of $J$-excision in \ref{unbased}.
\end{enumerate}
\end{rem}

We construct a family of ``generating'' $G$-strongly cocartesian $J_+$-cube. These will be used in section \ref{sec:ecxapprox} to define $J$-excisive approximations to homotopy functors. The standard example of a strongly cocartesian $(n+1)$-cube in $\mathscr{C}$ from \cite{calcIII} is the functor that sends a subset $U$ of $\underline{n+1}$ to the join $c\star U$, for a fixed object $c$ in $\mathscr{C}$. We extend this construction to finite $G$-sets, by gluing together iterated cones, whose dimensions depend on the size of the $G$-orbits of $J$.

\begin{defn}
The star category of a subset $U\subset J_+$ is the subposet of $\mathcal{P}(J_+)$ defined by any of the following equivalent expressions
\[St(U):=St(J_+)/U=St(J_+)\cap\mathcal{P}(U)=\bigcup_{o\in J/G}\mathcal{P}(o_+\cap U)\backslash o_+.\]
\end{defn}

Let $H$ be a subgroup of $G$, and let $c$ be an object of $\mathscr{C}^H$. For every subset $U$ of $J_+$ we define the $U$-star diagram of $c$ to be the functor $St^U(c)\colon St(U)\rightarrow \mathscr{C}$ that sends the empty set to $c$, and every other subset to the terminal object $\ast\in \mathscr{C}$. Define a cube $\Lambda^J(c)\colon \mathcal{P}(J_+|_H)\rightarrow \mathscr{C}$ with vertices
\[\Lambda^J(c)_{U}=\hocolim_{St(U)}St^U(c).\]
The map in the cube $\Lambda^J(c)$ corresponding to an inclusion $U\subset V$ of subsets of $J_+$ is the canonical map
\[\hocolim_{St(U)}St^U(c)=\hocolim_{St(U)}(St^V(c))|_{St(U)}\longrightarrow \hocolim_{St(V)}St^V(c)\]
induced by the associated inclusion of posets $St(U)\subset St(V)$.
The cube $\Lambda^J(c)$ has a canonical $H$-structure, defined as follows.
Every element $g$ of $G$ defines a functor $g\colon St(U)\rightarrow St(gU)$ by restricting the automorphism $g\colon\mathcal{P}(J_+)\to\mathcal{P}(J_+)$. The $H$-action on $c$ induces natural transformations
\[\xymatrix@R=15pt{St(U)\ar[dr]_{h}\ar[rr]^-{St^U(c)}& \ar@{}@<-2ex>[d]^-*+={\Downarrow \! \gamma_h}& \mathscr{C}\\
&St(hU)\ar[ur]_{St^{hU}(c)}
}\]
The $H$-structure is defined by the maps
\[h\colon \Lambda^J(c)_{U}=\hocolim_{St(U)}St^{U}(c)\stackrel{(\gamma_h)_\ast}{\longrightarrow}\hocolim_{St(U)}(St^{hU}(c)\circ h)\longrightarrow \hocolim_{St(hU)}St^{hU}(c)=\Lambda^J(c)_{hU}\]
where the second map is the canonical map induced by the functor $h\colon St(U)\rightarrow St(hU)$.

\begin{ex}\label{spelloutlambda}\
\begin{enumerate}
\item If $J=\underline{n}$ has the trivial $G$-action and $U$ is a subset of $\underline{n}_+$, the $U$-star diagram of $c$ is
\[St^U(c)=\vcenter{\xymatrix@=10pt{
\ast&\dots &\ast\\
\ast&c\ar[l]\ar[ul]\ar[ur]\ar[r]\ar[dr]\ar[d]\ar[dl]\ar[l]
&\ast\\
\ast&\ast&\ast\\
}}\]
with one ray for each element of $U$. The homotopy colimit of this diagram is the join $c\star U$, and $\Lambda^{\underline{n}}(c)$ is the diagram $\Lambda^{\underline{n}}_U(c)=c\star U$ used in \cite{calcIII} to define the $n$-excisive approximations,
\item If $J=T$ is a transitive $G$-set, the poset $St(U)$ is the cube $\mathcal{P}(U)$ if $U$ is a proper subset of $T_+$, and it is the punctured cube $\mathcal{P}_1(T_+)$ if $U=T_+$. The diagram $\Lambda^{T}(c)$ has vertices
\[\Lambda^{T}_U(c)=\left\{\begin{array}{ll}\displaystyle
\hocolim_{\mathcal{P}(U)}St^U(c)=C^U(c)&,\mbox{ if $U$ is a proper subset of $T_+$}\\
\displaystyle
\hocolim_{\mathcal{P}_1(T_+)}St^T(c)=\Sigma^{T}c &,\mbox{ if $U=T_+$}
\end{array}\right.
\]
Here $C^U(c)$ denotes the $U$-fold cone on $c$ (this is in fact our definition of $U$-fold cone), and $\Sigma^{T}c$ is the suspension by the permutation representation of $T$ (see \ref{loopsusp}). Since the $U$-fold cone is $G_U$-contractible when $U$ is not empty ($\mathcal{P}(U)$ has a $G_U$-invariant final object, see \cite[2.25]{Gdiags}), the restriction of $\Lambda^{T}_U(c)$ to $\mathcal{P}_0(T_+)$ is equivalent to the diagram $\omega^T(\Sigma^Tc)$ of \ref{loopsusp}, whose homotopy limit is $\Omega^T\Sigma^Tc$.

\item Suppose that $G=\mathbb{Z}/2=\{0,1\}$ and take $J=\underline{3}\times\mathbb{Z}/2$ the disjoint union of three copies of $\mathbb{Z}/2$ with the diagonal action.
The star categories of the subsets $\underline{3}\times\mathbb{Z}/2$ and $(\mathbb{Z}/2\amalg 0\amalg 0)_+$ are respectively
\vspace{-.5cm}
\[St(\underline{3}\times\mathbb{Z}/2)=\vcenter{\hbox{\xymatrix@R=10pt@C=10pt{
&&\mathbb{Z}/2\\
&0\ar[ur]&&1\ar[ul]\\
&0\ar[dl]&\emptyset\ar[dl]\ar[dr]\ar[l]\ar[r]\ar[ul]\ar[ur]&0
\ar[dr]\\
\mathbb{Z}/2&1\ar[l]&&1\ar[r]&\mathbb{Z}/2
}}}\ \ \ \ 
St((\mathbb{Z}/2\amalg 0\amalg 0)_+)=\vcenter{\hbox{\xymatrix@=4pt{
\mathbb{Z}/2\\
& 1\ar[ul]\ar[rr]&&+1\\
0\ar[uu]\ar[rr]&&+0\\
&\emptyset\ar[ul]\ar[uu]\ar[rr]\ar[dl]\ar[ddr]&&+\ar[uu]\ar[dl]
\ar[ul]
\ar[ddr]\\
0\ar[rr]&&+0\\
&&0\ar[rr]&&+0
}}}
\]
\vspace{-.5cm}
\item In general, the $U$-vertex of $\Lambda^J(c)$ is a gluing of iterated cones or suspensions of $c$. Let us enumerate the set of orbits $J/G=\{o_1,\dots,o_n\}$, and let us define $U_{o}=U\cap o_+$. The homotopy type of $\Lambda^J(c)$ is
\[\Lambda^{J}_U(c)\cong\left\{\!\!\begin{array}{ll}
C^{U_{o_1}}(c)\amalg_c\dots \amalg_cC^{U_{o_n}}(c)\simeq c\star\{o\ |\ U_o\neq\emptyset\} & \mbox{if $+\notin U$}\\
\\
C^{U_{o_1}}(c)\amalg_{C^{+}(c)}\dots \amalg_{C^{+}(c)}C^{U_{o_n}}(c)\simeq \ast
& \mbox{if $+\in U$ and $o_+\not\subset U$}\\
&\mbox{for all $o\in J/G$}\\
\\
(\coprod\limits_{C^+(c)}^{\{o|o_+\subset U\}}\Sigma^{o}c)\amalg_{C^+(c)}(\coprod\limits_{C^+(c)}^{\{o|o_+\not\subset U\}}C^{U_o}(c))\simeq\!\! \coprod\limits_{C^+(c)}^{\{o|o_+\subset U\}}\Sigma^{o}c&\mbox{otherwise}
\end{array}\right.\]
where the coproduct superscript denotes the number of times that we are taking the union over $C^+(c)$.
\end{enumerate}
\end{ex}

\begin{prop}\label{Lambdastronglycocart}
For every subgroup $H$ of $G$, every finite $G$-set $J$ and every $H$-object $c\in \mathscr{C}^H$, the $J_+|_H$-cube $\Lambda^J(c)$ is $G$-strongly cocartesian.
\end{prop}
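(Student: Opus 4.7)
My plan is to identify $\Lambda^J(c)$ as an equivariant homotopy left Kan extension, and then verify the cocartesian condition at each $S\notin St(J_+)$ via an equivariant Fubini-and-cofinality argument. First, I observe that $\Lambda^J(c)$ is the pointwise equivariant homotopy left Kan extension of the functor $St^{J_+}(c)\colon St(J_+)\to\mathscr{C}$ along the $H$-equivariant inclusion $\iota\colon St(J_+)\hookrightarrow \mathcal{P}(J_+|_H)$: the defining formula $\Lambda^J(c)_U=\hocolim_{St(U)}St^U(c)$ coincides with the pointwise Kan extension formula $\hocolim_{\iota/U}\iota^{\ast} St^{J_+}(c)$, because $\iota/U=St(U)$ and both star diagrams agree on $St(U)$ with the common functor $F$ sending $\emptyset\mapsto c$ and all other subsets to the terminal object. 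The $H$-structure described in the excerpt is the one inherited from this presentation.

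Fix now $S\subseteq J_+$ with $S\notin St(J_+)$, and let $H_S\leq H$ denote its stabilizer. I must show that
\[\alpha_S\colon\hocolim_{U\in\mathcal{P}_1(S)}\Lambda^J(c)_U\longrightarrow \Lambda^J(c)_S\]
is an equivalence in $\mathscr{C}^{H_S}$. Unfolding the definition and applying an equivariant Fubini theorem for homotopy colimits, $\alpha_S$ identifies with the map on $\hocolim F$ induced by the projection $\pi\colon\mathcal{G}\to St(S)$, $(U,T)\mapsto T$, where $\mathcal{G}$ is the $H_S$-equivariant Grothendieck construction of the functor $U\mapsto St(U)$ on $\mathcal{P}_1(S)$. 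It therefore suffices to show that $\pi$ is equivariantly cofinal.

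The cofinality check is the substance of the argument. For each $T\in St(S)$ with stabilizer $H_T\leq H_S$, the undercategory $T\downarrow\pi=\{(U,T')\in\mathcal{G}:T\subseteq T',\ T'\in St(U)\}$ admits $(T,T)$ as an $H_T$-fixed initial object: it is initial by the usual poset argument, and $H_T$-fixed since both coordinates are $T$. The only point requiring the hypothesis is checking $(T,T)\in\mathcal{G}$, equivalently $T\in\mathcal{P}_1(S)$, i.e.\ $T\subsetneq S$; this is forced by $S\notin St(J_+)$, since any $T\in St(S)\subseteq St(J_+)$ must be distinct from $S$. Invoking the equivariant version of Quillen's Theorem A for homotopy colimits from \cite{Gdiags} then concludes. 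I expect the principal point of care to be bookkeeping: tracking the equivariance data through Fubini and cofinality with the correct stabilizer groups at each stage.
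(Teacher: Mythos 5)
Your proof is correct and takes essentially the same route as the paper: both factor $\alpha_S$ through a homotopy colimit over the Grothendieck construction $\mathcal{P}_1(S)\wr St$ via equivariant Fubini, and then prove $\pi$ (the paper's $p$) is equivariantly cofinal by exhibiting $(T,T)$ as an invariant initial object of each undercategory. Your justification that $(T,T)$ lands in $\mathcal{G}$ — namely $T\in St(S)=St(J_+)\cap\mathcal{P}(S)\subseteq St(J_+)$ while $S\notin St(J_+)$, so $T\subsetneq S$ — is a slicker packaging of the paper's case analysis on how $S$ fails to be in $St(J_+)$, but it is not a genuinely different argument.
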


\begin{proof}
Let $S$ be a subset of $J_+$ that does not belong to $St(J_+)$. We need to show that the canonical map
\[\hocolim_{U\in \mathcal{P}_1(S)}\hocolim_{St(U)}St^U(c)\longrightarrow \hocolim_{St(S)}St^{S}(c)\]
is an equivalence in $\mathscr{C}^{H_S}$. Let $\mathcal{P}_1(S)\wr St$ be the Grothendieck construction of the $G_S$-diagram of categories $St(-)\colon \mathcal{P}_1(S)\rightarrow Cat$, with $G_S$-structure defined by the functors $g\colon St(U)\rightarrow St(gU)$. The category $\mathcal{P}_1(S)\wr St$ inherits an evident $G_S$-action, and the diagrams $St^U(c)$ for $U\in \mathcal{P}_1(S)$ assemble into an $H_S$-diagram
\[St(c)\colon \mathcal{P}_1(S)\wr St\longrightarrow \mathscr{C}\]
that sends an object $(U\in\mathcal{P}_1(S),V\in St(U))$ to $c$ if $V$ is empty, and to the terminal object otherwise. The map above factors as
\[\xymatrix{\displaystyle\hocolim_{U\in \mathcal{P}_1(S)}\hocolim_{St(U)}St^U(c)\ar[dr]_-{\simeq}\ar[r]&\displaystyle
\hocolim_{St(S)}St^{S}(c)\\
&\displaystyle \hocolim_{\mathcal{P}_1(S)\wr St}St(c)\ar[u]_-{p_\ast}
}\]
where the diagonal map is an equivalence in $\mathscr{C}^{H_S}$ by the equivariant Fubini Theorem \cite[2.26]{Gdiags}. The vertical map is induced by the $G_S$-equivariant projection $p\colon \mathcal{P}_1(S)\wr St\rightarrow St(S)$ which sends a pair $(U\in \mathcal{P}_1(S),V\in St(U))$ to $V$ seen as an object of $St(S)$ through the inclusion $St(U)\subset St(S)$. The map $p_\ast$ is well defined, because $St(c)\circ p= St^{S}(c)$ as functors $St(S)\rightarrow \mathscr{C}$. By the equivariant cofinality Theorem of \cite[2.25]{Gdiags}, the map $p_\ast$ is an $H_S$-equivalence if we can show that the under categories $V/ p$ are $H_V$-contractible for every object $V\in St(S)$. The category $V/p$ is the full subcategory of $\mathcal{P}_1(S)\wr St$ of objects 
\[V/ p=\{(U\in \mathcal{P}_1(S),W\in St(U)) \ |\ V\subset W \}.\]
We claim that the pair $(V,V\in St(V))$ is initial in $V/ p$. The only thing that could go wrong is that $V$ is not a proper subset of $S$. But since $V$ belongs to $St(S)$, it is a subset of $o_+\cap S$ properly included in $o_+$, for some $G$-orbit $o$ of $J$. Since $S$ does not belong to $St(J_+)$ we have two possibilities. The first possibility is that $S=o'_+$ for some orbit $o'\in J/G$. In this case either $o=o'$ and $V$ is clearly proper in $S$, or $o\neq o'$ and $V$ is a subset of $\{+\}$, which is a proper subset of $S$. The second possibility is that $S$ intersects two distinct $o_+$ and $o'_+$. Since $V$ is included in $o_+$ it must be a proper subset of $S$.
This shows that the pair $(V,V\in St(V))$ is a well defined initial object of $V/p$. Moreover it is $H_V$-invariant, and it defines an initial object in every fixed categories $(V/ p)^K$ for subgroups $K\leq H_V$. This shows that $V/p$ is $H_V$-contractible.
\end{proof}

We end this section by reformulating the definition of $G$-strongly cocartesian cubes in terms of faces. The next results are analogous to \cite[1.7]{calcII} and \cite[1.20-1.24]{calcII}. The techniques we use for proving these results are different than those of \cite{calcII}. This is because the $G$-actions prevent us from using induction on the dimension of the cubes.

Let $I$ and $J$ be finite $G$-sets, and let $X\in \mathscr{C}^{\mathcal{P}(I\amalg J)}_a$ be an $(I\amalg J)$-cube in $\mathscr{C}$. Through the equivariant isomorphisms of categories $\mathcal{P}(I\amalg J)\cong \mathcal{P}(I)\times \mathcal{P}(J)$, the cube $X$ defines a cube of cubes $\widetilde{X}\colon \mathcal{P}(J)\rightarrow \mathscr{C}^{\mathcal{P}(I)}$ with vertices $\widetilde{X}_U(T)=X(U\cup T)$. We need a bit of care with the group actions, since $\widetilde{X}$ is not a $J$-cube of $I$-cubes. For this one would need natural transformations $\widetilde{X}_U\rightarrow \widetilde{X}_{gU}$, that is maps $X(U\cup T)\rightarrow X((gU)\cup T)$, but the $G$-structure provides us with maps $X(U\cup T)\rightarrow X(gU\cup gT)$. What the $G$-structure on $X$ gives $\widetilde{X}$ is, for every $U\subset J$, a $G_U$-structure on $\widetilde{X}_U$.

\begin{prop}\label{cubeofcartcubes} Let $I$ and $J$ be finite $G$-sets, and let $X\in \mathscr{C}^{\mathcal{P}(I\amalg J)}_a$ be an $(I\amalg J)$-cube in a $G$-model category $\mathscr{C}$.
\begin{enumerate}
\item Suppose that for every non-empty subset $U$ of $J$ the cube $\widetilde{X}_U\in \mathscr{C}^{\mathcal{P}(I|_{G_U})}_a$ is cartesian. Then $X\in \mathscr{C}^{\mathcal{P}(I\amalg J)}_a$ is cartesian if and only if $\widetilde{X}_\emptyset=X|_{\mathcal{P}(I)}\in \mathscr{C}^{\mathcal{P}(I)}_a$ is cartesian.
\item Dually, if $\widetilde{X}_U\in \mathscr{C}^{\mathcal{P}(I|_{G_U})}_a$ is cocartesian for every proper subset $U$ of $J$, then $X\in \mathscr{C}^{\mathcal{P}(I\amalg J)}_a$ is cocartesian if and only if $\widetilde{X}_J\in \mathscr{C}^{\mathcal{P}(I)}_a$ is cocartesian.
\end{enumerate}
\end{prop}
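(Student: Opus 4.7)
The plan for part (i) is to use iterated total fibers to reduce the cartesianness of the $(I\amalg J)$-cube $X$ to that of its $I$-slice $X|_{\mathcal{P}(I)}$; part (ii) will then follow by the formally dual argument using iterated total cofibers. Recall that an $L$-cube $Y\in\mathscr{C}^{\mathcal{P}(L)}_{a}$ is cartesian in $\mathscr{C}^G$ if and only if its total fiber $\operatorname{tfib}_L(Y):=\hofib\bigl(Y_\emptyset\to\holim_{\mathcal{P}_0(L)}Y\bigr)$ is $G$-contractible, so the claim will reduce to identifying these total fibers.

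The first step is to establish the equivariant iterated total-fiber identification
$$\operatorname{tfib}_{I\amalg J}(X)\simeq \operatorname{tfib}_{J}\bigl(U\mapsto \operatorname{tfib}_{I|_{G_U}}(\widetilde{X}_U)\bigr)$$
in $\mathscr{C}^G$, where the right-hand side is the total fiber of the $J$-cube whose $U$-vertex is the $G_U$-equivariant total fiber of $\widetilde{X}_U$, and whose $G$-structure maps are induced from those of $X$ via the maps $X_{U\cup T}\to X_{gU\cup gT}$. I would obtain this from an equivariant Fubini manipulation of the homotopy limit along the factorization $\mathcal{P}(I\amalg J)\cong\mathcal{P}(I)\times\mathcal{P}(J)$, splitting the punctured product $\mathcal{P}_0(I\amalg J)$ into its $U=\emptyset$ stratum $\mathcal{P}_0(I)$ and its $U\neq\emptyset$ stratum $\mathcal{P}_0(J)\times\mathcal{P}(I)$. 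The hypothesis then enters: for every non-empty $U\subset J$, cartesianness of $\widetilde{X}_U$ gives $\operatorname{tfib}_{I|_{G_U}}(\widetilde{X}_U)\simeq\ast$ in $\mathscr{C}^{G_U}$, so every non-initial vertex of the outer $J$-cube is equivariantly contractible. Its punctured homotopy limit is therefore $G$-contractible, and the total fiber collapses to its initial vertex $\operatorname{tfib}_{I}(\widetilde{X}_\emptyset)=\operatorname{tfib}_I(X|_{\mathcal{P}(I)})$. Combining the two reductions yields $\operatorname{tfib}_{I\amalg J}(X)\simeq \operatorname{tfib}_I(X|_{\mathcal{P}(I)})$ in $\mathscr{C}^G$, which is precisely the biconditional of part (i).

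The main obstacle is the first step: although iterated total-fiber decompositions are routine non-equivariantly, here one must check that the identification is natural with respect to the full $G$-structure, despite the fact that the vertices of the outer $J$-cube carry only partial actions by the stabilizers $G_U$. The cleanest approach is to reinterpret everything as genuine homotopy limits over the Grothendieck construction $G\ltimes_a\mathcal{P}(I\amalg J)$ and to invoke the equivariant Fubini and cofinality theorems \cite[2.25, 2.26]{Gdiags}, combined with the fact that taking a homotopy fiber commutes with homotopy limits of the same shape. For part (ii) the same strategy applies verbatim after dualizing: replace $\operatorname{tfib}$ by total cofiber, $\holim$ by $\hocolim$, and the initial vertex $\emptyset$ by the terminal vertex $J$.
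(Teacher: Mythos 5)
Your plan has a genuine gap at the outset, before the Fubini manipulation is even reached. You assert that an $L$-cube $Y$ is cartesian if and only if its total fiber $\operatorname{tfib}_L(Y)=\hofib\bigl(Y_\emptyset\to\holim_{\mathcal{P}_0(L)}Y\bigr)$ is contractible, and the whole argument (and its dualization for part (ii)) hinges on this. That equivalence is false in a general $G$-model category. Proposition \ref{cubeofcartcubes} is stated and used for cubes in $\Top_\ast$, $sSet_\ast$, and arbitrary $G$-model categories $\mathscr{C}$; in these settings a map can have contractible homotopy fiber without being an equivalence (non-surjectivity on $\pi_0$, or $\pi_1$-action on fibers, already break it for pointed spaces), and for an unpointed $\mathscr{C}$ the total fiber is not even defined. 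So even after you deduce $\operatorname{tfib}_{I\amalg J}(X)\simeq\operatorname{tfib}_I(X|_{\mathcal{P}(I)})$, that does not give the biconditional ``$X$ cartesian $\Leftrightarrow$ $X|_{\mathcal{P}(I)}$ cartesian,'' because contractibility of the total fiber is strictly weaker than the map $X_\emptyset\to\holim_{\mathcal{P}_0}$ being an equivalence. The dualization for (ii) fails for the same reason with total cofibers.

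The underlying Fubini/stratification idea is sound, but it must be implemented at the level of the punctured homotopy limits themselves, not of their total fibers. Concretely, one covers $\mathcal{P}_0(I\amalg J)\cong\bigl(\mathcal{P}(I)\times\mathcal{P}(J)\bigr)\setminus\{(\emptyset,\emptyset)\}$ by the two $G$-stable subposets $\mathcal{P}_0(I)\times\mathcal{P}(J)$ and $\mathcal{P}(I)\times\mathcal{P}_0(J)$; by the equivariant Covering Lemma \ref{coverings}, $\holim_{\mathcal{P}_0(I\amalg J)}X$ is the homotopy pullback of
\[\holim_{\mathcal{P}_0(I)}X|_{\mathcal{P}(I)}\longrightarrow \holim_{U\in\mathcal{P}_0(J)}\holim_{\mathcal{P}_0(I)}\widetilde{X}_U\longleftarrow \holim_{U\in\mathcal{P}_0(J)}X_U,\]
where the left and right terms simplify because each of $\mathcal{P}(J)$ and $\mathcal{P}(I)$ has a $G$-invariant initial object. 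The hypothesis that each $\widetilde{X}_U$ with $U\neq\emptyset$ is cartesian makes the right-hand leg a levelwise equivalence, so the opposite leg is an equivalence, and one obtains $\holim_{\mathcal{P}_0(I\amalg J)}X\simeq\holim_{\mathcal{P}_0(I)}X|_{\mathcal{P}(I)}$ compatibly with the maps from $X_\emptyset$; 2-out-of-3 then gives (i). The paper proves (ii) by the dual strategy but with an extra maneuver: for homotopy colimits the analogous covering statement about $\mathcal{P}_1(I_+)\times\mathcal{P}_1(J_+)\to\mathcal{P}_1(I_+\amalg J)$ requires a basepoint in one factor, so it first treats $I_+$ and then removes the basepoint by extending cubes over the empty initial vertex. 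Your approach, as written, skips both the holim-level comparison and this basepoint subtlety.
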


\begin{proof}
We prove the statement about cocartesian cubes. Let us first prove it in the case where $I$ has a basepoint fixed by $G$. That is, let us replace $I$ in the statement by a $G$-set of the form $I_+$.
The functor $F\colon \mathcal{P}_{1}(I_+)\times\mathcal{P}_{1}(J_+)\rightarrow \mathcal{P}_{1}(I_+\amalg J)$ that sends a pair of proper subsets to their union inside $I_+\amalg J$ induces an equivalence in $\mathscr{C}^G$
\[F_\ast\colon \hocolim_{\mathcal{P}_{1}(I_+)\times\mathcal{P}_{1}(J_+)}F^\ast X\stackrel{\simeq}{\longrightarrow}\hocolim_{\mathcal{P}_{1}(I_+\amalg J)}X.\]
This is proved in \cite{Gdiags}, and it strongly relies on the presence of a base-point. Now consider the commutative diagram
\[\xymatrix{\displaystyle\hocolim_{\mathcal{P}_1(I_+)}\widetilde{X}_J\ar[d]_{\alpha}\ar[rr]&&X(I_+\amalg J)\\
\displaystyle\hocolim_{U\in\mathcal{P}_1(J_+)}\hocolim_{\mathcal{P}_1(I_+)}\widetilde{X}_U
\ar[r]_{\simeq}&\displaystyle\hocolim_{\mathcal{P}_1(I_+)\times\mathcal{P}_1(J_+)}F^\ast X\ar[r]^-{F_\ast}_-\simeq&\displaystyle\hocolim_{\mathcal{P}_1(I_+\amalg J)}X\ar[u]
}.\]
The lower left horizontal map is an equivalence by the Fubini Theorem \cite[2.26]{Gdiags}. The right vertical map is an equivalence in $\mathscr{C}^G$ precisely when $X$ is cocartesian, and the top horizontal map when $\widetilde{X}_J$ is cocartesian. Therefore the result follows if we can show that the left vertical map $\alpha$ is an equivalence in $\mathscr{C}^G$. We claim that for every proper subset $U\subset J_+$ different from $J$ there is a natural equivalence in $\mathscr{C}^{G_U}$
\[\hocolim_{\mathcal{P}_{1}(I_+)}\widetilde{X}_U\stackrel{\simeq}{\longrightarrow} X(U\cup I_+).\]
When $U$ is a proper subset of $J$ this is because $\widetilde{X}_U$ is assumed to be cocartesian, and if $U$ contains the basepoint $+$ because the maps $\widetilde{X}_U(V)\rightarrow \widetilde{X}_U(V_+)$ are all identities (see \cite[3.30]{Gdiags}). Therefore $\alpha$ factors as
\[\xymatrix{\displaystyle\hocolim_{\mathcal{P}_{1}(I_+)}\widetilde{X}_J\ar[r]^-{\alpha}\ar[dr]_-{\beta}& \displaystyle\hocolim_{U\in\mathcal{P}_{1}(J_+)}\hocolim_{\mathcal{P}_{1}(I_+)}\widetilde{X}_U\ar[d]^{\simeq}\\
&\displaystyle\hocolim_{U\in\mathcal{P}_{1}(J_+)}Z_U
}\]
where $Z\colon \mathcal{P}_1(J_+)\rightarrow \mathscr{C}$ has vertices $Z_U=X(I_+\cup U)$ for $U\neq J$ and $Z_J=\hocolim_{\mathcal{P}_1(I_+)}\widetilde{X}_J$. We show that $\beta$ is an equivalence. For every proper subset $U\subset J$ the map $Z_U\rightarrow Z_{U_+}$ is the identity map, and this gives a $G_U$-equivariant isomorphism $\hocolim_{U\in\mathcal{P}(J)}Z_U\cong\hocolim_{U\in\mathcal{P}_{1}(J_+)}Z_U$. Therefore the map
\[\beta\colon \hocolim_{\mathcal{P}_{1}(I_+)}\widetilde{X}_J=Z_J\longrightarrow \hocolim_{U\in\mathcal{P}(J)}Z_U\cong\hocolim_{U\in\mathcal{P}_{1}(J_+)}Z_U\]
is an equivalence by cofinality, because $J$ is a $G$-invariant final object of $\mathcal{P}(J)$.

Let us finally consider the case where $I$ does not necessarily have a base-point. For a general finite $G$-set $K$, a $K$-cube $Y\in \mathscr{C}^{\mathcal{P}(K)}_a$ can be extended to a $K_+$-cube $Y^+$, by sending the subsets of $K_+$ that do not contain $+$ to the initial object in $\mathscr{C}$, and the subsets of the form $W_+$ for a $W\subset K$ to $Y_W$. This procedure preserves homotopy colimits, in the sense that there is a natural $G$-equivalence
\[\hocolim_{\mathcal{P}_{1}(K_+)}Y^+\stackrel{\simeq}{\longrightarrow}\hocolim_{\mathcal{P}_{1}(K)}Y.\]
Thus $Y$ is cocartesian if and only if $Y^+$ is cocartesian. Now let $X\in \mathscr{C}^{\mathcal{P}(I\amalg J)}_a$ be an $(I\amalg J)$-cube with the property that for every proper subset $U$ of $J$, the $I|_{G_U}$-cube $\widetilde{X}_U$ is cocartesian. Notice that by definition $(\widetilde{X}^{+})_U=(\widetilde{X}_U)^{+}$ as $I_+|_{G_U}$-cubes. Therefore the cubes $(\widetilde{X}^{+})_U$ are also cocartesian for proper subsets $U$ of $J$. By the pointed argument above, $(\widetilde{X}^{+})_J$ is cocartesian if an only if $X^{+}$ is. Since our extension preserves colimits, this is equivalent to the statement that $\widetilde{X}_J$ is cocartesian if an only if $X$ is.
\end{proof}

\begin{prop}\label{facesstronglycocart}
Let $I$ and $J$ be finite $G$-sets, let $H$ be a subgroup of $G$ and let $X\in \mathscr{C}^{\mathcal{P}(I_+\amalg J|_H)}_a$ be a $G$-strongly cocartesian $(I_+\amalg J)|_H$-cube. Then for every subset $U$ of $J$, the $(I_+)|_{H_U}$-cube $\widetilde{X}_U=X_{U\cup(-)}$ is $G$-strongly cocartesian.
\end{prop}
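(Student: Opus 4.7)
The plan is induction on $|U|$, with Proposition \ref{cubeofcartcubes}(ii) as the main tool. The setup is tailor-made for it: for every $T\subset I_+$ not in $St(I_+)$, the subcube $X|_{\mathcal{P}(U\cup T)}$ naturally splits via $\mathcal{P}(U\cup T)\cong \mathcal{P}(T)\times\mathcal{P}(U)$ as a $U$-cube of $T$-cubes, and \ref{cubeofcartcubes}(ii) extracts cocartesianness of the vertex-face $\widetilde{X}_U|_{\mathcal{P}(T)}$ from cocartesianness of the total cube together with all faces at proper $W\subsetneq U$.

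For the base case $U=\emptyset$, one must show that $X|_{\mathcal{P}(T)}$ is cocartesian whenever $T\subset I_+$ with $T\notin St(I_+)$. The key combinatorial observation is that $St(I_+\amalg J)\cap\mathcal{P}(I_+)=St(I_+)$: indeed, for a subset $T\subset I_+$ to be a proper subset of $o_+$ where $o$ is an orbit of $J$, one would need $T\subset o_+\cap I_+=\{+\}$, but both $\emptyset$ and $\{+\}$ already belong to $St(I_+)$. Hence $T\notin St(I_+\amalg J)$, and the conclusion is immediate from $G$-strong cocartesianness of $X$.

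For the inductive step, fix $T\notin St(I_+)$ and apply \ref{cubeofcartcubes}(ii) to $X|_{\mathcal{P}(U\cup T)}$, viewed as a $(T\amalg U)|_{H_{U\cup T}}$-cube. Cocartesianness of the proper $W$-faces $\widetilde{X}_W|_{\mathcal{P}(T)}$ for $W\subsetneq U$ is exactly the inductive hypothesis (which makes $\widetilde{X}_W$ a $G$-strongly cocartesian cube, so that $T\notin St(I_+)$ forces the required face to be cocartesian). Cocartesianness of the total cube $X|_{\mathcal{P}(U\cup T)}$ reduces to checking $U\cup T\notin St(I_+\amalg J)$: if the union lay in $o_+$ for some orbit $o$, the $I$ versus $J$ dichotomy forces $o\subset J$, $U\subset o$, and therefore $T\subset o_+\cap I_+=\{+\}$, contradicting $T\notin St(I_+)$. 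Proposition \ref{cubeofcartcubes}(ii) then delivers the desired cocartesianness of $\widetilde{X}_U|_{\mathcal{P}(T)}$, completing the induction.

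I do not expect a serious obstacle. The only delicate bookkeeping is that the inductive hypothesis gives cocartesianness in $\mathscr{C}^{H_W\cap H_T}$, while \ref{cubeofcartcubes}(ii) applied to the $H_{U\cup T}$-action wants it in the subgroup $\mathscr{C}^{H_{U\cup T}\cap H_W}=\mathscr{C}^{H_U\cap H_T\cap H_W}$, but forgetting along a subgroup inclusion preserves equivalences in a $G$-model category, so this is automatic. The whole argument is a matter of assembling pushout pasting through \ref{cubeofcartcubes}(ii) with the orbit combinatorics of $St$.
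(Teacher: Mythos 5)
Your proof is essentially identical to the paper's: both proceed by induction on $|U|$, pasting via Proposition \ref{cubeofcartcubes}(ii), with the only real work being the star-category bookkeeping (that $T\notin St(I_+)$ forces $T\notin St((I\amalg J)_+)$ and $U\cup T\notin St((I\amalg J)_+)$), which you spell out and the paper leaves implicit. The equivariance point at the end — that the inductive hypothesis provides cocartesianness over the a priori larger group $H_W\cap H_T\supseteq H_{U\cup T}\cap H_W$ and one restricts — is correct and worth noting, even though the paper passes over it silently.
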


\begin{proof}
Let $S$ be a subset of $I_+$ which is not in $St(I)$. We need to show that $(\widetilde{X}_U)|_{\mathcal{P}(S)}$ is cocartesian. We prove it by induction on the number of elements in $U$.

If $U$ is empty, $(\widetilde{X}_\emptyset)|_{\mathcal{P}(S)}=X|_{\mathcal{P}(S)}$ is cocartesian because $X$ is assumed to be $G$-strongly cocartesian.
Now suppose that $(\widetilde{X}_W)|_{\mathcal{P}(S)}$ is cocartesian for every subset $W$ of $J$ with $k$ elements, and let $U$ be a subset of $J$ of cardinality $k+1$. This means that $(\widetilde{X}_W)|_{\mathcal{P}(S)}$ is cocartesian for every proper subset $W$ of $U$. By \ref{cubeofcartcubes} the cube $(\widetilde{X}_U)|_{\mathcal{P}(S)}$ is cocartesian if and only if $X|_{\mathcal{P}(S\amalg U)}$ is cocartesian, and this is the case because $X$ is $G$-strongly cocartesian.
\end{proof}

The following is a reformulation of Definition \ref{strongly} in terms of the ``equivariant faces'' of a $J_+$-cube.

\begin{cor}\label{faces}
Let $J$ be a finite $G$-set and let $H$ be a subgroup of $G$. A $J_+|_H$-cube $X\in \mathscr{C}^{\mathcal{P}(J_+|_H)}_a$ is $G$-strongly cocartesian if and only if for every $G$-subset $K$ of $J$ and every subset $U$ of the complement $J\backslash K$, the $K_+|_{H_U}$-cube $\widetilde{X}_U$ is $G$-strongly cocartesian.
\end{cor}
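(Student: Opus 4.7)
The plan is to derive the corollary directly from Proposition \ref{facesstronglycocart}, using the $G$-equivariant decomposition of $J_+$ determined by $K$ for the forward direction, and a trivial specialization for the converse.

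For the forward implication, I would observe that any $G$-subset $K$ of $J$ yields a $G$-equivariant decomposition of pointed $G$-sets
\[J_+ = K_+ \amalg (J \setminus K),\]
with the basepoint lying in the first summand and the complement $J \setminus K$ forming a $G$-invariant unbased summand. Applying Proposition \ref{facesstronglycocart} with its ``$I$'' taken to be $K$ and its ``$J$'' taken to be $J \setminus K$, our $X$ becomes, in the notation of that proposition, a $G$-strongly cocartesian $(I_+ \amalg J)|_H$-cube, and its conclusion yields that $\widetilde{X}_U = X_{U \cup (-)}$ is a $G$-strongly cocartesian $K_+|_{H_U}$-cube for every subset $U$ of $J \setminus K$.

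For the converse, I would specialize the hypothesis to the case $K = J$, in which case $J \setminus K = \emptyset$ and the only admissible subset is $U = \emptyset$. Then $H_\emptyset = H$, $K_+ = J_+$, and $\widetilde{X}_\emptyset$ is just $X$ itself as a $J_+|_H$-cube, so the hypothesis specialized to this case asserts precisely that $X$ is $G$-strongly cocartesian.

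Since both implications reduce either to an instance of the preceding proposition or to a trivial specialization, no serious obstacle arises. The only small bookkeeping point is the identification of indexing categories $\mathcal{P}(J_+|_H) \cong \mathcal{P}(K_+|_H) \times \mathcal{P}((J \setminus K)|_H)$ as categories with $H$-action, which follows immediately from the $G$-equivariant disjoint union decomposition above and ensures that the $\widetilde{X}_U$ of the corollary coincides with the $\widetilde{X}_U$ of Proposition \ref{facesstronglycocart} under the chosen substitution.
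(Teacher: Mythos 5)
Your proof is correct and follows the same route as the paper: the forward direction applies Proposition \ref{facesstronglycocart} to the $G$-equivariant decomposition $J_+ = K_+ \amalg (J\setminus K)$, and the converse is the specialization $K = J$, $U = \emptyset$, where $\widetilde{X}_\emptyset$ recovers $X$ itself. The bookkeeping identification $\mathcal{P}(J_+|_H) \cong \mathcal{P}(K_+) \times \mathcal{P}(J\setminus K)$ that you note is indeed the only detail to check, and it holds precisely because $K$ is required to be a $G$-subset.
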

\begin{proof}
One implication is obvious, by choosing $J=K$ and $U=\emptyset$. The other implication is immediate from \ref{facesstronglycocart}, by the $G$-equivariant decomposition $J=K\amalg J\backslash K$.
\end{proof}

The following property of $G$-strongly cocartesian cubes will be helpful later on. It is analogous to the property that the value $X_U$ of a strongly cocartesian $(n+1)$-cube $X$ decomposes as the iterated homotopy pushout of the initial vertices $X_{\{u\}}$'s over $X_{\emptyset}$, for $u\in U$.

\begin{lemma}\label{unionintersectionthing}
Let $X\in \mathscr{C}^{\mathcal{P}(J_+|_H)}_a$ be $G$-strongly cocartesian. Then for every subset $U\in \mathcal{P}(J_+)$ the canonical map from the iterated homotopy pushout
\[X_{U\cap o^{1}_+}\coprod_{X_{U\cap +}}X_{U\cap o^{2}_+}\coprod_{X_{U\cap +}}\dots\coprod_{X_{U\cap +}}X_{U\cap o^{n}_+} \stackrel{\simeq}{\longrightarrow}X_U\]
is an $H_U$-equivalence, where $J/G=\{o^1,\dots, o^n\}$ is the orbits decomposition of $J$.
\end{lemma}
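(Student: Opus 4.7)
My approach will be to rewrite both sides of the claim as homotopy colimits over judiciously chosen subposets of $\mathcal{P}(U)$ and to compare them via a filtration argument. Set $\mathcal{Q}=\bigcup_{i=1}^{n}\mathcal{P}(U\cap o^{i}_+)\subset \mathcal{P}(U)$. The wedge-shaped subposet with center $U\cap\{+\}$ and peripheral objects $\{U\cap o^{i}_+\}_i$ is $H_U$-equivariantly cofinal in $\mathcal{Q}$: for every $V\in\mathcal{Q}$ the under-category is either the whole wedge (with initial vertex $U\cap\{+\}$, hence contractible) when $V\subset\{+\}$, or a singleton when $V$ has a non-basepoint element, which then lies in a unique $U\cap o^{i}_+$. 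Applying the equivariant cofinality theorem \cite[2.25]{Gdiags} identifies the iterated homotopy pushout on the left-hand side with $\hocolim_\mathcal{Q} X$ $H_U$-equivariantly. If $U$ meets at most one orbit of $J$ then $\mathcal{Q}=\mathcal{P}(U)$ and the claim is immediate, so I may assume $U$ intersects at least two orbits, whence $U\notin St(J_+)$ and the $G$-strong cocartesianness of $X$ furnishes an $H_U$-equivalence $X_U\simeq\hocolim_{\mathcal{P}_1(U)}X$. The task reduces to showing that the inclusion $\mathcal{Q}\hookrightarrow \mathcal{P}_1(U)$ induces an $H_U$-equivalence on homotopy colimits.

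The key observation is that any $V\in\mathcal{P}_1(U)\setminus \mathcal{Q}$ intersects at least two $G$-orbits of $J$ --- otherwise $V$ would be contained in some $U\cap o^{i}_+$ and so lie in $\mathcal{Q}$ --- and therefore $V\notin St(J_+)$, so that $X|_{\mathcal{P}(V)}$ is cocartesian. I will filter $\mathcal{P}_1(U)$ by the $H_U$-invariant subposets
\[\mathcal{Q}_k=\mathcal{Q}\cup\{V\in\mathcal{P}_1(U)\setminus\mathcal{Q}\,:\,|V|\leq k\},\]
starting at $\mathcal{Q}_1=\mathcal{Q}$ and terminating at $\mathcal{Q}_{|U|-1}=\mathcal{P}_1(U)$. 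At each step the new elements of size $k+1$ are pairwise incomparable and maximal in $\mathcal{Q}_{k+1}$ (no element of $\mathcal{Q}$ can contain them, being confined to some $U\cap o^{i}_+$, and every earlier adjoined element has strictly smaller size), and all their proper subsets lie in $\mathcal{Q}_k$. This presents $\mathcal{Q}_{k+1}$ as the $H_U$-equivariant poset pushout of $\mathcal{Q}_k$ and $\coprod_V\mathcal{P}(V)$ along $\coprod_V\mathcal{P}_1(V)$, the coproducts being indexed by the newly adjoined $V$'s.

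Taking homotopy colimits, the cocartesianness of each $X|_{\mathcal{P}(V)}$ makes the canonical map $\hocolim_{\mathcal{P}_1(V)}X\to X_V=\hocolim_{\mathcal{P}(V)}X$ an $H_V$-equivalence, so each adjunction step becomes a pushout along an equivalence and the homotopy colimit is preserved: $\hocolim_{\mathcal{Q}_k}X\stackrel{\simeq}{\to}\hocolim_{\mathcal{Q}_{k+1}}X$ is an $H_U$-equivalence. Iterating over the finite filtration then yields the desired $H_U$-equivalence $\hocolim_\mathcal{Q}X\simeq \hocolim_{\mathcal{P}_1(U)}X\simeq X_U$. I expect the main subtlety to lie in verifying that the $H_U$-equivariant poset pushout presentation of $\mathcal{Q}_{k+1}$ over $\mathcal{Q}_k$ translates correctly into a homotopy pushout of homotopy colimits, so that the equivalence $\hocolim_{\mathcal{P}_1(V)}X\simeq X_V$ can be invoked at each stage to conclude that the filtration has constant homotopy colimit; once this is established, the remaining induction is routine.
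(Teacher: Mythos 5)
Your proposal is correct, and it takes a genuinely different route from the paper's proof. The paper proceeds by induction on the cardinality of $U$: it applies $G$-strong cocartesianness at $U$ itself to write $X_U\simeq\hocolim_{\mathcal{P}_1(U)}X$, substitutes the Lemma's formula (the inductive hypothesis) for each vertex $X_V$, commutes the outer $\hocolim$ with the iterated pushout, and then identifies each of the resulting $U$-cubes $X_{(-)\cap o_+}$ and $X_{(-)\cap +}$ as cocartesian because their maps degenerate in some direction. You instead avoid the induction on $|U|$ entirely: after identifying the left-hand side with $\hocolim_{\mathcal{Q}}X$ (via the cofinality of the wedge $\{U\cap\{+\}\}\cup\{U\cap o^i_+\}_i$ in $\mathcal{Q}=\bigcup_i\mathcal{P}(U\cap o^i_+)$, which checks out: the under-category is either the whole wedge with $\{+\}$-invariant initial object, or an $(H_U)_V$-invariant singleton) you build a cell-by-cell filtration $\mathcal{Q}=\mathcal{Q}_1\subset\cdots\subset\mathcal{Q}_{|U|-1}=\mathcal{P}_1(U)$ and show each stage preserves the homotopy colimit. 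This is transparent about exactly which subsets use strong cocartesianness (the new cells $V$, each of which meets $\geq 2$ orbits and hence lies outside $St(J_+)$), and it requires no formula for the intermediate vertices --- only that the corresponding faces of $X$ are cocartesian.

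The step you flag as the ``main subtlety'' --- turning the attachment $\mathcal{Q}_k\subset\mathcal{Q}_{k+1}$ into an equivalence of homotopy colimits --- is precisely where the equivariant Covering Lemma (Proposition \ref{coverings}) does the work, and is worth spelling out. Cover $\mathcal{Q}_{k+1}$ by $\mathcal{Q}_k$ and $\mathcal{R}=\bigcup_V\mathcal{P}(V)$ (the union over the newly adjoined $V$'s); this is an $H_U$-equivariant cover with intersection $\mathcal{Q}_k\cap\mathcal{R}=\bigcup_V\mathcal{P}_1(V)$, so $\hocolim_{\mathcal{Q}_{k+1}}X$ is the homotopy pushout of $\hocolim_{\mathcal{Q}_k}X\leftarrow\hocolim_{\bigcup_V\mathcal{P}_1(V)}X\to\hocolim_{\mathcal{R}}X$. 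A second application of the Covering Lemma to the covers $\{\mathcal{P}(V)\}$ of $\mathcal{R}$ and $\{\mathcal{P}_1(V)\}$ of $\bigcup_V\mathcal{P}_1(V)$ (noting that for $|S|\geq 2$ the intersections $\bigcap_{V\in S}\mathcal{P}(V)=\mathcal{P}(\bigcap_{V\in S}V)=\bigcap_{V\in S}\mathcal{P}_1(V)$ agree, while for a singleton the comparison $\hocolim_{\mathcal{P}_1(V)}X\to X_V$ is an equivalence by cocartesianness) shows that $\hocolim_{\bigcup_V\mathcal{P}_1(V)}X\to\hocolim_{\mathcal{R}}X$ is an $H_U$-equivalence. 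Pushing out this equivalence gives $\hocolim_{\mathcal{Q}_k}X\simeq\hocolim_{\mathcal{Q}_{k+1}}X$, completing the argument. With that step made explicit, your proof is fully rigorous and arguably a bit cleaner to verify than the published one, at the modest cost of invoking the Covering Lemma twice rather than once.
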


\begin{proof}
If $U$ is a subset of $o_+$ for some $G$-orbit $o$ the result is obvious. We prove the claim on the remaining subsets of $J_+$ by induction on the cardinality of $U$. Suppose that $U$ is not included in any $o_+$. In particular $U$ is not in $St(J_+)$, and thus $X_U$ is equivalent to
\[X_U\stackrel{\simeq}{\longleftarrow}\hocolim_{V\in\mathcal{P}_1(U)}X_V
\stackrel{\simeq}{\longleftarrow}
\hocolim_{V\in\mathcal{P}_1(U)}\big(X_{V\cap o^{1}_+}\coprod_{X_{V\cap +}}X_{V\cap o^{2}_+}\coprod_{X_{V\cap +}}\dots\coprod_{X_{V\cap +}}X_{V\cap o^{n}_+}\big)
\]
where the second map is an equivalence by the inductive hypothesis. This is equivalent to
\[(\hocolim_{V\in\mathcal{P}_1(U)}X_{V\cap o^{1}_+})\coprod_{\displaystyle\hocolim_{V\in\mathcal{P}_1(U)}X_{V\cap +}}\dots\coprod_{\displaystyle\hocolim_{V\in\mathcal{P}_1(U)}X_{V\cap +}}(\hocolim_{V\in\mathcal{P}_1(U)}X_{V\cap o^{n}_+}).\]
In every $U$-cube $X_{(-)\cap o_+}$ all the maps $X_{V\cap o_+}\to X_{(V\cup \{u\})\cap o_+}$ are identities, for a choice of $u\in U\backslash o_+$, and similarly for the cube $X_{(-)\cap +}$. Thus the $U$-cubes $X_{(-)\cap o_+}$ and $X_{(-)\cap +}$ are homotopy cocartesian (by \cite[3.30]{Gdiags}), and the expression above is equivalent to
\[X_{U\cap o^{1}_+}\coprod_{X_{U\cap +}}X_{U\cap o^{2}_+}\coprod_{X_{U\cap +}}\dots\coprod_{X_{U\cap +}}X_{U\cap o^{n}_+}.\]
\end{proof}

\subsection{Higher equivariant excision}\label{sec:higherexc}

The aim of this section is to define excision with respect to a finite $G$-set $J$. Let $\mathscr{C}$ and $\mathscr{D}$ be $G$-model categories, and let $\Phi\colon \mathscr{C}\to\mathscr{D}^G$ be a homotopy functor (see \ref{def:htpyfctr}). We recall that $\Phi$ induces a functor
\[\Phi_\ast\colon \mathscr{C}^{\mathcal{P}(J_+)}_a\longrightarrow\mathscr{D}^{\mathcal{P}(J_+)}_a\]
from the category of $J_+$-cubes in $\mathscr{C}$ to the category of $J_+$-cubes in $\mathscr{D}$.

\begin{defn}\label{J-exc}
Let $\mathscr{C}$ and $\mathscr{D}$ be $G$-model categories and let $J$ be a finite $G$-set. A homotopy functor $\Phi\colon \mathscr{C}\to\mathscr{D}^G$ is called $J$-excisive if the functor $\Phi_\ast\colon \mathscr{C}^{\mathcal{P}(J_+|_H)}_a\to\mathscr{D}^{\mathcal{P}(J_+|H)}_a$ sends $G$-strongly cocartesian $J_+|_H$-cubes in $\mathscr{C}$ to cartesian cubes in $\mathscr{D}$, for every subgroup $H$ of $G$.
\end{defn}

\begin{rem}\label{rem:excision}\
\begin{enumerate}
\item Suppose that $J$ is the set $\underline{n}=\{1,\dots, n\}$ with the trivial $G$-action. A homotopy functor $\Phi\colon \mathscr{C}\to\mathscr{D}^G$ is $\underline{n}$-excisive if and only if the associated functor $\Phi\colon \mathscr{C}^H\to\mathscr{D}^H$ is $n$-excisive in the sense of \cite[3.1]{calcII}, for every subgroup $H$ of $G$. This is immediate from \ref{rem:stronglycocart}(i).
\item Suppose that $J=T$ is a transitive $G$-set, that the categories $\mathscr{C}$ and $\mathscr{D}$ are pointed, and that $\Phi(\ast)$ is equivalent to $\ast$ in $\mathscr{D}^G$. Then $\Phi$ is $T$-excisive if and only if for every $c\in\mathscr{C}^H$ a certain natural zig-zag of maps defines an equivalence
\[\Phi(c)\simeq \Omega^{T|_H}\Phi(\Sigma^{T|_H}c)\]
in $\mathscr{D}^H$, for every $H\leq G$. This was first proved in \cite[3.25]{Gdiags} for enriched functors, and it is related to the loops and suspension cubes of \ref{loopsusp}. It is a direct consequence of Theorem \ref{exisiveapprox} below.
\item Suppose that $J$ is the free transitive $G$-set $J=G$ and that $\mathscr{C}$ and $\mathscr{D}$ are pointed. It turns out that under a mild extra assumption on the $G$-model category $\mathscr{D}$, it is sufficient to verify the $G$-excision condition on $G_+$-cubes, and not on all $G_+|_H$-cubes for subgroups $H$ of $G$ (see \cite[3.23]{Gdiags}). The author is not aware of a similar result for general $G$-sets.
\end{enumerate}
\end{rem}

\begin{ex}\label{ex:excision}\
\begin{enumerate}
\item Let $M$ be a simplicial Abelian group with additive $G$-action. The equivariant Dold-Thom construction $M(-)\colon sSet_\ast\to sSet_{\ast}^G$ is $G$-excisive (see \cite[3.3]{Gdiags}). Observe that for a fixed pointed $G$-space $X$ and a discrete $M$, the homotopy groups of $M(X)^G$ are the Bredon homology of $X$ with coefficients in the Mackey functor $G/H\mapsto M^H$.
\item Let $E$ be an orthogonal $G$-spectrum. The functor $E\wedge (-)\colon \Top_\ast\to \Sp_{O}^G$ is $G$-excisive. The same is true for $E\wedge (-)\colon \Sp_{O}\to \Sp_{O}^G$ when $E$ is cofibrant. This is because $E\wedge(-)$ commutes with equivariant colimits, and therefore it preserves cocartesian equivariant cubes. The claim follows from the fact that cocartesian $J$-cubes of spectra are cartesian for every finite $G$-set $J$ (see \cite[3.35]{Gdiags}). In particular the identity functor on $G$-spectra is $G$-excisive.
\item Let $A$ be a ring with an anti-involution $w\colon A^{op}\to A$, and let $M$ be an $A$-bimodule with an anti-involution $h\colon M^{op}\to M$ (an additive map which satisfies $h(am)=h(m)w(a)$ and $h(ma)=w(a)h(m)$ for all $a\in A$ and $m\in M$). The Real topological Hochschild homology of $A$ with coefficients in $M$ is a symmetric $\mathbb{Z}/2$-spectrum $\THR(A;M(-))$, defined from suitably derived smash products
\[HM\wedge \bigwedge_{[k]} HA\]
where $[k]=\{0,\dots,k\}$ has the $\mathbb{Z}/2$-action $i\mapsto k-i$, and the smash over $[k]$ is the norm construction of \cite{HHR}. This construction has been studied in details in the author's thesis \cite{thesis}. By replacing $M$ with the Dold-Thom construction $M(X)$ of a pointed $\mathbb{Z}/2$-space $X$, we obtain a functor $\THR(A;M(-))\colon sSet_\ast\to \Sp_{\Sigma}^{\mathbb{Z}/2}$ to the category of symmetric $\mathbb{Z}/2$-spectra, analogous to the functor $\THH(A;M(-))$ of \cite{DM}. At least when $2\in A$ is invertible, the functor $\THR(A;M(-))$ is $\mathbb{Z}/2$-excisive (see \cite[5.2.5]{thesis} and \cite[3.2.7,3.3.2]{Gcalc}).
\item Let $K$ be a finite $G$-set and let $E$ be an orthogonal $G$-spectrum. The functor $\Top_\ast\to \Sp_{O}^ G$ that sends a pointed space $X$ to the suspension of the $K$-indexed smash product
\[E\wedge X^{\wedge K}\]
is $(K\times G)$-excisive (notice that $K\times G$ is equivariantly isomorphic to $|K|\times G$, where $|K|$ is the underlying set of $K$ with the trivial $G$-action). Our proof uses the equivariant excisive approximations, and it is postponed to Proposition \ref{smashfctr}. In particular $E\wedge X^{\wedge n}$ is $n\times G$-excisive.
\item Let $E$ be an orthogonal spectrum with $G\times \Sigma_n$-action. The homotopy functor $\Top_\ast\to \Sp_{O}^ G$ that sends a pointed space $X$ to
\[(E\wedge X^{\wedge n})_{h\Sigma_n}\]
is also $n\times G$-excisive. This follows from \ref{ex:excision}(iv) and the fact that cocartesian equivariant cubes and cartesian equivariant cubes of spectra agree. Since we are taking homotopy orbits for the $\Sigma_n$-action, we are thinking of $E$ as a genuine $G$-spectrum with na\"{i}ve $\Sigma_n$-action. In our point-set models, this model category and the model category of genuine $G\times\Sigma_n$-spectra have the same underlying category.
\item Let $\mathcal{F}_n$ be the family of subgroups $\Gamma$ of $G\times \Sigma_n$ that intersect $1\times \Sigma_n$ trivially. Alternatively, this is the family of subgroups $\Gamma$ such that the $\Sigma_n$-action on $(G\times \Sigma_n)/\Gamma$ is free, or equivalently the family of graphs of the group homomorphisms $H\to\Sigma_n$ for subgroups $H\leq G$. Let $E\mathcal{F}_n$ be a classifying space for this family, that is a $G\times\Sigma_n$-space whose fixed points $E\mathcal{F}^{\Gamma}_n$ are contractible if $\Gamma\in\mathcal{F}_n$ and empty otherwise. The space $E\mathcal{F}_n$ is a universal space for $(G,\Sigma_n)$-vector bundles, and it occurs in the theory of $N_\infty$-operads of \cite{BH}, in the equivariant Snaith-splitting of \cite[VII-5.7]{LMS}, and in the proof of Sullivan's conjecture of \cite{Carlsson}. 

Given an orthogonal spectrum with $G\times \Sigma_n$-action, let $\mathcal{S}^{\mathcal{F}_n}_E\colon \Top_\ast\to \Sp_{O}^ G$ be the homotopy functor that sends a pointed space $X$ to
\[\mathcal{S}^{\mathcal{F}_n}_E(X)=(E\wedge X^{\wedge n})\wedge_{\Sigma_n}(E\mathcal{F}_n)_+\]
where $\Sigma_n$ acts on the $n$-fold smash product by permuting the factors. We will show in \ref{smashfctr} that $\mathcal{S}^{\mathcal{F}_n}_E$ is $n\times G$-excisive. We think of $E$ as a spectrum with a genuine action of the subgroups of the family $\mathcal{F}_n$. These are $G\times\Sigma_n$-spectra whose homotopy type is determined by the fixed-point spectra of the subgroups $\Gamma\in \mathcal{F}_n$ and which have all the transfers between these groups. An efficient definition of this category is Barwick's category of spectral Mackey functors on the Burnside category of the full subcategory of the orbit category of $G\times \Sigma_n$ generated by the transitive $G\times \Sigma_n$-sets which are $\Sigma_n$-free (see also \ref{rem:classification}(iv)).
\end{enumerate}
\end{ex}

The $J$-excision property is invariant under the notion of equivalence of \ref{def:eqGdiags}. We recall that a natural transformation $\Gamma\colon\Phi\to \Psi$ is an equivalence if $\Gamma_c\colon\Phi(c)\to\Psi(c)$ is an equivalence in $\mathscr{D}^H$ for every $H$-object $c\in\mathscr{C}^H$.

\begin{lemma}
Let $\Phi,\Psi\colon \mathscr{C}\to\mathscr{D}^G$ be equivalent homotopy functors, and let $J$ be a finite $G$-set. Then $\Phi$ is $J$-excisive if and only if $\Psi$ is $J$-excisive.
\end{lemma}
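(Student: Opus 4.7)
The plan is to show that a natural weak equivalence $\Gamma\colon\Phi\to\Psi$ induces an equivalence of $G$-diagrams $\Gamma_*\colon\Phi_*(X)\to\Psi_*(X)$ for every cube $X$, and then conclude by the homotopy invariance of the cartesianness condition.

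First I would fix a subgroup $H\leq G$ and a $J_+|_H$-cube $X\in\mathscr{C}^{\mathcal{P}(J_+|_H)}_a$, and build the morphism $\Gamma_*\colon\Phi_*(X)\to\Psi_*(X)$ in $\mathscr{D}^{\mathcal{P}(J_+|_H)}_a$ whose vertex at $U\subseteq J_+$ is the map $\Gamma_{X_U}\colon\Phi(X_U)\to\Psi(X_U)$. Compatibility of this collection of maps with the $H$-structures of $\Phi_*(X)$ and $\Psi_*(X)$ described in \S\ref{sec:htpyfctr} follows directly from the naturality of $\Gamma$, applied both to the action of elements $h\in H_U$ on the $G$-object $\Phi(X_U)$ and to the $H$-structure morphisms $\phi_h\colon X_U\to X_{hU}$ of $X$. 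Since each $X_U\in\mathscr{C}^{H_U}$ and $\Gamma$ is a weak equivalence of homotopy functors, each $\Gamma_{X_U}$ is an equivalence in $\mathscr{D}^{H_U}$, so $\Gamma_*$ is an equivalence of $H$-diagrams in the sense of Definition \ref{def:eqGdiags}.

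Now assume $X$ is $G$-strongly cocartesian. The map $\Gamma_*$ fits into a commutative square
\[
\xymatrix{
\Phi(X_\emptyset)\ar[r]\ar[d]_-{\Gamma_{X_\emptyset}}&\holim_{\mathcal{P}_0(J_+|_H)}\Phi_*(X)\ar[d]^-{(\Gamma_*)_*}\\
\Psi(X_\emptyset)\ar[r]&\holim_{\mathcal{P}_0(J_+|_H)}\Psi_*(X)
}
\]
in $\mathscr{D}^H$ whose horizontal maps are the canonical comparison maps. The left vertical map is an equivalence in $\mathscr{D}^H$ because $X_\emptyset\in\mathscr{C}^H$ and $\Gamma$ is a weak equivalence of homotopy functors, and the right vertical map is an equivalence in $\mathscr{D}^H$ because $\holim_{\mathcal{P}_0(J_+|_H)}$ preserves equivalences between $G$-diagrams, as recalled after Definition \ref{def:eqGdiags}. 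The two-out-of-three property then forces the top horizontal map to be an equivalence if and only if the bottom one is; that is, $\Phi_*(X)$ is cartesian iff $\Psi_*(X)$ is. Since this holds for every $H\leq G$ and every $G$-strongly cocartesian $J_+|_H$-cube $X$, the functor $\Phi$ is $J$-excisive iff $\Psi$ is.

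The only point requiring real verification is the compatibility of $\Gamma_*$ with the $H$-structures on $\Phi_*(X)$ and $\Psi_*(X)$, which I expect to be the main (but entirely routine) obstacle; the rest is standard homotopy invariance once this is in place.
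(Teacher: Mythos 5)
Your argument is correct and follows the same route as the paper's: extend $\Gamma$ to a pointwise equivalence of cube diagrams and invoke homotopy invariance of the cartesianness condition. You simply unpack the two steps that the paper leaves implicit (the $H$-structure compatibility of $\Gamma_\ast$ and the two-out-of-three argument via the naturality square), which is a fair amount of welcome detail but not a different proof.
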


\begin{proof}
An equivalence $\Gamma\colon\Phi\to \Psi$ extends to an equivalence $\Phi_\ast\simeq\Psi_\ast\colon \mathscr{C}^{I}_a\to\mathscr{D}^{I}_a$ for every category with $H$-action $I$. By this we mean that for every $H$-diagram $X$ in $\mathscr{C}^{I}_a$ the map $\Gamma_X\colon \Phi(X)\to\Psi(X)$ is an equivalence in $\mathscr{D}^{I}_a$, in the sense of \ref{def:eqGdiags}. In particular if $X\in\mathscr{C}^{\mathcal{P}(J_+|_H)}_a$ is $G$-strongly cocartesian, the $J_+|_H$-cubes $\Phi(X)$ and $\Psi(X)$ in $\mathscr{D}$ are equivalent. Hence $\Phi(X)$ is cartesian if and only if $\Psi(X)$ is cartesian.
\end{proof}

We analyze how $J$-excision and $K$-excision compare for different $G$-sets $J$ and $K$. Classically, any $n$-excisive functor is $m$-excisive for every $m\geq n$. The relationship for equivariant calculus turns out to be more involved, as we show in the next three results.

\begin{prop}\label{inclusions}
Let $J$ be a finite $G$-set and let $K\subset J$ be a $G$-invariant subset of $J$. Every $K$-excisive homotopy functor $\Phi\colon \mathscr{C}\to\mathscr{D}^G$ is also $J$-excisive.
\end{prop}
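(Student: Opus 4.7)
The plan is to decompose the $J_+$-cube into a cube of $K_+$-cubes along the $G$-invariant splitting $J = K \amalg L$, where $L = J\setminus K$, and combine Corollary \ref{faces} with Proposition \ref{cubeofcartcubes}(i).

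Fix a subgroup $H$ of $G$ and a $G$-strongly cocartesian $J_+|_H$-cube $X\in\mathscr{C}^{\mathcal{P}(J_+|_H)}_a$. Since $K$ is $G$-invariant, so is $L$, and as pointed $H$-sets we have a decomposition $J_+|_H \cong K_+|_H \amalg L|_H$ (the basepoint lies in $K_+$). This induces an $H$-equivariant isomorphism of posets $\mathcal{P}(J_+|_H)\cong \mathcal{P}(K_+|_H)\times \mathcal{P}(L|_H)$, so $X$ may be reinterpreted as a family of $K_+|_{H_U}$-cubes $\widetilde{X}_U = X_{U\cup(-)}$, indexed on subsets $U\subset L$, each carrying an action of the stabilizer $H_U$.

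By Corollary \ref{faces} applied to the decomposition $J_+|_H = K_+|_H \amalg L|_H$, the $K_+|_{H_U}$-cube $\widetilde{X}_U$ is $G$-strongly cocartesian for every subset $U$ of $L$ (including $U=\emptyset$). Since $\Phi$ is $K$-excisive by hypothesis, $\Phi_\ast$ sends each $\widetilde{X}_U$ to a cartesian $K_+|_{H_U}$-cube in $\mathscr{D}$. As $\Phi_\ast$ commutes with the reindexing $U\mapsto X_{U\cup(-)}$, this says that $\widetilde{\Phi(X)}_U = \Phi(X)_{U\cup(-)}$ is cartesian in $\mathscr{D}^{H_U}$ for every $U\subset L$.

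Now we invoke Proposition \ref{cubeofcartcubes}(i), applied to the $(K_+\amalg L)|_H$-cube $\Phi(X)$ viewed with $I = K_+|_H$ and $J = L|_H$. The hypothesis of that proposition is satisfied because $\widetilde{\Phi(X)}_U$ is cartesian for every \emph{non-empty} $U\subset L$. The conclusion is that $\Phi(X)$ is cartesian if and only if $\widetilde{\Phi(X)}_\emptyset = \Phi(X)|_{\mathcal{P}(K_+|_H)}$ is cartesian. But the case $U=\emptyset$ of the previous paragraph gives exactly this, so $\Phi(X)$ is cartesian. Since $H$ and $X$ were arbitrary, $\Phi$ is $J$-excisive.

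The only subtle point is verifying that the equivariance matches: Corollary \ref{faces} is stated precisely for this kind of $G$-invariant splitting of the indexing set, and Proposition \ref{cubeofcartcubes}(i) does not require the factor playing the role of $I$ to be based, so no extra adjustment is needed. No induction on the cardinality of $L$ is required, since the cube-of-cubes reformulation handles all face cubes uniformly.
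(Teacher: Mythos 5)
Your proof is correct and follows essentially the same route as the paper: decompose $J_+|_H$ as $K_+|_H\amalg(J\setminus K)|_H$, use Proposition~\ref{facesstronglycocart} (or equivalently its Corollary~\ref{faces}, which you cite) to see that each face cube $\widetilde{X}_U$ is $G$-strongly cocartesian, apply $K$-excision of $\Phi$ to each face, and conclude via Proposition~\ref{cubeofcartcubes}(i). The one trivial quibble is that your closing remark about $I$ not needing a basepoint is moot since you set $I=K_+|_H$, which is based; the relevant observation is that the factor $J=L|_H$ in Proposition~\ref{cubeofcartcubes}(i) need not be based.
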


\begin{proof}
Let $X\in \mathscr{C}^{\mathcal{P}(J_+|_H)}_a$ be $G$-strongly cocartesian. Decompose $J$ as the disjoint union of $G$-sets $J=K\amalg I$ where $I$ the complement of $K$ in $J$. By \ref{facesstronglycocart}, for every subset $U$ of $I$, the cube $\widetilde{X}_U=X_{U\cup(-)}\colon \mathcal{P}(K_+)\to \mathscr{C}$ is a $G$-strongly cocartesian $(K_+|_{H_U})$-cube. Since $\Phi$ is $K$-excisive $\Phi_\ast(\widetilde{X}_U)=\widetilde{\Phi_\ast(X)}_U$
is a cartesian $K_+|_{H_U}$-cube in $\mathscr{D}$, for every $U\subset I$. By \ref{cubeofcartcubes} the $J$-cube $\Phi(X)$ is also cartesian.
\end{proof}

\begin{prop}\label{isoonorb}
Let $p\colon K\to J$ be an equivariant map of finite $G$-set that induces an isomorphism on $G$-orbits. Any $K$-excisive homotopy functor $\Phi\colon  \mathscr{C}\to\mathscr{D}^G$ is also $J$-excisive. In particular if $\Phi\colon  \mathscr{C}\to\mathscr{D}^G$ is $K$-excisive, the functor $\Phi\colon\mathscr{C}^H\to\mathscr{D}^H$ is $|K/G|$-excisive in non-equivariant calculus, for every subgroup $H$ of $G$.
\end{prop}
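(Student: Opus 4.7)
The ``in particular'' clause follows from the main statement by taking $J=\underline{n}$ with trivial $G$-action and $n=|K/G|$, choosing a bijection $K/G\cong\underline{n}$ to produce an equivariant $K\to J$ inducing an isomorphism on orbits, and invoking Remark \ref{rem:excision}(i) to translate $\underline{n}$-excision into $n$-excision on $\mathscr{C}^H\to\mathscr{D}^H$ for every $H\leq G$. The work is in the main statement.

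For the main statement, extend $p$ to a pointed map $p\colon K_+\to J_+$ and consider the adjoint chain $f\dashv g\dashv a$ of $G$-equivariant poset maps between $\mathcal{P}(K_+)$ and $\mathcal{P}(J_+)$ where $f(V)=p(V)$ is the image, $g(U)=p^{-1}(U)$ is the preimage, and $a(V)=\{j\in J_+\mid p^{-1}(j)\subset V\}$. Since $g$ is fully faithful one has $a\circ g=\mathrm{id}_{\mathcal{P}(J_+)}$, and the unit of $g\dashv a$ is the inclusion $g(a(V))\subset V$. Given a $G$-strongly cocartesian $J_+|_H$-cube $X$, my plan is to define the $K_+|_H$-cube $W$ by $W_V=X_{a(V)}$ (with the $G$-structure inherited from $X$ via the equivariance of $a$), check that $W$ is $G$-strongly cocartesian, apply $K$-excision to get that $\Phi(W)$ is cartesian, and finally descend cartesianness along $g$ to $\Phi(X)=g^*\Phi(W)$.

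To check that $W$ is $G$-strongly cocartesian, I would pick $S\subset K_+$ with $S\notin St(K_+)$ and compute $\hocolim_{\mathcal{P}_1(S)}W$ by the equivariant Fubini argument of \ref{Lambdastronglycocart}. Unraveled via $g\dashv a$, the relevant Grothendieck construction has fiber over $U\in \mathcal{P}(a(S))$ equal to the poset $\{V\mid p^{-1}(U)\subset V\subsetneq S\}$, which is equivariantly contractible since it has the $G_{U,S}$-invariant initial object $p^{-1}(U)$. Hence $\hocolim_{\mathcal{P}_1(S)}W$ reduces to a colimit over the $U\in\mathcal{P}(a(S))$ with $p^{-1}(U)\neq S$, and two cases arise. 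If $S\neq p^{-1}(a(S))$ then every $U\in\mathcal{P}(a(S))$ contributes and the hocolim is $X_{a(S)}=W_S$ automatically. If $S=p^{-1}(a(S))$ then $S$ is a union of full $p$-fibers, and the hypothesis $S\notin St(K_+)$ together with $p$ being an isomorphism on orbits forces $a(S)\notin St(J_+)$ (any containment $a(S)\subsetneq o_+$ would, by saturation, pull back to $S\subsetneq o'_+$ for the corresponding orbit $o'$ of $K$), so that the strong cocartesianness of $X$ produces $\hocolim_{\mathcal{P}_1(a(S))}X\simeq X_{a(S)}=W_S$.

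For the final descent, the inclusion $g(\mathcal{P}(J_+))\hookrightarrow\mathcal{P}(K_+)$ is an equivariant deformation retract via the unit $g(a(V))\subset V$, and on the ``bad'' vertices $V$ with $a(V)=\emptyset$ the cube $\Phi(W)$ takes the constant value $\Phi(X_\emptyset)=\Phi(W)_\emptyset$; together these should imply that the restriction $\holim_{\mathcal{P}_0(K_+)}\Phi(W)\to\holim_{\mathcal{P}_0(J_+)}\Phi(X)$ is an equivalence, and combined with the cartesianness of $\Phi(W)$ and the identification $\Phi(W)_\emptyset=\Phi(X_\emptyset)$ this yields the cartesianness of $\Phi(X)$. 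The hardest step is precisely this last descent: $g$ fails to be equivariantly initial when restricted to the punctured subposets, because the bad vertices make some slice categories empty, and the argument must exploit the specific form of $\Phi(W)=a^*\Phi(X)$ on the bad part rather than appeal to a general cofinality principle.
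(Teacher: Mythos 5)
Your handling of the ``in particular'' clause is correct and matches the paper. The gap is in the main statement, in the choice of which adjoint to pull back along.

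In your chain $f\dashv g\dashv a$ between $\mathcal{P}(K_+)$ and $\mathcal{P}(J_+)$, the paper pulls back along $f$: it forms the $K_+|_H$-cube $p^*X$ with $(p^*X)_V = X_{p(V)}$, shows this is $G$-strongly cocartesian using that $p\colon S\to p(S)$ is \emph{surjective} for every $S$, and then invokes the fact that the punctured image functor $p\colon\mathcal{P}_0(K_+)\to\mathcal{P}_0(J_+)$ is homotopy initial (each slice $p/U\cong\mathcal{P}_0(p^{-1}(U))$ has the terminal object $p^{-1}(U)$, so is contractible) to deduce that $p^*$ detects cartesian cubes. You instead form $W_V = X_{a(V)}$ and try to descend along $g(U)=p^{-1}(U)$. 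The trouble is that $a$ does not preserve non-emptiness: for many $V\neq\emptyset$ one has $a(V)=\emptyset$, so $g\dashv a$ does not restrict to the punctured posets, and $g\colon\mathcal{P}_0(J_+)\hookrightarrow\mathcal{P}_0(K_+)$ has a left adjoint $f$ (making it homotopy \emph{final}, good for hocolims) but no right adjoint and is \emph{not} homotopy initial: for a bad $V$ the slice $g/V=\{U\in\mathcal{P}_0(J_+)\mid p^{-1}(U)\subset V\}$ is empty. You identify this obstruction yourself, but your proposed remedy — that $\Phi(W)$ is constant at $\Phi(X_\emptyset)$ over the bad vertices — does not close it.

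Here is a concrete failure. Take $G=\mathbb{Z}/2$, $K=G$ with the free action, $J=\{*\}$, and $p$ the unique map, so $K_+=\{0,1,+\}$. Write $A=\Phi(X_\emptyset)$, $B=\Phi(X_{\{*\}})$, $C=\Phi(X_{\{+\}})$, $D=\Phi(X_{J_+})$. Then $a(\{0\})=a(\{1\})=\emptyset$, so $W_{\{0\}}=W_{\{1\}}=X_\emptyset$, and the underlying homotopy limit of the punctured cube $\Phi(W)$ is
\[
\holim_{\mathcal{P}_0(K_+)}\Phi(W)\;\simeq\;A\times_{B\times C}A
\]
(the pullback of the two copies of $A\xrightarrow{(\alpha,\beta)}B\times C$), whereas $\holim_{\mathcal{P}_0(J_+)}\Phi(X)\simeq B\times_D C$. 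Cartesianness of $\Phi(W)$ therefore asks the diagonal $A\to A\times_{B\times C}A$ to be an equivalence, a statement about $(\alpha,\beta)\colon A\to B\times C$ being a homotopy monomorphism; passing to $G$-fixed points it becomes $A^G\simeq A\times_{B\times C}(B^G\times C^G)$. Neither of these is the cartesianness of $\Phi(X)$, namely $A\simeq B\times_D C$ (resp.\ $A^G\simeq B^G\times_{D^G}C^G$), and neither implies the other. So the last step of your plan is not merely delicate: it is false for this $W$.

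The construction of $W=a^*X$ and its strong cocartesianness (your Grothendieck-fibration argument, which parallels the paper's \ref{Lambdastronglycocart}) does look correct, and it is the more laborious half of the two halves. But it is the wrong cube to feed to $K$-excision. Replacing $W$ by $p^*X=f^*X$ makes strong cocartesianness an immediate consequence of the surjectivity of $p|_S$ and of the paper's \cite[A3]{Gdiags}, and makes the descent an application of \cite[A1]{Gdiags}; this is exactly the paper's proof.
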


\begin{proof}
Let $X\in \mathscr{C}^{\mathcal{P}(J_+|_H)}_a$ be a $G$-strongly cocartesian $J_+|_H$-cube. We need to show that $\Phi(X)$ is cartesian. Precomposition with the image functor $p\colon \mathcal{P}(K_+)\to \mathcal{P}(J_+)$ induces a pullback map $p^\ast\colon \mathscr{C}^{\mathcal{P}(J_+|_H)}_a\to \mathscr{C}^{\mathcal{P}(K_+|_H)}_a$. We claim that $p^\ast X$ is $G$-strongly cocartesian. To see this, let $S\subset K_+$ be a subset which is not in the star category $St(K_+)$. We need to show that $(p^\ast X)|_{\mathcal{P}(S)}$ is cocartesian. The square of categories with $G$-action
\[
\xymatrix{
\mathcal{P}(S)\ar[r]\ar[d]_p&\mathcal{P}(K_+)\ar[d]^{p}\\
\mathcal{P}(p(S))\ar[r]&\mathcal{P}(J_+)
}
\]
clearly commutes, where the horizontal maps are inclusions. Hence the restriction $(p^\ast X)|_{\mathcal{P}(S)}$ is equal to $p^{\ast}(X|_{\mathcal{P}(p(S))})$. Since $p$ is an isomorphism on orbits the subset $p(S)\subset J_+$ lies outside $St(J_+)$. Thus $X|_{\mathcal{P}(p(S))}$ is cocartesian. The functor $p^{\ast}\colon \mathscr{C}^{\mathcal{P}(p(S))}_a\to \mathscr{C}^{\mathcal{P}(S)}_a$ preserves cocartesian cubes by \cite[A3]{Gdiags}, because the map $p\colon S\to p(S)$ is surjective. This shows that $p^{\ast}(X|_{\mathcal{P}(p(S))})=(p^\ast X)|_{\mathcal{P}(S)}$ is cocartesian, and therefore that $p^\ast X$ is $G$-strongly cocartesian. As $\Phi$ is $K$-excisive, the $K_+|_H$-cube $\Phi(p^\ast X)=p^{\ast}\Phi(X)$ is cartesian. Observe that since $p$ is an isomorphism on orbits, it is a surjective map. By \cite[A1]{Gdiags} the functor $p^\ast\colon \mathscr{C}^{\mathcal{P}(J_+|_H)}_a\to \mathscr{C}^{\mathcal{P}(K_+|_H)}_a$ detects cartesian cubes, and $\Phi(X)$ must be cartesian.

Now consider the projection map $p\colon K\to K/G$. This is an equivariant map, where $G$ acts trivially on $K/G$, and it induces an isomorphism on $G$-orbits. Thus a $K$-excisive functor $\Phi$ is $K/G$-excisive. Since $K/G$ has the trivial $G$-action, the functors $\Phi\colon\mathscr{C}^H\to\mathscr{D}^H$ are $|K/G|$-excisive for every $H\leq G$, by \ref{rem:excision}(i). 
\end{proof}

\begin{cor}\label{injonorb}
Let $\alpha\colon K\to J$ be an equivariant map of finite $G$-sets, which descends to an injective map $\overline{\alpha}\colon K/G\to J/G$ on orbits. Then every $K$-excisive homotopy functor $\Phi\colon \mathscr{C}\to\mathscr{D}^G$ is also $J$-excisive.
%
\end{cor}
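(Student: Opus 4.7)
The plan is to factor the map $\alpha$ as a surjection onto its image followed by an inclusion, and then apply the two preceding propositions in succession. More precisely, consider the image $\alpha(K) \subseteq J$, which is a $G$-invariant subset since $\alpha$ is equivariant. Then $\alpha$ factors as
\[\alpha\colon K \stackrel{\pi}{\twoheadrightarrow} \alpha(K) \stackrel{\iota}{\hookrightarrow} J,\]
where $\pi$ is the surjective $G$-map induced by $\alpha$ and $\iota$ is the inclusion of a $G$-invariant subset.

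First I would observe that $\pi\colon K \to \alpha(K)$ induces an isomorphism on $G$-orbits. Indeed, the induced map $\overline{\pi}\colon K/G \to \alpha(K)/G$ is surjective (as $\pi$ is) and it is injective because it is the corestriction of $\overline{\alpha}\colon K/G \to J/G$ to its image, and $\overline{\alpha}$ is assumed to be injective. Thus by Proposition \ref{isoonorb} applied to $\pi$, any $K$-excisive homotopy functor $\Phi$ is automatically $\alpha(K)$-excisive.

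Next, since $\alpha(K)$ is a $G$-invariant subset of $J$, Proposition \ref{inclusions} applied to the inclusion $\iota$ shows that $\alpha(K)$-excision implies $J$-excision. Composing the two implications, $\Phi$ is $J$-excisive as required. There is no substantial obstacle here; the only small point to verify carefully is that $\overline{\alpha}$ injective forces $\overline{\pi}$ injective, which follows tautologically from the factorization $\overline{\alpha} = \overline{\iota} \circ \overline{\pi}$ together with the fact that $\overline{\iota}\colon \alpha(K)/G \hookrightarrow J/G$ is injective.
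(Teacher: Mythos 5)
Your proof is correct and is essentially identical to the paper's: you factor $\alpha$ through its image into a surjection (isomorphism on orbits, handled by Proposition \ref{isoonorb}) followed by an inclusion of a $G$-invariant subset (handled by Proposition \ref{inclusions}). The small verification that $\overline{\pi}$ is injective, which you note at the end, is the same routine point the paper leaves implicit.
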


\begin{proof}
Every $G$-map $\alpha\colon K\to J$ factors canonically as $K\to \alpha(K)\to J$. The second map is the inclusion of the image of $\alpha$. The first map is the corestriction $\alpha\colon K\to \alpha(K)$, which is an isomorphism on orbits since $\overline{\alpha}\colon K/G\to J/G$ is assumed to be injective. Hence a $K$-excisive functor $\Phi\colon \mathscr{C}\to\mathscr{D}^G$ is $\alpha(K)$-excisive by \ref{isoonorb}, and further $J$-excisive by \ref{inclusions}.
%
%
%
%
%
%
%
\end{proof}

\begin{rem}
Let us denote $nG=n\times G$ the disjoint union of $n$-copies of $G$ with the diagonal action, and let $J$ be a $G$-set with $n$-orbits. A choice of point $j_o$ in every orbit $o\in J/G$ defines a projection map $G\to G/G_{j_o}\cong o$. These maps assemble into a map $nG\to J$ which is bijective on orbits. By \ref{isoonorb}, any $nG$-excisive functor is also $J$-excisive.

Similarly given any $G$-set $J$, a choice of an orbit $o\in J/G$ and of a point $j\in o$ determines a map $G\to G/G_j\cong o\to J$ which is injective on $G$-orbits. Hence a $G$-excisive functor is $J$-excisive, for every finite $G$-set $J$.
\end{rem}

We end the section with a discussion about the role played by the basepoint of $J_+$ in the definition of $J$-excision. Define the reduced star category of a finite $G$-set $J$ to be the subposet
\[\overline{St}(J)=\bigcup_{o\in J/G}\mathcal{P}_1(o)\ \ \subset \mathcal{P}(J).\]
By replacing $St(J_+)$ with $\overline{St}(J)$ in the definition of $G$-strongly cocartesian $J_+$-cubes, we obtain a notion of $G$-strongly cocartesian $J$-cubes which leads to a different notion of $J$-excision. We call this ``reduced $J$-excision'', and we call a homotopy functor $\Phi\colon \mathscr{C}\to\mathscr{D}^G$ that satisfies this condition $\overline{J}$-excisive.

\begin{rem}\
\begin{enumerate}
\item If $J=n$ has the trivial $G$-action, every homotopy functor is $\overline{n}$-excisive. This is because $\overline{St}(J)$ is the trivial poset $\{\emptyset\}$, and a strongly cocartesian $n|_H$-cubes in the unbased sense is an $n$-cube in $\mathscr{C}^H$ in which all the maps are equivalences. These cubes are preserved by homotopy functors, and they are cartesian.
\item
Let $J=T$ be a transitive $G$-set and suppose that $\mathscr{C}$ and $\mathscr{D}$ are pointed. Let $\Phi\colon \mathscr{C}\to\mathscr{D}^G$ be a homotopy functor which preserves the zero-object, and let us assume for simplicity that $\Phi$ is $sSet$-enriched. Then $\Phi$ is $T$-excisive if and only if the adjoint assembly map $\Phi(c)\to \Omega^{T|_H}\Phi(\Sigma^{T|_H} c)$ is an equivalence for every $H$-object $c\in\mathscr{C}^H$ (see \cite[3.26]{Gdiags}). Similarly, $\Phi$ is $\overline{T}$-excisive if and only if the adjoint assembly map $\Phi(c)\to\Omega^{\overline{T}|_H}\Phi(\Sigma^{\overline{T}|_H} c)$ is an equivalence for every $H$-object $c\in\mathscr{C}^H$, where $\Omega^{\overline{T}}$ and $\Sigma^{\overline{T}}$ denote respectively the loops and the suspension with respect to the reduced permutation representation $\overline{\mathbb{R}}[T]=\ker(\mathbb{R}[T]\to \mathbb{R})$. Observe that $T$-excision implies $\overline{T}$-excision, since $\overline{\mathbb{R}}[T]$ is a sub-representation of $\mathbb{R}[T]$. Since $\mathbb{R}[T]$ splits canonically as $\mathbb{R}[T]\cong \overline{\mathbb{R}}[T]\oplus\mathbb{R}$, we see that $\overline{T}$-excision and $T$-excision are equivalent if we require additionally that $\Phi$ is classically $1$-excisive. Proposition \ref{unbased} generalizes this relationship.
\end{enumerate}
\end{rem}

\begin{prop}\label{unbased} Let $J$ be a finite $G$-set.
Every $J$-excisive homotopy functor $\Phi\colon \mathscr{C}\to\mathscr{D}^G$ is $\overline{J}$-excisive and $|J/G|$-excisive. The converse is true if the $G$-action on $J$ is either trivial or transitive.
\end{prop}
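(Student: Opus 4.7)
The plan is to prove the forward implication in two pieces and the converse by cases. The $|J/G|$-excision part of the forward implication is immediate from Proposition \ref{isoonorb} applied to the orbit projection $\pi\colon J\to J/G$: this $G$-equivariant map is tautologically a bijection on $G$-orbits, so any $J$-excisive $\Phi$ is $J/G$-excisive, and since $J/G$ carries the trivial action this is the same as classical $|J/G|$-excision by Remark \ref{rem:excision}(i). For the $\overline{J}$-excision part, given a $\overline{J}$-strongly cocartesian $J|_H$-cube $Y\in\mathscr{C}^{\mathcal{P}(J|_H)}_a$, I would extend it to a $J_+|_H$-cube $X$ by setting $X_V=Y_V$ for $V\subseteq J$ and $X_V=\ast$ for every $V\ni+$, with the canonical structure maps (unique to $\ast$, identity between copies of $\ast$) and the induced $G$-structure.

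The main task is verifying that $X$ is $G$-strongly cocartesian. The subsets $S\subseteq J_+$ outside $St(J_+)$ are of three types: (a) those $S\subseteq J$ meeting multiple $G$-orbits of $J$; (b) $S=o_+$ for a single orbit $o$; and (c) $S=T\cup\{+\}$ where $T\subseteq J$ meets multiple orbits. In case (a), $X|_{\mathcal{P}(S)}=Y|_{\mathcal{P}(S)}$ is cocartesian directly, since $S\notin\overline{St}(J)$. In cases (b) and (c), view $X|_{\mathcal{P}(S)}$ as a $\{+\}$-cube of $(S\setminus\{+\})$-cubes and apply Proposition \ref{cubeofcartcubes}(ii): the $\emptyset$-face is $Y|_{\mathcal{P}(S\setminus\{+\})}$, cocartesian because $S\setminus\{+\}$ is either a full orbit or meets multiple orbits and hence sits outside $\overline{St}(J)$; the $\{+\}$-face is the constant $(S\setminus\{+\})$-cube at $\ast$, which is cocartesian because $\mathcal{P}_1(S\setminus\{+\})$ admits the $G_S$-fixed initial object $\emptyset$ and is therefore $G_S$-equivariantly contractible, making the homotopy colimit of the constant-$\ast$ diagram $G_S$-equivalent to $\ast$.

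Having shown that $X$ is $G$-strongly cocartesian, $J$-excision of $\Phi$ forces $\Phi(X)$ to be a cartesian $J_+|_H$-cube. Viewing $\Phi(X)$ as a $\{+\}$-cube of $J$-cubes and applying the dual Proposition \ref{cubeofcartcubes}(i), the $\{+\}$-face is the constant $J$-cube at $\Phi(\ast)$, which is cartesian because $\mathcal{P}_0(J)$ has the $G$-fixed terminal object $J$ so the homotopy limit of a constant diagram at $\Phi(\ast)$ is $G$-equivalent to $\Phi(\ast)$. The proposition then identifies cartesianness of $\Phi(X)$ with cartesianness of its $\emptyset$-face $\Phi(X)|_{\mathcal{P}(J)}=\Phi(Y)$, completing the proof of $\overline{J}$-excision.

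For the converse, in the trivial-action case $|J/G|=|J|$ and $J$-excision already coincides with classical $|J|$-excision by Remark \ref{rem:excision}(i), so the statement is vacuous. In the transitive case $J=T$, working in the pointed setting, I would invoke the loops-suspension characterization of Remark \ref{rem:excision}(ii): $\overline{T}$-excision means $\Phi\simeq\Omega^{\overline{T}}\Phi\Sigma^{\overline{T}}$, classical $1$-excision (which is $|T/G|$-excision) means $\Phi\simeq\Omega\Phi\Sigma$, and the desired $T$-excision means $\Phi\simeq\Omega^T\Phi\Sigma^T$. The splitting $\mathbb{R}[T]\cong\overline{\mathbb{R}}[T]\oplus\mathbb{R}$ of $G$-representations factors $\Omega^T=\Omega\circ\Omega^{\overline{T}}$ and $\Sigma^T=\Sigma^{\overline{T}}\circ\Sigma$, and chaining the two assembly equivalences yields $\Phi(X)\simeq\Omega\Phi\Sigma X\simeq\Omega\Omega^{\overline{T}}\Phi\Sigma^{\overline{T}}\Sigma X\simeq\Omega^T\Phi\Sigma^T X$, which is precisely the $T$-excisive characterization. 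The main obstacle throughout is the equivariant cocartesianness of the constant-$\ast$ face in the extension $X$: this relies on the $G_S$-equivariant contractibility of $\mathcal{P}_1(S\setminus\{+\})$ via its $G_S$-fixed initial element $\emptyset$, together with the simplicial tensoring of the $G$-model category.
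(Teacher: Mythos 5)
Your forward direction is essentially the paper's own argument: the $|J/G|$-excision part comes from \ref{isoonorb}, and for $\overline{J}$-excision you extend an unbased cocartesian cube by a constant terminal face and run \ref{cubeofcartcubes} in both directions, which matches the paper's computation almost line for line. (One small quibble: the cocartesianness of the constant-$\ast$ face is better justified by the fact that all edges in a whole direction are identities, as in \cite[3.30]{Gdiags}, rather than by a cofinality argument from the initial object of $\mathcal{P}_1(S\setminus\{+\})$ — an initial object gives you left cofinality for homotopy \emph{limits}, not colimits; but the conclusion is the same, so this is a phrasing issue, not a logical error.)

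The converse, however, has a genuine gap. You "work in the pointed setting" and invoke the loops–suspension characterizations of $T$-excision and $\overline{T}$-excision from \ref{rem:excision}(ii) and the remark preceding \ref{unbased}. But those characterizations require more than the proposition assumes: they need $\mathscr{C}$ and $\mathscr{D}$ pointed, $\Phi$ to preserve the zero object, and $\Phi$ to be $sSet$-enriched (so that the adjoint assembly map $\Phi(c)\to\Omega^{T}\Phi(\Sigma^{T}c)$ exists as an actual map rather than a zig-zag that must be tracked carefully). Proposition \ref{unbased} is stated for an arbitrary homotopy functor between arbitrary $G$-model categories, with none of these restrictions, and indeed the remark immediately preceding it says explicitly that the loops–suspension argument handles the enriched reduced case and that "Proposition \ref{unbased} generalizes this relationship." So what you have proved is the already-known special case that the proposition is meant to extend, not the proposition itself. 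The paper's own converse argument works directly with cubes — extending a cocartesian $T_+$-cube $X$ by pushout squares $X_U\to X_{U_+}\to P_U$ over $X_T$, using $1$-excision to get a cartesian $(T\amalg 1_+)$-cube, and then using $\overline{T}$-excision to compare its homotopy limit back to $\holim_{\mathcal{P}_0(T_+)}\Phi(X)$ — and never needs enrichment or reducedness. To close your gap you would either have to reduce the general case to the reduced/enriched one (which is not straightforward, since splitting off $\Phi(\ast)$ changes the functor in ways that must be tracked through both excision conditions), or give a cube-level argument along the lines of the paper's.
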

\begin{proof}
Suppose first that $\Phi$ is $J$-excisive. It is classically $|J/G|$-excisive by \ref{isoonorb}. Let $X$ be a $G$-strongly cocartesian $J|_H$-cube, and let us prove that $\Phi(X)$ is cartesian. Define a $J_+|_H$-cube $Y$ with vertices $Y_U=X_U$ for subsets $U$ of $J$, and with the terminal object $Y_{U_+}=\ast$ at the subsets that contain the basepoint. We claim that $Y$ is a strongly cocartesian $J_+|_H$-cube. Let $S$ be a subset of $J_+$ which is not in $St(J_+)$. If $S$ does not contain the basepoint it must intersect at least two orbits, and therefore it belong to $\overline{St}(J)$. Since $X$ is a $G$-strongly cocartesian $J|_H$-cube the map
\[\hocolim_{\mathcal{P}_1(S)}Y=\hocolim_{\mathcal{P}_1(S)}X\stackrel{\simeq}{\longrightarrow}X_S=Y_S\]
is an equivalence. If $S$ contains the basepoint, let us write $S=R_+$ for a subset $R$ of $J$. Notice that $R$ does not belong to $\overline{St}(J)$.
The homotopy colimit over $\mathcal{P}_1(R_+)$ can be computed in two steps, as
\[\hocolim_{\mathcal{P}_1(R_+)}Y\simeq
\hocolim\left(\vcenter{\hbox{\xymatrix@=13pt{\displaystyle
\hocolim_{\mathcal{P}_1(R)}Y\ar[r]\ar[d]&\displaystyle\hocolim_{\mathcal{P}_1(R)}Y_{(-)\cup +}\\
Y_{R}
}}}\right)\simeq \hocolim\left(\vcenter{\hbox{\xymatrix@=13pt{
X_R\ar[r]\ar[d]&\ast\\
X_R
}}}\right)\simeq \ast =Y_{R_+}.\]
This shows that $Y$ is strongly cocartesian. By assumption $\Phi(Y)$ is a cartesian $J_+|_H$-cube. Moreover the $J|_H$-cube $\Phi(Y_{(-)\cup +})$ is the constant cube $\Phi(\ast)$, and it is therefore cartesian. By \ref{cubeofcartcubes} the restriction $\Phi(Y)|_{\mathcal{P}(J)}=\Phi(X)$ is also cartesian.

Now let us prove the converse. If $J$ has the trivial $G$-action there is nothing to prove. Suppose that $J=T$ is transitive and that $\Phi$ is both $\overline{T}$-excisive and $(|T/G|=1)$-excisive. Let $X$ be a cocartesian $T_+$-cube. For every $U\subset T$ consider the homotopy cocartesian square
\[\xymatrix@=13pt{X_U\ar[r]\ar[d]&X_{U_+}\ar[d]\\
X_T\ar[r]&P_U
}\]
where $P_U$ is the homotopy pushout. This defines a $T$-cube of cocartesian squares $Z\colon \mathcal{P}(T)\to \mathscr{C}^{\mathcal{P}(1_+)}$. Since $\Phi$ is $|T/G|=1$-excisive the $T$-cube of squares $\Phi(Z)\colon \mathcal{P}(T)\to \mathscr{D}^{\mathcal{P}(1_+)}$ is pointwise cartesian. By \ref{facesstronglycocart} the adjoint $T\amalg 1_+$-cube $\Phi(\widetilde{Z})\in \mathscr{D}^{\mathcal{P}(T\amalg 1_+)}_a$ is also cartesian, that is the canonical map
\[\Phi(X_\emptyset)=\Phi(Z_\emptyset)\stackrel{\simeq}{\longrightarrow}\holim_{\mathcal{P}_0(T\amalg 1_+)}\Phi(\widetilde{Z})\]
is an equivalence in $\mathscr{D}^G$. It is therefore sufficient to show that this homotopy limit is equivalent to $\holim_{\mathcal{P}_0(T_+)}\Phi(X)$.
This can be calculated in two steps, as
\[\holim_{\mathcal{P}_0(T\amalg 1_+)}\Phi(\widetilde{Z})\simeq
\holim\left(\!
\vcenter{\hbox{\xymatrix@=13pt{
&\displaystyle\holim_{\mathcal{P}_0(T_+)}\Phi(\widetilde{Z})\ar[d]\\
\Phi(\widetilde{Z}_{1})\ar[r]&\displaystyle
\holim_{\mathcal{P}_0(T_+)}\Phi(\widetilde{Z}_{(-)\amalg 1})
}}}\!
\right)\!
=\holim
\left(\!
\vcenter{\hbox{\xymatrix@=13pt{
&\displaystyle\holim_{\mathcal{P}_0(T_+)}\Phi(X)\ar[d]\\
\Phi(\widetilde{Z}_1)\ar[r]&\displaystyle
\holim_{\mathcal{P}_0(T_+)}\Phi(\widetilde{Z}_{(-)\amalg 1})
}}}\!
\right).\]
It is therefore sufficient to show that the bottom horizontal map is an equivalence, or in other words that $\Phi(\widetilde{Z}_{(-)\amalg 1})\in\mathscr{D}^{\mathcal{P}(T_+)}_a$ is cartesian. As a map of $T$-cubes, it is given at a subset $U\subset T$ by
\[\Phi(\widetilde{Z}_{U\amalg 1})=\Phi(X_T)\longrightarrow \Phi(P_U)=\Phi(\widetilde{Z}_{U_+\amalg 1})
.\]
The $T$-cube $\Phi(X_T)$ is constant, and in particular it is cartesian. By \ref{facesstronglycocart} it suffices to show that $\Phi(P_U)$ is a cartesian $T$-cube. Since $\Phi$ is assumed to be $\overline{T}$-excisive this holds if we show that $P_U$ is a cocartesian $T$-cube in $\mathscr{C}$. It's homotopy colimit over $\mathcal{P}_1(T)$ is by definition
\[\hocolim_{U\in \mathcal{P}_1(T)}P_U\simeq \hocolim\left(\vcenter{\hbox{\xymatrix@=13pt{\displaystyle
\hocolim_{\mathcal{P}_1(T)}X\ar[r]\ar[d]&\displaystyle\hocolim_{\mathcal{P}_1(T)}X_{(-)\cup +}\\
X_T
}}}\right)\simeq \hocolim_{\mathcal{P}_1(T)}X.\]
Since $X$ is assumed to be cocartesian, this last homotopy colimit is equivalent to $X_{T_+}$, which is also equivalent to
\[X_T\simeq \hocolim\left(\vcenter{\hbox{\xymatrix@=13pt{X_T\ar[r]\ar[d]&X_{T_+}\\
X_T
}}}\right)=P_{T}.\]

\end{proof}

\subsection{Equivariant excisive approximations}\label{sec:ecxapprox}

Let $G$ be a finite group, let $\mathscr{C}$ and $\mathscr{D}$ be $G$-model categories, and let $J$ be a finite $G$-set. The aim of this section is to construct the universal $J$-excisive approximation of a homotopy functor $\Phi\colon \mathscr{C}\to\mathscr{D}^G$.

Recall from Section \ref{stronglycocart} that for every $H$-object $c$ in $\mathscr{C}^H$ there is a $G$-strongly cocartesian $J_+|_H$-cube $\Lambda^J(c)$, with vertices
\[\Lambda_{U}^J(c)=\hocolim_{St(U)}St^U(c)\]
where $St(U)$ is the subposet $St(U)=\bigcup_{o\in J/G}\mathcal{P}_1(o_+)\cap\mathcal{P}(U)$ of $\mathcal{P}(J_+)$, and $St^U(c)$ is the diagram with value $c$ at the empty-set, and with the terminal object everywhere else. The $J$-excisive approximation of $\Phi$ is defined in a manner similar to Goodwillie's $n$-excisive approximation $P_n$, by replacing the $(n+1)$-cube $X\star U$ from \cite{calcIII} with $\Lambda_{U}^J(c)$. We start with defining a functor $T_J\Phi\colon \mathscr{C}\to\mathscr{D}^G$ by
\[T_J\Phi(c)=\holim_{\mathcal{P}_0(J_+)} \Phi(\Lambda^J(c)).\]
The functor $T_J$ comes equipped with a natural transformation $t_J\Phi\colon \Phi(c)=\Phi(\Lambda_{\emptyset}^J(c))\rightarrow T_J\Phi(c)$. We let the $J$-excisive approximation of $\Phi$ be the functor $P_J\Phi\colon \mathscr{C}\to\mathscr{D}^G$ defined as the sequential homotopy colimit
\[P_J\Phi(c)=\hocolim\big(\Phi(c)\stackrel{t_J\Phi}{\longrightarrow} T_J\Phi(c)\stackrel{t_JT_J\Phi}{\longrightarrow} T_JT_J\Phi(c)\longrightarrow\dots\big).\]
The canonical map from the initial object of the sequence to the homotopy colimit defines a natural transformation $p_J\colon \Phi\rightarrow P_J\Phi$. Since for the trivial $G$-set $J=\underline{n}$ there is a natural isomorphism $\Lambda^{\underline{n}}_U(c)\cong c\star U$ this construction extends Goodwillie's $n$-excisive approximations.

\begin{theorem}\label{exisiveapprox}
The functor $P_J\Phi\colon \mathscr{C}\to\mathscr{D}^G$ is $J$-excisive, and the map $p_J\colon \Phi\rightarrow P_J\Phi$ is essentially initial among the maps from $\Phi$ to a $J$-excisive functor.
\end{theorem}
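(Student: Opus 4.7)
The proof will follow Goodwillie's strategy from \cite{calcIII}, adapted to the equivariant setting using the tools developed in Section \ref{stronglycocart}. First I observe that $P_J\Phi$ is a homotopy functor, being a sequential homotopy colimit of homotopy functors. The first step is to characterize $J$-excision via $t_J$: a homotopy functor $\Phi$ is $J$-excisive if and only if the natural transformation $t_J\Phi\colon \Phi\to T_J\Phi$ is a weak equivalence. The forward direction is immediate; for each $H$-object $c\in\mathscr{C}^H$, the cube $\Lambda^J(c)$ is $G$-strongly cocartesian by Proposition \ref{Lambdastronglycocart} and has initial vertex $c$, so $J$-excision of $\Phi$ forces $\Phi(\Lambda^J(c))$ to be cartesian in $\mathscr{D}^H$, which is exactly the statement that $t_J\Phi$ is an equivalence at $c$. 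The converse will follow from the key technical lemma below.

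The main technical step will be: for any homotopy functor $\Phi$ and any $G$-strongly cocartesian $J_+|_H$-cube $X$ in $\mathscr{C}$, the $J_+|_H$-cube $T_J\Phi(X)$ in $\mathscr{D}$ is cartesian, so that $T_J\Phi$ is itself $J$-excisive. Unpacking the definition of $T_J$ and applying Fubini for equivariant homotopy limits to the bi-indexed family $(S,V)\mapsto\Phi(\Lambda^J_V(X_S))$ over $\mathcal{P}(J_+|_H)\times\mathcal{P}_0(J_+)$, this reduces to showing that for each fixed non-empty $V\subset J_+$ the $J_+|_H$-cube $S\mapsto\Phi(\Lambda^J_V(X_S))$ is cartesian in $\mathscr{D}^H$. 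This uses that $\Lambda^J_V(-)$ is an $H_V$-equivariant homotopy colimit of iterated cones (as computed in Example \ref{spelloutlambda}), so it preserves cocartesian structure and contracts many of its vertices; combined with the face characterization of Proposition \ref{facesstronglycocart} and the cube-of-cubes machinery of Proposition \ref{cubeofcartcubes}, the strong cocartesianity of $X$ translates into the required cartesianness. This step, analogous to \cite[Lemma 1.9]{calcIII}, is the main obstacle: one must carefully track the stabilizer subgroups $H_S$ and $G_V$, the equivariant (co)fibrancy of the auxiliary bicube, and repeatedly invoke the inductive combinatorics of $St(U)$ for various $U$.

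With the key lemma in hand, $T_J\Phi$ is $J$-excisive, and the characterization of the first step gives that $t_J(T_J\Phi)\colon T_J\Phi\to T_J^2\Phi$ is a weak equivalence; by induction, each transition $T_J^k\Phi\to T_J^{k+1}\Phi$ is an equivalence. The sequential homotopy colimit $P_J\Phi$ is therefore equivalent to $T_J\Phi$, and is $J$-excisive. For the universal property, let $\alpha\colon \Phi\to F$ be a natural transformation with $F$ a $J$-excisive functor. Applying the first step to $F$ shows that $F\stackrel{\simeq}{\to} T_J^k F$ for every $k$, and naturality of $T_J$ produces compatible maps $T_J^k\Phi\to T_J^k F\simeq F$. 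Passing to the sequential homotopy colimit in the source yields a natural transformation $P_J\Phi\to F$ factoring $\alpha$ through $p_J\Phi$. Essential initiality is then the observation that any two such factorizations differ only up to a natural homotopy, since restriction along $p_J\Phi$ detects natural transformations into $J$-excisive targets by the same argument applied to the cylinder in $\mathscr{D}^G$.
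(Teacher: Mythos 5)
Your strategy has a genuine gap at the central step. You claim that for any homotopy functor $\Phi$ and any $G$-strongly cocartesian $J_+|_H$-cube $X$, the cube $T_J\Phi(X)$ is cartesian, so that $T_J\Phi$ is already $J$-excisive. This is false, even non-equivariantly: for a general homotopy functor $\Phi\colon\Top_\ast\to\Top_\ast$ the functor $T_n\Phi$ is not $n$-excisive, and the equivalence $P_n\Phi\simeq T_n\Phi$ does not hold. (This is precisely why Goodwillie's construction is an \emph{infinite} iteration.) The error is visible in your proposed reduction. After applying Fubini you want, for each fixed non-empty $V\subset J_+$, the $J_+|_H$-cube $S\mapsto\Phi\big(\Lambda^J_V(X_S)\big)$ to be cartesian. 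But take $G=1$ and $J=\underline{1}$, so $J_+$ is a $2$-element set, and take $V=J_+$, so that $\Lambda^J_V(c)=\Sigma c$. Your claim then requires $S\mapsto\Phi(\Sigma X_S)$ to be a cartesian square for every cocartesian square $X$, i.e.\ that $\Phi\circ\Sigma$ is $1$-excisive for arbitrary $\Phi$. That is not the case.

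The subtlety is that $\Lambda^J_V(X_S)$ only sees the single vertex $X_S$, so no amount of strong cocartesianity of $X$ can make $S\mapsto\Phi(\Lambda^J_V(X_S))$ collapse. The argument that actually works (Goodwillie, Rezk) replaces $\Lambda^J(X_T)$ by a bifunctor built from the whole cube, namely
\[
K(U,T)=\hocolim_{S\in St(J_+)}X_{(S\cap U)\cup T},
\]
and factors $t_J\Phi(X_T)$ through the cube $Y_T=\holim_{\mathcal{P}_0(J_+)}\Phi(K(-,T))$. The point is that when $X$ is $G$-strongly cocartesian one has $K(U,T)\simeq X_{U\cup T}$, so for non-empty $U$ and $u\in U$ the map $\Phi(X_{U\cup T})\to\Phi(X_{U\cup T\cup u})$ is the identity; that is what makes the cube $T\mapsto\Phi(K(U,T))$ cartesian, and hence $Y$ a homotopy limit of cartesian cubes. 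One then concludes not that $T_J\Phi(X)$ is cartesian, but that the map $\Phi(X)\to T_J\Phi(X)$ factors through a cartesian cube, so that $P_J\Phi(X)$ is a sequential homotopy colimit of cartesian cubes (using cofinality), which is cartesian since each $\mathscr{D}^H$ is locally finitely presentable. Your ``first step'' characterization ($\Phi$ is $J$-excisive iff $t_J\Phi$ is an equivalence) is correct and useful, and your derivation of the universal property from it is essentially the right idea, but the middle section as written does not go through and cannot be repaired without a structurally different intermediate object.
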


\begin{proof}
Let $H$ be a subgroup of $G$ and let $X\in \mathscr{C}^{\mathcal{P}(J_+|_H)}_a$ be a $J_+|_H$-cube in $\mathscr{C}$. We use the technique of \cite{calcIII} and \cite{rezk} of factorizing the map $t_J\Phi\colon \Phi(X)\rightarrow T_J\Phi(X)$ through a certain $J_+|_H$-cube $Y\in \mathscr{D}^{\mathcal{P}(J_+|_H)}_a$. Then we show that when $X$ is $G$-strongly cocartesian $Y$ is cartesian. By cofinality, the sequential homotopy colimit $P_J\Phi(X)$ will be equivalent to a sequential homotopy colimit of cartesian cubes, which is itself homotopy cartesian (this is proved in \cite[A.8]{Gdiags} and it uses that each $\mathscr{D}^H$ is locally finitely presentable). This will show that $P_J\Phi$ is $J$-excisive.

We define a functor $K\colon \mathcal{P}(J_+)\times \mathcal{P}(J_+)\rightarrow \mathscr{C}$ by
\[K(U,T)=\hocolim_{S\in St(J_+)}X_{(S\cap U)\cup T}.\]
This has an $H$-structure defined by the natural transformations
\[h\colon K(U,T)\stackrel{h}{\longrightarrow} \hocolim_{S\in St(J_+)}X_{h\big((S\cap U)\cup T\big)}=\hocolim_{St(J_+)}X_{((-)\cap hU)\cup hT}\circ h\longrightarrow K(hU,hT)\]
where the first map is from the $H$-structure of $X$ and the second map is the restriction along the functor $h\colon St(J_+)\rightarrow St(J_+)$. Define the $J_+|_H$-cube $Y\in \mathscr{D}_{a}^{\mathcal{P}(J_+|_H)}$ by
\[Y_T=\holim_{\mathcal{P}_0(J_+)}\Phi(K(-,T))\]
with the $H$-structure induced from the one of $K$. We define, for every $T\in\mathcal{P}(J_+)$, a factorization
\[\xymatrix@R=13pt{\Phi(X_T)\ar[dr]_-\phi\ar[rr]^-{t_J\Phi(X_T)}
&&\displaystyle\holim_{\mathcal{P}_0(J_+)} \Phi(\Lambda^J(X_T))=T_J\Phi(X_T)\\
&\displaystyle\holim_{\mathcal{P}_0(J_+)}\Phi(K(-,T))\ar[ur]_-{\psi}
}\]
which assemble into a factorization of the map $t_J\Phi(X)$ through $Y$. The map $\psi$ is the homotopy limit of $\Phi$ applied to the map of $H_T$-diagrams $K(-,T)\rightarrow \Lambda^J(X_T)$ defined at a vertex $U\in \mathcal{P}_1(J_+)$ by
\[K(U,T)=\hocolim_{S\in St(J_+)}X_{(S\cap U)\cup T}=
\hocolim_{St(J_+)}X_{(-)\cup T}\circ ((-)\cap U){\longrightarrow}\hocolim_{S\in St(U)}X_{S\cup T}\longrightarrow \Lambda^J(X_T).\]
Here the first map is induced by the functor $(-\cap U)\colon St(J_+)\rightarrow St(U)$. The second one is induced by the map of $St(U)$-shaped diagrams $X_{-\cup T}\rightarrow St^U(X_T)$ which is the identity on the initial vertex $\emptyset\in St(U)$, and which is the map to the terminal object everywhere else. The map $\phi$ is induced by the natural transformation of $\mathcal{P}_0(J_+)$-diagrams $\Phi(X_T)\to \Phi(K(-,T))$ (where $\Phi(X_T)$ is seen as a constant diagram) defined at a vertex $U\in \mathcal{P}_0(J_+)$ by applying $\Phi$ to the composite
\[X_T\stackrel{\simeq}{\longrightarrow}X_T\otimes NSt(J_+)\cong\hocolim_{St(J_+)}X_T\longrightarrow \hocolim_{St(J_+)}X_{((-)\cap U)\cup T}.\]
Here the first map is induced by the inclusion $\{\emptyset\}\to St(J_+)$, and the second one by the inclusions $T\to (S\cap U)\cup T$.


We are left with proving that $Y$ is cartesian whenever $X$ is $G$-strongly cocartesian. We claim that when $X$ is $G$-strongly cocartesian, there is an equivalence of $H$-diagrams
\begin{equation}\label{eqKX}
K(U,T)\simeq X_{U\cup T}
\end{equation}
analogous to the situation of \cite{rezk}.
This would show that for any non-empty $U\subset J_+$ the cube $\Phi(K(U,-))$ is cartesian, since for every $u\in U$ the map $\Phi(X_{U\cup T})\rightarrow \Phi(X_{U\cup (T\cup u)})$ is the identity (see \cite[3.30]{Gdiags}). Hence $Y$ will be a homotopy limit of cartesian cubes, and therefore itself cartesian. In order to prove the equivalence (\ref{eqKX}) we calculate the homotopy colimit defining $K(U,T)$ by covering the category $St(J_+)$ by the punctured poset cubes $\mathcal{P}_{1}(o_+)$, for $o\in J/G$. By \cite[1.10]{calcII} the homotopy colimit of a diagram over $St(J_+)$ decomposes as 
\begin{equation}\label{eqKiter}
\displaystyle K(U,T)\stackrel{\simeq}{\longleftarrow}\hocolim_{W\in\mathcal{P}_1(J/G)}\hocolim_{\bigcap\limits_{o\in (J/G)\backslash W}\mathcal{P}_{1}(o_+)}X_{((-)\cap U)\cup T}
\end{equation}
(it also follows by \ref{coverings}, which is an equivariant enhancement of \cite[1.10]{calcII}).
Since the various posets $\mathcal{P}_{1}(o_+)$ overlap only on the edge $\emptyset\to +$, the inner homotopy colimit in (\ref{eqKiter}) is
\[\hocolim(X_{T}\to X_{(+\cap U)\cup T})\stackrel{\simeq}{\longrightarrow} X_{(+\cap U)\cup T}\]
whenever $|W|<|J/G|-1$, and in particular it is independent of $W$. For the subsets $W\subset J/G$ of the form $W=(J/G)\backslash o$ for some $G$-orbit $o$, the inner homotopy colimit of (\ref{eqKiter}) is equivalent to
\[\hocolim_{\mathcal{P}_{1}(o_+)}X_{((-)\cap U)\cup T}\stackrel{\simeq}{\longrightarrow} X_{(o_+\cap U)\cup T}.\]
This map is an equivalence for the following reasons. When $U$ contains $o_+$ we have that $\big(X_{((-)\cap U)\cup T}\big)|_{\mathcal{P}(o_+)}=\big(X_{(-)\cup T}|_{\mathcal{P}(o_+)}\big)$. This is the translated cube $\widetilde{X}_T$ from \ref{faces} if $T$ does not intersect $o_+$, and it has therefore homotopy colimit equivalent to $X_{(o_+\cap U)\cup T}$ because $X$ is $G$-strongly cocartesian. In the case where $T$ does intersects $o_+$, the maps $X_{S\cup T}\to X_{(S\cup t)\cup T}$ are identities for $t\in T\cap o_+$, and the colimit is also equivalent to $X_{(o_+\cap U)\cup T}$, by \cite[3.30]{Gdiags}. Finally, if $U$ does not contain $o_+$, the maps $X_{(S\cap U)\cup T}\to X_{((S\cup j)\cap U)\cup T}$ are identities for some $j$ in $o_+$ which does not belong to $U$, and the homotopy colimit is again equivalent to $X_{(o_+\cap U)\cup T}$. Thus (\ref{eqKiter}) becomes an equivalence
\[K(U,T)\simeq\hocolim_{W\in\mathcal{P}_{1}(J/G)}\left\{\begin{array}{ll}
X_{(o_+\cap U)\cup T}&\mbox{for $W=(J/G)\backslash o$ for some $o\in J/G$}\\
X_{(+\cap U)\cup T} &\mbox{for}\ |W|<|J/G|-1
\end{array}\right.\]
This is the iterated homotopy pushout 
\[K(U,T)\simeq X_{(o^{1}_+\cap U)\cup T}\coprod_{X_{(+\cap U)\cup T}}\dots \coprod_{X_{(+\cap U)\cup T}} X_{(o^{n}_+\cap U)\cup T}\]
which is equivalent to $X_{U\cup T}$ by the decomposition \ref{unionintersectionthing} applied to the $G$-strongly cocartesian cube $\widetilde{X}_T$ from \ref{faces}.
This concludes the proof that $P_J\Phi$ is $J$-excisive.

The argument of \cite[1.8]{calcIII} applies verbatim to our equivariant situation, showing that $\Phi\to P_J\Phi$ is homotopy initial among maps to $J$-excisive functors.
\end{proof}

\begin{rem}\label{PJcommute} The excisive approximations are defined as sequential homotopy colimits of finite equivariant homotopy limits. Because we are assuming that the model categories $\mathscr{D}^H$ are locally finitely presentable sequential homotopy colimits in $\mathscr{D}^G$ commute with finite homotopy limits of $G$-diagrams in $\mathscr{D}$ (see \cite[A.8]{Gdiags}). It follows that the equivariant excisive approximations satisfy properties similar to \cite[1.7]{calcIII}. More precisely for every pair of finite $G$-sets $J$ and $K$:
\begin{enumerate}
\item There are natural equivalences $T_KT_J\Phi\simeq T_JT_K\Phi$ and $P_KP_J\Phi\simeq P_JP_K\Phi$,
\item The construction $T_J$ commutes with pointwise equivariant homotopy limits and with pointwise sequential homotopy colimits of functors,
\item The construction $P_J$ commutes with pointwise finite equivariant homotopy limits and with pointwise sequential homotopy colimits of functors,
\item If the target $G$-model category $\mathscr{D}$ is $J$-stable (the cocartesian $J_+$-cubes are the same as the cartesian $J_+$-cubes, for example the $G$-model category of $G$-spectra) both $T_J$ and $P_J$ commute with all the pointwise homotopy colimits of functors.
\end{enumerate}
\end{rem}

We end the section with some examples of equivariant excisive approximations.

\begin{ex}\label{ex:transapprox}
Suppose that $\mathscr{C}$ and $\mathscr{D}$ are pointed, and that $\Phi\colon\mathscr{C}\to \mathscr{D}^G$ sends the zero object of $\mathscr{C}$ to an object $G$-equivalent to the zero object of $\mathscr{D}^G$. Suppose that $J=T$ is a transitive $G$-set. Then $\Lambda^T(c)$ is equivalent to the $T$-suspension cube $\sigma^Tc$ of Example \ref{loopsusp}. Since $\Phi$ preserves the zero object the natural transformation $\omega^T\Phi(\Sigma^Tc)\to\Phi(\sigma^Tc)$ induced by the map from the zero-object is an equivalence, where $\omega^T$ is the loop cube of \ref{loopsusp}. This induces a weak equivalence on homotopy limits
\[T_T\Phi(c)=\holim_{\mathcal{P}_0(T_+)}\Phi(\sigma^Tc)\stackrel{\simeq}{\longleftarrow}
\holim_{\mathcal{P}_0(T_+)}\omega^T\Phi(\Sigma^Tc)\cong \Omega^T\Phi(\Sigma^Tc).\]
It follows that the $n$-fold iteration $T_{T}^{(n)}\Phi(c)$ is equivalent to $\Omega^{nT}\Phi(\Sigma^{nT}c)$. If $\Phi\colon\mathscr{C}\to \mathscr{D}^G$ is enriched over simplicial sets there are actual maps $\Phi(c)\to \Omega^{nT}\Phi(\Sigma^{nT}c)$, and a natural equivalence
\[P_T\Phi(c)\simeq\hocolim\big(
\Phi(c)\longrightarrow\Omega^{T}\Phi(\Sigma^{T}c)\longrightarrow \Omega^{2T}\Phi(\Sigma^{2T}c)\longrightarrow\dots
\big).\]
In this case the proof of \ref{exisiveapprox} specializes to \cite[3.26]{Gdiags}.
\end{ex}

\begin{ex}[Genuine vs na\"{i}ve homology theories]\label{naivevsgenuine}
Let $E\in \Sp^{G}_O$ be an orthogonal $G$-spectrum. For every transitive $G$-set $T$, we consider the infinite loop space functor $\Omega^{T}_E\colon \Top_\ast\to \Top_{\ast}^G$ defined by
\[\Omega^{T}_E(X)=\Omega^{\infty T}(E\wedge X)=\hocolim_k\Omega^{kT}(E_{kT}\wedge X)\]
where $E_{kT}$ is the value of $E$ at the permutation representation $\mathbb{R}[kT]$.
This is the homology theory on the universe $\bigoplus_{k\geq 0}\mathbb{R}[T]$ associated to $E$, or said in other words $\Omega^{T}_E$ is $T$-excisive (the approximation map $\Omega^{T}_E\to P_T\Omega^{T}_E$ is an equivalence by \ref{ex:transapprox}). In particular $\Omega^{G}_E$ is a genuine homology theory ($G$-excisive) and $\Omega^{1}_E$ is a na\"{i}ve one ($\underline{1}$-excisive). If $f\colon T\to S$ is a map of transitive $G$-sets, there is an induced map $f^\ast\colon \Omega^{S}_E(X)\to \Omega^{T}_E(X)$ defined by suspending by the sphere of the canonical complement of the inclusion $f^\ast\colon \mathbb{R}[S]\to \mathbb{R}[T]$. We claim that there is a natural zig-zag of equivalences $\Omega^{T}_E(X)\simeq P_T\Omega^{S}_E(X)$, under which the canonical map $f^\ast$ corresponds to the universal approximation map $p_T\colon \Omega^{S}_E(X)\to P_T \Omega^{S}_E(X)$. In particular $\Omega^{G}_E$ is the $G$-excisive approximation of $\Omega^{1}_E$: the closest genuine homology theory approximating $\Omega^{1}_E$. The zig-zag is
\[P_T\Omega^{S}_E(X)\stackrel{\simeq}{\longleftarrow}\hocolim_k\Omega^{kT}\hocolim_n\Omega^{nS}(E_{nS}\wedge \Sigma^{kT}X)\stackrel{\simeq}{\longrightarrow}\hocolim_{k,n}\Omega^{nS+kT}(E_{nS+kT}\wedge X)\]
and the latter space is equivalent to $\Omega^{T}_E(X)$ because $\mathbb{R}[T]$ is cofinal among the sums of permutation representations of $T$ and $S$ (in fact $f^\ast$ includes $\mathbb{R}[S]$ into $\mathbb{R}[T]$).
\end{ex}

\begin{ex}
Let $A$ be a ring with a Wall antistructure and let $M$ be an $A$-bimodule. One of the main results of \cite{Gcalc} shows that the $\mathbb{Z}/2$-excisive approximation of the Real $K$-theory functor $\widetilde{\KR}(A\ltimes M(-))\colon sSet_\ast\to\Sp^{\mathbb{Z}/2}_{\Sigma}$ is equivalent to $H(A;M(S^{1,1}))$, the Real MacLane homology of $A$ with the coefficients in the Dold-Thom construction of the sign-representation sphere $S^{1,1}$. The spectrum $H(A;M(S^{1,1}))$ is equivalent to the Real topological Hochschild homology $\THR(A;M(S^{1,1}))$ when $2\in A$ is invertible. This is an equivariant analogue of \cite{DM}.
\end{ex}

We conclude the section by determining the excision properties of the indexed power functor.

\begin{prop}\label{smashfctr} Let $E$ be an orthogonal $G$-spectrum and $K$ a finite $G$-set.
The functor $\mathcal{T}_{E}^ K\colon \Top_\ast\to \Sp_{O}^ G$ defined by
\[\mathcal{T}_{E}^ K(X)=E\wedge X^{\wedge K}\]
 is $K\times G$-excisive, or equivalently $nG$-excisive where $n=|K|$.
\end{prop}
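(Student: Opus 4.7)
\textit{Proof plan.} We must show that $\mathcal{T}_E^K\colon\Top_\ast\to\Sp_O^G$ is $(K\times G)$-excisive. Since $K\times G$ is $G$-isomorphic to $nG$, with $n=|K|$, via the map $(k,h)\mapsto(h^{-1}k,h)$, the two excision properties coincide. Concretely, I must show: for every subgroup $H\leq G$ and every $G$-strongly cocartesian $(nG)_+|_H$-cube $Y$ of pointed spaces, $E\wedge Y^{\wedge K}$ is a cartesian $(nG)_+|_H$-cube of orthogonal $H$-spectra. Because cartesian and cocartesian $(nG)_+|_H$-cubes of spectra coincide by \cite[3.35]{Gdiags}, and since $E\wedge(-)$ commutes with homotopy colimits and hence preserves cocartesian cubes, it suffices to prove that $Y\mapsto Y^{\wedge K}$ takes strongly cocartesian $(nG)_+|_H$-cubes of pointed spaces to cocartesian cubes of $H$-spectra (via suspension).

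Write $o_1,\dots,o_n$ for the $G$-orbits of $nG$. The key step is to decompose $Y^{\wedge K}$ pointwise using the strongly cocartesian hypothesis. By Lemma \ref{unionintersectionthing}, for each subset $S\subseteq(nG)_+$ the canonical map
\[
Y_{S\cap(o_1)_+}\coprod_{Y_{S\cap\{+\}}}Y_{S\cap(o_2)_+}\coprod_{Y_{S\cap\{+\}}}\cdots\coprod_{Y_{S\cap\{+\}}}Y_{S\cap(o_n)_+}\stackrel{\simeq}{\longrightarrow}Y_S
\]
is an $H_S$-equivalence. Since the smash product of pointed spaces commutes with homotopy pushouts in each variable, iterating this decomposition in the $K$-fold smash expresses $Y_S^{\wedge K}$ as a finite iterated homotopy pushout of ``monomial'' terms of the form $Y_{S\cap(o_{k_1})_+}\wedge\cdots\wedge Y_{S\cap(o_{k_n})_+}$, with $(k_1,\dots,k_n)$ ranging over multi-indices in $\{1,\dots,n\}^n$, glued along $Y_{S\cap\{+\}}$.

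Letting $S$ vary, this describes the cube $S\mapsto Y_S^{\wedge K}$ as a finite homotopy pushout of ``monomial'' subcubes, and it therefore suffices to prove that each such subcube is cocartesian. The ``diagonal'' summands, corresponding to multi-indices that are permutations of $(1,\dots,n)$, together assemble into the cube obtained by pulling the $n$-fold external smash product of the orbit-wise restrictions $Y^{(k)}:=Y|_{\mathcal{P}((o_k)_+)}$ back along the fragmenting map $S\mapsto(S\cap(o_k)_+)_k$. Each $Y^{(k)}$ is a cocartesian $(o_k)_+|_{H_{o_k}}$-cube since $(o_k)_+$ does not lie in $St((nG)_+)$ and $Y$ is strongly cocartesian. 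External smash products of cocartesian spectral cubes are cocartesian in each independent orbit direction, and iterated application of Proposition \ref{cubeofcartcubes} along the orbits assembles these into a cocartesian cube on $\mathcal{P}((nG)_+)$. The remaining ``off-diagonal'' summands, in which some orbit $o_j$ does not appear among the $k_i$, give subcubes that are constant in the $o_j$-direction and are hence cocartesian for trivial reasons by \cite[3.30]{Gdiags}.

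The main technical obstacle lies in the combinatorial bookkeeping of the last step: rigorously organizing the iterated pushout expansion of $Y^{\wedge K}$ into monomial subcubes, verifying that the full cube is the homotopy pushout of these subcubes (as cubes of spectra, not merely pointwise), and handling the basepoint identifications required to match the diagonal summand with a pullback of an external smash. Once this combinatorial setup is in place, cocartesianness of each piece reduces to the basic fact that smash distributes over homotopy colimits in each variable, and cocartesianness of the full cube $Y^{\wedge K}$ follows by assembling the pieces.
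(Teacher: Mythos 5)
Your proposal takes a genuinely different route from the paper. The paper reduces to checking that $t_{K\times G}\colon\mathcal{T}_E^K\to T_{K\times G}\mathcal{T}_E^K$ is already an equivalence, which amounts to showing $\Lambda^{K\times G}(X)^{\wedge K}$ is cocartesian for the single generating strongly cocartesian cube $\Lambda^{K\times G}(X)$; it then commutes the $K$-indexed smash past the homotopy colimit defining $\Lambda$, organizes the result over a Grothendieck construction $\wr St(-)^K$, and concludes by a cofinality argument whose crux is that any tuple $\underline{W}\in St(K\times G_+)^K$ has union $\bigcup_k W_k\subsetneq (K\times G)_+$ because $K\times G$ has only $|K|$ orbits. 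You instead argue directly that $Y\mapsto Y^{\wedge K}$ carries an arbitrary $G$-strongly cocartesian $(nG)_+|_H$-cube $Y$ to a cocartesian cube of spectra, by invoking Lemma \ref{unionintersectionthing} vertex-by-vertex and expanding $Y_S^{\wedge K}$ into monomials $\bigwedge_{k\in K}Y_{S\cap(o_{j_k})_+}$. Both strategies are sound in outline: your approach is more concrete and proves a slightly stronger pointwise statement, trading the paper's universal-cube reduction for an explicit pushout stratification; the paper's approach hides the combinatorics inside the Fubini theorem and one cofinality check.

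You correctly flag that the missing content is the bookkeeping, and it is worth naming exactly what still needs to be supplied. First, the monomial ``subcube'' indexed by a permutation is not literally an external smash of the orbit-faces; it is the pullback of that external smash along the fragmenting map $S\mapsto(S\cap(o_k)_+)_{k\in K}$, which is a non-surjective embedding $\mathcal{P}((nG)_+)\to\prod_k\mathcal{P}((o_k)_+)$ because the basepoint direction is diagonalized. The claim that this pullback is still cocartesian is true but requires the left-cofinality of the restricted union functor $\prod_k\mathcal{P}_1((o_k)_+)\to\mathcal{P}_1((nG)_+)$ established in \cite[A.2--A.3]{Gdiags}; simply citing Proposition \ref{cubeofcartcubes} does not by itself close this. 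Second, the $G$-action on $K$ permutes your smash factors and therefore permutes the monomial multi-indices, so the individual monomial subcubes are not $H$-subcubes; only the full pushout stratification is $H$-equivariant. You must therefore either organize the expansion as an $H$-equivariant Grothendieck construction (at which point your argument converges with the paper's) or work one monomial $H$-orbit at a time. Neither issue is fatal, but each is precisely the kind of step the paper's cofinality argument was designed to avoid.
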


\begin{proof}
It suffices to show that the map $\mathcal{T}_{E}^ K\to T_{K\times G}\mathcal{T}_{E}^ K$ is an equivalence, that is to say that for every pointed $H$-space $X$ the $(K\times G)_+|_H$-cube cube $\mathcal{T}_{E}^ K(\Lambda^{K\times G}(X))$ is cartesian. Since the category of $G$-spectra is $G$-stable, this is equivalent to showing that $\mathcal{T}_{E}^ K(\Lambda^{K\times G}(X))$ is cocartesian (see \cite[3.35]{Gdiags}). Moreover smashing with $E$ preserves colimits, and it is sufficient to show that the cube of pointed spaces $\Lambda^{K\times G}(X)^{\wedge K}$ is cocartesian. We verify that the map
\[\hocolim_{U\in\mathcal{P}_1(K\times G_+)}(\big(\hocolim_{V\in St(U)}St^{U}(X)_V\big)^{\wedge K})\longrightarrow\big(\hocolim_{W\in St(K\times G_+)}St^{K\times G_+}(X)_{W}
\big)^{\wedge K}\]
is indeed an $H$-equivalence.
Classically the smash product commutes with homotopy colimits in each variable. An easy fixed points argument using the isomorphism (\ref{fixedptsdiag}) from \S\ref{sec:fixedpts} shows that smashing over the $G$-set $K$ also commutes with colimits in each variable, in the sense that the map above is an $H$-equivalence if and only if
\[\hocolim_{U\in\mathcal{P}_1(K\times G_+)}\hocolim_{\underline{V}\in St(U)^K}\big(St^{U}(X)_{V_k}^{\wedge K}\big)\longrightarrow\hocolim_{\underline{W}\in St(K\times G_+)^K}\big(St^{K\times G_+}(X)_{W_k}^{\wedge K}
\big)\]
is an $H$-equivalence. Here $\underline{V}=\{V_k\}_{k\in K}$ is a collection of objects of $St(U)$, whose components are permuted by the $G$-action, and similarly for $\underline{W}$. This $G$-action induces a $G$-structure on the categories valued functor 
\[St(-)^K\colon \mathcal{P}_1(K\times G_+)\longrightarrow Cat\]
defining a further $G$-action on its Grothendieck construction $\wr St(-)^K$. The diagrams $St^U(X)^{\wedge K}$ assemble into a $G$-diagram $St(X)^{\wedge K}$ on the category with $G$-action $\wr St(-)^K$. There is an equivariant projection $p\colon \wr St(-)^K\to St(K\times G_+)^K$ that sends a pair $(U\subsetneq K\times G_+,\underline{V}\in St(U)^K)$ to $\underline{V}$ viewed as a collection of subsets of $K\times G_+$. The map above fits into a commutative diagram
\[\xymatrix{\displaystyle\hocolim_{U\in \mathcal{P}_1(K\times G_+)}\hocolim_{\underline{V}\in St(U)^K}\big(St^{U}(X)_{V_k}^{\wedge K}\big))\ar[r]\ar[d]^-{\simeq}&\displaystyle\hocolim_{\underline{W}\in St(K\times G_+)^K}\big(St^{(K\times G)_+}(X)_{W_k}^{\wedge K}\big)\\
\displaystyle\hocolim_{\wr St(-)^K}St(X)^{\wedge K}\ar[ur]_-{p_\ast}
}.\]
The vertical map is an equivalence by the twisted Fubini theorem of \cite[2.26]{Gdiags}. The diagonal map is an equivalence by equivariant cofinality, if we can show that the categories ${\underline{W}}/p$ are $G_{\underline{W}}$-contractible for every object $\underline{W}$ of $St(K\times G_+)^K$. We claim that the pair $(\bigcup_{k\in K}W_k,\underline{W})$ defines a $G_{\underline{W}}$-invariant initial object in ${\underline{W}}/p$. For this to be a well-defined object we need the union $\bigcup_{k\in K}W_k$ to be a proper subset of $K\times G_+$. Each $W_k$ is a proper subset of $o_+$, for some orbit $o\in (K\times G)/G$. Since $K\times G$ has only $|K|$-orbits we cannot cover $K\times G$ with the $W_k$'s.
\end{proof}

\subsection{The equivariant Goodwillie tree}\label{sec:tree}

We combine the $J$-excisive approximations of a homotopy functor $\Phi\colon \mathscr{C}\to \mathscr{D}^G$ constructed in \S\ref{sec:ecxapprox} into a diagram analogous to the Taylor tower of classical functor calculus.

Observe that the universal property of the $J$-excisive approximation of \ref{exisiveapprox} guarantees that, if there exist a map of finite $G$-sets $\alpha\colon K\to J$ which is injective on orbits, there is an essentially unique weak map $P_J\Phi\to P_K\Phi$. This is because the functor $P_K\Phi$ is $J$-excisive by \ref{injonorb}. It is going to be convenient to construct an actual natural transformation $\alpha^\ast\colon P_J\Phi\to P_K\Phi$ instead of working in the homotopy category. The map $\alpha^\ast$ is induced on sequential homotopy colimits by the following composite
\[T_J\Phi(c)=\holim_{\mathcal{P}_0(J_+)}\Phi\big(\Lambda^{J}(c)\big)\stackrel{\alpha^\ast}{\longrightarrow} \holim_{\mathcal{P}_0(K_+)}\alpha^{\ast}\Phi\big(\Lambda^{J}(c)\big)\stackrel{\eta_\alpha}{\longrightarrow} \holim_{\mathcal{P}_0(K_+)}\Phi\big(\Lambda^{K}(c)\big)=T_K\Phi(c).\]
The first map is induced on homotopy limits by the image functor $\alpha\colon \mathcal{P}_0(K_+)\to \mathcal{P}_0(J_+)$. The second map is induced by the map of punctured $K_+$-cubes $\eta_\alpha\colon\alpha^{\ast}\Lambda^{J}(c)\to\Lambda^{K}(c)$ defined at a non-empty subset $U$ of $K_+$ by the canonical map on homotopy colimits
\[\Lambda^{J}(c)_{\alpha(U)}=\hocolim_{St(\alpha(U))}St^ {\alpha(U)}(c)\longrightarrow \hocolim_{St(U)}St^ {U}(c)=\Lambda^{K}(c)\]
induced by the functor $\alpha^{-1}(-)\cap U\colon St(\alpha(U))\to St(U)$. This functor is well defined because we are assuming that $\alpha$ is injective on orbits. It sends a subset $V\subset \alpha(U)\cap o_+$, where $o$ is an orbit in $J/G$, to the subset $\alpha^{-1}(V)\cap U$ of
\[\alpha^{-1}(\alpha(U)\cap o_+)\cap U=\alpha^{-1}(\alpha(U))\cap U\cap \alpha^{-1}(o_+)=U\cap w_+\]
where $w$ is the unique orbit which is sent to $o$ by $\alpha$.

\begin{rem}
By the universal property of the excisive approximation \ref{exisiveapprox} any two $G$-maps $\alpha,\beta\colon K\to J$ injective on orbits induce maps $\alpha^\ast,\beta^\ast\colon P_J\Phi\to P_K\Phi$ which are equivalent in the homotopy category. An explicit zig-zag of equivalence is provided by the commutative diagram
\[\xymatrix@C=40pt{P_J\Phi\ar[d]_-{\alpha^\ast}\ar[r]_-{\simeq}^-{p_JP_J\Phi}&
P_JP_J\Phi\ar[d]_-{P_J\alpha^\ast}&P_J\Phi\ar[d]^{P_Jp_K\Phi}\ar[l]_-{P_Jp_J\Phi}^-{\simeq}
\ar[r]^-{P_Jp_J\Phi}_-{\simeq}&P_JP_J\Phi\ar[d]^-{P_J\beta^\ast}&P_J\Phi
\ar[l]_-{p_JP_J\Phi}^-{\simeq}\ar[d]^{\beta^\ast}
\\
P_K\Phi\ar[r]^-{\simeq}_-{p_JP_K\Phi}&P_JP_K\Phi\ar@{=}[r]&P_JP_K\Phi\ar@{=}[r]&P_JP_K\Phi&P_K\Phi\ar[l]_-{\simeq}^-{p_JP_K\Phi}
}\]
from the proof of \cite[1.8]{calcIII}. The horizontal arrows are equivalences because both $P_J\Phi$ and $P_K\Phi$ are $J$-excisive.
\end{rem}

The maps $\alpha^\ast\colon P_J\Phi\to P_K\Phi$ define a contravariant diagram of functors indexed on the category of finite $G$-sets and $G$-maps that are injective on orbits. By the Remark above several of the maps in this diagram induce the same map in the homotopy category. We remove this redundancy by letting $\mathcal{F}^{[inj]}_{G}$ be the poset of finite $G$-sets, ordered by the relation $K\leq J$ if there is a $G$-map $K\to J$ that descends to an injective map $K/G\to J/G$. The various $J$-excisive approximations of $\Phi$ and the maps $\alpha^\ast$ define a diagram
\[(\mathcal{F}^{[inj]}_{G})^{op}\longrightarrow Fun_h(\mathscr{C},\mathscr{D}^G)\]
to the category of homotopy functors, which is commutative in the homotopy category. 

\begin{rem}
When we say that the diagram commutes in the homotopy category of homotopy functors we mean that for every $H$-object $c\in\mathscr{C}^H$ the diagram 
\[(P_{(-)}\Phi)(c)\colon (\mathcal{F}^{[inj]}_{G})^{op}\longrightarrow h\mathscr{D}^H\]
commutes in the homotopy category of $\mathscr{D}^H$, and that the zig-zags are natural in $c$. If one can localize the equivalences of $Fun_h(\mathscr{C},\mathscr{D}^G)$ this would give an actually commutative diagram in the homotopy category of $Fun_h(\mathscr{C},\mathscr{D}^G)$. This localization is constructed classically in \cite{BCR} and \cite{BR} by defining a model structure on the category of homotopy functors $\mathscr{C}\to\mathscr{D}$ which satisfy a certain accessibility condition.
\end{rem}

We observe that every map $\alpha\colon K\to J$ which is injective on orbits decomposes canonically as the composition of a $G$-map which is an isomorphism on orbits followed by an inclusion (through the image of $\alpha$). Conceptually, we want to distinguish these two kinds of maps. We will use the inclusions for taking homotopy limits and to talk about convergence, and we will use the isomorphisms on orbits to compare the convergence of different collections of inclusions. Let $\mathcal{F}^{incl}_{G}$ be the subposet of $\mathcal{F}^{[inj]}_{G}$ where the unique arrow $K\leq J$ belongs to $\mathcal{F}^{incl}_{G}$ if $K$ is a $G$-subset of $J$.

\begin{defn}
The Goodwillie tree of a homotopy functor $\Phi\colon \mathscr{C}\to\mathscr{D}^G$ is the functor 
\[\mathcal{T}_G\Phi\colon (\mathcal{F}_{G}^{incl})^{op}\longrightarrow Fun_h(\mathscr{C},\mathscr{D}^G)\]
which sends $J$ to $P_J\Phi$ and an inclusion of $G$-sets $\iota\colon K\to J$ to the map $\iota^{\ast}\colon P_J\Phi\to P_K\Phi$. We observe that the Goodwillie tower of $\Phi$ is the subdiagram of $\mathcal{T}_G\Phi$ on the sets $J=\underline{n}$ with the trivial $G$-action, where $n$ runs through the natural numbers.
\end{defn}

Let us discuss the role of the $G$-maps $K\to J$ that induce isomorphisms on orbits. Let $\underline{S}=\{S_{n}\}_{n\geq 1}$ be a sequence of transitive $G$-sets, and let $\iota_{\underline{S}}\colon\mathbb{N}\to \mathcal{F}_{G}^{incl}$ be the corresponding inclusion, that sends an integer $m$ to the disjoint union
\[\iota_{\underline{S}}(m)=\coprod_{n\leq m}S_n.\]
Here $\mathbb{N}$ is the poset of natural numbers with the standard order and the maps $\iota_{\underline{S}}(m)\to \iota_{\underline{S}}(k)$, for $m\leq k$, are the inclusions of coproduct summands.
We define $\mathcal{T}_G\Phi|_{\underline{S}}$ to be the tower
\[\mathcal{T}_G\Phi|_{\underline{S}}\colon\mathbb{N}^{op}\stackrel{\iota^{op}_{\underline{S}}}{\longrightarrow} (\mathcal{F}_{G}^{incl})^{op}\longrightarrow Fun_h(\mathscr{C},\mathscr{D}^G)\]
obtained by restricting the Goodwillie tree of $\Phi$ along $\iota^{op}_{\underline{S}}$. A collection of $G$-maps $\underline{\alpha}=\{\alpha_n\colon S_n\to R_n\}$ between two sequences of transitive $G$-sets $\underline{S}$ and $\underline{R}$ induces a map of towers 
\[\underline{\alpha}^{\ast}\colon \mathcal{T}_G\Phi|_{\underline{R}}\longrightarrow \mathcal{T}_G\Phi|_{\underline{S}}.\]
This is because the map $\amalg_{n\leq m}\alpha_n\colon \iota_{\underline{S}}(m)\to \iota_{\underline{R}}(m)$ is an isomorphism on orbits for every $m\in\mathbb{N}$. The map $\underline{\alpha}^\ast$ further induces a map on the homotopy limits of the towers
\[\holim_{\mathbb{N}^{op}}\mathcal{T}_G\Phi|_{\underline{R}}\longrightarrow \holim_{\mathbb{N}^{op}}\mathcal{T}_G\Phi|_{\underline{S}}.\]
We will show in \ref{convergence} that for certain functors (for example the identity on pointed $G$-spaces) this map is in fact a $G$-equivalence.
\begin{ex}
The projection $G\to 1$ induces a map from the constant sequence $\underline{G}$ with value $G$ to the constant sequence $\underline{1}$ with value $\{1\}$. This gives a natural transformation
\[\holim_{\mathbb{N}^{op}}P_n\Phi=\holim_{\mathbb{N}^{op}}\mathcal{T}_G\Phi|_{\underline{1}}\longrightarrow\holim_{\mathbb{N}^{op}}\mathcal{T}\Phi|_{\underline{G}}=\holim_{\mathbb{N}^{op}}P_{n\times G}\Phi\]
from the limit of the standard Goodwillie tower of $\Phi$ to the limit of the tower on free finite $G$-sets.
\end{ex}

\begin{ex}
Let $I\colon \Top_{\ast}\to \Top_{\ast}^G$ be the inclusion of pointed spaces with the trivial $G$-action (whose canonical extension to $\Top_{\ast}^G$ is the identity functor). We recall from \ref{ex:transapprox} that for every transitive $G$-set $T$ the $T$-excisive approximation of $I$ is naturally equivalent to
\[P_TI(X)\simeq \Omega^{\infty T}\Sigma^{\infty T}X:=\hocolim_k\Omega^{k T}\Sigma^{k T}X\]
where $\Omega^{k T}$ and $\Sigma^{k T}$ are the loop and suspension operators for the permutation representation sphere $\mathbb{R}[kT]^+$.
The map $P_1 I\to P_{G}I $ induced by the projection $G\to 1$ is the standard map
\[P_1I(X)\simeq
\Omega^\infty\Sigma^{\infty}X\longrightarrow 
\Omega^{\infty G}\Sigma^{\infty G}X\simeq P_{G}I(X)\]
from the na\"{i}ve to the genuine stable equivariant homotopy of $X$, induced by suspending with the reduced regular representation of $G$. This map has a retraction on fixed points, and $(P_{1}I(X))^G$ splits out of $(P_{G}I(X))^G$ as a summand, forming part of the Tom Dieck-splitting. In \S\ref{tomdieckspl} we will assemble the maps $P_S I\to P_{T}I $ induced by the $G$-maps $T\to S$ of transitive $G$-sets into the full Tom Dieck-splitting. Our technique generalizes to all the $G$-sets with a prescribed number of orbits and to a more general class of functors. 
\end{ex}

\subsection{Change of group}\label{sec:changegroup}

We study the behavior of the equivariant excisive approximations under the restriction of the $G$-action to a subgroup $H\leq G$. Let $\iota_{H}\colon H\to G$ be a subgroup inclusion, and let $\iota_{H}^\ast\colon \mathscr{D}^G\to \mathscr{D}^H$ be the corresponding restriction functor. Given a homotopy functor $\Phi\colon \mathscr{C}\to \mathscr{D}^G$ we write $\Phi|_H$ for the composite
\[\Phi|_H\colon \mathscr{C}\stackrel{\Phi}{\longrightarrow}\mathscr{D}^G\stackrel{\iota_{H}^\ast}{\longrightarrow} \mathscr{D}^H.\]
Notice that a $G$-model category restricts canonically to an $H$-model category by considering only the categories $\mathscr{C}^L$ for subgroups $L\leq H$, and that $\Phi|_H$ is still a homotopy functor. Given a finite $G$-set $J$ we use the notation $J=\coprod_{o\in J/G}o$ for the canonical decomposition into $G$-orbits, and we write
\[J=\coprod_{o\in J/G}\coprod_{w\in o/H}w\]
for its further decomposition into $H$-orbits. An element $\underline{w}$ of the set $\prod_{o\in J/G}o/H$ is a choice of $H$-orbit $w_o\in o/H$ for every $G$-orbit $o\in J/G$. We will denote the union of its components by $\amalg\underline{w}=\amalg_{o\in J/G}w_o$, which is an $H$-subset of $J$ that intersects every $G$-orbit. Given a finite collection of $H$-sets $\{K_i\}_{i\in I}$, we denote by 
\[\bigcirc_{i\in I}P_{K_i}(\Phi|_H):=P_{K_{i_1}}\dots P_{K_{i_k}}(\Phi|_H)\]
the iteration of the $K_i$-excisive approximations, in any chosen order (different orders give equivalent functors by \ref{PJcommute}).

\begin{theorem}\label{restriction}
Let $J$ be a finite $G$-set and let $\Phi\colon\mathscr{C}\to \mathscr{D}^G$ be a homotopy functor. For every subgroup $H\leq G$ there is a natural zig-zag of equivalences under $\Phi|_H$
\[(P_J\Phi)|_H\simeq  \iter_{\underline{w}\in\!\!\!\prod\limits_{o\in J/G}\!\!\! o/H}P_{\amalg\underline{w}}(\Phi|_H).\]
\end{theorem}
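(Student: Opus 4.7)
The plan is a two-stage argument: first handle the transitive case by explicit computation, then reduce the general case to it via an iterative universal-property argument.

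In the transitive case $J = T$, the explicit formula of Example~\ref{ex:transapprox} identifies $P_T\Phi(c) \simeq \hocolim_k \Omega^{kT}\Phi(\Sigma^{kT}c)$ (after reducing to pointed $\Phi$, which one may always do, or by adapting the proof of \ref{exisiveapprox} directly to the unreduced case). Restriction to $H$ uses the $H$-equivariant decomposition $T|_H = \amalg_{w \in T/H} w$, which gives $\mathbb{R}[kT]|_H \cong \bigoplus_{w} \mathbb{R}[kw]$ and therefore factors the loop and suspension functors as $\Omega^{kT}|_H \cong \iter_w \Omega^{kw}$ and $\Sigma^{kT}|_H \cong \iter_w \Sigma^{kw}$. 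Combining the commutation of sequential homotopy colimits with finite equivariant homotopy limits in the locally finitely presentable $H$-model categories $\mathscr{D}^L$ (Remark~\ref{PJcommute}(iii)) with a diagonal cofinality in the multi-indexed sequential parameter $(k_w)_w$ rearranges $(P_T\Phi)|_H(c)$ into $\iter_{w \in T/H} P_w(\Phi|_H)(c)$, establishing the statement whenever $J$ is a single $G$-orbit.

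For general $J$, by iterating Theorem~\ref{exisiveapprox} and the commutation $P_KP_{K'} \simeq P_{K'}P_K$ of Remark~\ref{PJcommute}(i), the right-hand side $\iter_{\underline{w}} P_{\amalg\underline{w}}(\Phi|_H)$ is the essentially initial homotopy $H$-functor under $\Phi|_H$ which is $(\amalg\underline{w})$-excisive for every transversal $\underline{w} \in \prod_{o \in J/G} o/H$. The key technical step is therefore to verify that $(P_J\Phi)|_H$ inherits this family of excisivities. Given an $H$-strongly cocartesian $(\amalg\underline{w})_+|_L$-cube $Y$ in $\mathscr{C}^L$, I would extend $Y$ to a $G$-strongly cocartesian $J_+|_L$-cube $X$, built by iteratively adjoining cone-type pushout fills along the $H$-orbits of $J \setminus \amalg\underline{w}$ in a manner parallel to the $\Lambda^J$-construction of~\S\ref{stronglycocart}. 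A case analysis on subsets $S \notin St(J_+)$ then verifies $G$-strong cocartesianity of $X$: the restrictions along the identity directions coming from $o \setminus w_o$ are cocartesian by \cite[3.30]{Gdiags}; the restrictions to subsets $S \subset (\amalg\underline{w})_+$ that cross multiple $G$-orbits lie outside $St((\amalg\underline{w})_+)$, since they necessarily cross multiple $H$-orbits of $\amalg\underline{w}$, and so are cocartesian by the $H$-strong hypothesis on $Y$.

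Applying $J$-excisivity of $P_J\Phi$ to $X$ produces a cartesian $J_+|_L$-cube $P_J\Phi(X)$, from which the desired cartesianity of $P_J\Phi(Y) = P_J\Phi(X)|_{\mathcal{P}((\amalg\underline{w})_+)}$ is extracted by iteratively applying Proposition~\ref{cubeofcartcubes} along the $H$-orbits of $J \setminus \amalg\underline{w}$. The main technical obstacle is engineering the extension $X$ so that this descent is non-circular: the naive collapse $X_V = Y_{V \cap (\amalg\underline{w})_+}$ is constant along $J \setminus \amalg\underline{w}$ and its image under $P_J\Phi$ has tautologically trivial cartesianity content, so one must use a genuine iterated-pushout extension whose complementary subcubes $\widetilde{X}_U$ for non-empty $U \subset J \setminus \amalg\underline{w}$ depend nontrivially on $Y$ and are independently cartesian, reducing inductively (on the orbit count of $J$) to the transitive case applied to each additional $H$-orbit. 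Once the $(\amalg\underline{w})$-excisivity of $(P_J\Phi)|_H$ is secured for every $\underline{w}$, the matching universal properties deliver the required natural zig-zag of equivalences between $(P_J\Phi)|_H$ and $\iter_{\underline{w}} P_{\amalg\underline{w}}(\Phi|_H)$.
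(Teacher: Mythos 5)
Your reduction to proving that $(P_J\Phi)|_H$ is $\amalg\underline{w}$-excisive for every transversal $\underline{w}\in\prod_{o}o/H$ is the right strategic move and matches the skeleton of the paper's argument (which isolates exactly this as Proposition~\ref{excres}, and then builds the zig-zag via a separately constructed natural transformation $F_\Phi$ and a 2-out-of-6 argument). But the way you try to establish that excisivity runs in the wrong direction and has a circularity that the paper deliberately avoids, and you haven't actually resolved it --- you flag it as ``the main technical obstacle'' and then gesture at an induction that doesn't close.

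The concrete problem: you want to \emph{extend} an $H$-strongly cocartesian $(\amalg\underline{w})_+|_L$-cube $Y$ to a $G$-strongly cocartesian $J_+|_L$-cube $X$ and then peel $Y$ back out of $P_J\Phi(X)$ using Proposition~\ref{cubeofcartcubes}. That peeling requires the complementary faces $\widetilde{X}_U$, for nonempty $U\subset J\setminus\amalg\underline{w}$, to already have cartesian image under $P_J\Phi$. If the extension is constant in the $J\setminus\amalg\underline{w}$ directions (i.e.\ $X_V=Y_{V\cap(\amalg\underline{w})_+}$, pullback along intersection), then $\widetilde{X}_U=Y$ for every $U$, so the required hypothesis is literally the conclusion you're trying to prove --- as you notice. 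But if the extension is built by ``cone-type pushout fills,'' then the faces $\widetilde{X}_U$ are $(\amalg\underline{w})_+$-cubes that are again something like strongly cocartesian, and you are back in the same loop; the orbit count of the ambient $G$-set has not decreased, so there is no well-founded induction. Your verification sketch of $G$-strong cocartesianity of $X$ also misses the $S=o_+$ case where $S$ meets exactly one $G$-orbit but crosses both $w_o$ and $o\setminus w_o$: such $S$ is outside $St(J_+)$, is not contained in $(\amalg\underline{w})_+$, and is not handled by either of your two bullet cases.

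The paper's proof goes the other way precisely to sidestep this. Rather than extending $Y$ to a bigger cube, it \emph{pulls back} along a poset functor $p\colon\mathcal{P}(J_+|_H)\to\mathcal{P}((\amalg\underline{w})_+)$ and feeds $p^\ast Y$ to $P_J\Phi$. The key point is that $p$ is \emph{not} induced by a map of $G$-sets (in general no $H$-equivariant retraction $J|_H\to\amalg\underline{w}$ exists: only when the orbits of $J$ are $G/K$ with $K$ normal do the $H$-orbits of each $o$ become isomorphic). Instead $p$ is a ``blow-up'' built from orbit-by-orbit functors $f_v\colon\mathcal{P}(v)\to\mathcal{P}(w_o)$, chosen so as to satisfy the three axioms of Lemma~\ref{blowupcubes}: in particular $p(U)=\emptyset$ only when $U=\emptyset$. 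That single condition is exactly what breaks the circularity of the constant extension, because it guarantees that $p^\ast$ genuinely detects cartesianness (Lemma~\ref{blowupcubes}), so from ``$P_J\Phi(p^\ast Y)$ cartesian'' one deduces ``$P_J\Phi(Y)$ cartesian'' directly with no independent hypothesis on complementary faces. You would need either to rediscover this blow-up lemma or to invent a genuinely different mechanism for the face cubes; as it stands the argument has a hole at its center.

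Two smaller points. Your opening treatment of the transitive case invokes Example~\ref{ex:transapprox}, which assumes $\mathscr{C}$ and $\mathscr{D}$ pointed and $\Phi$ reduced; Theorem~\ref{restriction} makes no such assumptions, and ``reducing to pointed $\Phi$'' is not a free move in a general $G$-model category. And the claim that ``matching universal properties deliver the required natural zig-zag'' needs the construction of an actual weak natural transformation $F_\Phi\colon(P_J\Phi)|_H\to\iter_{\underline{w}}P_{\amalg\underline{w}}(\Phi|_H)$ before one can run a 2-out-of-6 argument against it; the universal property alone gives a map only in a homotopy category of functors, whose existence the paper is careful not to presuppose.
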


\begin{rem} Theorem \ref{restriction} specializes to the following equivalences:
\begin{enumerate}
\item If $H$ is the trivial group, there is a natural equivalence
\[(P_J\Phi)|_{1}\simeq \iter_{\underline{w}\in\!\!\!\prod\limits_{o\in J/G}\!\!\! o}P_{|J/G|}(\Phi|_1)\simeq P_{|J/G|}(\Phi|_1)\]
that is $P_J\Phi$ and $P_{J/G}\Phi$ have the same non-equivariant homotopy type.
\item If $J=nG$ is a free $G$-set, there is a natural equivalence
\[(P_{nG}\Phi)|_H\simeq  \iter_{\substack{\underline{w}\in\prod\limits_{i=1}^n G/H}}P_{nH}(\Phi|_H)\simeq P_{nH}(\Phi|_H).\]
In particular if $\Phi$ is $nG$-excisive $\Phi|_H$ is $nH$-excisive.
\item
If $J=T$ is a transitive $G$-set with $H$-orbits decomposition $T=T_1\amalg\dots\amalg T_n$, there is an equivalence
\[(P_T\Phi)|_H\simeq  P_{T_1}\dots P_{T_n}(\Phi|_H).\]
If $\mathscr{C}$ and $\mathscr{D}$ are pointed and $\Phi$ is reduced this corresponds to the equivalence
\[\Omega^{\infty T|_H}\Phi(\Sigma^{\infty T|_H}c)\simeq \Omega^{\infty T_1}\dots \Omega^{\infty T_n}\Phi(\Sigma^{\infty T_1}\dots\Sigma^{\infty T_n}c)\]
(cf. \ref{ex:transapprox}).
\end{enumerate}
\end{rem}

The proof of Theorem \ref{restriction} will occupy the rest of the section.

\begin{lemma}\label{blowupcubes}
Let $J$ and $K$ be finite $G$-sets, and let $p\colon\mathcal{P}(K)\to \mathcal{P}(J)$ be a $G$-equivariant functor with the following properties:
\begin{enumerate}
\item $p(U)=\emptyset$ if and only if $U=\emptyset$,
\item $p(U\cup V)=p(U)\cup p(V)$ for every $U,V\subset K$,
\item For every $j\in J$ there is $k\in K$ such that $p(\{k\})=\{j\}$.
\end{enumerate}
Then the functor $p^\ast\colon C^{\mathcal{P}(J)}_a\to C^{\mathcal{P}(K)}_a$ preserves and detects cartesian cubes, and it preserves cocartesian cubes. 
\end{lemma}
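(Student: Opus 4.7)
The plan starts by extracting the structure of $p$ from the hypotheses: conditions (i) and (ii) force $p(U)=\bigcup_{k\in U}f(k)$ where $f(k):=p(\{k\})$, so $p$ preserves joins and hence admits a right adjoint $q\colon\mathcal{P}(J)\to\mathcal{P}(K)$ given by $q(V)=\{k\in K:f(k)\subseteq V\}$. Condition (iii) yields $p\circ q=\id_{\mathcal{P}(J)}$ and $p(K)=J$, from which one reads off the identifications $(p^\ast X)_\emptyset=X_\emptyset$ and $(p^\ast X)_K=X_J$ on the extremal vertices.

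For the cartesian part, the plan is to invoke the equivariant cofinality theorem \cite[2.25]{Gdiags} on the restriction $p\colon\mathcal{P}_0(K)\to\mathcal{P}_0(J)$. For each $V\in\mathcal{P}_0(J)$ and each subgroup $H\leq G_V$, the $H$-fixed points of the under-category $V\downarrow p=\{U\in\mathcal{P}_0(K):V\subseteq p(U)\}$ contain the $G$-fixed element $K$ as a maximum (since $p(K)=J\supseteq V$) and are therefore contractible. Equivariant cofinality then produces a $G$-equivalence $\holim_{\mathcal{P}_0(J)}X\stackrel{\simeq}{\longrightarrow}\holim_{\mathcal{P}_0(K)}p^\ast X$, which combined with the identification of the $\emptyset$-vertex shows that $p^\ast$ both preserves and detects cartesian cubes.

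For the cocartesian part the plan is dual but more delicate, since subsets $U\in\mathcal{P}_1(K)$ with $p(U)=J$ obstruct a naive cofinality argument. First decompose $\mathcal{P}_1(K)=\mathcal{A}\sqcup R$ where $\mathcal{A}=\{U:p(U)\neq J\}$ is a sieve and $R=\{U:p(U)=J\}$ is the complementary cosieve, on which $p^\ast X$ is the constant diagram at $X_J$. The restricted right adjoint $q\colon\mathcal{P}_1(J)\to\mathcal{A}$ is well-defined since $q(V)\subsetneq K$ for $V\neq J$, and it is equivariantly homotopy final: for every $U\in\mathcal{A}$ and every $H\leq G_U$, the $H$-fixed subcategory of the slice $\{V\in\mathcal{P}_1(J):U\subseteq q(V)\}=\{V:p(U)\subseteq V\}$ has the $G_U$-fixed minimum $p(U)$, hence is contractible. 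This yields $\hocolim_{\mathcal{A}}(p^\ast X)|_{\mathcal{A}}\simeq\hocolim_{\mathcal{P}_1(J)}X$, which equals $X_J$ by the cocartesian hypothesis on $X$. The final step combines the equivariant sieve/cosieve decomposition of the homotopy colimit with the observation that $p^\ast X|_R$ is constant at $X_J\simeq\hocolim_{\mathcal{A}}(p^\ast X)|_{\mathcal{A}}$ to absorb the $R$-contribution, concluding $\hocolim_{\mathcal{P}_1(K)}p^\ast X\simeq X_J=(p^\ast X)_K$. The main obstacle is precisely this gluing step, which is where the cocartesian hypothesis on $X$ becomes essential and the argument goes genuinely beyond the cofinality used for the cartesian case.
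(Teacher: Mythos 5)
Your approach has two issues, one of which is a genuine gap.

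\textbf{The cartesian part: wrong comma category.} You correctly identify the Galois adjunction $p \dashv q$ with $q(V)=\{k:p(\{k\})\subseteq V\}$, but your cofinality check is aimed at the wrong slice. For the restriction map $\holim_{\mathcal{P}_0(J)}X\to\holim_{\mathcal{P}_0(K)}p^\ast X$ to be an equivalence, the paper's equivariant cofinality condition asks that the \emph{over}-categories $p/_V=\{U\in\mathcal{P}_0(K):p(U)\subseteq V\}$ be $G_V$-contractible — compare the proof of \ref{+vertex}, where left cofinality is verified with $\iota/_U$. You instead proved that the \emph{under}-categories $V\downarrow p=\{U:V\subseteq p(U)\}$ have maximum $K$; that is the condition relevant to homotopy colimits, not homotopy limits. (A quick sanity check on the inclusion of a non-initial vertex into a chain shows the under-category condition is false in general for holim cofinality.) The fix is short and uses the adjunction you already set up: $p/_V$ has the $G_V$-fixed maximum $q(V)$, which is nonempty for $V\neq\emptyset$ by condition (iii), so $p/_V$ and all its fixed subposets are contractible.

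\textbf{The cocartesian part: the ``absorption'' step is not established.} Your step through $q\colon\mathcal{P}_1(J)\to\mathcal{A}$ is correct and is the key cofinality observation — the under-category $U/q\cong\{V\in\mathcal{P}_1(J):p(U)\subseteq V\}$ has the $G_U$-fixed minimum $p(U)$ precisely because $U\in\mathcal{A}$. The gap is the final claim that a ``sieve/cosieve decomposition'' lets you conclude $\hocolim_{\mathcal{P}_1(K)}p^\ast X\simeq\hocolim_{\mathcal{A}}p^\ast X$ from the constancy of $p^\ast X|_R$ at $X_J$ together with $\hocolim_{\mathcal{A}}p^\ast X|_{\mathcal{A}}\simeq X_J$. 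No such formal decomposition exists here. The inclusion $\mathcal{A}\hookrightarrow\mathcal{P}_1(K)$ is \emph{not} homotopy final (under-categories $U/\iota$ for $U\in R$ are empty), and the derived left Kan extension of $p^\ast X|_{\mathcal{A}}$ along $\iota$ does \emph{not} recover $p^\ast X$ on $R$. For a concrete failure, take $J=\{a,b\}$, $K=\{x,y,z\}$ with $p(\{x\})=\{a\}$, $p(\{y\})=\{b\}$, $p(\{z\})=J$. Then $\mathcal{A}=\{\emptyset,\{x\},\{y\}\}$ and $\{z\}\in R$, and the slice $\mathcal{A}/\{z\}$ is just $\{\emptyset\}$, so the Kan extension has value $X_\emptyset$ at $\{z\}$, which is generally \emph{not} equivalent to $X_J=p^\ast X(\{z\})$. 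The total colimit over $\mathcal{P}_1(K)$ does come out equivalent to $X_J$ once you use cocartesianity, but this requires an argument (e.g.\ along the lines of the paper's direct manipulation with the properties $pp^{-1}(W)=W$, $U\subseteq p^{-1}p(U)$, $p^{-1}(W)\cup p^{-1}(Z)\subseteq p^{-1}(W\cup Z)$, following \cite[A.3]{Gdiags}), not a one-line gluing. As written, the last step of your cocartesian proof does not go through.
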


\begin{proof}
This lemma is a slight generalization of \cite[A.1]{Gdiags} and \cite[A.3]{Gdiags}, where this result is proved when $p\colon\mathcal{P}(K)\to \mathcal{P}(J)$ is the image functor of a surjective $G$-map $K\to J$. The difference in our case is that $p$ might send one-point sets to sets which have more than one element. The proof of \ref{blowupcubes} is identical to \cite[A.1]{Gdiags} and \cite[A.3]{Gdiags}, by replacing the preimage of a subset $W\subset J$ by a map $K\to J$ with the ``preimage'' of $W$ by the functor $p$, which is defined as
\[p^{-1}(W):=\{k\in K\ | \ p(\{k\})\subset W\}.\]
The fundamental properties satisfied by $p^{-1}(-)$ needed in the proof are the following:
\begin{itemize}
\item $p^{-1}(W)\cup p^{-1}(Z)\subset p^{-1}(W\cup Z)$
\item $p p^{-1}(W)=W$,
\item $U\subset p^{-1}p(U)$.
\end{itemize}
\end{proof}

\begin{prop}\label{excres}
Let $\underline{w}\in \prod_{o}o/H$ be a choice of $H$-orbit of $o$ for every $G$-orbit $o\in J/G$. If $\Phi\colon\mathscr{C}\to \mathscr{D}^G$ is $J$-excisive, the restriction $\Phi|_H\colon \mathscr{C}\to \mathscr{D}^H$ is $\amalg\underline{w}$-excisive.
\end{prop}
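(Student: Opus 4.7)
The plan is to deduce the $K$-excision of $\Phi|_H$ by a cube-extension argument combined with $J$-excision of $\Phi$ and the cube-of-cubes criterion of Proposition \ref{cubeofcartcubes}. Fix a subgroup $L \leq H$, a $K_+|_L$-cube $X \in \mathscr{C}^{\mathcal{P}(K_+|_L)}_a$ which is $H$-strongly cocartesian, and let me argue that $\Phi(X)$ is cartesian. Since $K = \amalg\underline{w}$ is $H$-invariant, hence $L$-invariant, the decomposition $J_+ = K_+ \sqcup (J\setminus K)$ is $L$-equivariant and induces $\mathcal{P}(J_+|_L)\cong \mathcal{P}(K_+|_L)\times \mathcal{P}((J\setminus K)|_L)$.

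The first step is to extend $X$ to a $J_+|_L$-cube $\tilde X$ by ``coning off'' the $(J\setminus K)$-directions: set $\tilde X_V=X_V$ when $V\subset K_+$ and $\tilde X_V=\ast$ otherwise. Under this choice every fiber $\widetilde{\tilde X}_U$ for $\emptyset\neq U \subset J\setminus K$ is a constant-$\ast$ cube, whose $\Phi$-image is a constant cube and hence trivially cartesian. For the $G$-strong cocartesianness I would check each subset $S\subset J_+$ outside $St_G(J_+)=\bigcup_{o}\mathcal{P}_1(o_+)$: when $S\subset K_+$, one uses the containment $\mathcal{P}_1(w_{o+})\subseteq \mathcal{P}_1(o_+)$ (which follows from $w_{o+}\subseteq o_+$) to see that $St_G(J_+)\cap \mathcal{P}(K_+)\supseteq St_H(K_+)$, so that $S\notin St_H(K_+)$ and the hypothesis on $X$ gives cocartesianness of $X|_{\mathcal{P}(S)}$; when $S$ meets $J\setminus K$, a Fubini-style iterated-cofiber computation reduces cocartesianness of $\tilde X|_{\mathcal{P}(S)}$ to cocartesianness of the smaller $K_+$-face $X|_{\mathcal{P}(S\cap K_+)}$.

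Given that $\tilde X$ is $G$-strongly cocartesian, $J$-excision of $\Phi$ gives that $\Phi(\tilde X)$ is cartesian as a $J_+|_L$-cube. Viewing $\Phi(\tilde X)$ as a $(J\setminus K)|_L$-cube of $K_+|_L$-cubes via the decomposition above, Proposition \ref{cubeofcartcubes}(i) together with the cartesianness of all fibers $\widetilde{\Phi(\tilde X)}_U$ for nonempty $U$ forces the $\emptyset$-fiber $\widetilde{\Phi(\tilde X)}_\emptyset=\Phi(X)$ to be cartesian in $\mathscr{D}^L$, which is exactly the desired conclusion.

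The hard part is the last verification in the construction of $\tilde X$, namely showing that the reduction ``$\tilde X|_{\mathcal{P}(S)}$ cocartesian iff $X|_{\mathcal{P}(S\cap K_+)}$ cocartesian'' always provides the needed cocartesianness from the $H$-strong cocartesianness of $X$. The delicate case is when $S$ meets several $G$-orbits via its $(J\setminus K)$-part while $S\cap K_+$ happens to be a proper subset of a single $w_{o+}$: here $S\notin St_G(J_+)$ but $S\cap K_+\in St_H(K_+)$, and the hypothesis on $X$ does not a priori apply. Resolving this requires either refining the extension inside each $G$-orbit $o$ (replacing the bare $\ast$-filling of the $(o\setminus w_o)$-directions by an iterated $\Lambda$-type filling that preserves the cocartesianness of the reduced face while keeping the $\Phi$-fibers in the $(J\setminus K)$-direction cartesian), or arguing that the reduction is in fact unnecessary for such $S$ because the corresponding subcube is cocartesian for formal reasons (for instance, when a direction factors through an identity after one applies the Fubini argument orbit-by-orbit). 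This orbit-by-orbit analysis is what I expect to be the technical core of the proof.
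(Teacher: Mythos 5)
You have correctly identified where the naive extension fails, and the gap is genuine. Here is a concrete instance of the trouble spot you describe: let $J=o_1\amalg o_2$ with $o_1=\{a,b,c\}$, $o_2=\{e\}$, $H$ acting trivially, $w_{o_1}=\{a\}$, $w_{o_2}=\{e\}$, so $K=\{a,e\}$. Take $S=\{b,e\}$; then $S\notin St_G(J_+)$ since $S$ meets both $G$-orbits, but $S\cap K_+=\{e\}\subsetneq\{e,+\}=(w_{o_2})_+$ lies in $St_H(K_+)$, so the hypothesis on $X$ says nothing about the edge $X_\emptyset\to X_{\{e\}}$. Under your extension $\tilde X$, the face $\tilde X|_{\mathcal{P}(S)}$ is the square with $X_\emptyset$, $X_{\{e\}}$, $\ast$, $\ast$, and its cocartesianness would require $X_\emptyset\to X_{\{e\}}$ to be an equivalence, which is not available. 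Your ``Fix 2'' (formal cocartesianness) therefore fails; and your ``Fix 1'' cannot consist of filling $(o\setminus w_o)$-directions by cones or by a strongly-cocartesian continuation determined by $X|_{\mathcal{P}(K_+)}$, because $X$ simply does not carry the data needed to produce non-degenerate fills in those directions.

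The paper resolves this by \emph{not} extending $X$ at all; it works in the opposite direction, pulling $X$ back along a retraction $p\colon\mathcal{P}(J_+|_H)\to\mathcal{P}(\amalg\underline{w}_+)$ that copies values from $K_+$ into the rest of $J_+$. The retraction is built orbit-by-orbit from $H$-equivariant functors $f_v\colon\mathcal{P}(v)\to\mathcal{P}(w_o)$ ($v\in o/H$, $v\neq w_o$) defined by $f_v(U)=\{h\cdot a_o\mid h\in H,\ h\cdot a_v\in U\}$ for chosen base-points $a_o\in w_o$ and $a_v\in v$. The crucial feature, and exactly what your $\tilde X$ lacks, is that $p$ sends \emph{nonempty} sets to \emph{nonempty} sets: in the example above $p(\{b\})=\{a\}$, so $p(\{b,e\})=\{a,e\}\notin St_H(K_+)$, and the pulled-back face $p^\ast(X|_{\mathcal{P}(\{a,e\})})$ is covered by the hypothesis on $X$. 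The properties of $p$ are axiomatized in Lemma~\ref{blowupcubes}: $p(U)=\emptyset\iff U=\emptyset$, $p$ preserves unions, and $p$ is surjective on singletons. Under these hypotheses $p^\ast$ both preserves cocartesianness and detects cartesianness, and one checks directly that $p(U)\notin St(\amalg\underline{w}_+)$ whenever $U\notin St(J_+)$. Then $p^\ast X$ is $G$-strongly cocartesian, $\Phi(p^\ast X)=p^\ast\Phi(X)$ is cartesian by $J$-excision of $\Phi$, and the detection half of Lemma~\ref{blowupcubes} yields that $\Phi(X)$ is cartesian. This is genuinely different from your cube-of-cubes strategy via Proposition~\ref{cubeofcartcubes}, and the key idea you are missing is the existence of an equivariant, nonempty-preserving poset retraction $\mathcal{P}(J_+|_H)\to\mathcal{P}(K_+)$, not an extension of $X$.
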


\begin{proof}
Let us fix $\underline{w}\in \prod_{o}o/H$. We need to show that for every subgroup $L$ of $H$ the functor $\Phi|_H$ sends $H$-strongly cocartesian $\amalg\underline{w}_+|_L$-cubes in $\mathscr{C}$ to cartesian cubes in $\mathscr{D}$. We prove this by turning $\amalg\underline{w}_+$-cubes into $J_+|_H$-cubes by means of a certain functor $p\colon \mathcal{P}(J_+|_H)\to \mathcal{P}(\amalg\underline{w}_+)$ which satisfies the properties of Lemma \ref{blowupcubes}.

If the $G$-orbits of $J$ where of the form $o=G/K$ for normal subgroups $K$ of $G$, one can define an $H$-equivariant retraction $p\colon J_+|_H\to \amalg\underline{w}_+$ by sending each $H$-orbit $w\in o/H$ isomorphically to $w_o$ (the $H$-set $G/K$ decomposes into isomorphic $H$-orbits if $K$ is normal in $G$). In general such a retraction does not exist at the level of sets, but it is still possible to define a functor $p\colon \mathcal{P}(J_+|_H)\to \mathcal{P}(\amalg\underline{w}_+)$ which is a retraction for the inclusion functor, and which satisfies the hypotheses of \ref{blowupcubes}. It is defined as follows.
For every $o\in J/G$ choose a point $a_o\in w_o$, and for every $v\in o/H$ different than $w_o$, choose an $a_v\in v$. For every $v\in o/H$ define a functor $f_v\colon \mathcal{P}(v)\to \mathcal{P}(w_o)$ by
\[f_v(U)=\{h\cdot a_o\ | \ h\in H, \ h\cdot a_v\in U\}.\]
This functor is $H$-equivariant, and it defines a retraction \[p\colon \mathcal{P}(J_+|_H)= \mathcal{P}\big((\coprod_{o\in J/G}\coprod_{v\in o/H}v)_+\big)\cong \mathcal{P}(+)\times\prod_{o\in J/G}\prod_{v\in o/H}\mathcal{P}(v)\longrightarrow \mathcal{P}(\amalg\underline{w}_+)\]
where the last map is the union of the maps $f_w$ for $w\neq w_o$, and of the identity otherwise. Explicitly it is defined by the formula
\[p(U)=(+\cap U)\amalg \coprod_{o\in J/G}\big( (U\cap w_o)\amalg\coprod_{\substack{v\in o/H\\ v\neq w_o}}f_v(U\cap v)\big).\]
This is an $H$-equivariant functor which satisfies the conditions of Lemma \ref{blowupcubes}.

Let us show that if $X\in \mathscr{C}^{\mathcal{P}(\amalg\underline{w}_+|_L)}_a$  is $H$-strongly cocartesian, so is $p^\ast X\in \mathscr{C}^{\mathcal{P}(J_+|_L)}_a$. For any subset $U\subset J_+$ which is not contained in $St(J_+)=\bigcup_{o}\mathcal{P}_{1}(o_+)$ we need to show that $(p^{\ast}X)|_{\mathcal{P}(U)}=p^{\ast}(X|_{\mathcal{P}(p(U))})$ is cocartesian. Since $p^{\ast}$ preserves cocartesian cubes it is sufficient to show that $p(U)$ does not belong to 
\[St(\amalg\underline{w}_+)=\bigcup_{z\in (\amalg\underline{w})/H}\mathcal{P}_{1}(\amalg\underline{w}\cap z_+)=\bigcup_{o\in J/G}\mathcal{P}_{1}((w_o)_+)\]
where the last equality holds because each $w_o$ is an $H$-orbit of $\amalg\underline{w}$.
Suppose for the sake of contradiction that $p(U)$ belongs to $St(\amalg\underline{w}_+)$. Then there is a $G$-orbit $o\in J/G$ such that $p(U)$ is a proper subset of $(w_o)_+$. By the definition of $p$ this forces $U$ to be a proper subset of $o_+$, which is absurd since $U$ does not belong to $St(J_+)$.

Let us finally prove that $\Phi|_H$ is $\amalg\underline{w}$-excisive. Since $\Phi$ is $J$-excisive, for every $H$-strongly cocartesian $X\in \mathscr{C}^{\mathcal{P}(\amalg\underline{w}_+|_L)}_a$ the $J_+|_L$-cube 
\[\Phi(p^\ast X)=\Phi|_H(p^\ast X)=p^\ast \Phi|_H(X)\] is cartesian. Thus $\Phi|_H(X)$ is cartesian by \ref{blowupcubes}.
\end{proof}

\begin{lemma} There is a weak natural transformation under $\Phi|_H$
\[F_\Phi\colon (T_J\Phi)|_H\longrightarrow  \iter_{\substack{\underline{w}\in\!\!\!\prod\limits_{o\in J/G}\!\!\! o/H}}T_{\amalg\underline{w}}(\Phi|_H)\]
which is natural in $\Phi$.
\end{lemma}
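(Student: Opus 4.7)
The strategy is to rewrite both sides as homotopy limits over suitable indexing categories, produce a comparison functor between these categories, and then a natural transformation between the associated diagrams; the resulting map of equivariant homotopy limits will be $F_\Phi$.

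First I would observe that, by the equivariant Fubini theorem \cite[2.26]{Gdiags}, the iterated construction admits a presentation as a single homotopy limit
\[\iter_{\underline{w}} T_{\amalg\underline{w}}(\Phi|_H)(c)\simeq \holim_{(U_{\underline{w}})\in \prod_{\underline{w}}\mathcal{P}_0(\amalg\underline{w}_+)}\Phi|_H\bigl(\Lambda^{\mathrm{prod}}(c)_{(U_{\underline{w}})}\bigr),\]
where $\Lambda^{\mathrm{prod}}(c)_{(U_{\underline{w}})}$ is the iterated star construction obtained by successively applying $\Lambda^{\amalg\underline{w}}$ at the vertex $U_{\underline{w}}$, in any fixed order (the different orders agree up to natural equivalence by \ref{PJcommute}(i)). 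Next I would introduce the $H$-equivariant union functor
\[\pi\colon \prod_{\underline{w}\in\prod_{o}o/H}\mathcal{P}_0(\amalg\underline{w}_+)\longrightarrow \mathcal{P}_0(J_+|_H),\qquad (U_{\underline{w}})_{\underline{w}}\longmapsto \bigcup_{\underline{w}}U_{\underline{w}}.\]
This lands in non-empty subsets because each $U_{\underline{w}}$ is non-empty in $\amalg\underline{w}_+$, and it is well-defined and functorial. Precomposition induces a canonical map
\[(T_J\Phi)|_H(c)=\holim_{\mathcal{P}_0(J_+|_H)}\Phi(\Lambda^J(c))\longrightarrow \holim_{\prod \mathcal{P}_0(\amalg\underline{w}_+)}\Phi\bigl(\Lambda^J(c)_{\bigcup U_{\underline{w}}}\bigr).\]

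The central technical input is a natural transformation of $H$-diagrams over $\prod_{\underline{w}}\mathcal{P}(\amalg\underline{w}_+)$,
\[\Lambda^J(c)_{\bigcup U_{\underline{w}}}\longrightarrow \Lambda^{\mathrm{prod}}(c)_{(U_{\underline{w}})}.\]
The key observation supporting its construction is that for any $U\subset \amalg\underline{w}_+$ one has $St_J(U)=St_{\amalg\underline{w}}(U)$, because $\amalg\underline{w}$ meets each $G$-orbit $o\in J/G$ in exactly one $H$-orbit $w_o$, and hence $U\cap o_+=U\cap w_{o+}$. In particular the restriction of $\Lambda^J(c)$ along the inclusion $\mathcal{P}(\amalg\underline{w}_+)\hookrightarrow \mathcal{P}(J_+|_H)$ agrees with $\Lambda^{\amalg\underline{w}}(c)$. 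Using this identity for the first index $\underline{w}_1$ to rewrite $\Lambda^J(c)$ on the sub-poset $\mathcal{P}(\amalg\underline{w}_{1+})$, then iterating the construction on the intermediate diagrams by an analogous cofinal comparison between nested star posets, produces the required natural transformation. Composing these three steps yields $F_\Phi$; the compatibility with $\Phi|_H$ (the ``under $\Phi|_H$'' property) follows because at the initial object $(U_{\underline{w}})=(\emptyset,\dots,\emptyset)$ all the star constructions reduce to $c$, and the transformations $t_J\Phi$ and $\bigcirc_{\underline{w}}t_{\amalg\underline{w}}(\Phi|_H)$ are both induced by the inclusion of this initial vertex.

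The main obstacle is the third step: producing the natural comparison $\Lambda^J(c)_{\bigcup U_{\underline{w}}}\to \Lambda^{\mathrm{prod}}(c)_{(U_{\underline{w}})}$ in a way that is functorial in the product poset and equivariant for the $H$-structure. The star category $St_J(\bigcup U_{\underline{w}})$ does \emph{not} split as a product of the individual $St_{\amalg\underline{w}}(U_{\underline{w}})$'s, since at an $H$-orbit level the pieces $U_{\underline{w}}\cap o_+$ for different $\underline{w}$'s only overlap at the fixed basepoint $+$. One must therefore argue by an equivariant cofinality/Grothendieck-construction argument, in the spirit of the proof of \ref{Lambdastronglycocart}, comparing $St_J(\bigcup U_{\underline{w}})$ with the Grothendieck construction of the various $St_{\amalg\underline{w}}(U_{\underline{w}})$'s indexed by $\underline{w}$, and verifying that the resulting projection functor admits an $H_{(U_{\underline{w}})}$-invariant initial section at every object, as in the proof of \ref{Lambdastronglycocart}.
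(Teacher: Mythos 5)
Your overall plan---express the iterated $T_{\amalg\underline{w}}$ as a homotopy limit via Fubini, push forward along a comparison functor $\pi$, and compare $\Lambda$-diagrams---is the right shape and agrees with the paper's opening moves. However, the proposal contains a genuine error in its central claim, and the intermediate-category comparison is gestured at rather than supplied.

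The false step is the assertion that for $U\subset\amalg\underline{w}_+$ one has $St_J(U)=St_{\amalg\underline{w}}(U)$, and consequently that the restriction of $\Lambda^J(c)$ along $\mathcal{P}(\amalg\underline{w}_+)\hookrightarrow\mathcal{P}(J_+|_H)$ agrees with $\Lambda^{\amalg\underline{w}}(c)$. It is true that $U\cap o_+=U\cap (w_o)_+$ for each $G$-orbit $o$, but the two star posets remove \emph{different} maximal elements: $St_J(U)$ is $\bigcup_o\mathcal{P}(U\cap o_+)\setminus o_+$, while $St_{\amalg\underline{w}}(U)$ removes the much smaller sets $(w_o)_+$. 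If $o$ has more than one $H$-orbit then $o_+\not\subseteq (w_o)_+\supseteq U\cap(w_o)_+$, so removing $o_+$ does nothing and $St_J(U)$ retains $(w_o)_+$ as a maximal object whenever $(w_o)_+\subseteq U$. Concretely, taking $U=(w_o)_+$ gives $St_J(U)=\mathcal{P}((w_o)_+)$ (a full cube, with $H_U$-invariant final object, so $\Lambda^J_U(c)\simeq\ast$), whereas $St_{\amalg\underline{w}}(U)=\mathcal{P}_1((w_o)_+)$ and $\Lambda^{\amalg\underline{w}}_U(c)\simeq\Sigma^{w_o}c$. These are not equivalent, so the identity on which the rest of your construction rests fails. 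This is not a point-set subtlety; it is a homotopy-type obstruction, and it explains why the map cannot be built fiberwise in the way you propose.

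Two further discrepancies with the paper point to where the genuine work lies. First, the comparison functor $\pi$ is not the naive union: the paper takes $\pi(\underline{A})=\bigcup_{\underline{w}}A_{\underline{w}}$ \emph{only} when $+\in\bigcap_{\underline{w}}A_{\underline{w}}$, and otherwise deletes the basepoint from the union; without this the restricted zig-zag does not land in the correct star posets. Second, and more fundamentally, the paper does \emph{not} produce a direct natural transformation $\Lambda^J_{\pi(\underline{A})}\to\iter_{\underline{w}}\Lambda^{\amalg\underline{w}}_{A_{\underline{w}}}$. Instead it inserts a third indexing category, the glued coproduct $\big(\coprod_{o\in J/G}\prod_{w\in o/H}\mathcal{P}_1(w_+)\big)/_{\underline{\emptyset}\to\underline{+}}$, which maps by a right-cofinal functor $\gamma$ to $St(J_+)|_H$ and by a functor $\delta$ to $\prod_{\underline{w}}St(\amalg\underline{w}_+)$. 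Passing to homotopy colimits yields a zig-zag with a genuine wrong-way equivalence $\gamma_\ast$ --- this is the reason the statement asserts only a \emph{weak} natural transformation. Your proposed ``Grothendieck construction indexed by $\underline{w}$'' is a structurally different object from this coproduct over $J/G$, and would not account for the wrong-way arm of the zig-zag that turns out to be unavoidable.
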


\begin{proof}
Since the target of this map is an iterated homotopy limit, we need to produce a weak natural transformation
\[F_\Phi\colon (\holim_{\mathcal{P}_0(J_+)}\Phi\big(\Lambda^J\big))|_H=
\holim_{\mathcal{P}_0(J_+|_H)}\Phi|_H\big(\Lambda^J|_H\big)
\longrightarrow  \holim_{\prod\limits_{\underline{w}\in\!\!\!\prod\limits_{o\in J/G}\!\!\! o/H}\mathcal{P}_0(\amalg\underline{w}_+)}\Phi|_H\big(\iter_{\underline{w}}\Lambda^{\amalg\underline{w}}\big)\]
where the $\iter_{\underline{w}}$ in the target denotes the iteration of the diagrams $\Lambda^{\amalg\underline{w}}(-)$.
Define a functor $\pi\colon \prod\limits_{\underline{w}\in \prod_o o/H}\mathcal{P}_0(\amalg\underline{w}_+)\to \mathcal{P}_0(J_+|_H)$ by sending $\underline{A}=\{A_{\underline{w}}\in\mathcal{P}_0(\amalg\underline{w}_+)\}_{\underline{w}}$ to
\[\pi(\underline{A})=\left\{\begin{array}{ll}\bigcup_{\underline{w}}A_{\underline{w}}& \mbox{if}\ +\in \bigcap_{\underline{w}}A_{\underline{w}}\\
\big(\bigcup_{\underline{w}}A_{\underline{w}}\big)\backslash +& \mbox{otherwise}
\end{array}\right.\]
Notice that this functor is surjective on objects. 
There is a natural zig-zag of functors
\[St(J_+)|_H\stackrel{\gamma}{\longleftarrow} \Big(\coprod_{o\in J/G}\prod_{w\in o/H}\mathcal{P}_{1}(w_+)\Big)/_{\underline{\emptyset}\to\underline{+}}
\stackrel{\delta}{\longrightarrow}\prod\limits_{\underline{w}\in\!\!\!\prod\limits_{o\in J/G}\!\!\! o/H}St(\amalg\underline{w}_+).\]
The middle category consists of the disjoint union of the posets $\prod_{w}\mathcal{P}_{1}(w_+)$ glued along the morphism from the constant collection $\underline{\emptyset}_w=\emptyset$ to the constant collection $\underline{+}_w=+$.
The left-pointing arrow $\gamma$ is the union of the right-cofinal functors $\prod_{w}\mathcal{P}_{1}(w_+)\to \mathcal{P}_{1}(o_+)$ that take a collection of subset to the union of its components inside $o_+$ (see \cite[A.2-A.3]{Gdiags}), and it is also right-cofinal. The right-pointing arrow $\delta$ sends an object $(v\in J/G,\ \underline{U}=\{U_w\subset w_+\}_{w\in v/H})$ to the collection of sets with $\underline{w}=\{w_o\}_{o\in J/G}$-component
\[(w_{v}, U_{w_{v}}\subset (w_{v})_+)\in St(\amalg\underline{w}_+).\]
This restricts to a natural zig-zags of functors
\[ St(\pi(\underline{A}))\stackrel{\gamma_{\underline{A}}}{\longleftarrow} \Big(\coprod_{o\in J/G}\prod_{w\in o/H}\mathcal{P}(\pi(\underline{A})\cap w_+)\backslash w_+\Big)/_{\underline{\emptyset}\to\underline{+}}\stackrel{\delta_{\underline{A}}}{\longrightarrow} \prod\limits_{\underline{w}\in\!\!\!\prod\limits_{o\in J/G}\!\!\! o/H}St(A_{\underline{w}})\]
for every $\underline{A}$, where $\gamma_{\underline{A}}$ is right-cofinal. Let us denote the middle poset by $St(\underline{A})$. Observe that for any object $c\in \mathscr{C}$ there is an equality of diagrams $St(\underline{A})\longrightarrow \mathscr{C}$
\[St^{\pi(\underline{A})}(c)\circ\gamma_{\underline{A}}=
\big(\iter_{\underline{w}}St^{A_{\underline{w}}}(c)\big)\circ\delta_{\underline{A}}
\ \ .\]
On homotopy colimits this gives a zig-zag of compatible natural transformations
\[\xymatrix{\Lambda^J_{\pi(\underline{A})}(c)& \displaystyle\hocolim_{St(\underline{A})}
St^{\pi(\underline{A})}(c)\circ\gamma_{\underline{A}}
\ar[l]_-{(\gamma_{\underline{A}})_{\ast}}^-{\simeq}
\ar[r]^-{(\delta_{\underline{A}})_\ast}
& \iter_{\underline{w}}\Lambda^{\amalg\underline{w}}_{A_{\underline{w}}}(c)}.\]
Let $\Lambda^J_{\underline{A}}(c)$ be the middle homotopy colimit.
We define the map $F_\Phi$ as the composition of the maps on homotopy limits
\[\xymatrix@C=18pt{\displaystyle
F_\Phi\colon(\holim_{\mathcal{P}_0(J_+)}\Phi\big(\Lambda^J\big))|_H\ar[r]^-{\pi^\ast}
&\displaystyle
\holim_{\underline{A}}
\Phi|_H\big(\Lambda_{\pi(\underline{A})}^J\big)
&\displaystyle
\holim_{\underline{A}}\Phi|_H\big(\Lambda^J_{\underline{A}}(c)\big)\ar[r]^-{\delta_{\ast}}
\ar[l]_-{\gamma_\ast}^-\simeq
& \displaystyle
\holim_{\underline{A}}\Phi|_H\big(\iter_{\underline{w}}
\Lambda_{A_{\underline{w}}}^{\amalg\underline{w}}\big)}.\]
\end{proof}

\begin{proof}[Proof of \ref{restriction}]
By naturality of the weak map $F_\Phi$ the diagram
\[\xymatrix{
(T_J\Phi)|_H\ar[rrr]^-{F_\Phi}\ar[d]_{T_J(t_J\Phi)}&&& \iter_{\underline{w}}T_{\amalg\underline{w}}(\Phi|_H)
\ar[d]^-{\iter_{\underline{w}}T_{\amalg\underline{w}}(\circ_{\underline{x}}t_{\underline{x}})}
\ar[dll]_-{\iter_{\underline{w}}T_{\amalg \underline{w}}(t_J\Phi)}
\\
(T^{(2)}_J\Phi)|_H\ar[r]_-{F_{T_J\Phi}}&
\iter_{\underline{w}}T_{\amalg\underline{w}}(T_J\Phi)|_H\ar[rr]_-{\iter_{\underline{w}}T_{\amalg\underline{w}}(F)}
&&\iter_{\underline{w}}T_{\amalg\underline{w}}
\iter_{\underline{x}}T_{\amalg\underline{x}}(\Phi|_H)
}\]
commutes, defining a weak natural transformation $F_{\Phi}\colon(P_J\Phi)|_H\to \iter_{\underline{w}}P_{\amalg\underline{w}}(\Phi|_H)$. We use Proposition \ref{excres} to show that this map is an equivalence. Morally this is because $(P_{J}\Phi)|_H$ is $\amalg\underline{w}$-excisive for every $\underline{w}$, and the universal properties of the $\amalg\underline{w}$-excisive approximations should produce a canonical map $ \iter_{\underline{w}}P_{\amalg\underline{w}}(\Phi|_H)\to (P_{J}\Phi)|_H$ which is an inverse of $F_\Phi$. The be precise, consider the diagram

\[\xymatrix@C=21pt{
(P_{J}\Phi)|_{H}\ar[r]_-{F_\Phi}\ar@/^2pc/[rrr]
&\iter_{\underline{w}}P_{\amalg\underline{w}}(\Phi|_H)
\ar[rr]^-{
\iter_{\underline{w}}P_{\amalg\underline{w}}p_J\Phi}
\ar@/_2pc/[rrrr]_-{\iter_{\underline{w}}P_{\amalg\underline{w}}
\iter_{\underline{x}}t_{\amalg\underline{x}}\Phi}&&
\iter_{\underline{w}}P_{\amalg\underline{w}}(P_J\Phi|_H)
\ar[rr]^-{\iter_{\underline{w}}P_{\amalg\underline{w}}F_\Phi}&&
\iter_{\underline{w}}P_{\amalg\underline{w}}
\iter_{\underline{x}}P_{\amalg\underline{x}}(\Phi|_H)
}.\]
We show that the two curved arrows are equivalences, and by the $2$ out of $6$ property every map in the diagram, including $F_\Phi$, is an equivalence. The lower-right triangle commutes because $F_{\Phi}$ is a map under $\Phi|_{H}$. The lower curved arrow is an equivalence because $\iter_{\underline{w}}P_{\amalg\underline{w}}(P_J\Phi|_H)$ is $\amalg\underline{x}$-excisive for every $\underline{x}$ in $\prod_o o/H$. The upper curved map is an equivalence because it compares via the following zig-zag of equivalences
\[\xymatrix{(P_{J}\Phi)|_{H}\ar[rr]_-{\simeq}^-{p_JP_J\Phi}\ar[dd]&&(P_{J}P_J\Phi)|_{H}\ar[dd]^{F_{P_J\Phi}}&&(P_{J}\Phi)|_{H}\ar[ll]_-{P_Jp_J\Phi}^{\simeq}\ar[d]^{F_{\Phi}}\\
&&&& \iter_{\underline{w}}P_{\amalg\underline{w}}(\Phi|_H)\ar[d]^{\iter_{\underline{w}}P_{\amalg\underline{w}}p_J\Phi}\\
\iter_{\underline{w}}P_{\amalg\underline{w}}(P_J\Phi|_H)\ar@{=}[rr]&&\iter_{\underline{w}}P_{\amalg\underline{w}}(P_J\Phi|_H)&& \iter_{\underline{w}}P_{\amalg\underline{w}}(P_J\Phi|_H)\ar@{=}[ll]
}\]
to the composition of the maps
\[\xymatrix@=10.5pt{
(P_{J}\Phi)|_{H}\ar[rrr]_-{\simeq}^-{p_{\underline{w}^1}(P_{J}\Phi)|_H}&&&
P_{\amalg\underline{w}^1}(P_J\Phi|_H)\ar[rrrr]_-{\simeq}^-{p_{\underline{w}^2}(P_{\underline{w}^1}(P_{J}\Phi)|_H)}&&&&
P_{\amalg\underline{w}^2}P_{\amalg\underline{w}^1}(P_J\Phi|_H)\ar[r]_-{\simeq}&\dots\ar[r]_-{\simeq}&
\displaystyle\iter_{\underline{w}}P_{\amalg\underline{w}}(P_J\Phi|_H)
}\] 
which are equivalences by Proposition \ref{excres}. The right-hand square commutes by naturality of $F_{\Phi}$ in the functor $\Phi$, and the left-hand square commutes by construction.
\end{proof}

\subsection{On the convergence of the Goodwillie tree}\label{sec:convergence}

Let $G$ be a finite group. We discuss the convergence of the Goodwillie tree for a certain class of homotopy functors $\Phi\colon \Top_\ast\to \Top_{\ast}^G$.

\begin{defn}\label{def:split}
We say that a homotopy functor $\Phi\colon \Top_\ast\to \Top_{\ast}^G$ commutes with fixed-points if for every subgroup $H$ of $G$ and every $H$-equivariant map $f\colon X\to Y$ which is an isomorphism on $H$-fixed-points, the map $(f_\ast)^H\colon \Phi(X)^H\to \Phi(Y)^H$ is an isomorphism.
\end{defn}

\begin{ex}\
\begin{enumerate}
\item The inclusion of $H$-fixed-points $X^H\to X$ is an isomorphism on $H$-fixed-points for every pointed $H$-space $X$. Thus if $\Phi$ commutes with fixed-points $\Phi(X^H)^H\cong \Phi(X)^H$.
\item A $sSet$-enriched homotopy functor $\Phi\colon \Top_\ast\to \Top_{\ast}^G$ determines an orthogonal $G$-spectrum $\Phi(\mathbb{S})$ with $n$-th level the $G$-space $\Phi(\mathbb{S})_n=\Phi(S^n)$ (in fact the finitary ones define a model category Quillen equivalent to orthogonal $G$-spectra, by \cite{Blumberg}). The geometric, genuine and na\"{i}ve $G$-fixed points of  $\Phi(\mathbb{S})$ are respectively defined in degree $n$ by the spaces $\Phi(S^{nG})^G$, $(\Omega^{n\overline{G}}\Phi(S^{nG}))^G$ and $\Phi(S^n)^G$, and they fit into a commutative diagram
\[\xymatrix{
\Phi(S^{nG})^G&&\Phi(S^n)^G\ar[ll]_{(\iota_\ast)^G}\ar[dl]^-{\eta}\\
&(\Omega^{n\overline{G}}\Phi(S^{nG}))^G\ar[ul]_-{ev}
}\]
where $\iota\colon S^n=(S^{nG})^G\to S^{nG}$ is the inclusion, $\eta$ is the adjoint assembly map and $ev$ is the evaluation at $0\in S^{n\overline{G}}$. Observe that $\iota$ is an isomorphism on $G$-fixed-points, and if $\Phi$ commutes fixed-points the map $(\iota_\ast)^G$ is an isomorphism. Thus the evaluation map from the genuine to the geometric $G$-fixed points of $\Phi(\mathbb{S})$ is a split epimorphism. The $G$-spectra with this property are called geometrically split in \cite{Lewis}, and they are the spectra that admit a Tom Dieck-splitting. The relationship with these splittings will be further investigated in \S\ref{tomdieckspl}.

\item Let $\Phi\colon \Top_\ast\to \Top_{\ast}^G$ be a homotopy functor whose image lies in the subcategory of spaces with the trivial $G$-action. Suppose that $\Phi\colon \Top_\ast\to \Top_{\ast}$ preserves limits with shape $H$, for every subgroup $H$ of $G$ (that is $\Phi$ ``commutes with fixed-points''). Then if $f\colon X\to Y$ is an isomorphism on $H$-fixed-points, the induced map
\[(f_\ast)^{H}\colon \Phi(X)^H\cong\Phi(X^H)\stackrel{(f^H)_\ast}{\longrightarrow} \Phi(Y^H)\cong \Phi(Y)^H\]
is an isomorphism, and $\Phi$ commutes with fixed-points in the sense of definition \ref{def:split}.

Examples of such functors include the inclusion $I\colon \Top_\ast\to \Top_{\ast}^G$ of spaces with the trivial $G$-action (which extends to the identity functor on $\Top_{\ast}^G$), the ``na\"{i}ve'' algebraic $K$-theory of spaces functor $A(-)$ of \ref{ex:htpyfctrs}(iii), and the algebraic $K$-theory functor $\widetilde{K}(R\ltimes M(-))$ of \cite{DM} associated to a ring $R$ and an $R$-bimodule $M$.

\item Let $K$ be a $G$-CW-complex. The homotopy functor $K\wedge (-)\colon \Top_\ast\to \Top_{\ast}^G$ commutes with fixed-points. If $f\colon X\to Y$ is an isomorphism on $H$-fixed-points the map
\[(f_\ast)^{H}\colon (K\wedge X)^H\cong K^H\wedge X^H\stackrel{K^H\wedge f^H}{\xrightarrow{\hspace*{2cm}}}K^H\wedge Y^H\cong (K\wedge Y)^H\]
is an isomorphism.
\end{enumerate}
\end{ex}
A proper filtration of finite $G$-sets is an infinite countable sequence of proper inclusions of finite $G$-sets
\[\underline{J}=\big(J_1\subset \dots\subset J_n\subset J_{n+1}\subset\dots\big).\]
A map of proper filtrations $\underline{f}\colon \underline{K}\to \underline{J}$ is a sequence of $G$-equivariant maps $f_n\colon K_n\to J_n$ which commute with the respective inclusions.

\begin{theorem}\label{convergence}
Let $\Phi\colon \Top_\ast\to \Top_{\ast}^G$ be a homotopy functor which commutes with fixed-points, and let $\underline{f}\colon \underline{K}\to \underline{J}$ be a map of proper filtrations such that $f_n\colon K_n\to J_n$ is a bijection on $G$-orbits, for every $n$. Then the maps $f_{n}^\ast\colon P_{J_n}\Phi\to P_{K_n}\Phi$ induce a $G$-equivalence on homotopy limits
\[\underline{f}^\ast\colon\holim_{n}P_{J_n}\Phi\stackrel{\simeq}{\longrightarrow} \holim_{n}P_{K_n}\Phi.\]
In particular there is a $G$-equivalence $\holim_{n}P_{n}\Phi\to \holim_{n}P_{nG}\Phi$.
\end{theorem}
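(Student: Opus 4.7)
The natural strategy is to verify the map subgroup-by-subgroup: it suffices to show that for every subgroup $H\leq G$, the induced map $(\underline{f}^\ast)^H$ is a weak equivalence of non-equivariant spaces. Since $H$-fixed points commute with sequential homotopy limits over $\mathbb{N}^{op}$, this reduces to comparing the towers $\{(P_{J_n}\Phi(X))^H\}_n$ and $\{(P_{K_n}\Phi(X))^H\}_n$ after passing to $\holim$.

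The main tool is the change of group Theorem \ref{restriction}, which identifies
\[(P_{J_n}\Phi)|_H \simeq \iter_{\underline{w}\in\prod_{o\in J_n/G} o/H} P_{\amalg\underline{w}}(\Phi|_H),\]
and similarly for $K_n$. Because $f_n$ is a bijection on $G$-orbits $K_n/G\xrightarrow{\cong} J_n/G$, the corresponding $G$-equivariant surjections $o'\twoheadrightarrow o$ of orbits descend to surjections $o'/H\twoheadrightarrow o/H$, and any $\underline{w}'$ lifting $\underline{w}$ yields an $H$-map $\amalg\underline{w}'\to\amalg\underline{w}$ which is a bijection on $H$-orbits. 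By Corollary \ref{injonorb}, every $\amalg\underline{w}$-excisive functor is automatically $\amalg\underline{w}'$-excisive, so $\underline{f}^\ast|_H$ is identified with a natural comparison map between two iterated excisive approximations of $\Phi|_H$, both built from $H$-sets with $a_n:=|J_n/G|=|K_n/G|$ orbits each.

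The heart of the argument is to show that on $H$-fixed points this comparison becomes an equivalence after $\holim_n$. The fixed-point commutation hypothesis on $\Phi$, namely $\Phi(Y^H)^H\xrightarrow{\cong}\Phi(Y)^H$ for any $H$-space $Y$, propagates through the finite limits and sequential colimits of the $T_J$ and $P_J$ constructions to yield a description of $(P_{\amalg\underline{w}}(\Phi|_H)(X))^H$ in terms of the non-equivariant functor $\Psi_H(Y):=\Phi(Y)^H$ applied to $X^H$. Combined with Proposition \ref{isoonorb} applied to the projection $\amalg\underline{w}\twoheadrightarrow\amalg\underline{w}/H$, both towers on $H$-fixed points become equivalent to the subtower $\{P_{a_n}\Psi_H(X^H)\}_n$ of the classical Goodwillie tower of $\Psi_H$ at $X^H$, under which $\underline{f}^\ast$ corresponds to the identity comparison; the induced map on $\holim$'s is therefore an equivalence.

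The particular statement $\holim_n P_n\Phi\simeq \holim_n P_{nG}\Phi$ is the special case $J_n=\underline{n}$, $K_n=nG$, with $f_n\colon nG\to\underline{n}$ the canonical projection, which is a bijection on $G$-orbits. The main obstacle is the third step: since $H$-fixed points commute with homotopy limits of $G$-diagrams only via a restriction map (equation (\ref{fixedptsdiag})), not an isomorphism, collapsing the iterated $\amalg\underline{w}$-excisive approximations to the classical tower of $\Psi_H$ requires careful control of that restriction at each stage of the iterated $T_{\amalg\underline{w}}$ construction, invoking the commutation with fixed points of $\Phi$ repeatedly.
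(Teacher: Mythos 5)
There is a genuine gap in the third step. You claim that the fixed-point hypothesis ``propagates through the finite limits and sequential colimits of the $T_J$ and $P_J$ constructions'' to identify $(P_{\amalg\underline{w}}(\Phi|_H)(X))^H$ with $P_{a_n}\Psi_H(X^H)$, where $\Psi_H(Y)=\Phi(Y)^H$. This identification is false level-wise. The construction $T_J$ is a homotopy limit over the equivariant poset $\mathcal{P}_0(J_+)$, and display~(\ref{fixedptsdiag}) provides only a \emph{restriction map} $(\holim_I X)^G\to \holim_{I^G}X^G$ --- a fibration, not an isomorphism. Consequently taking $H$-fixed points does not commute with the homotopy limits inside $T_{\amalg\underline{w}}$: one has only a map
\[(T_{\amalg\underline{w}}(\Phi|_H)(X))^H\longrightarrow T_{a_n}(\Psi_H)(X^H),\]
which (after composing with the isomorphism furnished by $\Phi$ commuting with fixed points) becomes a \emph{retraction} of $p^\ast$, not an inverse. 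That the two sides are genuinely inequivalent is visible already in the example: $(P_GI(X))^G=(Q_GX)^G$ carries the full Tom Dieck-splitting of \ref{transTD} and \ref{fixderI}, whereas $P_1I(X^G)=Q(X^G)$ is just one factor thereof. So ``both towers become equivalent to $\{P_{a_n}\Psi_H(X^H)\}_n$'' is wrong at each finite stage, and $\underline{f}^\ast$ does not correspond to the identity comparison.

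You flag this obstacle at the end but stop short of resolving it, and the resolution is not ``careful control of the restriction at each stage'': that restriction never becomes an isomorphism. What is actually needed is the retraction trick in the spirit of Carlsson \cite{Carlsson}. Because $(P_{J_n}\Phi)^G$ retracts onto $(P_n\Phi)^G$ via $r$, and because the structure map of the tower $\iota^\ast\colon (P_{J_{n+1}}\Phi)^G\to (P_{J_n}\Phi)^G$ is identified, up to a natural zig-zag, with the composite $p^\ast\circ j^\ast\circ r$ through $(P_{n}\Phi)^G$, the inverse system $\{(P_{J_n}\Phi)^G\}_n$ is cofinal with the classical one and the two homotopy limits agree \emph{after} $\holim_n$, even though the levels do not. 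Your use of the change-of-group formula of \ref{restriction} and the orbit-injectivity statement of \ref{injonorb} to generate the comparison maps between the $\amalg\underline{w}$-towers is appropriate (and is how the $H$-fixed-points case is handled); but the entire weight of the argument lies in producing the retraction $r$, checking that $\iota^\ast\simeq p^\ast j^\ast r$, and applying cofinality --- steps that your proposal omits.
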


\begin{rem}\label{Carlssonthing}
This result might seems surprising, but for the identity functor on pointed spaces it is closely related to a result of Carlsson about $\mathbb{Z}$-completion which plays a crucial role in the proof of Sullivan's conjecture (\cite{Carlsson}). Let $Q=\Omega^\infty\Sigma^\infty\colon \Top_\ast\to \Top_\ast$ be the stable homotopy functor. This functor has the structure of a monad, and the totalization of the corresponding cosimplicial space $Q^ \bullet$ is equivalent to the limit of the Goodwillie tower of the identity functor on pointed spaces
\[\holim_n P_n\id(X)\simeq Tot Q^\bullet(X).\]
Equivariantly there are two similar constructions. The first is the canonical extension of $Q$ to pointed $G$-spaces $Q=\Omega^\infty\Sigma^\infty\colon \Top^{G}_\ast\to \Top^{G}_\ast$, which is the ``na\"{i}ve'' stable homotopy functor. The second is the ``genuine'' stable homotopy functor $Q_G=\Omega^{\infty G}\Sigma^{\infty G}\colon \Top^{G}_\ast\to \Top^{G}_\ast$ obtained by stabilizing with respect to the regular representation of $G$. An easy fixed-points argument shows that for every pointed $G$-space $X$ there is a $G$-equivalence $\holim_n P_nI(X)\simeq Tot Q^\bullet(X)$. Carlsson proves in \cite[II.7]{Carlsson} that there is a natural $G$-equivalence $Tot Q^\bullet(X)\simeq Tot Q_{G}^\bullet(X)$. The equivalence of \ref{convergence}
\[\holim_{n}P_nI\stackrel{\simeq}{\longrightarrow}\holim_{n}P_{nG}I\]
for the inclusion $I\colon \Top_\ast\to \Top_{\ast}^G$ is a calculus analogue of Carlsson result. It follows that the pointed $G$-spaces $\holim_n P_nI(X)$, $\holim_n P_{nG}I(X)$, $Tot Q^\bullet(X)$ and $Tot Q_{G}^\bullet(X)$ are all equivalent.
\end{rem}

\begin{cor}\label{convanalytic}
Let $\Phi\colon \Top_\ast\to \Top^{G}_{\ast}$ be a homotopy functor which commutes with fixed-points, and such that for every subgroup $H$ of $G$ the composite
\[\Top_\ast\stackrel{\Phi}{\longrightarrow} \Top^{G}_{\ast}\stackrel{(-)^H}{\xrightarrow{\hspace*{1.2cm}}}\Top_\ast\]
is $\rho_H$-analytic in the sense of \cite[4.2]{calcII}, for some integer $\rho_H$. Then for every proper filtration of $G$-sets $\underline{J}$
the map $\Phi(X)\to \holim_{n}P_{J_n}\Phi(X)$
is a $G$-equivalence for every $G$-space $X$ such that $X^H$ is $\rho_H$-connected for every subgroup $H$ of $G$.
\end{cor}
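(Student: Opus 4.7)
The strategy is to combine Theorem \ref{convergence} with Goodwillie's classical convergence theorem for $\rho$-analytic functors, applied fixed-point-wise.

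First, I would reduce the arbitrary filtration $\underline{J}$ to one with trivial $G$-action. For every $n$, set $k_n=|J_n/G|$ and $K_n=\underline{k_n}$ with trivial $G$-action. Since $J_n\subsetneq J_{n+1}$ is a proper inclusion, the induced map on orbit sets is also proper, so $k_n<k_{n+1}$, and $\underline{K}=(K_1\subset K_2\subset\dots)$ is a proper filtration. Choosing orbit representatives in $J_1$ and extending compatibly as $n$ grows produces a map of filtrations $\underline{f}\colon\underline{K}\to\underline{J}$ with each $f_n$ a bijection on orbits. Theorem \ref{convergence} then yields a $G$-equivalence $\holim_n P_{J_n}\Phi(X)\simeq \holim_n P_{k_n}\Phi(X)$, so it is enough to prove the convergence statement for the filtration $\underline{K}$.

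Next, I would check the equivalence on $H$-fixed points for every subgroup $H\leq G$. The key intermediate claim is that, for $\Phi$ commuting with fixed points, there is a natural equivalence
\[(P_{k_n}\Phi)(X)^H\;\simeq\;P_{k_n}\bigl(\Phi(-)^H\bigr)(X^H),\]
where the right-hand side is the classical non-equivariant $k_n$-excisive approximation of the composite $\Phi(-)^H\colon\Top_\ast\to\Top_\ast$ evaluated at $X^H$. To establish this, recall that $T_{k_n}\Phi(X)=\holim_{\mathcal{P}_0(\underline{k_n}_+)}\Phi(\Lambda^{\underline{k_n}}(X))$, where the indexing category has trivial $H$-action because $\underline{k_n}$ does. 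Hence the restriction map of (\ref{fixedptsdiag}) is a weak equivalence after pointwise fibrant replacement, so $(-)^H$ commutes with this finite homotopy limit. Vertex-by-vertex, the commuting-with-fixed-points hypothesis applied to the inclusion $\Lambda^{\underline{k_n}}(X)^H\cong\Lambda^{\underline{k_n}}(X^H)\hookrightarrow\Lambda^{\underline{k_n}}(X)$ identifies $\Phi(\Lambda^{\underline{k_n}}(X))^H$ with $(\Phi(-)^H)(\Lambda^{\underline{k_n}}(X^H))$. Iterating $T_{k_n}$ and passing to the sequential homotopy colimit—which $(-)^H$ also preserves in $\Top$—gives the displayed equivalence. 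Combined with the identification $\Phi(X)^H\cong\Phi(X^H)^H$, the map $\Phi(X)\to\holim_n P_{k_n}\Phi(X)$ becomes, on $H$-fixed points, the canonical map
\[\Phi(X^H)^H\;\longrightarrow\;\holim_n P_{k_n}\bigl(\Phi(-)^H\bigr)(X^H).\]
Now $\Phi(-)^H$ is $\rho_H$-analytic by assumption and $X^H$ is $\rho_H$-connected, so Goodwillie's classical convergence theorem \cite[Thm.~1.13]{calcII} applies; since $\{k_n\}$ is cofinal in $\mathbb{N}$ the homotopy limit over this subsequence agrees with the full classical tower. This gives an equivalence on $H$-fixed points for every $H$, hence a $G$-equivalence.

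The main obstacle is verifying the interchange $(P_{k_n}\Phi)(X)^H\simeq P_{k_n}(\Phi(-)^H)(X^H)$ in a clean way. Morally it is two standard facts—finite homotopy limits of diagrams indexed on a trivially-acted category commute with $(-)^H$ up to equivalence via (\ref{fixedptsdiag}), and sequential $\hocolim$ in $\Top$ commutes with the finite limit $(-)^H$—but one has to thread fibrant replacements through the iterative definition of $P_{k_n}$ so that the comparison map is genuinely a weak equivalence and is natural in $\Phi$. Once this bookkeeping is in place, the remaining content is the reduction to trivial $G$-action via Theorem \ref{convergence} and an invocation of classical analytic convergence.
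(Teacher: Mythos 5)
Your proposal is correct and takes essentially the same approach as the paper: reduce to a filtration by trivial $G$-sets via Theorem \ref{convergence}, commute $(-)^H$ past $P_n$ (which is where the commuting-with-fixed-points hypothesis and the triviality of the $G$-action on $\underline{n}$ are used), and then invoke Goodwillie's classical convergence theorem for $\rho_H$-analytic functors on $H$-fixed points. The paper packages the interchange of $(-)^H$ with $P_n$ in a single line of isomorphisms where you spell out the fibrancy and sequential-colimit bookkeeping; the underlying argument is identical.
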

\begin{proof}
Since $\Phi$ commutes with fixed-points, the inclusion $X^H\to X$ induces an isomorphism
\[(P_n\Phi(X))^H\cong P_n\Phi(X)^H\stackrel{\cong}{\longleftarrow} P_n\Phi(X^H)^H\]
for every natural number $n$. By the convergence Theorem \cite[1.13]{calcIII}
\[\Phi(X)^H\cong \Phi(X^H)^H\longrightarrow \holim_{n}P_{n}\Phi(X^H)^H\cong(\holim_{n}P_{n}\Phi(X))^H\]
is an equivalence if $X^H$ is $\rho_H$-connected. By \ref{convergence} the map of filtrations $f_n\colon J_n\to J_n/G$ induces an equivalence between 
the right-hand side and the $H$-fixed-points of $\holim_{n}P_{J_n}\Phi(X)$.
\end{proof}

\begin{rem}
It is unclear to the author if a result similar to \ref{convanalytic} holds for a general homotopy functor $\Phi\colon \Top_\ast\to \Top_{\ast}^G$ which satisfies a suitable equivariant version of analyticity (similar to the one suggested in \cite{Gcalc} or \cite{GdiagTop}).
\end{rem}

Our proof of \ref{convergence} is close in spirit to the proof of \cite[II.7]{Carlsson}, and it is based on the existence of a natural retraction
\[r\colon (P_{J}\Phi)^G\longrightarrow P_{J/G}\Phi^G\]
for the restriction on $G$-fixed points of the map $p^\ast\colon P_{J/G}\Phi\to P_{J}\Phi$ from \S\ref{sec:tree}, induced by the projection $p\colon J\to J/G$ (notice that $P_{J/G}$ commutes with fixed-points, because the $G$-action on $J/G$ is trivial). We recall that the $G$-fixed-points category of $I=\mathcal{P}(J_+)$ is canonically isomorphic to $\mathcal{P}(J/G_+)$. Hence the fixed points diagram $X^G$ of a $J_+$-cube $X$ is a $J/G_+$-cube (see \ref{ex:fixedcube}).
The retraction $r\colon (P_{J}\Phi)^G\to P_{J/G}\Phi^G$ is induced on homotopy colimits by the composition
\[
\xymatrix{
(T^{(k)}_{J}\Phi(c))^G\ar@{-->}[dr]\ar@{=}[r]&\big(\holim_{\mathcal{P}_0(J_+)^k}\Phi\big(\Lambda^{J,k}(c)\big)\big)^G\ar[r]& \holim_{\mathcal{P}_0(J/G_+)^k}\Phi\big(\Lambda^{J,k}(c)\big)^G\\
&T_{J/G}^{(k)}\Phi(c)^G\ar@{=}[r]& \holim_{\mathcal{P}_0(J/G_+)^k}\Phi\big(\Lambda^{J/G,k}(c)\big)^G\ar[u]_{(\eta_p)^{G}_\ast}^{\cong}
}\]
where $\Lambda^{J,k}(c)$ denotes the $k$-fold iteration  $\Lambda^{J}(\dots\Lambda^{J}(c))$.
The first map is the fixed-points restriction map (\ref{fixedptsdiag}) of \S\ref{sec:fixedpts}.
The isomorphism is induced by the map $\eta_p\colon p^\ast\Lambda^{J/G}(c)\to \Lambda^{J}(c)$ defined at the beginning of \S\ref{sec:tree}. It is an isomorphism because $\Phi$ commutes with fixed-points, and because for every $G$-invariant subset $U\subset J_+$ the restriction of $(\eta_p)_{U}$ on $G$-fixed-points is the isomorphism
\[\Lambda^{J}_U(c)^G=\big(\hocolim_{St(U)}St^U(c)\big)^G\underset{(\ref{fixedptsdiag})}{\cong} \hocolim_{St(U)^G}(St^U(c))^G\cong \hocolim_{St(U/G)}St^{U/G}(c^G)=\Lambda_{U/G}^{J/G}(c^G)\]
where the isomorphism $St(U)^G\cong St(U/G)$ is the restriction of $\mathcal{P}(J_+)^G\cong \mathcal{P}(J/G_+)$. 

\begin{proof}[Proof of \ref{convergence}]
Since $f_n\colon K_n\to J_n$ induces an isomorphism $\overline{f}_n\colon K_n/G\to J_n/G$ on $G$-orbits, there is a commutative square
\[\xymatrix{
P_{J_n}\Phi\ar[r]^-{f_{n}^\ast}& P_{K_n}\Phi\\
P_{J_n/G}\Phi\ar[u]\ar[r]_{\overline{f}^{\ast}_n}^-{\cong}& P_{K_n/G}\Phi\ar[u]
}\]
for every natural number $n$, where the vertical maps are induced by the projections $J_n\to J_n/G$ and $K_n\to K_n/G$. It is therefore sufficient to show that for every proper filtration $\underline{J}$ the map
\[\holim_{n}P_{J_n/G}\Phi\longrightarrow\holim_{n}P_{J_n}\Phi\]
induced by the projection $p\colon J_n\to J_n/G$ is an equivalence. Moreover by cofinality we can assume that $J_n$ has $n$-orbits, and we write $n=J_n/G$.

Let $j\colon n\to n+1$ be the standard inclusion, and let $r\colon (P_{J_n}\Phi)^G\to P_{n}\Phi^G$ be the retraction of $p^\ast$ constructed above. Consider the following commutative diagram
\[\xymatrix@R=11pt@C=15pt{
(P_{n-1}\Phi)^G&&&(P_{n}\Phi)^G\ar@{=}[dll]\ar[ddl]^{p^{\ast}}\ar[lll]_-{j^\ast}
&&&(P_{n+1}\Phi)^G\ar@{=}[dll]\ar[ddl]^{p^{\ast}}\ar[lll]_-{j^\ast}\\
&(P_{n}\Phi)^G\ar[ul]^-{j^\ast}&&&(P_{n+1}\Phi)^G
\ar[ul]^-{j^\ast}\\
&&(P_{J_n}\Phi)^G\ar[ul]^-{r}&&&(P_{J_{n+1}}\Phi)^G\ar@{-->}[lll]\ar[ul]^-{r}
}\]
By cofinality, $(\holim_{n}P_n\Phi)^G$ is equivalent to the homotopy limit of the $(P_{J_n}\Phi)^G$'s along the composite maps $p^\ast j^\ast r$. It then suffices to show that the map $p^\ast j^\ast r$ compares, via a natural zig-zag of equivalences, to the map $\iota^{\ast}\colon (P_{J_{n+1}}\Phi)^G\to (P_{J_n}\Phi)^G$ induced by the inclusion $\iota\colon J_{n}\to J_{n+1}$. Morally this is the case because the map $\iota^{\ast}$ is unique in the homotopy category, but we need to take extra care because we are working on fixed-points. An explicit zig-zag is provided by the following diagram
\[\xymatrix{
(P_{J_{n+1}}\Phi)^G\ar[d]_{\iota^{\ast}}\ar[drrr]|-{p_{J_{n}}P_{J_{n+1}}\Phi}
\ar[rrr]^-{p_{J_{n+1}}P_{J_{n+1}}\Phi}&&&
(P_{J_{n+1}}P_{J_{n+1}}\Phi)^G\ar[d]^{p^\ast j^\ast r}
&&&(P_{J_{n+1}}\Phi)^G\ar[lll]_-{P_{J_{n+1}}p_{J_{n+1}}\Phi}\ar[d]^{p^\ast j^\ast r}\\
(P_{J_{n}}\Phi)^G\ar[rrr]_-{P_{J_{n}}p_{J_{n+1}}\Phi}
&&&(P_{J_{n}}P_{J_{n+1}}\Phi)^G&&&(P_{J_{n}}\Phi)^G
\ar[lll]^-{P_{J_{n}}p_{J_{n+1}}\Phi}
}\]
The mid vertical map is the map $p^\ast j^\ast r$ for the functor $P_{J_{n+1}}\Phi$.
The horizontal arrows are equivalences because $P_{J_{n}}\Phi$ and $P_{J_{n+1}}\Phi$ are both $J_{n+1}$-excisive. The bottom left triangle commutes by the properties of the excisive approximations. The upper-left triangle commutes because the natural transformation $p^\ast \iota^{\ast}r\colon (P_{J_{n+1}}\Phi)^G\to (P_{J_{n}}\Phi)^G$ is a map under $\Phi^G$. The right-hand square commutes by naturality of $p^\ast \iota^\ast=(\iota p)^\ast$ and $r$.  


This argument shows that the map $p^\ast\colon (\holim_{n}P_n)^G\to(\holim_{n}P_{J_n})^G$ is an equivalence on $G$-fixed-points, for every finite group $G$. We use the restriction formula of \ref{restriction} to show that this map is an equivalence also on $H$-fixed-points, for every subgroup $H$ of $G$. By naturality of the zig-zag of \ref{restriction} it is sufficient to show that the map
\[P_{n}\Phi^H\simeq\big(\iter_{\underline{w}\in\!\!\!\prod\limits_{o\in J_n/G}\!\!\! o/H}P_{n}(\Phi|_H)\big)^H\stackrel{\iter p_{n}^\ast}{\xrightarrow{\hspace*{.8cm}}}\big(\iter_{\underline{w}\in\!\!\!\prod\limits_{o\in J_n/G}\!\!\! o/H}P_{\amalg\underline{w}}(\Phi|_H)\big)^H\simeq (P_{J_n}\Phi)^H\]
induced by the projections $\amalg\underline{w}\to n$ becomes an equivalence after taking the homotopy limit over $n$. Recall that for every functor $\Psi\colon\Top_\ast\to \Top^{H}_\ast$ and any $H$-set $A$ we constructed a restriction map
\[(P_{A}\Psi)^H\longrightarrow\hocolim_k\holim_{\mathcal{P}_0(A/H_+)^k}\Psi(\Lambda^{A,k}(-))^H.\]
By iterating this construction we obtain a map
\[\big(\!\!\iter_{\underline{w}\in\!\!\!\!\prod\limits_{o\in J_n/G}\!\!\!\! o/H}P_{\amalg\underline{w}}(\Phi|_H)\big)^H\longrightarrow \hocolim_{k_1}\!\! \holim_{\mathcal{P}_0(\amalg\underline{w}_1/H_+)^{k_1}}\!\!\dots \hocolim_{k_l}\!\! \holim_{\mathcal{P}_0(\amalg\underline{w}_l/H_+)^{k_l}}\!\!\Phi(\Lambda^{\amalg\underline{w}_1,k_1}\dots \Lambda^{\amalg\underline{w}_l,k_l})^H
\]
where $l$ is the cardinality of $\prod_{o\in J_n/G} o/H$. The composition of this map with the isomorphism 
\[\Phi(\Lambda^{\amalg\underline{w}_1,k_1}\dots \Lambda^{\amalg\underline{w}_l,k_l}(-))^H\cong \Phi(\Lambda^{n,k_1}\dots \Lambda^{n,k_l}(-))^H\]
gives a retraction of $\iter p_{n}^\ast$. The existence of these retractions show that $\holim_n\iter p_{n}^\ast$ is an equivalence on $H$-fixed-points, by an argument which is formally identical to the one given above for the $G$-fixed-points.
\end{proof}

\subsection{The genuine tower of the stable indexed powers}\label{indexedpowers}

Let $E$ be an orthogonal $G$-spectrum and $K$ a finite $G$-set.
The aim of this section is to describe the genuine subtower of the Goodwillie tree (determined by the free $G$-sets) for the indexed power functor
$\mathcal{T}_{E}^ K\colon \Top_\ast\to \Sp_{O}^ G$ defined by
\[\mathcal{T}_{E}^ K(X)=E\wedge X^{\wedge K},\]
and some variations of this functor involving families of subgroups. These results generalize immediately to the indexed power functors on $G$-spectra $(-)^{\wedge K}\colon \Sp_{O}^ G\to \Sp_{O}^G$ of \cite{HHR} and their family versions, but it is the functors from pointed spaces that will play a role in the calculation of the derivatives of the identity of \S\ref{sec:Id}.

We showed in Proposition \ref{smashfctr} that $\mathcal{T}_{E}^K$ is $kG$-excisive, where $k=|K|$. Thus by \ref{inclusions} it is $nG$-excisive for every $n\geq |K|$, and its genuine Goodwillie tower stabilizes at the $|K|$-th stage. Unlike in the non-equivariant case though, the lower excisive approximations of $\mathcal{T}_{E}^ K$ are usually non-trivial. They are described in the following proposition and its corollary.

\begin{prop}\label{PnGsympowers}
Let $H$ be a subgroup of $G$. The geometric $H$-fixed-points of the $nG$-excisive approximation of $\mathcal{T}_{E}^K$ are respectively
\[
\Phi^H(P_{nG}\mathcal{T}_{E}^K)\simeq \left\{\begin{array}{ll}
\ast&\mbox{if}\ n<|K|\ \mbox{and}\ n\leq |K/H|\\
\Phi^H(\mathcal{T}_{E}^K) &\mbox{otherwise}.
\end{array}
\right.
\]
\end{prop}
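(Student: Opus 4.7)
The plan is to handle the regime $n\ge|K|$ immediately by excisiveness, and to reduce the remaining cases to an analysis of a sequential telescope obtained from restriction to $H$ and geometric fixed-points. For $n\ge|K|$, Propositions~\ref{smashfctr} and~\ref{inclusions} imply that $\mathcal{T}_E^K$ is $nG$-excisive, so $\mathcal{T}_E^K\to P_{nG}\mathcal{T}_E^K$ is an equivalence and $\Phi^H$ preserves it, delivering the ``otherwise'' conclusion.

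For $n<|K|$, I would first invoke Remark~(ii) following Theorem~\ref{restriction} to get $(P_{nG}\mathcal{T}_E^K)|_H \simeq P_{nH}(\mathcal{T}_{E|_H}^{K|_H})$, since each $H$-orbit of $G$ is isomorphic to $H$. Proposition~\ref{prop:geomholim} then lets me push $\Phi^H$ through the sequential homotopy colimit and the cubical homotopy limits defining the iterates of $T_{nH}$. Using the identification $\mathcal{P}_0((nH)_+)^H\cong \mathcal{P}_0(n_+)$ (from the free $H$-action on $nH$) together with the spectrum-level identity
\[
\Phi^H(E|_H \wedge Y^{\wedge K|_H}) \simeq \Phi^H E \wedge \bigwedge_{o\in K/H} Y^{L_o},
\]
where $L_o\le H$ is the isotropy of a representative of each $H$-orbit $o$ of $K$, the problem reduces to analysing a sequential colimit of holims over non-equivariant $n_+$-cubes whose vertex values are built from $X^{\wedge|K/H|}$ equipped with specific suspension shifts arising from the $\Lambda^{nH}_{\tilde U}(X)$ construction.

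The dichotomy then follows from a careful analysis of the structure maps of this telescope. When $n>|K/H|$, the cube dimension is sufficient for the iterated $T_{nH}$-construction to encode the full $|K/H|$-polynomial behaviour of $\Phi^H\mathcal{T}_E^K$, and the sequential colimit stabilises to this functor, yielding the ``otherwise'' conclusion. When $n\le|K/H|$ and $n<|K|$, at least one factor $Y^{L_o}$ in the smash decomposition corresponds to a non-trivial $H$-orbit (forced by $|K/H|<|K|$ when $n=|K/H|$, or by pure homogeneity when $n<|K/H|$); its contribution to each assembly map is, after $\Phi^H$, a pointed sphere map $S^k\to S^{k+1}$ that is null by connectedness. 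All structure maps in the telescope thus become stably null, and the standard cofibre-sequence formula $\hocolim \simeq \mathrm{cofib}\bigl(\bigoplus A_k\xrightarrow{1-F}\bigoplus A_k\bigr)$ collapses the telescope to $*$. The main obstacle is precisely this combinatorial analysis of the assembly maps and their nullity on geometric fixed-points, which generalises the transitive case of Example~\ref{ex:transapprox} to arbitrary free orbit counts and a variable number of smash factors.
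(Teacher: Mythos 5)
Your reduction to $H=G$ via Theorem~\ref{restriction}, and the treatment of $n\ge|K|$ via Propositions~\ref{smashfctr} and \ref{inclusions}, match the paper. The remaining two regimes, however, are where the substance lives, and there you have real gaps.

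For $n<|K|$ with $n\le|K/H|$, the mechanism you propose --- that on geometric fixed-points the assembly maps of the telescope become null because a factor $Y^{L_o}$ attached to a nontrivial $H$-orbit contributes an $S^k\to S^{k+1}$-type map --- is not what makes the colimit vanish, and in general those maps are not null; they shift connectivity. The paper instead proves a quantitative estimate: using the explicit descriptions of $\Lambda^{nG}_{p^{-1}U}(X)$ in Example~\ref{spelloutlambda}, it shows
\[
\conn\ \Phi^G\bigl(T^{(m)}_{nG}\mathcal{T}_E^K(X)\bigr)\ \ge\ \conn\bigl(X^{\wedge K}\bigr)^G + m,
\]
with the key inequality $p(U_i)-|U_i|\ge 0$ coming precisely from the hypotheses $n\le|K/G|$ (for the $+$-free vertices, whose $G$-fixed suspension shift is by $|K/G|$) and $n<|K|$ (for the $+$-containing vertices, whose shift is by $|K|$). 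Divergent connectivity of the stages, not nullity of the bonding maps, is what kills the telescope. Moreover your case split is asymmetric: when $n<|K/H|$ and $K$ is a trivial $H$-set there is no nontrivial $H$-orbit at all, so the orbit-based argument evaporates there and ``pure homogeneity'' is not a substitute unless you first commute $P_{nG}$ past $\Phi^G$, which itself requires justification (Proposition~\ref{prop:geomholim} and stability).

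For $|K/G|<n<|K|$, you assert that ``the cube dimension is sufficient to encode the full $|K/H|$-polynomial behaviour'' so that the telescope stabilises, but give no argument. The paper proves this by showing that the approximation map $\mathcal{T}_E^K\to T_{nG}\mathcal{T}_E^K$ is already an equivalence on geometric $G$-fixed-points: the cofinality functor $p\colon\wr St(-)^K\to St(nG_+)^K$ from the proof of Proposition~\ref{smashfctr} restricts to a right-cofinal functor on $G$-fixed-points, and the verification hinges on the observation that a $G$-invariant collection $\underline{W}$ has components confined to at most $|K/G|$ copies of $G_+$, so $\bigcup_k W_k$ is proper in $nG_+$ whenever $n>|K/G|$. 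That combinatorial step is the actual content of this case and is missing from your proposal.
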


\begin{proof}
Let us first observe that \ref{restriction} provides an equivalence
\[\Phi^H(P_{nG}\mathcal{T}_{E}^K)=\Phi^H(P_{nG}\mathcal{T}_{E}^K)|_H\simeq \Phi^H(P_{nH}\mathcal{T}_{E}^{K|_H}),\]
and we can therefore assume that $H=G$.

Suppose that $n<|K|$ and that $n\leq |K/G|$, and let us examine the connectivity of $\Phi^G(T^{(m)}_{nG}\mathcal{T}_{E}^K)$, for some positive integer $m$. By the equivalence of \ref{prop:geomholim} this spectrum is the homotopy limit
\[\begin{array}{ll}\Phi^G(T^{(m)}_{nG}\mathcal{T}_{E}^K(X))\simeq \holim_{\prod\limits_m\mathcal{P}_0(nG_+)^G}(\Phi^{G}E)\wedge (\underbrace{\Lambda^{nG}\dots\Lambda^{nG}}_{m}(X)^{\wedge K})^G\\\simeq (\Phi^{G}E)\wedge\holim_{\prod\limits_m\mathcal{P}_0(nG_+)^G} \mathbb{S}\wedge (\underbrace{\Lambda^{nG}\dots\Lambda^{nG}}_{m}(X)^{\wedge K})^G.
\end{array}\]
The connectivity of the homotopy limit of this formula is bounded below by
\begin{equation}\label{connTnG}\min_{U_1,\dots,U_n\in \mathcal{P}_0(n_+)}
\conn ((\Lambda^{nG}_{p^{-1}U_1}\dots \Lambda^{nG}_{p^{-1}U_n})(X)^{\wedge K})^G-\sum_{i=1}^m(|U_i|-1)
\end{equation}
(see e.g. \cite[A.1.2]{GdiagTop}), where $p\colon nG_+\to n_+$ is the projection map. In order to estimate this quantity we need to understand the connectivity of the $G$-fixed-points of $\Lambda^{nG}_{p^{-1}(U)}(X)^{\wedge K}$, for every subset $U$ of $n_+$. We use the equivalences of \ref{spelloutlambda} to describe these spaces, depending on weather or not $U$ contains the basepoint. When $U$ is a subset of $n$, there is an equivalence
\[(\Lambda^{nG}_{p^{-1}(U)}(X)^{\wedge K})^G\underset{\ref{spelloutlambda}}{\simeq} ((\bigvee_{|U|-1}\Sigma X)^{\wedge K})^G\cong (\bigvee_{(|U|-1)^{\times K}}\Sigma^K(X^{\wedge K}))^G\cong \bigvee_{(|U|-1)^{\times K/G}}\Sigma^{K/G}(X^{\wedge K})^G,\]
and the connectivity of this space is $\conn (X^{\wedge K})^G+|K/G|$.
Now suppose that $U=V_+$ for some subset $V$ of $n$. In this case there is an equivalence
\[(\Lambda^{nG}_{p^{-1}(V_+)}(X)^{\wedge K})^G\underset{\ref{spelloutlambda}}{\simeq} ((\bigvee_{|V|-1}\Sigma^G X)^{\wedge K})^G\cong (\bigvee_{(|V|-1)^{\times K}}\Sigma^{K\times G}(X^{\wedge K}))^G\cong \bigvee_{(|V|-1)^{\times K/G}}\Sigma^{|K|}(X^{\wedge K})^G,\]
and this has connectivity $\conn (X^{\wedge K})^G+|K|$.
By induction, we see that he connectivity range of (\ref{connTnG}) is
\[\min_{U_1,\dots,U_n}
\conn (X^{\wedge K})^G+\sum_{i=1}^m p(U_i)-\sum_{i=1}^m(|U_i|-1)=\min_{U_1,\dots,U_n}
\conn (X^{\wedge K})^G+\sum_{i=1}^m (p(U_i)-|U_i|+1)
\]
where $p(U_i)=|K/G|$ if $U_i$ does not contain $+$, and $p(U_i)=|K|$ otherwise. By our assumption $p(U_i)-|U_i|\geq 0$, and therefore
\[\conn \holim_{\prod\limits_m\mathcal{P}_0(nG_+)^G} \mathbb{S}\wedge (\underbrace{\Lambda^{nG}\dots\Lambda^{nG}}_{m}(X)^{\wedge K})^G\geq \conn (X^{\wedge K})^G+m\]
diverges with $m$. It follows that $\Phi^GP_{nG}\mathcal{T}_{E}^K$ is contractible.

Now let us suppose that either $n\geq |K|$, or that $n>|K/G|$. In the first case we know by \ref{smashfctr} that $\mathcal{T}_{E}^K$ is $K\times G=|K|\times G$-excisive, and therefore that $\mathcal{T}_{E}^K$ and $P_{nG}\mathcal{T}_{E}^K$ are equivalent. It remains to show that the map $\mathcal{T}_{E}^K\to P_{nG}\mathcal{T}_{E}^K$ is an equivalence on geometric $G$-fixed-points when $n>|K/G|$. The argument of \ref{smashfctr} shows that this is the case if the functor 
$p\colon \wr St(-)^{K}\to St(nG_+)^K$ restricts to a right-cofinal functor on $G$-fixed-points. We recall that $\wr St(-)^{K}$ is the Grothendieck construction of the functor
\[St(-)^K\colon \mathcal{P}_1(nG_+)\longrightarrow Cat,\]
and that $p$ sends a pair $(U\subsetneq nG_+,\underline{V}\in St(U)^K)$ to $\underline{V}$ viewed as a collection of subsets of $nG_+$. Let $\underline{W}$ in $St(nG_+)^K$ be a collection of subsets which is invariant for the $G$-action on $St(nG_+)^K$. We show that the category $(\underline{W}/p)^G$ has an initial object.The pair $(\cup_k W_k, \underline{W})$ is $G$-invariant, and it defines an initial object provided the subset $\cup_k W_k$ of $nG_+$ is proper. We observe that since $\underline{W}$ is $G$-invariant, if the component $W_k$ is a subset of $\{i\}\times G_+$, the component $W_{gk}$ must be a subset of the same orbit $\{i\}\times G_+$. Hence the union $\cup_k W_k$ can cover at most $|K/G|$ copies of $G_+$, and since $n>|K/G|$ it must be a proper subset of $nG_+$.
\end{proof}

We can now repackage the information of Proposition \ref{PnGsympowers} to describe the full functor $P_{nG}\mathcal{T}_{E}^ K$. 
Given a set of subgroups $\mathcal{R}$ of a finite group $\Sigma$ closed by conjugation, we let $\overline{E}\mathcal{R}$ be a pointed $\Sigma$-space whose fixed-points by the subgroups $\Gamma$ of $\Sigma$ are characterized by the property
\[\overline{E}\mathcal{R}^\Gamma=\left\{\begin{array}{ll}
 S^0&\mbox{if $\Gamma\in\mathcal{R}$}\\
\ast&\mbox{otherwise}
\end{array}\right.\]
(This space can be constructed by closing $\mathcal{R}$ to a family of subgroups $\mathcal{F}$, and by defining $\overline{E}\mathcal{R}$ as the cofiber of the map between universal $\Sigma$-spaces $E(\mathcal{F}\backslash \mathcal{R})_+\to E\mathcal{F}_+$ induced by the inclusion of families.) If $\mathcal{R}$ is not closed by conjugation, we still write $\overline{E}\mathcal{R}$ for the pointed $\Sigma$-space associated to its closure by conjugation.

Now let $\mathcal{R}$ be a set of subgroups of $G\times\Sigma_k$, such that every element of $\mathcal{R}$ intersects $1\times\Sigma_k$ trivially, or in other words a collection of graphs of group homomorphisms $\rho\colon H\to\Sigma_k$ where $H$ is a subgroup $G$. We think of $\mathcal{R}$ as a set of actions of subgroups of $G$ on a $k$-elements set. Given an orthogonal $G\times\Sigma_k$-spectrum $C$, we define the corresponding indexed symmetric power functor $\mathcal{S}^{\mathcal{R}}_C\colon \Top_\ast\to \Sp_{O}^ G$ by
\[\mathcal{S}_{C}^{\mathcal{R}}(X)=(C\wedge X^{\wedge k})\wedge_{\Sigma_k}\overline{E}\mathcal{R}\]
where $\Sigma_k$ acts both on $C$ and on $X^{\wedge k}$ by permuting the smash factors.

\begin{rem}\label{handy} We will make repeated use of the following well-known formula. If $\mathcal{R}$ is closed under conjugation, the are natural equivalences
\[\Phi^H\mathcal{S}_{C}^{\mathcal{R}}(X)\simeq \big(\bigvee_{\substack{\rho\colon H\to\Sigma_k\\ \Gamma_\rho\in\mathcal{R}}}\Phi^{H}(\rho^{\ast}(E)\wedge X^{\wedge \rho^\ast k})\big)_{h\Sigma_k}\simeq \bigvee_{\substack{[\rho\colon H\to\Sigma_k]\\ \Gamma_{\rho}\in\mathcal{R}}}\Phi^{H}(\rho^{\ast}(E)\wedge X^{\wedge \rho^\ast k})_{hAut_{\rho^{\ast}k}}
\]
where $\Gamma_\rho$ is the graph of $\rho$, the $H$-spectrum $\rho^{\ast}E$ is the restriction of $E$ along the inclusion $H\to G\times \Sigma_k$ which sends $h$ to $(h,\rho(h))$, and $\rho^{\ast}k$ is the $H$-set defined by the homomorphism $\rho$. Moreover $\Sigma_k$ acts on the set indexing the first wedge by pointwise conjugation of group homomorphisms, and the second wedge runs over the equivalence classes of this action. A proof of this formula for pointed spaces can be found in \cite[I.4]{Carlsson}, and it can be readily extended to spectra using the model for the geometric fixed points chosen in \ref{prop:geomholim}.
\end{rem}

\begin{ex}\label{indexedfromfamily}
Let $K$ be a finite $G$-set with $k$-elements, and $\rho_K\colon G\to\Sigma_k$ its corresponding group homomorphism (upon a choice of total order on the set underlying $K$).
Let $\mathcal{R}_K$ be the set of all the subgroups of the graph of $\rho_K$. Then for every orthogonal $G$-spectrum $E$ the canonical map
\[\mathcal{T}_{E}^{K}(X)=E\wedge X^{\wedge K}\stackrel{\simeq}{\longleftarrow} ((\Sigma_k)_+\wedge E\wedge X^{\wedge k})\wedge_{\Sigma_k}\overline{E}\mathcal{R}_K=\mathcal{S}_{(\Sigma_k)_+\wedge E}^{\mathcal{R}_K}(X)\]
is an equivalence, where $G$ acts on $\Sigma_k$ by left multiplication through $\rho_K\colon G\to\Sigma_k$. Indeed by the decomposition formula \ref{handy} the geometric $H$-fixed-points of the right-hand side are equivalent to
\[\bigvee_{\{\rho_K|_H\}}((\rho_{K}^{\ast}\Sigma_k)_{+}^H\wedge\Phi^{H}(E\wedge X^{\wedge K}))_{hAut_{K|_H}}\]
where the wedge run through the unique graph in $\mathcal{R}_K$ with domain $H$.
By definition of the $G$-action the $H$-fixed-points set of  $\rho_{K}^{\ast}\Sigma_k$ is the group of equivariant automorphisms $Aut_{K|_H}$, and the map above corresponds to the canonical equivalence
\[((Aut_{K|_H})_{+}^H\wedge\Phi^{H}(E\wedge X^{\wedge K}))_{hAut_{K|_H}}\stackrel{\simeq}{\longrightarrow}\Phi^{H}(E\wedge X^{\wedge K}).\]
\end{ex}

Given a set $\mathcal{R}$ of subgroups of $G\times\Sigma_k$ which intersect $1\times \Sigma_k$ trivially and a positive integer $n<k$, we define
\[\mathcal{R}(<n)=\{\Gamma_\rho\in\mathcal{R}\ |\ \rho\colon H\to\Sigma_k\ \mbox{with}\ |(\rho^\ast k)/H|<n\},\]
the set of actions in $\mathcal{R}$ with less than $n$-orbits.

\begin{cor}\label{symtree}
For any orthogonal $G\times\Sigma_k$-spectrum $C$ and positive integer $n<k$, there is a natural equivalence
\[P_{nG}\mathcal{S}_{C}^{\mathcal{R}}(X)\simeq (C\wedge X^{\wedge k})\wedge_{\Sigma_k}\overline{E}\mathcal{R}(<n).\]
In particular if $K$ is a finite $G$-set with $k$ elements and $E$ is an orthogonal $G$-spectrum, by setting $C=(\Sigma_k)_+\wedge E$ as in \ref{indexedfromfamily} we obtain an equivalence
\[P_{nG}\mathcal{T}_{E}^K(X)\simeq ((\Sigma_{k})_+\wedge E\wedge X^{\wedge k})\wedge_{\Sigma_k}\overline{E}\mathcal{R}_K(<n).\]
\end{cor}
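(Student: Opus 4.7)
The plan is to reduce Corollary \ref{symtree} to Proposition \ref{PnGsympowers} by decomposing $\mathcal{S}_C^{\mathcal{R}}$ along $\Sigma_k$-conjugacy classes, applying $P_{nG}$ termwise, and identifying each piece with an indexed power functor. First, the set $\mathcal{R}(<n)$ is $(G\times\Sigma_k)$-invariant since conjugation in either factor preserves the orbit count of the associated subgroup-action on $k$. Setting $\mathcal{R}(\geq n):=\mathcal{R}\setminus\mathcal{R}(<n)$, a fixed-point check (both sides have $\Gamma$-fixed points $S^0$ exactly when $\Gamma$ lies in the corresponding subset, and $\ast$ otherwise) gives a weak equivalence of pointed $(G\times\Sigma_k)$-spaces
\[\overline{E}\mathcal{R}(<n)\vee \overline{E}\mathcal{R}(\geq n)\xrightarrow{\simeq}\overline{E}\mathcal{R}.\]
Smashing with $C\wedge X^{\wedge k}$ and taking $\Sigma_k$-orbits yields a natural splitting $\mathcal{S}_C^{\mathcal{R}}(X)\simeq \mathcal{S}_C^{\mathcal{R}(<n)}(X)\vee \mathcal{S}_C^{\mathcal{R}(\geq n)}(X)$, and since $G$-spectra are $nG$-stable, $P_{nG}$ preserves this wedge by \ref{PJcommute}(iv). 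The corollary therefore reduces to showing (a) $\mathcal{S}_C^{\mathcal{R}(<n)}$ is $nG$-excisive, and (b) $P_{nG}\mathcal{S}_C^{\mathcal{R}(\geq n)}\simeq \ast$ as a $G$-spectrum.

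Both (a) and (b) can be checked on geometric $H$-fixed points for each $H\leq G$, which detect $G$-equivalences of spectra. Applying the same fixed-point splitting to decompose $\overline{E}\mathcal{R}'$ (for $\mathcal{R}'=\mathcal{R}(<n)$ or $\mathcal{R}(\geq n)$) into individual $\Sigma_k$-conjugacy classes $[\Gamma_\rho]$, and using the model $\overline{E}[\Gamma_\rho]\simeq(\Sigma_k)_+\wedge_{N_{\Sigma_k}(\Gamma_\rho)}\overline{E}\{\Gamma_\rho\}$, each conjugacy-class summand $\mathcal{S}_C^{[\Gamma_\rho]}$ is identified (analogously to Example \ref{indexedfromfamily}) with an indexed power functor $\mathcal{T}_{E'}^{K'}$ for a suitable $G$-spectrum $E'$ and a $G$-set $K'$ whose $H$-orbit count satisfies $|K'/H|=|\rho^*k/H|$. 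Proposition \ref{PnGsympowers} applied to $\mathcal{T}_{E'}^{K'}$ then gives: under the hypothesis $n<k=|\rho^*k|$, the geometric $H$-fixed points $\Phi^HP_{nG}\mathcal{T}_{E'}^{K'}$ are contractible exactly when $n\leq |\rho^*k/H|$, i.e., $\Gamma_\rho\in\mathcal{R}(\geq n)$, and agree with $\Phi^H\mathcal{T}_{E'}^{K'}$ when $|\rho^*k/H|<n$, i.e., $\Gamma_\rho\in\mathcal{R}(<n)$. Summing over conjugacy classes and comparing with the decomposition of Remark \ref{handy} on the source side establishes (a) and (b), and hence the first equivalence of the corollary.

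The second, concrete statement follows by specializing the first equivalence to $C=(\Sigma_k)_+\wedge E$ and $\mathcal{R}=\mathcal{R}_K$, using Example \ref{indexedfromfamily}'s identification $\mathcal{T}_E^K\simeq \mathcal{S}_{(\Sigma_k)_+\wedge E}^{\mathcal{R}_K}$ and observing that $\mathcal{R}_K(<n)$ is precisely the set of subgroups of the graph of $\rho_K$ whose associated $H$-set $\rho^*k$ has fewer than $n$ orbits. The principal obstacle in this plan is the identification of each summand $\mathcal{S}_C^{[\Gamma_\rho]}$ as a concrete $\mathcal{T}_{E'}^{K'}$-type functor with the expected $H$-orbit structure: this requires careful tracking of the normalizer $N_{\Sigma_k}(\Gamma_\rho)$-action on both $C$ and the smash factors of $X^{\wedge k}$, together with an invocation of the restriction formula \ref{restriction} to translate the $G$-equivariant statement of $P_{nG}$-excision into the $H$-equivariant form appearing in Proposition \ref{PnGsympowers}.
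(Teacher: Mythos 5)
Your plan matches the paper through the reduction step: the splitting $\overline{E}\mathcal{R}\simeq\overline{E}\mathcal{R}(<n)\vee\overline{E}\mathcal{R}(\geq n)$, the resulting wedge decomposition of $\mathcal{S}_C^{\mathcal{R}}$, and the reduction to (a) $\mathcal{S}_C^{\mathcal{R}(<n)}$ is $nG$-excisive and (b) $P_{nG}\mathcal{S}_C^{\mathcal{R}(\geq n)}\simeq\ast$, checked on geometric fixed points, are exactly the paper's first moves.

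The divergence that follows contains a genuine gap. You propose to decompose $\overline{E}\mathcal{R}'$ into $\Sigma_k$-conjugacy classes $[\Gamma_\rho]$ and to identify each summand $\mathcal{S}_C^{[\Gamma_\rho]}$ with an indexed power functor $\mathcal{T}_{E'}^{K'}$ for a genuine $G$-set $K'$. That identification cannot be made. First, a single conjugacy class $[\Gamma_\rho]$ is never of the form $\mathcal{R}_{K'}$ for any $G$-set $K'$: by definition $\mathcal{R}_{K'}$ is the set of \emph{all} subgroups of the graph $\Gamma_{\rho_{K'}}$, hence contains the graphs $\Gamma_{\rho_{K'}|_L}$ for every $L\leq G$, a full family of subgroups spanning many conjugacy classes, not a single one. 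Second, for $\rho\colon H\to\Sigma_k$ with $H$ a proper subgroup of $G$, the $H$-set $\rho^{\ast}k$ generally does not extend to any $G$-set, so there is simply no $G$-set $K'$ with $K'|_{H}\cong\rho^{\ast}k$ to feed into Proposition \ref{PnGsympowers}. Third (and prior to the other two), a $\Sigma_k$-conjugacy class of subgroups of $G\times\Sigma_k$ is typically not $G\times\Sigma_k$-invariant -- conjugating $\Gamma_\rho$ by $(g,1)$ moves the domain $H$ to $gHg^{-1}$ -- so the splitting of $\overline{E}\mathcal{R}'$ as a $G\times\Sigma_k$-space along $\Sigma_k$-conjugacy classes does not exist in the first place; only a $G\times\Sigma_k$-conjugacy decomposition would, after which the first two objections still apply.

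The paper's route avoids all of this by taking geometric $H$-fixed points \emph{before} introducing conjugacy classes. The decomposition formula of Remark \ref{handy} already produces, for each fixed $H\leq G$, a $\Sigma_k$-equivariant wedge over homomorphisms $\rho\colon H\to\Sigma_k$ with $\Gamma_\rho\in\mathcal{R}$ of terms $\Phi^{H}(\rho^{\ast}C\wedge X^{\wedge\rho^{\ast}k})$, which are geometric fixed points of $H$-equivariant indexed powers $\mathcal{T}_{\rho^{\ast}C}^{\rho^{\ast}k}$, with no need or possibility to globalize $\rho^{\ast}k$ to a $G$-set. Proposition \ref{PnGsympowers}, whose own proof begins by restricting to $H$ via Theorem \ref{restriction}, then applies directly term by term, separating the contributions by whether $\rho^{\ast}k$ has fewer than $n$ $H$-orbits. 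If you replace your conjugacy-class decomposition of $\overline{E}\mathcal{R}'$ and the $\mathcal{T}_{E'}^{K'}$-identification with a direct appeal to \ref{handy} at the level of $\Phi^{H}$, the gap closes and the argument aligns with the paper's.
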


\begin{rem}
We observe that since $\mathcal{R}_K(<n)$ does not contain the trivial homomorphisms $H\to\Sigma_k$ the non-equivariant homotopy type of $\overline{E}\mathcal{R}_K(<n)$ is contractible. Indeed non-equivariantly $P_n (E\wedge X^{\wedge k})$ is contractible when $n<k$.
\end{rem}

\begin{proof}[Proof of \ref{symtree}]
If the set of subgroups $\mathcal{R}$ decomposes as $\mathcal{A}\amalg\mathcal{B}$ (where all the sets are closed by conjugation), then $\overline{E}\mathcal{A}\vee \overline{E}\mathcal{B}$ is a model for $\overline{E}\mathcal{R}$. Thus if we denote by $\mathcal{L}$ the complement of $\mathcal{R}(<n)$ in $\mathcal{R}$,
the functor $\mathcal{S}_{C}^{\mathcal{R}}$ splits as
\[\mathcal{S}_{C}^{\mathcal{R}}\simeq \mathcal{S}_{C}^{\mathcal{R}(<n)}\vee \mathcal{S}_{C}^{\mathcal{L}}.\]
Hence the corollary holds provided we can prove that $\mathcal{S}_{C}^{\mathcal{R}(<n)}$ is $nG$-excisive and $P_{nG}\mathcal{S}_{C}^{\mathcal{L}}$ is trivial.

We use \ref{prop:geomholim} to commute $P_{nG}$ and geometric fixed-points, and the decomposition formula \ref{handy} gives an equivalence
\[
\Phi^HP_{nG}\mathcal{S}_{C}^{\mathcal{R}}
\simeq
\big(\bigvee_{\substack{\rho\colon H\to\Sigma_k\\ \Gamma_\rho\in \mathcal{R}}}\Phi^{H}P_{nG}\mathcal{T}_{\rho^\ast C}^{\rho^{\ast }k}\big)_{h\Sigma_k}
.\]
which is compatible with the splitting above. Here $\rho^\ast C$ is the $H$-spectrum obtained by restricting $C$ along the map $H\to G\times \Sigma_k$ which sends $h$ to $(h,\rho(h))$. By \ref{PnGsympowers} one immediately sees that $\Phi^{H}P_{nG}\mathcal{T}_{\rho^\ast C}^{\rho^{\ast }k}$ is equivalent to $\Phi^{H}\mathcal{T}_{\rho^\ast C}^{\rho^{\ast }k}$ when $\Gamma_\rho$ belongs to $\mathcal{R}(<n)$, and that it is contractible if it belongs to $\mathcal{L}$. 
\end{proof}

\section{Homogeneous functors}

Let $G$ be a finite group, let $J$ be a finite $G$-set, and let $\mathscr{C}$ and $\mathscr{D}$ be $G$-model categories. We will assume from now on that $\mathscr{C}$ and $\mathscr{D}$ are pointed.
We define the notion of $J$-homogeneous homotopy functor, and we construct a supply of examples by taking the ``layers'' of the Goodwillie tree of any homotopy functor $\Phi\colon \mathscr{C}\to \mathscr{D}^G$ (in \S\ref{layers}). We study the behavior of the cross-effect functor on $J$-homogeneous functors in \S\ref{multilinear}, and we classify ``strongly homogeneous functors'' as equivariant spectra with a na\"{i}ve $\Sigma_n$-action in \S\ref{sec:class}.

\begin{defn}
Let $J$ be a finite $G$-set. A homotopy functor $\Phi\colon \mathscr{C}\to \mathscr{D}^G$ is $J$-reduced if $P_K\Phi$ is contractible for every proper $G$-subset $K$ of $J$. We say that $\Phi$ is $J$-homogeneous if it is $J$-excisive and $J$-reduced. We say that $\Phi$ is strongly $J$-homogeneous if it is $J$-excisive and $n$-reduced, where $n=|J/G|$.
\end{defn}

\begin{rem}
A strongly $J$-homogeneous functor is in particular $J$-homogeneous, since
for every proper $G$-subset $K\subset J$ there are equivalences
\[P_{K}\Phi\simeq P_{K/G}P_{K}\Phi\simeq P_{K}P_{K/G}\Phi\simeq P_{K/G}\ast\simeq\ast.\]
This is because $P_K\Phi$ is $K/G$-excisive by \ref{isoonorb}, and because $|K/G|< |J/G|$.   
\end{rem}

\begin{ex}
If $J=T$ is a transitive $G$-set, the only proper subset $K$ of $T$ is the empty-set. A functor is $T$-reduced if and only if $P_{\emptyset}\Phi=\Phi(\ast)$ is equivalent to the zero-object of $\mathscr{D}^G$. In this case strongly $T$-homogeneous functors and $T$-homogeneous functors agree. We refer to these functors as $T$-linear. It is a theorem of \cite{Blumberg} that there is a model structure on the category of finitary enriched reduced homotopy functors on pointed $G$-spaces which is Quillen equivalent to the model category of $G$-spectra on the universe $\bigoplus_k\mathbb{R}[kT]$, and where the fibrant objects are the $T$-linear functors. This Theorem is an equivariant version of Goodwillie's classification of $1$-homogeneous functors of \cite{calcIII}. 
\end{ex}

\begin{ex}\label{orbitsstronglyhom}
For any orthogonal $G$-spectrum $E$, the functor $\mathcal{T}^{n}_E\colon \Top_\ast\to \Sp_{O}^G$ of Proposition \ref{smashfctr} defined by
\[\mathcal{T}^{n}_E(X)=E\wedge X^{\wedge n}\]
is strongly $nG$-homogeneous. We showed in \ref{smashfctr} that $\mathcal{T}^{n}_E$ is $nG$-excisive, and it follows by \cite[3.1-3.2]{calcIII} that $\mathcal{T}^{n}_E$ is $n$-reduced. As an immediate consequence we see that the functor $\mathcal{S}_{E}^n(X)=(E\wedge X^{\wedge n})_{h\Sigma_n}$ is also strongly $nG$-homogeneous, where $E$ is an orthogonal $G\times\Sigma_n$-spectrum. We observe that a map $f\colon E\to W$ of $G\times \Sigma_n$-spectra induces an equivalence $f_\ast\colon \mathcal{S}^{n}_E\to \mathcal{S}^{n}_W$ if $f$ is an equivalence of $G$-spectra after forgetting the $\Sigma_n$-action. Thus we think of $E$ as an object of the model category of genuine $G$-spectra with na\"{i}ve $\Sigma_n$-action.
\end{ex}

\begin{ex}
The $n$-th layer of the genuine tower of a homotopy functor $\Phi\colon \mathscr{C}\to \mathscr{D}^G$ between pointed $G$-model categories
\[D_{nG}\Phi=\hofib(P_{nG}\Phi\longrightarrow P_{(n-1)G}\Phi)\]
is $nG$-homogeneous, but generally not strongly $nG$-homogeneous. We will generalize this construction to an arbitrary $G$-set $J$ in \S\ref{layers}.
\end{ex}

\begin{ex}\label{DnGsym}
The calculation of \ref{symtree} allows us to see which symmetric power functors $\mathcal{S}^{\mathcal{R}}_{C}$ are $nG$-homogeneous. We recall that $C$ is an orthogonal $G\times\Sigma_k$-spectrum and that $\mathcal{R}$ is a set of subgroups of $G\times \Sigma_k$ which intersect $1\times\Sigma_k$ trivially. Given a pair of integers $n< k$ we let
\[\mathcal{R}(n)=\{\Gamma_\rho\leq G\times \Sigma_k\ |\ \rho\colon H\to\Sigma_k \ \mbox{with}\ \rho^{\ast}k/H=n-1\},\]
and for $n=k$ we set
\[\mathcal{R}(n)=\{\Gamma_\rho\leq G\times \Sigma_n\ |\ \rho\colon H\to\Sigma_n \ \mbox{with}\ \rho=1 \ \mbox{or}\ \rho^{\ast}n/H= n-1\}.\]
These collections record the actions of $\mathcal{R}$ which are either trivial or non-trivial with $n-1$ orbits. We observe that $\mathcal{R}(n)$ is non-empty only when $k\leq (n-1)|G|$ or $k=n$.
It follows immediately from \ref{symtree} that the $n$-th layer of the genuine tower of $\mathcal{S}^{\mathcal{R}}_{C}$ is
\[D_{nG}\mathcal{S}^{\mathcal{R}}_{C}\simeq \mathcal{S}^{\mathcal{R}(n)}_{C},\]
and that $\mathcal{S}^{\mathcal{R}(n)}_{C}$ is $nG$-homogeneous.
\end{ex}

\subsection{The layers of the tree}\label{layers}

Let $\Phi\colon \mathscr{C}\to \mathscr{D}^G$ be a homotopy functor between pointed $G$-model categories, and let $J$ be a finite $G$-set. We construct a $J$-homogeneous homotopy functor and a strongly $J$-homogeneous homotopy functor from the Goodwillie tree of $\Phi$ defined in \S\ref{sec:tree}. They are analogous to the layers of the classical Taylor tower. 

Given an inclusion of finite $G$-sets $\iota\colon K\to J$ we let $D_\iota\Phi$ be the homotopy fiber
\[D_\iota\Phi=\hofib\big(P_J\Phi\stackrel{\iota^\ast}{\longrightarrow} P_K\Phi\big).\]
For every $G$-orbit $o$ of $J$ let $\iota_o\colon J\backslash o\to J$ denote the inclusion of the complement of $o$ in $J$.
\begin{defn}
The $J$-differential of a homotopy functor $\Phi\colon \mathscr{C}\to \mathscr{D}^G$ is the homotopy functor $D_J\Phi\colon \mathscr{C}\to \mathscr{D}^G$ defined as the iteration
\[D_J\Phi= D_{\iota_{o_1}}\dots D_{\iota_{o_n}}\Phi\]
for a choice of order $J/G=\{o_1,\dots,o_n\}$ on the set of orbits of $J$. Different orders give equivalent differentials by \ref{PJcommute}.
The strong $J$-differential of $\Phi$ is the homotopy functor $\overline{D}_J\Phi\colon \mathscr{C}\to \mathscr{D}^G$ defined as
\[\overline{D}_J\Phi=P_JD_n\Phi\]
where $n=|J/G|$.
\end{defn}

\begin{rem}
The $J$-differential can equivalently be defined as the total homotopy fiber of the $|J/G|$-cube in $\mathscr{D}^G$
\[D_J\Phi\simeq Tfib\left(\begin{array}{l}
\mathcal{P}(J/G)\longrightarrow \mathscr{D}^G\\
W\longmapsto P_{J\backslash w_1}\dots P_{J\backslash w_l}\Phi
\end{array}\right)\]
where $W=\{w_1,\dots,w_l\}$. 
\end{rem}


\begin{ex}
If the orbits of $J$ are all isomorphic, that is if $J\cong nT$ for some transitive $G$-set $T$, the $nT$-differential of $\Phi$ is equivalent to
\[D_{\iota_T}\dots D_{\iota_T}\simeq D_{\iota_T}=\hofib \big(P_{nT}\to P_{(n-1)T}\big).\]
In particular
\[D_n\Phi\simeq \hofib\big(P_n\Phi\to P_{n-1}\Phi\big)\ \ \ \ \ \ \ \ \mbox{and}\ \ \ \ \ \ \
 D_{nG}\Phi\simeq \hofib\big(P_{nG}\Phi\to P_{(n-1)G}\Phi\big).\]
\end{ex}

\begin{prop}
For any homotopy functor $\Phi\colon \mathscr{C}\to \mathscr{D}^G$ and any finite $G$-set $J$, the functor $D_J\Phi$ is $J$-homogeneous, and the functor $\overline{D}_J\Phi$ is strongly $J$-homogeneous.
\end{prop}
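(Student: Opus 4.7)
The plan is to reduce both statements to routine manipulations of the excisive approximations, resting on three ingredients that are already in hand: (a) $P_K$ commutes with finite homotopy limits and with every $P_L$ up to natural equivalence (Remark~\ref{PJcommute}); (b) if $K \subset J$ is a $G$-subset then every $K$-excisive functor is $J$-excisive (Proposition~\ref{inclusions}); and (c) consequently, for any homotopy functor $\Psi$ and any $G$-subset $K \subset J$, the canonical map $P_K\Psi \to P_K P_J \Psi$ is an equivalence, since $P_K\Psi$ is itself $J$-excisive by (b), hence $P_J P_K\Psi \simeq P_K\Psi$ by the universal property \ref{exisiveapprox}, and then $P_K P_J \simeq P_J P_K$ by (a). Granted these, the $J$-excisivity of $D_J\Phi$ is immediate: each elementary layer $D_{\iota_o}\Phi = \hofib(P_J\Phi \to P_{J\setminus o}\Phi)$ is a homotopy fiber of a map between two $J$-excisive functors (the second by (b), since $J\setminus o$ is a $G$-subset of $J$), and being $J$-excisive is preserved by finite homotopy limits; iterating over the orbits of $J$ gives the result.

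For the $J$-reducedness of $D_J\Phi$, I fix a proper $G$-subset $K \subsetneq J$. Since $K$ is proper and $G$-invariant, there exists an orbit $o \in J/G$ disjoint from $K$, so that $K \subset J\setminus o$. The various $D_{\iota_{o'}}$ commute up to equivalence by (a), so I arrange the iteration to place $D_{\iota_o}$ on the outside and write $D_J\Phi \simeq D_{\iota_o}\Psi$ for some homotopy functor $\Psi$. Applying $P_K$ and using (a) once more,
\[
P_K D_J\Phi \simeq \hofib\bigl( P_K P_J\Psi \longrightarrow P_K P_{J\setminus o}\Psi \bigr).
\]
By (c) both source and target are naturally equivalent to $P_K\Psi$, and the map, being $P_K$ applied to the canonical comparison $P_J\Psi \to P_{J\setminus o}\Psi$, corresponds to the identity on $P_K\Psi$ under these equivalences; hence $P_K D_J\Phi \simeq \ast$.

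The argument for $\overline{D}_J\Phi = P_J D_n\Phi$ is essentially the non-equivariant special case of the same scheme. The $J$-excisivity is built into the definition. For $n$-reducedness, take $m < n$ and use (a) to compute
\[
P_{\underline{m}}\overline{D}_J\Phi \;\simeq\; P_J \, P_{\underline{m}} D_n\Phi.
\]
The inner term $P_{\underline{m}} D_n\Phi \simeq \hofib(P_{\underline{m}} P_n\Phi \to P_{\underline{m}} P_{n-1}\Phi)$ is contractible by exactly the argument of the previous paragraph, applied to the chain $\underline{m} \subset \underline{n-1} \subset \underline{n}$ of trivial-action subsets: by (c) both hofib terms collapse to $P_{\underline{m}}\Phi$ with the identity between them. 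Therefore $P_{\underline{m}} \overline{D}_J\Phi \simeq \ast$, which yields strong $J$-homogeneity.

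The only point that genuinely requires care is the identification of the maps inside the hofib computations with the identity after applying the canonical equivalences of (c); this is a naturality check, not a deep obstacle. Concretely, all the equivalences in (c) are induced by applying $P_K$ to the universal approximation maps $\id \to P_L$, and the relevant triangles commute by functoriality of $P_K$ together with the uniqueness (up to homotopy) of maps to a $J$-excisive target guaranteed by Theorem~\ref{exisiveapprox}.
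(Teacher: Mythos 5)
Your proof is correct and follows essentially the same route as the paper: $J$-excisivity comes from $D_J\Phi$ being a finite homotopy limit of $J$-excisive functors, and $J$-reducedness (resp.\ $n$-reducedness for $\overline{D}_J\Phi$) is obtained by isolating an orbit $o$ disjoint from the proper $G$-subset $K$, commuting $P_K$ past the constructions, and observing that the induced map $P_KP_J \to P_KP_{J\setminus o}$ is an equivalence because $P_K$ is already both $J$- and $(J\setminus o)$-excisive. The only cosmetic difference is bookkeeping: the paper pushes $P_K$ all the way inside to $P_K\Phi$ before collapsing the innermost $\hofib$, while you collapse the outermost $\hofib$ directly, but the underlying commutation facts and the appeal to the universal property of $P_J$ are identical.
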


\begin{proof}
The functor $\overline{D}_J\Phi$ is clearly $J$-excisive. It is $n$-reduced since $P_k\overline{D}_J\Phi\simeq P_JP_kD_n\Phi\simeq \ast$ if $k<n$.

The $J$-differential is a homotopy limit of $J$-excisive functors (by \ref{inclusions}), and therefore it is $J$-excisive.
Let us prove that $D_J\Phi$ is $J$-reduced. A proper $G$-subset $K$ of $J$ is necessarily included in $J\backslash o$ for some orbit $o$. Since the $D_{\iota_o}$'s commute with each other we may assume that $o=o_n$, and 
\[P_KD_J\Phi\simeq D_{\iota_{o_1}}\dots D_{\iota_{o_n}}P_{K}\Phi=D_{\iota_{o_1}}\dots D_{\iota_{o_{n-1}}}\hofib(P_{J}P_{K}\to P_{J\backslash o_n}P_{K}\Phi).\]
Since $K$ is included in $J\backslash o_n$ the functor $P_{K}\Phi$ is both $J$-excisive and $J\backslash o_n$-excisive, and the map 
\[P_K\Phi\simeq P_{J}P_K\Phi\longrightarrow P_{J\backslash o_n}P_{K}\Phi\simeq P_K\Phi\]
is an equivalence. Thus $P_KD_J$ is contractible.
\end{proof}

The differentials of $\Phi$ are functorial with respect to the $G$-equivariant maps of finite $G$-sets that are bijective on orbits. Such a map $\alpha\colon K\to J$ induces a map on homotopy fibers
\[\xymatrix{
D_{\iota_o}\Phi\ar[d]\ar@{-->}[r]^{\alpha_{o}^{\ast}}&D_{\iota_{\alpha(o)}}\Phi\ar[d]\\
P_{J}\Phi\ar[r]^{\alpha^\ast}\ar[d]&P_{K}\Phi\ar[d]\\
P_{J\backslash o}\Phi\ar[r]^{\alpha^\ast}&P_{K\backslash \alpha(o)}
}\]
for every orbit $o$ in $J/G$, and every homotopy functor $\Phi$. 
By iterating this construction we obtain a map
\[\alpha^\ast=\alpha_{o_1}^{\ast}\dots \alpha_{o_n}^\ast\colon D_J\Phi\longrightarrow D_K\Phi.\]

\subsection{Equivariant deloopings of homogeneous functors}\label{delooping}

Let $J$ be a finite $G$-set and let $\Phi\colon \Top_\ast\to \Top_{\ast}^{G}$ be a homotopy functor on the $G$-model category of pointed $G$-spaces. We show that if $\Phi$ is $J$-homogeneous it lifts to the category of ``$J$-spectra'' along the $\Omega^{\infty J}$-functor. This is the category obtained from $\Top_{\ast}^G$ by inverting the permutation representation sphere $S^J$.

We choose the category $\Sp_{\Sigma}^G$ of $G$-objects in symmetric spectra as a point-set model for this category of $J$-spectra. The model structure on this category is defined by stabilizing with respect to the $G$-sets in the $G$-set universe $\amalg_{n\in\mathbb{N}}J$, and it is denoted $\Sp_{J}^{G}$. This model structure is a symmetric analogue of the model structure on orthogonal $G$-spectra induced by the universe $\bigoplus_n\mathbb{R}[J]$. It is studied in great details in \cite{Markus}, and it is Quillen equivalent to the model category of \cite{Mandell} when the orbits of $J$ are of the form $G/N$ for normal subgroups $N\lhd G$ (see \cite[Thm A]{Markus}). 

\begin{ex}
The category $\Sp^{G}_{\underline{n}}$ is a model for the category of na\"{i}ve symmetric $G$-spectra, and $\Sp^{G}_{nG}$ is a model for the category of genuine symmetric $G$-spectra, for every $n\in\mathbb{N}$. 
\end{ex}

We promote $\Sp_{J}^G$ to a $G$-model category, by considering the collection of categories of $H$-objects in symmetric spectra $\{\Sp^{H}_\Sigma\}_{H\leq G}$, where each category is equipped with the model structure induced by the universe of the $H$-sets that are restriction of the $G$-sets in $\amalg_{n\in\mathbb{N}}J$. 
There is an equivariant infinite loop space functor 
\[\Omega^{\infty J}\colon \Sp_{J}^G\longrightarrow \Top^{G}_\ast\]
that sends a symmetric $G$-spectrum $E$ to $\Omega^{\infty J}E=\hocolim_m\Omega^{mJ}E_{mJ}$, where $E_{mJ}$ is the value of $E$ at the $G$-set $mJ$
\[E_{mJ}:=E_{m|J|}\wedge_{\Sigma_{m|J|}}Bij(m|J|,mJ)_+.\]
Here $\Sigma_{m|J|}$ and $G$ act on the set of bijections $Bij(m|J|,mJ)$, respectively on the right and on the left, in the obvious way. The functor $\Omega^{\infty J}$ preserves equivalences of semi-stable $G$-spectra, and in particular of fibrant $G$-spectra (see \cite[Thm B]{Markus}).

\begin{theorem}\label{Jdelooping}
A $J$-homogeneous homotopy functor $\Phi\colon \Top_\ast\to \Top_{\ast}^{G}$ lifts functorially to a $J$-homogeneous homotopy functor $\widehat{\Phi}$ to the category of $J$-spectra
\[\xymatrix{&\Sp_{J}^G\ar[d]^{\Omega^{\infty J}}\\
\Top_\ast\ar@{-->}[ur]^-{\widehat{\Phi}}\ar[r]_{\Phi}&\Top_{\ast}^G
}\]
which has fibrant values.
\end{theorem}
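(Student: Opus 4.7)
The plan is to follow the strategy of \cite[\S 2]{calcIII}, adapted to the equivariant setting, and produce an equivariant delooping of $\Phi$ by the permutation representation sphere $S^J$. The first step is to establish, for every $J$-homogeneous $\Phi$ and every pointed $G$-space $X$, a natural zig-zag of $G$-equivalences
\[
\Phi(X)\;\simeq\;\Omega^{J}\Phi(\Sigma^{J}X).
\]
To produce this comparison I would apply the $J$-excision property of $\Phi$ to the $G$-strongly cocartesian $J_+$-cube $\Lambda^{J}(X)$ constructed in \S\ref{stronglycocart}: the cube $\Phi(\Lambda^{J}(X))$ is cartesian, its $J_+$-vertex identifies with $\Phi(\Sigma^{J}X)$, its $\emptyset$-vertex with $\Phi(X)$, and its intermediate vertices involve partial suspensions of $X$. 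The $J$-reducedness of $\Phi$ (vanishing of $P_K\Phi$ on every proper $G$-subset $K\subsetneq J$) will force the contribution of all the "face" homotopy limits indexed by proper subsets of $J_+$ to become contractible, leaving only the $J_+$-vertex and its loops, i.e.\ $\Omega^{J}\Phi(\Sigma^{J}X)$. Concretely this is the equivariant analogue of the argument that passes from $T_J\Phi$ to $P_J\Phi$ in \S\ref{sec:ecxapprox}, specialised to $\Phi = P_J\Phi$ and then stripped of the already-trivial lower faces.

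The second step iterates the delooping. Choose a functorial pointwise fibrant replacement $F$ in $\Top_\ast^G$, and for each positive integer $m$ define
\[
\widehat{\Phi}(X)_{m|J|}\;:=\;F\Phi(S^{mJ}\wedge X),
\]
with $\Sigma_{m|J|}$-action by permuting the smash factors of $S^{mJ}=S^{\mathbb{R}[mJ]}$ via an unordering of its underlying set; the value of the symmetric spectrum at a $G$-set $K$ of cardinality $m|J|$ is then $\widehat{\Phi}(X)_{m|J|}\wedge_{\Sigma_{m|J|}}\mathrm{Bij}(m|J|,K)_+$. The structure maps of $\widehat{\Phi}(X)$ are the adjoints of the iterated delooping equivalence from Step~1, applied to the space $S^{mJ}\wedge X$. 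By construction the adjoint structure maps are $G$-equivalences, so $\widehat{\Phi}(X)$ is a positive $\Omega$-spectrum in $\Sp_J^G$, hence fibrant (and, being built from fibrant spaces with weak equivalences as structure maps, semistable).

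The third step is to check the three remaining properties: (i) the equivalence $\Omega^{\infty J}\widehat{\Phi}(X)\simeq\Phi(X)$ is immediate since $\Omega^{\infty J}$ of a positive $\Omega$-$J$-spectrum reads off the $0$-th space up to the equivalences giving the spectrum structure; (ii) $J$-excision is preserved by the construction because $\Sp_J^G$ is $J$-stable (cocartesian and cartesian $J_+$-cubes agree) so the sequential homotopy colimit defining each level, together with $\Omega^{J}$-stabilisation, preserves $J$-cartesianness; (iii) functoriality in $\Phi$ is automatic from the functoriality of $F$ and of the zig-zag in Step~1.

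The main obstacle I anticipate is Step~1: deducing the clean delooping equivalence $\Phi(X)\simeq\Omega^{J}\Phi(\Sigma^{J}X)$ from $J$-excision plus $J$-reducedness for \emph{non-transitive} $J$. Unlike the transitive case (where $J$-excision alone gives the assembly equivalence, cf.\ \ref{ex:transapprox}), a $J$-cube with several orbit types has many intermediate faces $\Lambda^J_U(X)$ indexed by subsets $U\subsetneq J_+$ that involve only a proper sub-$G$-set of orbits; only the combination of $J$-excision with the vanishing of every $P_K\Phi$ for $K\subsetneq J$ collapses these contributions, and organising this cancellation coherently enough to extract an honest natural zig-zag (rather than just a statement in the homotopy category) is the step that requires the most care, essentially amounting to an equivariant enhancement of \cite[\S 2]{calcIII} that respects the orbit-type stratification encoded in the Goodwillie tree.
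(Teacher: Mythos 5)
Your Step 1 is false in general, and this invalidates the construction in Step 2. The claim that a $J$-homogeneous functor satisfies
\[
\Phi(X)\;\simeq\;\Omega^{J}\Phi(\Sigma^{J}X)
\]
holds only when $J$ is transitive (cf.\ \ref{ex:transapprox} and \ref{rem:excision}(ii)), because in that case $\Phi$ is $1$-homogeneous in the classical sense and the assembly map for a linear functor is an equivalence. For $J$ with $n=|J/G|\geq 2$ orbits, a $J$-homogeneous functor is $n$-homogeneous, and an $n$-homogeneous functor is \emph{not} delooped by precomposition with suspension. Take the classical example $\Phi(X)=\Omega^\infty(X^{\wedge 2})_{h\Sigma_2}$, which is $2$-homogeneous: $\Phi(\Sigma X)\simeq\Omega^\infty(S^{1+\sigma}\wedge X^{\wedge 2})_{h\Sigma_2}$, where $\sigma$ is the sign representation, whereas the double loop space of this would need to recover $\Omega^\infty(X^{\wedge 2})_{h\Sigma_2}$ -- it does not, since $\Sigma^2$ and $S^{1+\sigma}$ differ equivariantly. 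The same phenomenon occurs for any $J$-homogeneous functor with $|J/G|\geq 2$: the intermediate vertices $\Lambda^J_{o_+}(X)\simeq\Sigma^o X$ of the cube $\Lambda^J(X)$ give $\Phi(\Sigma^o X)$, which is not contractible when $\Phi$ is merely $J$-reduced (the hypothesis $P_K\Phi\simeq\ast$ for proper $K\subsetneq J$ is a statement about the universal $K$-excisive approximation of $\Phi$ and does not imply $\Phi$ vanishes on suspension cubes indexed on $K$). Consequently the proposed levels $\widehat\Phi(X)_{m|J|}=F\Phi(S^{mJ}\wedge X)$ do not assemble into a $J$-$\Omega$-spectrum, and the lift does not recover $\Phi$ under $\Omega^{\infty J}$.

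The paper's construction is different in an essential way. The delooping functor $R_o\Phi$ is built one orbit $o\in J/G$ at a time by covering $\mathcal{P}_0(J_+)^k$ with the subposets $\mathcal{A}^k_l$ and forming the $o_+$-cube $R^{\mathcal{A}^k}\Phi$ with vertices $R^{\mathcal{A}^k}_W\Phi=\holim_{\mathcal{A}^k_W}\Phi(\Lambda^{J,k})$. Lemma \ref{Aast} kills the intermediate vertices for $\emptyset\ne W\subsetneq o_+$, $W\neq\{+\}$; Lemma \ref{+vertex} identifies $R^{\mathcal{A}^k}_{\{+\}}\Phi$ with $T^k_{J\backslash o}\Phi$, and only after \emph{stabilizing over} $k$ does $J$-reducedness enter, via $\hocolim_k T^k_{J\backslash o}\Phi\simeq P_{J\backslash o}\Phi\simeq\ast$. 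This produces a $J$-fold delooping $\Phi\simeq\Omega^J R_J\Phi$ in which $R_J\Phi$ is genuinely a different functor from $\Phi\circ\Sigma^J$, encoding the cross-effect data that cannot be ignored once $J$ has more than one orbit. A further technical subtlety you omit is that the maps $R^{\mathcal{A}^k}_W\Phi\to R^{\mathcal{A}^{k+1}}_W\Phi$ do not exist directly; one needs the auxiliary cube $\mathcal{O}^k\Phi$ and Lemma \ref{wrongway}, and then the wrong-way equivalences are straightened out using the trick of \cite[(0.1)]{calcI} to define the actual spectrum $\widehat\Phi$. Your Step 2 formula would therefore need to be replaced by $\widehat\Phi(X)_m=\Omega^{m\overline J}\overline R_{mJ}\Phi(X)$ as in the paper.
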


We start by constructing functorial $J$-fold deloopings $\Phi\simeq \Omega^JR_J\Phi$ for the $J$-homogeneous functor $\Phi$. This is where the real content of Theorem \ref{Jdelooping} is. The lift $\widehat{\Phi}$ to our point-set model $\Sp_{J}^G$ is constructed formally from these deloopings at the end of the section. The proof of Theorem \ref{Jdelooping} will occupy the rest of Section \ref{delooping}. 

We construct equivariant deloopings $P_J\Phi\simeq\Omega^oR_o\Phi$ of $P_J\Phi$ one orbit $o\in J/G$ at the time. Since $\Phi$ is $J$-excisive the map $p_J\colon \Phi\to P_J\Phi$ is an equivalence, and this provides us with an $o$-fold delooping of $\Phi$. We will show in \ref{RJreduced} and \ref{RJexc} that this construction can be iterated for every orbit providing us with a $J$-fold delooping of $\Phi$.
Fix a $G$-orbit $o$ of $J$ and a positive integer $k$. We construct the $o$-delooping of $P_J\Phi$ following the idea of \cite[2.2]{calcIII}.
We decompose $T_{J}^k\Phi$ as the homotopy limit of a punctured $o_+$-cube by defining an equivariant cover $\{\mathcal{A}^{k}_l\}_{l\in o_+}$ of the category with $G$-action $\mathcal{P}_0(J_+)^k$. The cover consists of the subposets
\[\mathcal{A}^{k}_j=\mathcal{P}_0(J_+)^k\backslash\mathcal{P}_0(J_+\backslash\{j\})^k \ \ \ \ \ \ \ \ \ \ \ \ \ \ \ \mathcal{A}^{k}_+=\big(\mathcal{P}_0(J_+)\backslash\mathcal{P}_0(o)\big)^k\]
of $\mathcal{P}_0(J_+)^k$, where $j$ runs through the elements of $o$. By ``equivariant cover'' we mean that the $G$-action functors $g\colon \mathcal{P}_0(J_+)^k\to \mathcal{P}_0(J_+)^k$ restrict to functors $g\colon\mathcal{A}^{k}_l\to \mathcal{A}^{k}_{gl}$ for every $l\in J_+$.
For every subset $W$ of $o_+$ we denote the intersection of the subposets corresponding to the elements of $W$ by 
\[\mathcal{A}^{k}_W=\bigcap_{j\in W}\mathcal{A}^{k}_j.\]
The cover induces an $o_+$-cube $R^{\mathcal{A}^{k}}\Phi(X)\in (\Top_\ast)^{\mathcal{P}(o_+)}_a$ for every pointed space $X$, with vertices
\[R^{\mathcal{A}^{k}}_W\Phi(X)=\holim_{\mathcal{A}^{k}_W}\Phi\big(\Lambda^{J,k}(X)\big)\]
where $\Lambda^{J,k}$ denotes the $k$-fold iteration $\Lambda^J(\dots\Lambda^{J}(X))$.
By the Covering Lemma \ref{coverings} this cube is homotopy cartesian, that is $T_{J}^{k}\Phi=R^{\mathcal{A}^{k}}_{\emptyset}\Phi$ decomposes as the homotopy limit
\[T_{J}^k\Phi=\holim_{\mathcal{P}_0(J_+)^k}\Phi\big(\Lambda^{J,k}(-)\big)\stackrel{\simeq}{\longrightarrow}\holim_{W\in \mathcal{P}_0(o_+)}R^{\mathcal{A}^{k}}_W\Phi.\]
The plan is to show that for every non-empty subset $W$ of $o_+$ different than $\{+\}$ or $o_+$, the vertex $R^{\mathcal{A}^{k}}_W\Phi$ is contractible (see \ref{Aast}), and that $R^{\mathcal{A}^{k}}_{\{+\}}\Phi$ is equivalent to $T_{J\backslash o}^k\Phi$ ( see \ref{+vertex}).
Roughly speaking, also the $+$-vertex of $R^{\mathcal{A}^{k}}\Phi$ becomes contractible after taking the homotopy colimit over $k$, because $\Phi$ is $J$-reduced.
The only non-trivial vertices of the cube $R^{\mathcal{A}^{k}}\Phi$ are the initial vertex $T^{k}_{J}\Phi$, and the final vertex
\[R^{\mathcal{A}^{k}}_{o_+}\Phi=\holim_{\mathcal{A}_{o_+}^k}\Phi\big(\Lambda^{J,k}(-)\big).\]
After stabilizing over $k$ we will get an equivalence
\[P_J\Phi=\hocolim_k T^{k}_J\Phi\stackrel{\simeq}{\longrightarrow}\hocolim_k\holim_{\mathcal{P}_0(o_+)}R^{\mathcal{A}^{k}}\Phi
\stackrel{\simeq}{\longrightarrow}\holim_{\mathcal{P}_0(o_+)}\hocolim_kR^{\mathcal{A}^{k}}\Phi
\stackrel{\simeq}{\longleftarrow}\Omega^o \hocolim_kR^{\mathcal{A}^{k}}_{o_+}\Phi\]
which defines an $o$-delooping of $\Phi\simeq P_J\Phi$. Just as in \cite{calcIII} there is an issue with the fact that there are no direct stabilization maps $R^{\mathcal{A}^{k}}_W\Phi\to R^{\mathcal{A}^{k+1}}_W\Phi$, and we need a bit of care in taking the homotopy colimit over $k$. This issue is addressed in \ref{wrongway} below, after the analysis of the vertices of the covering cube $R^{\mathcal{A}^{k}}\Phi$ of the next two lemmas.

\begin{lemma}\label{Aast}
Let $\{+\}\neq W\subsetneqq o_+$ be non-empty. The functor $R^{\mathcal{A}^{k}}_W\Phi=\holim_{\mathcal{A}_W^k}\Phi\big(\Lambda^{J,k}(-)\big)$ is $G_W$-contractible.
\end{lemma}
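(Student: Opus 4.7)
The argument will adapt \cite[Lemma 2.2]{calcIII} to the equivariant setting. Since $W \neq \{+\}$ and $W$ is nonempty, I may fix an element $j_0 \in W \cap o$. The key observation is that $\mathcal{A}^k_W$ is cut out of the product $\mathcal{P}_0(J_+)^k$ by containment conditions, namely that for each $j \in W \cap o$ some coordinate $U_i$ of a tuple must contain $j$, together with, when $+ \in W$, the non-containment condition that no $U_i$ lies entirely in $o$. In particular every tuple of $\mathcal{A}^k_W$ has at least one coordinate $U_i$ carrying an element of $o$ that is not the basepoint.

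First, I would analyze the diagram $\Phi \circ \Lambda^{J,k}$ restricted to $\mathcal{A}^k_W$. Using the equivariant cofinality theorem \cite[2.25]{Gdiags}, I would replace $\mathcal{A}^k_W$ by a $G_W$-equivariant subposet $\mathcal{B}^k_W$ that is easier to analyze: one that records, for each tuple, a $G_W$-invariant profile of which coordinates carry elements of $W \cap o$ and which lie entirely in $(J \setminus o)_+$. The cofinality of $\mathcal{B}^k_W \hookrightarrow \mathcal{A}^k_W$ for the diagram in question follows from an argument formally similar to the contractibility of under-categories appearing in the proof of \ref{Lambdastronglycocart} and of \ref{smashfctr}.

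Second, I would exploit the product decomposition $St(U) \cong \prod_{o'} (\mathcal{P}(o'_+ \cap U) \setminus \{o'_+\})$ together with the equivariant Fubini theorem \cite[2.26]{Gdiags} to factor each $\Lambda^J_{U_i}(X)$ as an iterated coproduct of cones and suspensions, as described explicitly in \ref{spelloutlambda}. Over the reduced indexing $\mathcal{B}^k_W$, the restriction of $\Phi \circ \Lambda^{J,k}$ acquires the structure of an iterated homotopy limit whose inner layer is of the form $\Phi \circ \Lambda^{J\setminus o, k'}$: here one uses the easy observation (compare \ref{spelloutlambda}) that on the subposet $\mathcal{P}((J\setminus o)_+) \subseteq \mathcal{P}(J_+)$ the cube $\Lambda^{J}$ coincides with $\Lambda^{J\setminus o}$. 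The outer layer then encodes the $W \cap o$-containment condition, which, because $\Phi$ is $J$-excisive and $\Lambda^J$ is $G$-strongly cocartesian, forces the outer iterated limit to be contractible: indeed the cartesianness of $\Phi \circ \Lambda^J$ provides enough cancellations, in combination with $J$-reducedness applied to the proper $G$-subset $K = J \setminus o \subsetneq J$ (so that $P_K\Phi \simeq \ast$), to kill the whole limit $R^{\mathcal{A}^k}_W\Phi$ already at finite $k$.

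\textbf{Main obstacle.} The central difficulty is establishing cofinality of $\mathcal{B}^k_W \hookrightarrow \mathcal{A}^k_W$ equivariantly with respect to $G_W$: $j_0$ itself is generally not $G_W$-fixed, and only its $G_W$-orbit inside $W \cap o$ is stable, so one must set up the stratification along this orbit rather than along a single element. A related subtlety arises when $+ \in W$: the basepoint constraint and the $W \cap o$-constraints interact in a way that must be untangled to expose a factor genuinely annihilated by $J$-reducedness rather than by the weaker statement $\Phi(\ast) \simeq \ast$. Both difficulties are resolved by the $G_W$-equivariant versions of the Fubini and cofinality theorems recalled in \S\ref{sec:preliminaries}.
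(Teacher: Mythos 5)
There is a genuine gap in the vanishing mechanism. Your plan kills $R^{\mathcal{A}^k}_W\Phi$ by reducing to $J$-reducedness of $\Phi$, writing the iterated limit so that an inner layer is $\Phi\circ\Lambda^{J\setminus o,k'}$ and invoking $P_{J\setminus o}\Phi\simeq *$. But $P_{J\setminus o}\Phi$ is the colimit over $k'$ of $T^{k'}_{J\setminus o}\Phi$; its contractibility says nothing about the single finite stage $T^{k'}_{J\setminus o}\Phi$, and the lemma is a statement at fixed $k$. Indeed this is exactly the distinction the delooping construction hinges on: the $\{+\}$-vertex of the covering cube is identified with $T^k_{J\setminus o}\Phi$ (Lemma~\ref{+vertex}) and becomes contractible only after stabilizing over $k$, whereas the vertices $R^{\mathcal{A}^k}_W\Phi$ for $\{+\}\neq W\subsetneqq o_+$ must be killed at finite $k$ for the argument to go through. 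Your route collapses these two cases and would not prove the lemma as stated.

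The actual mechanism is much more elementary and does not use $J$-excision or $J$-reducedness at all. As in the paper, one passes to a left-cofinal $G_W$-invariant subposet $\mathcal{A}^*_W\subset\mathcal{A}^k_W$ (and the analogous $\mathcal{A}^*_{W_+}$ when $+\in W$) on which each tuple $\underline{U}$ has some component $U_l$ that is a nonempty subset of $W$, hence a nonempty proper subset of a single $o_+$. By \ref{spelloutlambda}, for such a $U_l$ the functor $\Lambda^{J}_{U_l}(-)$ is a $U_l$-fold cone (or glues into a point when $+\in U_l\subsetneqq o_+$), so $\Lambda^{J,k}_{\underline{U}}(X)$ is contractible independently of $X$. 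Since $\Phi$ is reduced, $\Phi(\Lambda^{J,k}_{\underline{U}}(X))$ is contractible on the whole cofinal subposet, and the homotopy limit vanishes at fixed $k$. Note also that this cofinal subposet is defined by a condition over all of $W$ simultaneously, so it is $G_W$-invariant by construction; no choice of a single $j_0\in W\cap o$ (or of its $G_W$-orbit) is needed, which avoids the equivariance issue you correctly flag as the main obstacle of your approach.
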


\begin{proof}
Let us define subposets
\[\mathcal{A}^{\ast}_W=\{\underline{U}\in \mathcal{P}_0(J_+)^k| \ \mbox{for every}\ j\in W \ , \ j\in U_l\subset W\ \mbox{for some }\ l\}\subset\mathcal{A}^{k}_W\] 
\[\mathcal{A}^{\ast}_{W_+}=\Big(\{\underline{U}\in \mathcal{P}_0(J_+)^k| \ \mbox{for every}\ j\in W \ , \ j\in (U_l\cap z_+)\subset W_+\ \mbox{for some }\ l\}\cap\mathcal{A}^{k}_+\Big)
\subset \mathcal{A}^{k}_{W_+}.\]
We claim that the inclusions $\mathcal{A}^{\ast}_W\subset \mathcal{A}^{k}_W$ and $\mathcal{A}^{\ast}_{W_+}\subset \mathcal{A}^{k}_{W_+}$ are both left cofinal. This will show that the canonical maps
\[\holim_{\mathcal{A}^{k}_W}\Phi\big(\Lambda^{J,k}(-)\big)\stackrel{\simeq}{\longrightarrow} \holim_{\underline{U}\in\mathcal{A}^{\ast}_W}\Phi\big(\Lambda^{J,k}_{\underline{U}}(-)\big)\ \ \ \ \ \mbox{and} \ \ \ \ \ \holim_{\mathcal{A}^{k}_{W_+}}\Phi\big(\Lambda^{J,k}(-)\big)\stackrel{\simeq}{\longrightarrow} \holim_{\underline{U}\in\mathcal{A}^{\ast}_{W_+}}\Phi\big(\Lambda^{J,k}_{\underline{U}}(-)\big)\]
are equivalences.
If $j\in U_l\subset W$ the space $\Lambda^{J}(X)_{U_l}$ is a $U_l$-fold cone and thus contractible, and for every $\underline{U}\in\mathcal{A}^{\ast}_{W}$ the space $\Lambda^{J,k}(X)_{\underline{U}}$ must be contractible. It follows that the first homotopy limit is contractible. Similarly, for a $\underline{U}\in\mathcal{A}^{\ast}_{W_+}$ the space $\Lambda^{J}(X)_{U_l}$ corresponding to a component $U_l$ with $j\in (U_l\cap o_+)\subset W_+$ is a $U_l$-fold cone, since $W_+\neq o_+$. This shows that the second homotopy limit is also contractible.

Let us prove that the inclusion $\iota\colon \mathcal{A}^{\ast}_W\to \mathcal{A}^{k}_W$ is left cofinal. For a fixed object $\underline{V}$ in $\mathcal{A}^{k}_W$, choose for every $j$ in $W$ an index $l_j$ with $j\in V_{l_j}$. Notice that we might have $l_j=l_{j'}$ for distinct $j$ and $j'$ in $W$. Define $\mathring{\underline{V}}$  to be the collection of subsets $\underline{V}$ with the component $V_{l_j}$ replaced by $V_{l_j}\cap W$. The set of subsets $\mathring{\underline{V}}$ is a well defined object in the over category $\iota/_{\underline{V}}$. The zig-zag of natural transformations
\[\underline{U}\to\Big(\underline{U}\ \mbox{with}\ U_{l_j}\ \mbox{replaced by }\ U_{l_j}\cup (V_{l_j}\cap W)\Big)\leftarrow \Big(\underline{U}\ \mbox{with}\ U_{l_j}\ \mbox{replaced by }\ (V_{l_j}\cap W)\Big)\to \mathring{\underline{V}}\]
defines a contraction of $\iota/_{\underline{V}}$ onto $\mathring{\underline{V}}$.

Similarly, for a fixed object $\underline{V}$ in $\mathcal{A}^{k}_{W_+}$ we define $\check{\underline{V}}$ by replacing each component $V_{l_j}$ with $(V_{l_j}\backslash o_+)\cup(V_{l_j}\cap W_+)$. A similar zig-zag defines a contraction of the over category of the inclusion $\mathcal{A}^{\ast}_{W_+}\subset \mathcal{A}^{k}_W$ onto $\check{\underline{V}}$.
\end{proof}

\begin{lemma}\label{+vertex}
There is a natural equivalence $R^{\mathcal{A}^{k}}_+\Phi=\holim_{\mathcal{A}^{k}_+}\Phi\big(\Lambda^{J,k}(-)\big)\stackrel{\simeq}{\longrightarrow}T_{J\backslash o}\Phi$.
\end{lemma}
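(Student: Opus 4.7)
The plan is to identify the indexing category $\mathcal{A}_+^k$ with $\mathcal{P}_0((J\backslash o)_+)^k$ via a $G$-equivariant cofinality argument, and then to observe that the diagram $\Lambda^{J,k}$ restricted along this identification agrees with $\Lambda^{J\backslash o,k}$. This will reduce the homotopy limit defining $R^{\mathcal{A}^k}_+\Phi$ to the $k$-fold iteration $T^k_{J\backslash o}\Phi$ that is implicit in the statement of the lemma.

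First, I would introduce the $G$-equivariant functor $r\colon\mathcal{A}_+\to\mathcal{P}_0((J\backslash o)_+)$, $r(U)=U\backslash o=U\cap(J\backslash o)_+$. It is well-defined because $U\in\mathcal{A}_+$ means $U\not\subset o$, so $U\backslash o$ is non-empty; and $G$-equivariant because $o$ is a $G$-invariant subset of $J$. A direct check shows that $r$ is right adjoint to the inclusion $\iota\colon\mathcal{P}_0((J\backslash o)_+)\hookrightarrow\mathcal{A}_+$ (the unit is the identity and the counit is the inclusion $U\backslash o\subset U$); concretely, for every $U\in\mathcal{A}_+$ the under-category $\iota\downarrow U$ has $r(U)$ as a $G_U$-invariant terminal object. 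Taking $k$-fold products, all under-categories of $\iota^k$ also admit stabilizer-invariant terminal objects. The dual (for homotopy limits) of the equivariant cofinality theorem \cite[2.25]{Gdiags} then yields a $G$-equivalence of homotopy limits under restriction along $\iota^k$.

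Second, I would verify that on the subposet $\mathcal{P}_0((J\backslash o)_+)$ the two star categories $St^J(V)$ and $St^{J\backslash o}(V)$ coincide. The only extra contribution of the orbit $o\in J/G$ to $St^J(V)$ is $\mathcal{P}(o_+\cap V)\backslash o_+$, which for $V\subset (J\backslash o)_+$ is contained in $\{\emptyset,\{+\}\}$; assuming $J\backslash o$ is non-empty, both of these elements already lie in $St^{J\backslash o}(V)$, contributed by any orbit of $J\backslash o$. Since the diagram $St^V(c)$ depends only on $V$, we conclude $\Lambda^J_V=\Lambda^{J\backslash o}_V$ as $G_V$-diagrams, and iterating gives $\Lambda^{J,k}|_{\mathcal{P}_0((J\backslash o)_+)^k}=\Lambda^{J\backslash o,k}$.

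Combining the two steps produces the desired equivalence $R^{\mathcal{A}^k}_+\Phi\simeq T^k_{J\backslash o}\Phi$. The only delicate point is verifying the hypotheses of the equivariant cofinality theorem, but this reduces to the observation that $r$ is manifestly $G$-equivariant and that a $G_U$-invariant terminal object is automatically $H$-terminal for every subgroup $H\leq G_U$; everything else is a direct unwinding of definitions.
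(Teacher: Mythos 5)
Your proof is correct and takes the same route as the paper's: cofinality of the inclusion $\iota\colon\mathcal{P}_0((J\backslash o)_+)^k\hookrightarrow\mathcal{A}^k_+$ witnessed by the $G$-equivariant reflection $U\mapsto U\backslash o$ giving stabilizer-invariant terminal objects in the slice categories $\iota/_U$. You also make explicit an identification the paper leaves implicit, namely $\Lambda^J|_{\mathcal{P}_0((J\backslash o)_+)^k}=\Lambda^{J\backslash o,k}$ via the equality of star categories; the only slip is calling $\iota\downarrow U$ an under-category when it is the over-category $\iota/_U$ (the notation is right, the name is not).
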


\begin{proof}
The poset $\mathcal{P}_{0}(J_+\backslash o)^k$ clearly includes in $\mathcal{A}^{k}_+=\big(\mathcal{P}_0(J_+)\backslash\mathcal{P}_0(o)\big)^k$. It is sufficient to show that for $k=1$ the inclusion $\iota\colon \mathcal{P}_{0}(J_+\backslash o)\to \mathcal{P}_0(J_+)\backslash\mathcal{P}_0(o)$ is left cofinal. Let $U$ be an object of the target. This is a subset $U$ of $J_+$ that is not included in $o$. Hence the intersection $U\cap (J_+\backslash o)$ must be non-empty, and $U\cap (J_+\backslash o)$ is final in $\iota/_{U}$.
\end{proof}

We explain how to stabilize with respect to $k$, using the technique of \cite[2.12]{calcIII}. We define a functorial zig-zag of maps of $o_+$-cubes 
\begin{equation}\label{stabzigzag}
R^{\mathcal{A}^{k}}\Phi\longrightarrow T_JR^{\mathcal{A}^{k}}\Phi\longleftarrow \mathcal{O}^k\Phi\longrightarrow R^{\mathcal{A}^{k+1}}\Phi
\end{equation}
and we show in \ref{wrongway} that the left pointing map is an equivalence. The $o$-delooping of $P_J\Phi$ is then defined as the homotopy colimit of the final vertices
\[R_o\Phi:=\hocolim\big(R^{\mathcal{A}^{1}}_{o_+}\Phi\longrightarrow\dots\longrightarrow R^{\mathcal{A}^{k}}_{o_+}\Phi\longrightarrow T_JR^{\mathcal{A}^{k}}_{o_+}\Phi\stackrel{\simeq}{\longleftarrow} \mathcal{O}_{o_+}^k\Phi\longrightarrow R^{\mathcal{A}^{k+1}}_{o_+}\Phi\longrightarrow \dots \big).\]
The zig-zag (\ref{stabzigzag}) is induced by the following zig-zag of $o_+$-cubes of categories. Under the isomorphism $\mathcal{P}(o_+)\cong \mathcal{P}(o)\times\mathcal{P}(\{+\})$ we see an $o_+$-cube as a map of $o$-cubes, and we define the zig-zag at a subset $W\subset o$ by
\[\vcenter{\hbox{\xymatrix{\mathcal{A}^{k}_W\\
\mathcal{A}^{k}_{W_+}\ar[u]}}}
\stackrel{pr}{\longleftarrow}
\vcenter{\hbox{\xymatrix{\mathcal{P}_0(J_+)\times\mathcal{A}^{k}_{W}\\
\mathcal{P}_0(J_+)\times\mathcal{A}^{k}_{W_+}\ar[u]}}}
\longrightarrow
\vcenter{\hbox{\xymatrix{\mathcal{A}^{k+1}_{W}\\
\mathcal{A}^{k+1}_{W}\cap(\mathcal{P}_0(J_+)\times\mathcal{A}^{k}_+)\ar[u]
}}}
\longleftarrow
\vcenter{\hbox{\xymatrix{\mathcal{A}^{k+1}_{W}\\
\mathcal{A}^{k+1}_{W_+}\ar[u]
}}}
.\]
The first map projects off the $\mathcal{P}(J_+)$-factor, and the other maps are inclusions. The $o_+$-cube of functors $\mathcal{O}^k\Phi$ of (\ref{stabzigzag}) is defined as
\[\mathcal{O}^{k}_{W}\Phi=
\holim_{\mathcal{A}^{k+1}_{W}}\Phi\big(\Lambda^{J,k+1}(-)\big)\ \ \ \ \ \ \ \ \ \ \ \ \ \ \ \mathcal{O}^{k}_{W_+}\Phi=\holim_{\mathcal{A}^{k+1}_{W}\cap(\mathcal{P}_0(J_+)\times\mathcal{A}^{k}_+)}\Phi\big(\Lambda^{J,k+1}(-)\big)\] 
for every $W\subset o$.

\begin{lemma}\label{wrongway}
The middle map of the zig-zag (\ref{stabzigzag}) induces a weak equivalence $T_JR^{\mathcal{A}^k}\Phi\stackrel{\simeq}{\longleftarrow} \mathcal{O}^k\Phi$ of $o_+$-cubes of functors.
\end{lemma}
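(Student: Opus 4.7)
The plan is to show that at each vertex $W$ (resp.\ $W_+$) of the $o_+$-cube, the map $\mathcal{O}^{k}_W\Phi \to T_JR^{\mathcal{A}^k}_W\Phi$ induced by the inclusion of indexing categories $\iota\colon\mathcal{P}_0(J_+)\times\mathcal{A}^{k}_W\hookrightarrow \mathcal{A}^{k+1}_W$ is a weak equivalence in $\mathscr{D}^{G_W}$. Unpacking definitions, both sides are homotopy limits of the same diagram $\Phi(\Lambda^{J,k+1}(X))$, differing only in which objects of $\mathcal{P}_0(J_+)^{k+1}$ they include: the source requires coverage of $W$ by all $k+1$ coordinates, whereas the target requires coverage by the last $k$ coordinates only (so the target has strictly fewer indexing objects, namely those $(U_0,U')$ with $W\subset \bigcup_{i\geq 1}U_i$).

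First I would apply the equivariant twisted Fubini theorem \cite[2.26]{Gdiags} to the projection $p\colon \mathcal{A}^{k+1}_W\to \mathcal{P}_0(J_+)^k$ onto the last $k$ coordinates, realizing $\mathcal{A}^{k+1}_W$ as a Grothendieck construction with fibre $p^{-1}(U')=\{U_0\in\mathcal{P}_0(J_+)\mid W\setminus\bigcup_{i\ge 1}U_i\subset U_0\}$ over $U'$. For $U'\in \mathcal{A}^{k}_W$ the fibre is all of $\mathcal{P}_0(J_+)$ and the inner homotopy limit computes $T_J\Phi(\Lambda^{J,k}_{U'}(X))$. For $U'\notin \mathcal{A}^k_W$ the fibre has the initial element $W':=W\setminus \bigcup_{i\ge 1}U_i$, which is a nonempty subset of $o$ distinct from $o_+$, so the inner limit reduces to $\Phi(\Lambda^J_{W'}(\Lambda^{J,k}_{U'}(X)))$; by Example \ref{spelloutlambda} the space $\Lambda^J_{W'}(-)$ is a $|W'|$-fold iterated cone, which is $G_{W'}$-contractible. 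A parallel (easier) Fubini decomposition of the target along the projection onto $\mathcal{A}^k_W$ yields $\holim_{U'\in\mathcal{A}^k_W}T_J\Phi(\Lambda^{J,k}_{U'}(X))$, so the two sides become homotopy limits of diagrams on $\mathcal{P}_0(J_+)^k$ and $\mathcal{A}^k_W$ respectively that agree on $\mathcal{A}^k_W$.

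The final step is to identify these two outer homotopy limits, using that $\mathcal{A}^k_W\subset \mathcal{P}_0(J_+)^k$ is a cosieve and that the source diagram is $G_{U'}$-contractible on its sieve complement. This reduces to a cofinality-style argument: for every $U'\notin\mathcal{A}^k_W$, the inner limit inherits an explicit $G_{U'}$-equivariant contracting homotopy supplied by the star diagrams $St^{U_0}$ defining $\Lambda^J$, and one then invokes the equivariant cofinality theorem \cite[2.25]{Gdiags} to conclude that the homotopy limit over $\mathcal{P}_0(J_+)^k$ agrees with the restricted homotopy limit over $\mathcal{A}^k_W$. The case of $W_+$ is handled identically after intersecting all indexing categories with $\mathcal{A}^k_+$, an operation compatible with the preceding Fubini and contractibility arguments since $\mathcal{A}^k_+$ is a full subposet that does not interact with the coverage analysis of $W\cap o$. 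The main obstacle will be the last step: abstract pointwise contractibility on a sieve-complement is not in general enough to collapse a homotopy limit onto a cosieve, and one has to use the explicit structure of the transition maps in $\Phi(\Lambda^{J,k+1}(-))$ coming from the star diagrams (in particular the initial object $W'$ of each fibre) to run the cofinality argument equivariantly, rather than relying on a purely categorical Kan-extension statement.
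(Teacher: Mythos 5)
Your proposal founders at the Fubini step. The projection $p\colon\mathcal{A}^{k+1}_W\to\mathcal{P}_0(J_+)^k$ is a Grothendieck \emph{op}fibration, not a fibration: the fibres $p^{-1}(U')=\{U_0\mid W\setminus\bigcup_{i\ge 1}U_i\subset U_0\}$ grow as $U'$ grows, so they assemble into a \emph{covariant} functor $\mathcal{P}_0(J_+)^k\to Cat$. That is the direction in which the twisted Fubini theorem applies to homotopy \emph{colimits}. For homotopy limits the correct identity is the right Kan extension formula $\holim_{\mathcal{A}^{k+1}_W}\simeq\holim_{U'}\holim_{U'/p}$, and the comma category $U'/p$ is strictly larger than the fibre $p^{-1}(U')$ (it contains every $(U_0,V')$ with $V'\supset U'$ and $W\setminus\bigcup V_i\subset U_0$, and the inclusion $p^{-1}(U')\hookrightarrow U'/p$ is not left cofinal, since $W\setminus\bigcup V_i$ may be a proper subset of $W\setminus\bigcup U_i$). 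In particular the vertexwise identification of the source with ``$T_J\Phi$ on $\mathcal{A}^k_W$ and a cone elsewhere'' does not hold, and the outer iterated limit you want to form does not exist as a diagram indexed on $\mathcal{P}_0(J_+)^k$ — the candidate structure maps between those vertex objects go in opposite directions. Your final paragraph then flags (correctly) a second gap: even granting such a decomposition, pointwise contractibility of a sieve complement does not collapse a homotopy limit onto a cosieve, and you defer to an unspecified ``explicit structure of the transition maps.'' Neither gap is closed, so the argument as written does not prove the lemma.

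The paper's own proof avoids the ill-directed Fubini by going directly through the Covering Lemma~\ref{coverings}. It produces a two-element equivariant cover $\{\mathcal{C},\mathcal{D}\}$ of $\mathcal{B}=\mathcal{A}^{k+1}_{W}$, where $\mathcal{C}=\mathcal{P}_0(J_+)\times\mathcal{A}^{k}_{W}$ is the subcategory computing $T_JR^{\mathcal{A}^k}_W\Phi$ and $\mathcal{D}=\bigcup_{j\in W}\bigl(\mathcal{A}^{1}_j\times(\mathcal{P}_0(J_+)\setminus\{j\})^k\bigr)$ is a deliberately chosen union of product subposets — not the sieve complement of $\mathcal{C}$. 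The Covering Lemma then gives a cartesian square relating the limits over $\mathcal{B}$, $\mathcal{C}$, $\mathcal{D}$ and $\mathcal{C}\cap\mathcal{D}$, and the lemma reduces to showing that the bottom map of that square is an equivalence. This is done by showing that both $\holim_{\mathcal{D}}\Phi(\Lambda^{J,k}(-))$ and $\holim_{\mathcal{C}\cap\mathcal{D}}\Phi(\Lambda^{J,k}(-))$ are $G_W$-contractible, via a left $G_W$-cofinal inclusion of subposets $\mathcal{A}^{1,\ast}_j\times(\cdots)$ exactly as in Lemma~\ref{Aast}. The specific shape of $\mathcal{D}$ (a union of products admitting a sub-poset with a star-diagram contraction) is what makes the equivariant cofinality argument run; this is the mechanism that replaces the na\"{i}ve cosieve collapse you acknowledge you cannot supply.
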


\begin{proof}
The argument is similar to \cite[2.12]{calcIII}. For a subset $W\subset o$, the map $T_JR^{\mathcal{A}^k}\Phi\stackrel{}{\leftarrow} \mathcal{O}^k\Phi$ is induced by the inclusion \[\mathcal{C}=\mathcal{P}_0(J_+)\times\mathcal{A}^{k}_{W}\to\mathcal{A}^{k+1}_{W}=\mathcal{B}.\]
Let $\mathcal{D}$ be the sub $G$-poset of $\mathcal{B}$ defined by
\[\mathcal{D}=\bigcup_{j\in W}\big(\mathcal{A}^{1}_j\times (\mathcal{P}_0(J_+)\backslash\{j\})^k\big).\]
The $G$-posets $\mathcal{C}$ and $\mathcal{D}$ cover $\mathcal{B}$, with intersection
\[\mathcal{C}\cap\mathcal{D}=\bigcup_{j\in W}\big(\mathcal{A}_{j}^1\times ((\mathcal{P}_0(J_+)\backslash\{j\})^k\cap\mathcal{A}_{j}^k)\big).\]
By the (non-equivariant) Covering Lemma the induced square
\[\xymatrix@=13pt{\mathcal{O}^k\Phi\ar[r]\ar[d]&T_JR^{\mathcal{A}^k}\Phi\ar[d]\\
\displaystyle\holim_{\mathcal{D}}\Phi\big(\Lambda^{J,k}(-)\big)\ar[r]&\displaystyle\holim_{\mathcal{C}\cap\mathcal{D}}\Phi\big(\Lambda^{J,k}(-)\big)}
\]
is cartesian. Is is therefore enough to show that the lower horizontal map is a $G_W$-equivalence. We claim that both the source and the target of this map are $G_W$-contractible. The argument is the same as in \ref{Aast}, by showing that the inclusions
\[\bigcup_{j\in W}\big(\mathcal{A}^{1,\ast}_j\times (\mathcal{P}_0(J_+)\backslash\{j\})^k\big)\to \mathcal{D}\ \ \ \ \ \ \ \ \ \ \ \bigcup_{j\in W}\big(\mathcal{A}_{j}^{1,\ast}\times ((\mathcal{P}_0(J_+)\backslash\{j\})^k\cap\mathcal{A}_{j}^k)\big)\to \mathcal{C}\cap\mathcal{D}\]
are left $G_W$-cofinal.
A similar argument shows that $T_JR^{\mathcal{A}^k}_{W_+}\Phi\stackrel{}{\leftarrow} \mathcal{O}_{W_+}^k\Phi$ is an equivalence.
\end{proof}

Lemmas \ref{Aast}, \ref{+vertex} and \ref{wrongway} show that if $\Phi$ is $J$-homogeneous and $o$ is a $G$-orbit of $J$, the functor $R_o\Phi$ is an $o$-fold delooping of $\Phi$. The next two Lemmas show that the construction $R_o$ can be iterated over all the orbits $o_1,\dots,o_n$ of $J$, defining a $J$-fold delooping of $\Phi$.

\begin{lemma}\label{RJexc}
For every orbit $o\in J/G$ and every $J$-homogeneous functor $\Phi$, the functor $R_o\Phi$ is $J$-excisive.
\end{lemma}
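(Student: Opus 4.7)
The strategy is to show that the natural assembly map $t_J \colon R_o\Phi \to T_J R_o\Phi$ is an equivalence, from which iteration yields $R_o\Phi \simeq P_J R_o\Phi$ and hence $J$-excisivity. The key observation is that by its very definition, $R_o\Phi$ is built as a sequential homotopy colimit whose terms alternate between functors of the form $R^{\mathcal{A}^k}_{o_+}\Phi$ and their $T_J$-approximations, so that stabilizing produces something already fixed by $T_J$.

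More precisely, recall that
\[R_o\Phi = \hocolim\big(R^{\mathcal{A}^1}_{o_+}\Phi \to \cdots \to R^{\mathcal{A}^k}_{o_+}\Phi \stackrel{t_J}{\to} T_J R^{\mathcal{A}^k}_{o_+}\Phi \stackrel{\simeq}{\leftarrow} \mathcal{O}^k_{o_+}\Phi \to R^{\mathcal{A}^{k+1}}_{o_+}\Phi \to \cdots\big).\]
Since the backwards arrows are equivalences by Lemma \ref{wrongway}, we may invert them and view the defining diagram as a genuine sequence in the homotopy category. Both the subsequence of terms $R^{\mathcal{A}^k}_{o_+}\Phi$ and the subsequence of terms $T_J R^{\mathcal{A}^k}_{o_+}\Phi$ are cofinal in it, so
\[R_o\Phi \simeq \hocolim_k R^{\mathcal{A}^k}_{o_+}\Phi \simeq \hocolim_k T_J R^{\mathcal{A}^k}_{o_+}\Phi.\]
By Remark \ref{PJcommute}(ii), $T_J$ commutes with pointwise sequential homotopy colimits of functors, so the right-hand expression is naturally equivalent to $T_J \big(\hocolim_k R^{\mathcal{A}^k}_{o_+}\Phi\big) \simeq T_J R_o\Phi$. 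A diagram chase verifies that the resulting equivalence is induced by the natural transformation $t_J\colon R_o\Phi \to T_J R_o\Phi$, using that the edges of the zig-zag of shape $R^{\mathcal{A}^k}_{o_+}\Phi \to T_J R^{\mathcal{A}^k}_{o_+}\Phi$ are by definition the components of $t_J$ evaluated on the functors $R^{\mathcal{A}^k}_{o_+}\Phi$.

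Once $t_J\colon R_o\Phi \to T_J R_o\Phi$ is known to be an equivalence, iterating produces equivalences $R_o\Phi \simeq T_J^m R_o\Phi$ for all $m$, and passing to the homotopy colimit gives $R_o\Phi \simeq \hocolim_m T_J^m R_o\Phi = P_J R_o\Phi$. Since $P_J R_o\Phi$ is $J$-excisive by Theorem \ref{exisiveapprox}, so is $R_o\Phi$.

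The main obstacle is the compatibility check: showing that the equivalence $R_o\Phi \simeq T_J R_o\Phi$ obtained from the cofinality/commutation argument agrees with the natural map $t_J$ evaluated at the functor $R_o\Phi$. This amounts to tracking that the universal transformation $t_J$ is compatible with the transition maps of the defining zig-zag, which follows from the naturality of $t_J$ in the functor variable together with the construction of the connecting maps $\mathcal{O}^k_{o_+}\Phi \to R^{\mathcal{A}^{k+1}}_{o_+}\Phi$ in the zig-zag (\ref{stabzigzag}).
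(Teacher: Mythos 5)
Your strategy differs from the paper's. The paper never argues that $t_J\colon R_o\Phi\to T_JR_o\Phi$ is an equivalence; it directly verifies the definition of $J$-excision by applying the factorization technique from the proof of Theorem \ref{exisiveapprox} to each term $R^{\mathcal{A}^k}_{o_+}\Phi$: for a $G$-strongly cocartesian $J_+|_H$-cube $X$, the map $t_J\colon R^{\mathcal{A}^{k}}_{o_+}\Phi(X)\to T_JR^{\mathcal{A}^{k}}_{o_+}\Phi(X)$ factors through a cartesian cube, so $R_o\Phi(X)$ is a filtered homotopy colimit of cartesian cubes, hence cartesian by \cite[A.8]{Gdiags}. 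Your approach — showing $R_o\Phi$ is a fixed point of $T_J$ and then citing Theorem \ref{exisiveapprox} for $P_JR_o\Phi$ — is a genuinely different route and conceptually appealing, but it has a gap.

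The gap is exactly the step you flag as ``the main obstacle'' and then dismiss as a diagram chase. The issue is that in the defining zig-zag of $R_o\Phi$, the objects $T_JR^{\mathcal{A}^k}_{o_+}\Phi$ are \emph{sinks}: they receive the maps from $R^{\mathcal{A}^k}_{o_+}\Phi$ and from $\mathcal{O}^k_{o_+}\Phi$ and emit nothing. The full subcategory they span therefore has no nonidentity morphisms, and is not cofinal in the zig-zag category. To make your cofinality claim, you must first invert the backward equivalences $\mathcal{O}^k_{o_+}\Phi\to T_JR^{\mathcal{A}^k}_{o_+}\Phi$, but these inverses live only in the homotopy category — they are not natural transformations of functors — so the collapsed sequence $T_JR^{\mathcal{A}^k}_{o_+}\Phi\to T_JR^{\mathcal{A}^{k+1}}_{o_+}\Phi$ is not literally $T_J$ applied to the collapsed sequence $R^{\mathcal{A}^k}_{o_+}\Phi\to R^{\mathcal{A}^{k+1}}_{o_+}\Phi$, and the naturality of $t_J$ in the functor variable does not directly give the commuting squares you need. ``Naturality of $t_J$'' controls the squares only for honest natural transformations, not for formal inverses in the homotopy category. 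This is precisely the obstruction that Goodwillie's holim trick \cite[(0.1)]{calcI} is designed to resolve, and the paper uses that trick later in \S\ref{delooping} when building the spectrum $\widehat{\Phi}$; it does not need it for this lemma because the direct factorization argument sidesteps the whole question of which map $R_o\Phi\to T_JR_o\Phi$ is an equivalence. To repair your argument you would first have to replace the zig-zag by an honest sequence along the lines of the holim construction at the end of \S\ref{delooping}, and only then can the compatibility with $t_J$ be checked formally.
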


\begin{proof}
The functor $R_o\Phi$ is defined as the homotopy colimit
\[R_o\Phi:=\hocolim\big(R^{\mathcal{A}^{1}}_{o_+}\Phi\longrightarrow\dots\longrightarrow R^{\mathcal{A}^{k}}_{o_+}\Phi\stackrel{t_J}{\longrightarrow} T_JR^{\mathcal{A}^{k}}_{o_+}\Phi\stackrel{\simeq}{\longleftarrow} \mathcal{O}_{o_+}^k\Phi\longrightarrow R^{\mathcal{A}^{k}}_{o_+}\Phi\longrightarrow \dots \big).\]
If $X$ is a $G$-strongly cocartesian $J_+|_H$-cube, the map $t_J\colon R^{\mathcal{A}^{k}}_{o_+}(X)\to T_JR^{\mathcal{A}^{k}}_{o_+}(X)$ factors through a cartesian $J_+|_H$, by the main argument in the proof of \ref{exisiveapprox}. Thus $R_o\Phi(X)$ is a sequential homotopy colimit of cartesian cubes, and it is therefore cartesian. 
\end{proof}

\begin{lemma}\label{RJreduced}
Let $\Phi$ be $J$-homogeneous. For every orbit $o\in J/G$ and every subset $K\subset J\backslash o$ (possibly non-proper) the functor $P_KR_{o}\Phi$ is contractible.
\end{lemma}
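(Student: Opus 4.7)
The plan is to apply $P_K$ to the entire cartesian $o_+$-cube of functors $R^{\mathcal{A}^k}\Phi\colon \mathcal{P}(o_+)\to\mathrm{Fun}(\mathscr{C},\mathscr{D}^G)$ produced by the covering lemma \ref{coverings} in the construction of $R_o\Phi$, and to show that every vertex except possibly the terminal one becomes contractible after $P_K$; cartesianness will then pin down the terminal vertex. Throughout I will use that $P_K$, $T_J$ and $T_{J\setminus o}$ commute pairwise and commute with finite equivariant homotopy limits and sequential homotopy colimits of functors, by Remark \ref{PJcommute}. In particular the stabilization zig-zag \ref{stabzigzag} is $P_K$-compatible, so it suffices to prove $P_KR^{\mathcal{A}^k}_{o_+}\Phi\simeq\ast$ for every $k\geq 1$ and then pass to $\hocolim_k$.

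The vertex calculations are then straightforward. At the initial vertex one has $P_KR^{\mathcal{A}^k}_\emptyset\Phi = P_KT_J^k\Phi\simeq T_J^kP_K\Phi$; since $\Phi$ is $J$-reduced and $K\subsetneq J$ we have $P_K\Phi\simeq\ast$, so $P_KT_J^k\Phi\simeq\ast$. By Lemma \ref{+vertex} the $\{+\}$-vertex is equivalent to $T_{J\setminus o}^k\Phi$, and the analogous commutation argument with $T_{J\setminus o}$ in place of $T_J$ gives $P_KT_{J\setminus o}^k\Phi\simeq T_{J\setminus o}^kP_K\Phi\simeq\ast$. All remaining vertices $R^{\mathcal{A}^k}_W\Phi$ with $\{+\}\neq W\subsetneq o_+$ and $W\neq\emptyset$ are already equivariantly contractible by Lemma \ref{Aast}.

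Applying $P_K$ to the cartesian $o_+$-cube preserves cartesianness (finite homotopy limits commute with $P_K$), so one obtains a cartesian $o_+$-cube of functors whose every vertex is contractible except possibly the terminal one $P_KR^{\mathcal{A}^k}_{o_+}\Phi$. To upgrade this to genuine contractibility of the terminal vertex, I would interchange $P_K$ with the outer homotopy limit $\holim_{\mathcal{A}^k_{o_+}}$ and analyze each tuple $\underline{U}\in\mathcal{A}^k_{o_+}$ separately. For each such $\underline{U}$ the defining condition that the $U_i$ jointly cover the orbit $o$ while each individually escapes $o$ forces $\Lambda^{J,k}(X)_{\underline{U}}$ to decompose, via Lemma \ref{unionintersectionthing} applied iteratively to the strongly cocartesian cubes $\Lambda^{J,l}$ and the explicit formulas of \ref{spelloutlambda}, into a gluing whose $o$-component is an equivariant suspension by the permutation representation of $o$. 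Composing with $\Phi$ and localizing at $P_K$ with $K\subset J\setminus o$ annihilates this $o$-suspended summand, because $\Phi$ is $J$-reduced and $K$ is a proper subset of $J$ disjoint from $o$.

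The main obstacle I expect is this last step: rigorously showing that the ``$\Sigma^o$-in-direction-$o$'' contribution to $\Lambda^{J,k}_{\underline{U}}$ is killed by $P_K$ when $K\subset J\setminus o$. In the absence of the classification of homogeneous functors by equivariant spectra (developed only later, in Theorem \ref{Jdelooping} and Section \ref{sec:class}), one cannot simply invoke that $R_o\Phi$ is an $\Omega^{\infty J}$ of a delooped spectrum; instead the argument must proceed directly from the cube decompositions of \ref{unionintersectionthing} and \ref{spelloutlambda}, together with a careful bookkeeping induction on $k$ or on the cardinality of $K/G$, invoking at each step both $J$-reducedness of $\Phi$ and the $J$-excisiveness of $R_o\Phi$ from Lemma \ref{RJexc}.
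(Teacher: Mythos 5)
Your proposal identifies, commendably, that cartesianness of the $P_K$-image cube cannot by itself give contractibility of the terminal vertex: knowing that every other vertex of the cartesian $o_+$-cube $P_K R^{\mathcal{A}^k}\Phi$ is contractible only yields $\Omega^{o}P_KR^{\mathcal{A}^k}_{o_+}\Phi\simeq\ast$, since the homotopy limit of a punctured $o_+$-cube with trivial non-terminal vertices is precisely the $\Omega^o$ of the terminal vertex (Example~\ref{loopsusp}). This is exactly why the paper does not argue by cartesianness at all. Unfortunately, the repair you sketch does not close the gap. You propose that for each $\underline{U}\in\mathcal{A}^k_{o_+}$ the space $\Lambda^{J,k}(X)_{\underline{U}}$ has an ``$o$-component which is an equivariant suspension $\Sigma^o$'' that is then annihilated by $P_K$ because $K\subset J\setminus o$. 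But this is not the right picture, on two counts. First, examine the formulas in Example~\ref{spelloutlambda}: a $\Sigma^o$ appears at a vertex $U$ only when $o_+\subset U$, i.e.\ when $U$ contains the basepoint $+$; the defining condition of $\mathcal{A}^k_{o_+}$ only forces each $U_l$ to meet $J_+\setminus o$ and jointly cover $o$, which does not put $+$ in any $U_l$, so the typical $o$-contribution to $\Lambda^{J,k}_{\underline U}$ is a \emph{cone} $C^{U_l\cap o}$, not a suspension. Second, there is no general mechanism by which $P_K$ with $K\subset J\setminus o$ ``kills suspensions in the $o$-direction'' of arbitrary gluings; that intuition would essentially require the classification of homogeneous functors, unavailable at this point, as you yourself observe.

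The paper's actual proof uses the commutation $P_K R_o\Phi \simeq R_o P_K\Phi$ and then shows $R^{\mathcal{A}^k}_{o_+}P_K\Phi\simeq\ast$ directly. The key idea you are missing is a product decomposition of the \emph{indexing poset}: there is an equivariant isomorphism $\mathcal{A}^k_{o_+}\cong\mathcal{P}_0(J_+\setminus o)^k\times\mathcal{S}^k$, where $\mathcal{S}_k\subset\mathcal{P}_0(k\times o)$ records the surjective part onto $o$. This separates the homotopy limit into an outer limit over $S\in\mathcal{S}_k$ and an inner limit over $\underline{V}\in\mathcal{P}_0(J_+\setminus o)^k$. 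For fixed $S$, the inner cube $\underline{V}\mapsto\Lambda^J(X)_{(-)\cup p^{-1}(l)}$ is $G$-strongly cocartesian in the $(J_+\setminus o)$-directions by Proposition~\ref{facesstronglycocart}, and $P_K\Phi$ is $(J\setminus o)$-excisive by Proposition~\ref{inclusions} since $K\subset J\setminus o$; hence the inner limit collapses to the initial vertex $P_K\Phi(\Lambda^{J,k}_{\Gamma(\underline\emptyset,S)})$. This vertex is contractible for every $S\in\mathcal{S}_k$ because some component $p^{-1}(l)\subset o$ is a non-empty subset of a single orbit not containing $+$, so $\Lambda^J(-)_{p^{-1}(l)}$ is a $p^{-1}(l)$-fold cone and the iterated space is equivariantly contractible. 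So the mechanism is $(J\setminus o)$-excisiveness of $P_K\Phi$ collapsing a strongly cocartesian $(J_+\setminus o)$-cube, combined with the in-orbit pieces being cones — not $P_K$ absorbing $o$-suspensions. I'd encourage you to look for a structural splitting of the indexing category whenever an $o_+$-cube argument stalls at precisely an $\Omega^o$-ambiguity; that is the move that completes the proof here.
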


\begin{proof}
Since $P_K$ commutes with $R_{o}$ it is sufficient to show that $R_o P_K\Phi$ is contractible. We show that $R^{\mathcal{A}^k}_{o_+} P_K\Phi$ is contractible for every integer $k$, which proves that $R_o P_K\Phi$ is a homotopy colimit of contractible functors. Let $\mathcal{S}_k$ be the subposet of $\mathcal{P}_0(k\times o)$ of subsets $S$ with the property that the composition with the projection $S\to k\times o\to o$ is surjective. There is an equivariant isomorphism of posets
\[\mathcal{A}_{o_+}\cong \mathcal{P}_0(J_+\backslash o)^k\times\mathcal{S}^k\]
that sends a $k$-tuple of subsets $\underline{U}$ of $J_+$ to the pair $(\underline{U}\backslash o,\{(l,j)|j\in U_l\})$. Here $o$ is removed from $\underline{U}$ componentwise. The inverse sends a pair $(\underline{V},S)$ to the collection $\Gamma(\underline{V},S)$ with $l$-component $V_l\cup p^{-1}(l)$, where $p\colon S\to k\times o\to k$ is the projection. Hence $R^{\mathcal{A}^k}_{o_+}$ decomposes as
\[R^{\mathcal{A}^k}_{o_+}P_K\Phi\cong\holim_{S\in\mathcal{S}_k}
\holim_{\underline{V}\in\mathcal{P}_0(J_+\backslash o)^k}P_K\Phi\big(\Lambda^{J,k}(-)_{\Gamma(\underline{V},S)}\big).\]
We prove that for every fixed $S\in\mathcal{S}_k$, the inner homotopy limit is contractible. By \ref{faces} the $(J_{+}\backslash o)$-cube $\Lambda^{J}(X)_{(-)\cup p^{-1}(l)}$ is strongly cocartesian for every $l$, and by \ref{inclusions} the functor $P_K\Phi$ is $J\backslash o$-excisive ($K$ is a $G$-subset of $J\backslash o$). Hence the inner homotopy limit is equivalent to 
\[P_K\Phi\big(\Lambda^{J,k}(-)_{\Gamma(\underline{\emptyset},S)}\big)=P_K\Phi\big(\Lambda^{J,k}(-)_{\underline{S}}\big)\]
where $\underline{S}$ has $l$-component $p^{-1}(l)$. Now each $p^{-1}(l)$ is a subset of $o$, and at least one is non-empty. For this component $\Lambda^{J}(-)_{S_l}$ is a $S_l$-fold cone, and it is therefore contractible.
\end{proof}

We iterate the constructions $R_o$ over the orbits $o_1,\dots,o_n$ of $J$, defining a functor
\[R_J\Phi:=R_{o_1}\dots R_{o_n}\Phi.\]
The previous Lemmas show that $R_J\Phi\colon \Top_\ast\to \Top_{\ast}^G$ is a $J$-homogeneous $J$-fold delooping of $\Phi$.

The last thing we need to do in order to complete the proof of Theorem \ref{delooping} is to build the lifting $\widehat{\Phi}\colon \Top_\ast\to \Sp_{J}^G$ of $\Phi$ to our category of symmetric $G$-spectra. This is a purely formal maneuver. First of all notice that there is a wrong-way pointing equivalence in our construction of the $J$-delooping
\[\Phi\stackrel{\simeq}{\rightarrow}\holim_{W_1\in\mathcal{P}_0((o_{1})_+)}
\!\!\!\!\hocolim_{k_1}R^{\mathcal{A}^{k_1}}_{W_{1}}
\dots\!\!\!\!
\holim_{W_n\in\mathcal{P}_0((o_{n})_+)}\!\!\!\!\hocolim_{k_n}R^{\mathcal{A}^{k_n}}_{W_n}
\Phi\stackrel{\simeq}{\longleftarrow}
\Omega^{o_1}R_{o_1}
\dots
\Omega^{o_n}R_{o_n}
\Phi\stackrel{\simeq}{\rightarrow}
\Omega^JR_J\Phi.\]
Let us denote the second functor from the left $B_J\Phi$, and the third one $E_J\Phi$. 
The trick of \cite[(0.1)]{calcI} solves this issue, by defining a new sequence of functors
\[\overline{R}_{mJ}\Phi:=\holim\big(
R^{m}_{J}\Phi\stackrel{\simeq}{\longrightarrow}B_{J}R^{m}_J\Phi\stackrel{\simeq}{\longleftarrow}E_{J}R^{m}_J\stackrel{\simeq}{\longrightarrow}\Omega^{J}R^{m+1}_J\Phi
\stackrel{\simeq}{\longrightarrow}\Omega^{J}B_{J}R^{m+1}_J\Phi\stackrel{\simeq}{\longleftarrow}\dots
\big)\]
where the superscript $m$ denotes iteration. This new sequence supports actual structure maps \[\sigma_m\colon\overline{R}_{mJ}\Phi\wedge S^J\stackrel{}{\to}\overline{R}_{(m+1)J}\Phi.\] Observe that $\overline{R}_{mJ}\Phi$ has a canonical $\Sigma_m$-action induced by permuting the iterations of $\overline{R}_{J}$. We want to define $\widehat{\Phi}\colon \Top_\ast\to \Sp_{J}^G$ in such a way that the $G$-space $\overline{R}_{mJ}\Phi(X)$ is the value of the $G$-spectrum $\widehat{\Phi}(X)$ at the $G$-set $mJ$. Again this is formal, by sending a pointed space $X$ to the sequence of $G\times\Sigma_m$-spaces
\[\widehat{\Phi}(X)_m=\Omega^{m\overline{J}}\overline{R}_{mJ}\Phi(X)\]
where $\Omega^{m\overline{J}}$ is the loop space of the direct sum of $m$ copies of the reduced permutation representation of $J$, with $G\times\Sigma_m$-acting by conjugation. The structure maps of this spectrum are the composites
\[(\Omega^{m\overline{J}}\overline{R}_{mJ}\Phi)\wedge S^1\longrightarrow\Omega^{m\overline{J}}(\overline{R}_{mJ}\Phi\wedge S^1)\stackrel{(-)\wedge S^{\overline{J}}}{\longrightarrow} \Omega^{(m+1)\overline{J}}(\overline{R}_{mJ}\Phi\wedge S^J)\stackrel{\sigma_m}{\longrightarrow}\Omega^{(m+1)\overline{J}}\overline{R}_{(m+1)J}\Phi.\]
Since the values of $\widehat{\Phi}$ are fibrant, it follows immediately that $\widehat{\Phi}$ is $J$-homogeneous, concluding the proof of Theorem \ref{delooping}.

\subsection{Equivariant multilinear symmetric functors}\label{multilinear}

We study the behavior of Goodwillie's cross-effect and diagonal functors with respect to equivariant excision. We will use it in the next section to classify strongly homogeneous functors.

Let $J$ be a finite $G$-set, and let us order its orbits $J/G=\{o_1,\dots,o_n\}$. We consider multivariable homotopy functors $M\colon \mathscr{C}^{\times n}\to \mathscr{D}^G$, where $\mathscr{C}^{\times n}$ has the $G$-model structure defined by the product model structure on the categories of $H$-objects $(\mathscr{C}^{\times n})^H\cong (\mathscr{C}^H)^{\times n}$. We will further assume that both $\mathscr{C}$ and $\mathscr{D}$ are pointed.

\begin{defn} Let 
Let $M\colon \mathscr{C}^{\times n}\to \mathscr{D}^G$ be a homotopy functor and let $K$ be a finite $G$-set. We say that $M$ is $K$-excisive (resp. $K$-reduced) in the $i$-variable if for every object $\underline{c}\in \mathscr{C}^{\times(n-1)}$ the functor
\[M(c_1,\dots,c_{i-1},(-),c_i,\dots,c_{n-1})\colon\mathscr{C}{\longrightarrow} \mathscr{D}^G\]
is $K$-excisive (resp. $K$-reduced).
We say that $M\colon \mathscr{C}^{\times n}\to \mathscr{D}^G$ is $J$-multilinear if it is $o_i$-excisive and $1$-reduced in the $i$-variable, for every orbit $o_i$ of $J$.
\end{defn}

Let $\Delta\colon \mathscr{C}\to \mathscr{C}^{\times n}$ be the diagonal functor. Precomposition by $\Delta$ defines a functor
\[\Delta^\ast\colon Fun_h(\mathscr{C}^{\times n},\mathscr{D}^{G})\longrightarrow Fun_h(\mathscr{C},\mathscr{D}^G)\]
between the categories of homotopy functors.
The goal of the next results is to prove that $\Delta^\ast$ sends $J$-multilinear functors to $J$-homogeneous functors.

\begin{prop}\label{diag} Let $J$ be a finite $G$-set, and let $K_1,\dots,K_n$ be a collection of finite $G$-sets indexed over the orbits $J/G=\{o_1,\dots,o_n\}$. Let $M\colon \mathscr{C}^{\times n}\to \mathscr{D}^G$ be a homotopy functor which is $K_i$-excisive in the $i$-variable for every $1\leq i\leq n$. Then the diagonal functor $\Delta^\ast M\colon \mathscr{C}\to \mathscr{D}^G$ is $(K_1\amalg\dots\amalg K_n)$-excisive. In particular if $M$ is $o_i$-excisive in the $i$-variable for every orbit $o_i\in J/G$, the functor $\Delta^\ast M$ is $J$-excisive.
\end{prop}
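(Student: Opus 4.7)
The plan is to reduce by induction on $n$ to the case $n=2$, and in that case compare the diagonal $\Delta^{\ast}M(X)=M(X,X)$ to a bi-cube whose cartesianness is forced by the separate excisiveness hypotheses on the two variables of $M$. For the induction, set $N(a,b):=M(a,b,\ldots,b)$; the hypothesis applied to $M(a,-,\ldots,-)$ makes $N(a,-)$ the diagonal of an $(n-1)$-variable functor whose $i$-th variable is $K_{i+1}$-excisive, so $N(a,-)$ is $(K_2\amalg\cdots\amalg K_n)$-excisive by the inductive hypothesis, while $N(-,b)$ is $K_1$-excisive directly. Applying the $n=2$ case to $N$ then yields the result.

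For $n=2$ with $K=K_1\amalg K_2$ and $X\in\mathscr{C}^{\mathcal{P}(K_+|_H)}_a$ a $G$-strongly cocartesian $K_+|_H$-cube, consider the bi-cube
\[
\hat{M}\colon\mathcal{P}(K_+)^{\times 2}\cong\mathcal{P}(K_+\amalg K_+)\longrightarrow\mathscr{D},\qquad\hat{M}(U_1,U_2)=M(X_{U_1},X_{U_2}),
\]
with the diagonal $G$-structure on $K_+\amalg K_+$. This bi-cube is cartesian as a $(K_+\amalg K_+)|_H$-cube: applying Proposition~\ref{cubeofcartcubes}(i) with $I=K_+$ the first factor and $J=K_+$ the second, every $U_2$-slice $U_1\mapsto M(X_{U_1},X_{U_2})$ is obtained by applying $M(-,X_{U_2})$ to $X$, and since $M(-,X_{U_2})$ is $K_1$-excisive by hypothesis, hence $K$-excisive by Proposition~\ref{inclusions}, the slice is cartesian, and similarly for the $\emptyset$-slice. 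The remaining task is to transfer cartesianness from $\hat{M}$ to the diagonal $\Delta^{\ast}M(X)=\hat{M}\circ\Delta$, where $\Delta\colon\mathcal{P}(K_+)\to\mathcal{P}(K_+)^{\times 2}$ sends $U\mapsto(U,U)$.

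The main obstacle will be this final step, since the diagonal embedding $\Delta$ is not coinitial for homotopy limits---its over-category $\{U\in\mathcal{P}_0(K_+)\mid U\subseteq V_1\cap V_2\}$ is empty whenever $V_1\cap V_2=\emptyset$---so a direct cofinality argument does not suffice. Bridging this gap requires exploiting the specific form of $\hat{M}$ as arising from a single cube $X$. Concretely, one can cover $\mathcal{P}_0(K_+)$ via sub-posets compatible with the decomposition $K_+=(K_1)_+\amalg K_2$ in the spirit of the covering arguments of Section~\ref{delooping}, and then use the pushout decomposition $X_U\simeq X_{U\cap(K_1)_+}\amalg_{X_{U\cap\{+\}}}X_{U\cap(K_2)_+}$ of Lemma~\ref{unionintersectionthing} together with the $K_1$- and $K_2$-excisiveness of $M$ to identify each piece of $\Delta^{\ast}M(X)$ with corresponding pieces of the already cartesian bi-cube $\hat{M}$. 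The ``in particular'' clause then follows by specialization to $K_i=o_i$.
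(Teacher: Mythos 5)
Your reduction to $n=2$ is fine, and your observation that the bi-cube $\hat M(U_1,U_2)=M(X_{U_1},X_{U_2})$ on $\mathcal{P}(K_+\amalg K_+)$ is cartesian (via Proposition~\ref{cubeofcartcubes} and Proposition~\ref{inclusions}) is correct. But you have correctly located the real difficulty at the last step, and the sketch you give there does not close the gap.

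The problem is that passing from cartesianness of $\hat M$ to cartesianness of the diagonal cube $U\mapsto M(X_U,X_U)$ requires comparing $\holim_{(U_1,U_2)\in\mathcal{P}_0(K_+\amalg K_+)}M(X_{U_1},X_{U_2})$ with $\holim_{U\in\mathcal{P}_0(K_+)}M(X_U,X_U)$, and as you note the diagonal is not left cofinal. Your proposed remedy --- covering $\mathcal{P}_0(K_+)$ and invoking the pushout decomposition $X_U\simeq X_{U\cap(K_1)_+}\amalg_{X_{U\cap +}}X_{U\cap(K_2)_+}$ of Lemma~\ref{unionintersectionthing} --- does not obviously work, because that decomposition of $X_U$ is a homotopy \emph{colimit} description, and $M$ is not assumed to commute with pushouts in either variable. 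Applying $M$ to such a pushout presentation of $X_U$ gives no control over $M(X_U,X_U)$. In particular there is no evident way to ``identify pieces of $\Delta^\ast M(X)$ with pieces of $\hat M$'' through that decomposition.

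What the paper does instead is precisely the dual move: Lemma~\ref{lemmamulti} (the equivariant analogue of \cite[3.3]{calcII}) asserts that when $\Phi$ is $K_i$-excisive and $X$ is $G$-strongly cocartesian, $\Phi(X_T)$ is a homotopy \emph{limit} of the $\Phi(X_U)$ over a poset $\{T\subset U\mid |U|\ge|K_+\backslash K_i|+1\}$; applied one variable at a time this produces a limit decomposition of $M(X_T,\dots,X_T)$ in terms of $M(X_{U_1},\dots,X_{U_n})$ over the poset $\mathcal{E}_{\underline T}$, which interfaces correctly with the outer $\holim$ over $T$ and with the Covering Lemma. The limit description is essential; the colimit description you invoke plays no analogous role. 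So the step you flag as the ``main obstacle'' is indeed where your proof is incomplete, and the suggested fix would need to be replaced by something like Lemma~\ref{lemmamulti}.

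Two smaller remarks. First, the inductive reduction works, but note that strictly speaking Definition~\ref{J-exc} would have you check $K_i$-excisiveness of $M$ in each slot with the other slots held at arbitrary $H$-objects, not just objects of $\mathscr{C}$ with trivial action; this is needed both for your induction and for the claim that $M(-,X_{U_2})$ is $K_1$-excisive when $X_{U_2}$ carries a nontrivial $H_{U_2}$-action. Second, the paper's proof treats all $n$ variables simultaneously rather than reducing to $n=2$, which is tidier here since Lemma~\ref{lemmamulti} is applied once per variable in a uniform way; but the induction itself is not the problem.
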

The proof of this proposition is based on the following lemma, analogous to \cite[3.3]{calcII}.

\begin{lemma}\label{lemmamulti}
Let $K$ be a proper $G$-subset of $J$, let $\Phi\colon \mathscr{C}\to \mathscr{D}^G$ be a $K$-excisive homotopy functor and let $X\in\mathscr{C}_{a}^{\mathcal{P}(J_+|_H)}$ be a $G$-strongly cocartesian $J_+|_H$-cube, for some subgroup $H\leq G$. Then for every subset $T\subset J_+$, the canonical map
\[\Phi(X_T)\stackrel{\simeq}{\longrightarrow}\holim_{\substack{ T\subset U\subset J_+|\\ |U|\geq |J\backslash K|+1}}\Phi(X_U)\]
is an $H_T$-equivalence.
\end{lemma}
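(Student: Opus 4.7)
The lemma is an equivariant analogue of Goodwillie's \cite[Lemma 3.3]{calcII}, and my proof would follow a parallel inductive strategy enhanced by the equivariant machinery of \S\ref{stronglycocart}. I would proceed by induction on $|J\setminus K|$, uniformly in $T\subset J_+$. The base of the induction—the degenerate $K=J$ case, lying just outside the hypothesis—is immediate: the condition $|U|\geq 1$ restricts to the non-empty supersets of $T$, which admit $T$ as a minimum element when $T\neq\emptyset$ (trivializing the map to an $H_T$-equivalence by cofinality of the initial object), or coincide with $\mathcal{P}_0(J_+)$ when $T=\emptyset$ (in which case the claim becomes cartesianness of $\Phi(X)$, furnished by $J$-excision of $\Phi$ applied to the $G$-strongly cocartesian cube $X$).

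For the inductive step with $|J\setminus K|\geq 1$, I would pick a $G$-orbit $o\subset L:=J\setminus K$ and set $K':=K\cup o$. Then $K'$ is a $G$-subset of $J$ with $|J\setminus K'|=|L|-|o|<|L|$, and by \ref{inclusions} the $K$-excisivity of $\Phi$ implies $K'$-excisivity. Applying the inductive hypothesis to $K'$ yields the $H_T$-equivalence
\[
\Phi(X_T)\xrightarrow{\ \simeq\ }\holim_{\substack{T\subset U\subset J_+\\ |U|\geq |L|-|o|+1}}\Phi(X_U).
\]
Since the poset $\{|U|\geq|L|+1\}$ is contained in $\{|U|\geq|L|-|o|+1\}$, the desired equivalence reduces to showing that the resulting restriction map between these two homotopy limits is itself an $H_T$-equivalence.

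The hard part will be establishing this last restriction equivalence, which is where the equivariant combinatorics become the main obstacle. I would apply the equivariant Covering Lemma \ref{coverings} to cover the larger indexing poset by the smaller one together with sub-posets of the form $\{U: R\subset U\cap o\}$, indexed by non-empty subsets $R\subset o$ and carrying the natural $G_R$-action. For each such $R$, the translated face $Y^R_V:=X_{R\cup V}$ is a $G$-strongly cocartesian $(J\setminus R)_+|_{H_R}$-cube by \ref{facesstronglycocart}, and by $K$-excisivity combined with \ref{inclusions} the resulting cube $\Phi(Y^R)$ is cartesian. Assembling these cartesian faces according to the covering structure—whose combinatorial organization mirrors the star-category $St(o_+)$ of \S\ref{stronglycocart}—produces a cartesian cube of homotopy limits whose total homotopy fiber computes the homotopy fiber of the restriction map, forcing the latter to vanish. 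Carefully tracking the $G_R$-actions, the stabilizers, and the equivariant cofinality statements required to glue everything together is the principal technical challenge and the main point of departure from Goodwillie's non-equivariant argument.
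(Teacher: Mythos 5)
The approach takes a genuinely different inductive route from the paper, and that route has a gap that the proposed covering does not close.

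The paper inducts on the cardinality of $J_+\setminus T$, from large $T$ down to $T=\emptyset$. The base case ($|T|\geq |J\setminus K|+1$) is immediate by cofinality, and the inductive step factors the map through $\holim_{T\subsetneq S\subset T\cup K_+}\holim_{\mathcal{E}_S}$, with the left vertical map an equivalence by $K$-excision of $\Phi$ applied to the translated face $X_{T\cup(-)}|_{\mathcal{P}(K_+)}$, and the right vertical map an equivalence by Covering Lemma \ref{coverings} with the cover $\{\mathcal{A}_k\}_{k\in K_+}$ of $\mathcal{E}_T$. That cover is indexed on $K_+$; it genuinely covers because any $U$ with $|U|\geq |J\setminus K|+1=|J_+\setminus K_+|+1$ must meet $K_+$ for cardinality reasons.

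You instead induct on $|J\setminus K|$ by enlarging $K$ to $K'=K\cup o$. The base case ($K'=J$) is fine—\ref{inclusions} gives $J$-excision, and strong cocartesianness of $X$ gives the conclusion. The problem is the inductive step. You need the restriction $\holim_{\mathcal{E}_T^{L'}}\Phi(X_U)\to\holim_{\mathcal{E}_T^{L}}\Phi(X_U)$ (with $L'=L\setminus o$, $L=J\setminus K$) to be an equivalence, and you propose to see this via \ref{coverings} with a cover of $\mathcal{E}_T^{L'}$ by $\mathcal{E}_T^{L}$ together with the sub-posets $\{U:R\subset U\cap o\}$ for $\emptyset\neq R\subset o$. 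This is not a cover when $T\cap o=\emptyset$: any $U\in\mathcal{E}_T^{L'}$ with $U\cap o=\emptyset$ and $|L'|+1\leq |U|\leq |L|$ sits in none of your pieces. (In the simplest case $G=1$, $J=\{1,2\}$, $K=\{1\}$, $T=\emptyset$, the singleton $U=\{1\}$ already escapes.) The reason the paper covers over $K_+$ and not over the orbit $o\subset L$ is precisely to guarantee $U$ meets the covering index set for cardinality reasons; your indexing set $o$ lives on the wrong side of $J$ for that argument.

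There is also a smaller but real issue in your invocation of \ref{facesstronglycocart}. For a proper non-empty $R\subset o$ with $o$ a non-trivial $G$-orbit, the decomposition $J=(J\setminus R)\amalg R$ is not a decomposition into $G$-sets, so \ref{facesstronglycocart} does not produce a $G$-strongly cocartesian $(J\setminus R)_+|_{H_R}$-cube $X_{R\cup(-)}$. What it does give is a $G$-strongly cocartesian $(J\setminus o)_+|_{H_R}$-cube (using the $G$-decomposition $J_+=(J\setminus o)_+\amalg o$), which is smaller than what you need. Since $K\subset J\setminus o$ this face is cartesian under $\Phi$, but assembling those cartesian faces into the restriction equivalence you want is exactly what is missing, and is not clearly possible along the lines you sketch. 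In short: the induction variable is a legitimate variant, but the heart of the step—the comparison between the two holims—needs a different argument than the one you outline, and the paper's choice to induct on $|J_+\setminus T|$ sidesteps this issue entirely.
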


\begin{proof}
The proof is analogous to \cite[3.3]{calcII}, by induction on the cardinality of the complement $J_+\backslash T$ and by replacing the Covering Lemma \cite[1.9]{calcII} with the Equivariant Covering Lemma \ref{coverings}. We repeat the argument. Let $\mathcal{E}_T$ be the subposet of $\mathcal{P}(J_+)$
\[\mathcal{E}_T=\{T\subset U\subset J_+\ |\ |U|\geq |J\backslash K|+1\}.\]
When $|T|\geq |J\backslash K|+1$ the set $T$ is initial in $\mathcal{E}_T$. Hence the map of the statement is a $G_T$-equivalence by cofinality \cite[2.25]{Gdiags}. Now suppose that $|T|\leq|J\backslash K|$ and that
\[\Phi(X_S)\stackrel{\simeq}{\longrightarrow}\holim_{\substack{ S\subset U\subset J_+|\\ |U|\geq |J\backslash K|+1}}\Phi(X_U)\]
is a $G_S$-equivalence for every subset $S$ of $J_+$ with $|S|>|T|$. Consider the commutative diagram
\[\xymatrix{\Phi(X_T)\ar[r]\ar[d]_{\simeq}&\displaystyle\holim_{U\in\mathcal{E}_T}\Phi(X_U)\ar[d]\\
\displaystyle\holim_{T\subsetneqq S\subset T\cup K_+}\Phi(X_S)\ar[r]_-{\simeq}&\displaystyle\holim_{T\subsetneqq S\subset T\cup K_+}\holim_{V\in \mathcal{E}_S}\Phi(X_V)
}.\]
The bottom horizontal map is an equivalence by the inductive hypothesis. The left vertical map is an $H_T$-equivalence because $\Phi(X_{T\cup(-)})$ is a homotopy cartesian $K_+|_{H_T}$-cube in $\mathscr{D}$. This can be seen as follows. If $T$ intersects $K_+$ the maps $\Phi(X_{T\cup U})\to \Phi(X_{T\cup U\cup k})$ are equivalences for a chosen $k\in K_+\cap T$, and thus $\Phi(X_{T\cup(-)})$ is cartesian. If $T$ does not intersect $K_+$, the cube $X_{T\cup(-)}$ is $G$-strongly cocartesian by \ref{facesstronglycocart}, and its image is cartesian because $\Phi$ is $K$-excisive.

It remains to prove that the right vertical map is an equivalence.
The $G_T$-poset $\{T\subsetneqq S\subset T\cup K_+\}$ is isomorphic to $\mathcal{P}_0(K_+)$, and $\mathcal{E}_S=\bigcap_{k\in S\backslash T}\mathcal{A}_k$, where $\mathcal{A}_k$ is the subposet of $\mathcal{P}(J_+)$ defined by
\[\mathcal{A}_k=\{V\subset J_+\ | \ \ |V|\geq |J\backslash K|+1\ , \ T\cup k\subset V \}.\]
The right vertical map is then an equivalence by the Covering Lemma \ref{coverings}, provided the collection of posets $\{\mathcal{A}_k\}_{k\in K_+}$ covers $\mathcal{E}_T$. This is indeed the case since for cardinality reasons, if $V$ belongs to $\mathcal{E}_T$ it must intersect $K_+$. Hence $V$ contains both $T$ and at least one element $k$ of $K_+$, and thus $V\in \mathcal{A}_k$.
\end{proof}

\begin{proof}[Proof of \ref{diag}]
Let us denote $\amalg\underline{K}:=K_1\amalg\dots \amalg K_n$. Let $X$ be a $G$-strongly cocartesian $(\amalg\underline{K})_+|_H$-cube in $\mathscr{C}$ for some subgroup $H$ of $G$. By applying Lemma \ref{lemmamulti} to every variable of $M$, the homotopy limit of $\Delta^\ast M(X)$ over $\mathcal{P}_0(\amalg \underline{K}_+)$ decomposes as
\[\holim_{T\in\mathcal{P}_0(\amalg \underline{K})}M(X_T,\dots,X_T)\stackrel{\simeq}{\longrightarrow}\holim_{T\in\mathcal{P}_0(\amalg \underline{K})}\holim_{(U_{1},\dots, U_{n})\in \mathcal{E}_{\underline{T}}}M(X_{U_1},\dots X_{U_n})\]
where the inner homotopy limit on the right is taken over the sub-poset $\mathcal{E}_{\underline{T}}$ of $\mathcal{P}_0(\amalg \underline{K})^{n}$ of subsets $U_{1},\dots, U_{n}$ with the property that $T\subset U_i\subset \amalg \underline{K}_+$ and $|U_{i}|\geq |\amalg \underline{K}_+\backslash K_{i}|$ for every $1\leq i\leq n$. It follows that $\Delta^\ast M$ is $\amalg \underline{K}$-excisive if we can prove that the composition
\[M(X_{\emptyset},\dots,X_{\emptyset})\longrightarrow\holim_{(U_{1},\dots, U_{n})\in \mathcal{E}_{\underline{\emptyset}}}M(X_{U_1},\dots X_{U_n})\longrightarrow\holim_{T\in\mathcal{P}_0(\amalg \underline{K})}\holim_{(U_{1},\dots, U_{n})\in \mathcal{E}_{\underline{T}}}M(X_{U_1},\dots X_{U_n})\]
is an equivalence.
The first map is an equivalence again by Lemma \ref{lemmamulti}. The second map is induced by the cover $\{\mathcal{U}_k\}_{k\in \amalg \underline{K}_+}$ of $\mathcal{E}_{\underline{\emptyset}}$ defined by the posets
\[\mathcal{U}_k=\{(U_1,\dots,U_n)\in \mathcal{E}_{\underline{\emptyset}}| \ k\in U_i \ \mbox{for all} \ 1\leq i\leq n\}.\]
It is an equivalence by \ref{coverings} if we can show that the collection of the $\mathcal{U}_k$'s covers $\mathcal{E}_{\underline{\emptyset}}$. This is the case if every element $(U_1,\dots,U_n)\in \mathcal{E}_{\underline{\emptyset}}$ has non-empty intersection $\bigcap_{i=1}^nU_i$. This is equivalent to proving that the complement
\[\amalg\underline{K}_+\backslash\bigcap_{i=1}^nU_i=\bigcup_{i=1}^n(\amalg\underline{K}_+\backslash U_i)\]
is a proper subset of $\amalg\underline{K}_+$. Since $(U_1,\dots,U_n)$ belongs to $\mathcal{E}_{\underline{\emptyset}}$, the cardinality of the union is at most
\[|\bigcup_{i=1}^n(\amalg\underline{K}_+\backslash U_i)|\leq \sum_{i=1}^n|\amalg\underline{K}_+\backslash U_i|\leq \sum_{i=1}^n|K_{i}|=|\amalg\underline{K}|<|\amalg\underline{K}_+|.\]
\end{proof}

\begin{cor}\label{diagJlin}
Let $M\colon \mathscr{C}^{\times n}\to \mathscr{D}^G$ be a homotopy functor. If $M$ is $J$-multilinear the diagonal $\Delta^\ast M\colon \mathscr{C}\to \mathscr{D}^G$ is strongly $J$-homogeneous.
\end{cor}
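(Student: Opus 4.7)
The proof breaks into the two defining conditions of strong $J$-homogeneity. The $J$-excision of $\Delta^\ast M$ is immediate from Proposition \ref{diag} applied with $K_i=o_i$: the $o_i$-excision of $M$ in the $i$-th variable yields that $\Delta^\ast M$ is $\amalg_i o_i = J$-excisive. The content of the statement is therefore the $n$-reducedness, namely $P_{\underline{k}}\Delta^\ast M\simeq *$ for every $k<n$, where $\underline{k}$ carries the trivial $G$-action.

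My plan is to reduce this equivariant reducedness to a classical non-equivariant Goodwillie statement on each fixed-point category. Since $\underline{k}$ has trivial $G$-action, each $G$-orbit of $\underline{k}$ is a singleton, so the restriction formula of Theorem \ref{restriction} degenerates to an equivalence $(P_{\underline{k}}\Delta^\ast M)|_H\simeq P_{\underline{k}}(\Delta^\ast M|_H)$, and Remark \ref{rem:excision}(i) combined with the identification $\Lambda^{\underline{k}}_U(c)=c\star U$ of Example \ref{spelloutlambda}(i) identifies the latter with Goodwillie's classical $k$-excisive approximation $P_k^{cl}(\Delta^\ast M|_H)$ of the restricted functor $\Delta^\ast M|_H\colon\mathscr{C}^H\to\mathscr{D}^H$. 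It therefore suffices to prove that $\Delta^\ast M|_H$ is classically $(n-1)$-reduced, for every subgroup $H\leq G$.

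For this I invoke Proposition \ref{isoonorb}: since $|o_i/G|=1$, the $o_i$-excision of $M$ in the $i$-th variable implies that $M|_H$ is classically $1$-excisive in the $i$-th variable. Combined with the $1$-reducedness in each variable (obviously preserved under restriction to $H$-objects), $M|_H$ is classically multi-linear in $n$ variables. A standard result of classical Goodwillie calculus (the counterpart, for non-symmetric multi-linear functors, of the classification of $n$-homogeneous functors by symmetric multi-linear functors from \cite{calcIII}) then gives that $\Delta^\ast M|_H$ is classically $n$-homogeneous, and in particular $(n-1)$-reduced, completing the argument.

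The main potential obstacle is the compatibility of equivariant and classical excisive approximations invoked in the second paragraph, but this is essentially tautological: when $\underline{k}$ has trivial $G$-action the join $c\star U$ carries only the $G$-action coming from $c$, and the diagrams defining $T_{\underline{k}}$ match Goodwillie's original ones verbatim, so no genuinely equivariant phenomenon intervenes.
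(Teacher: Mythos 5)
Your proof is correct and takes essentially the same route as the paper: $J$-excision follows from Proposition \ref{diag}, and $n$-reducedness is obtained by passing to the classical non-equivariant argument of \cite{calcIII} (the paper cites \cite[3.1]{calcIII} directly for this). Your invocation of Theorem \ref{restriction} to pass to the restricted functor on $H$-objects is a heavier tool than needed --- when $J=\underline{k}$ has trivial $G$-action the compatibility $(P_{\underline{k}}\Phi)|_H\simeq P_{\underline{k}}(\Phi|_H)$ is automatic from the construction of $T_{\underline{k}}$ --- but this does not affect the validity of the argument.
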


\begin{proof}
The diagonal functor $\Delta^\ast M$ is $J$-excisive by \ref{diag}. The argument of \cite[3.1]{calcIII} shows that $P_{n-1}\Delta^\ast M$ is contractible, and thus that $\Delta^\ast M$ is $n$-reduced.
\end{proof}

\begin{ex} Corollary \ref{diagJlin} gives another proof that the functor $\mathcal{T}_{E}^{n}(X)=E\wedge X^{\wedge n}$ from Proposition \ref{smashfctr} is $nG$-homogeneous, which does not involve the calculation of its $nG$-excisive approximation.
\end{ex}

Goodwillie's cross-effect defines a functor in the opposite direction of the diagonal. Let $S=\{s_1,\dots,s_n\}$ be a finite set. The $S$-cross effect is the functor
\[cr_{S}\colon Fun_h(\mathscr{C},\mathscr{D}^G)\longrightarrow Fun_h(\mathscr{C}^{\times S},\mathscr{D}^G)\]
that sends $\Phi\colon \mathscr{C}\to\mathscr{D}^G$ to the multivariable functor $cr_S\Phi$ defined at a collection of objects $c_{s_1},\dots,c_{s_n}\in\mathscr{C}$ by the total homotopy fiber
\[cr_S\Phi(c_{s_1},\dots,c_{s_n})=\hofib\big(\Phi(\bigvee_{s\in S}c_s)\longrightarrow
\holim_{U\in\mathcal{P}_0(S)}\Phi(\bigvee_{s\in S\backslash U}c_s)\big).\]
Indexing the cross-effect on a general finite set $S$ makes it easier to state the following proposition. Let us remark that $cr_S\Phi$ is $1$-reduced in every variable.

\begin{prop}\label{crossexcisive}
Let $J$ be a finite $G$-set with $n$-orbits $J/G=\{o_1,\dots,o_n\}$, 
let $\Phi\colon \mathscr{C}\to\mathscr{D}^G$ be a $J$-excisive homotopy functor and let $W$ be a subset of $J/G$. Then $cr_W\Phi$ is $(J\backslash \amalg_{w\in W}w)\amalg o$-excisive in the $o$-variable for every $o\in W$, and $cr_{n+1}\Phi$ is trivial. In particular $cr_{J/G}\Phi$ is $J$-multilinear.
\end{prop}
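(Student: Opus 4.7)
I would prove the two claims separately. For the triviality of $cr_{n+1}\Phi$, apply Proposition \ref{isoonorb} to the orbit projection $J\to J/G$: this is a $G$-map bijective on orbits, so $\Phi$ is also $J/G$-excisive, and by Remark \ref{rem:excision}(i) the functor $\Phi\colon\mathscr{C}^H\to\mathscr{D}^H$ is classically $n$-excisive for every subgroup $H\leq G$. The classical vanishing $cr_{n+1}\Phi\simeq\ast$ of \cite{calcII} then applies levelwise, hence equivariantly.

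For the excision statement, fix $o\in W$ and set $K=(J\setminus\amalg_{w\in W}w)\amalg o$. Given a $G$-strongly cocartesian $K_+|_H$-cube $X$ in the $o$-variable, the $K_+|_H$-cube $T\mapsto cr_W\Phi(\underline{c};X_T)$ of cross-effects is the total homotopy fiber in the $W$-direction of the combined $(K_+\amalg W)|_H$-cube $(T,V)\mapsto\Phi(Y_{T,V})$, where $Y_{T,V}=\bigvee_{w\in W\setminus V,\,w\neq o}c_w\vee(X_T\text{ if }o\notin V)$ and $W$ carries the trivial $G$-action. By commutation of total fibers, the former is cartesian iff the latter is. Splitting the latter along the $\{o\}$-coordinate of $W$ as a $1$-cube of $(K_+\amalg(W\setminus\{o\}))|_H$-cubes, the ``$o\in V$'' face is constant in $T$ and so has trivial total fiber, i.e. it is cartesian; Proposition \ref{cubeofcartcubes}(i) then reduces the claim to cartesianess of the ``$o\notin V$'' face, which is the cube $\Phi(Z')$ with $Z'_{(T,V_0)}=X_T\vee\bigvee_{w\in W\setminus\{o\}\setminus V_0}c_w$.

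To deduce this from $J$-excision, extend $Z'$ to a $J_+|_H$-cube $Z$ by $Z_U=X_{U\cap K_+}\vee\bigvee_{w\in W\setminus\{o\},\,U\cap w=\emptyset}c_w$. Then $Z=q^*Z'$ for the $G$-equivariant functor $q\colon\mathcal{P}(J_+)\to\mathcal{P}(K_+\amalg(W\setminus\{o\}))$ sending $U$ to $(U\cap K_+)\cup\{w\in W\setminus\{o\}:U\cap w\neq\emptyset\}$. The functor $q$ satisfies the three hypotheses of Lemma \ref{blowupcubes}: $q^{-1}(\emptyset)=\{\emptyset\}$, $q$ respects unions, and for each element $j\in K_+\amalg(W\setminus\{o\})$ there is a singleton $\{k\}\subseteq J_+$ with $q(\{k\})=\{j\}$ (for $j\in W\setminus\{o\}$ pick any element of the orbit $j\subseteq J$). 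Hence $q^*$ preserves and detects cartesian cubes. A face-by-face check shows that $Z$ is $G$-strongly cocartesian, using the strong cocartesianess of $X$, the cocartesianess of the ``collapse $c_w$'' cube on each $\mathcal{P}(w_+)$, and Lemma \ref{unionintersectionthing} to assemble the orbit pieces on mixed faces. By $J$-excision of $\Phi$, $\Phi(Z)=q^*\Phi(Z')$ is cartesian, and detection via Lemma \ref{blowupcubes} then yields cartesianess of $\Phi(Z')$, as required. Specializing to $W=J/G$ gives the $J$-multilinearity of $cr_{J/G}\Phi$, since cross-effects are automatically $1$-reduced in each variable. The main anticipated obstacle is the face-by-face verification of strong cocartesianess of $Z$, where the orbit structure of $J$ interacts non-trivially with the wedge structure of the cross-effect.
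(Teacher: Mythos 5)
Your proof is correct, but it takes a genuinely different route from the paper's. The paper proves the excision statement by the classical Goodwillie one‑variable‑at‑a‑time induction: using \cite[3.3]{calcIII} it rewrites $cr_W\Phi(\underline c)$ as $cr_{W\setminus w}\Phi_{c_w}(\underline c\setminus c_w)$ for the modified functor $\Phi_{c_w}=\hofib\bigl(\Phi(c_w\vee(-))\to\Phi(-)\bigr)$, and then shows $\Phi_{c_w}$ is $(J\setminus w)$-excisive via the pinch $(J_+\setminus w\amalg 1)|_H$-cube built from $c_w\vee X\to X$, invoking Proposition \ref{isoonorb} to see $\Phi$ is $(J\setminus w\amalg 1)$-excisive. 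You instead argue non-inductively: encode the cross-effect as the $W$-directional total fiber of a single $(K_+\amalg W)|_H$-cube, peel off the $o$-coordinate with Proposition \ref{cubeofcartcubes}(i), and then pull the remaining cube $\Phi(Z')$ back along a collapse functor $q\colon\mathcal P(J_+)\to\mathcal P(K_+\amalg W_0)$ satisfying the hypotheses of Lemma \ref{blowupcubes}, so that $J$-excision gives cartesianess of $q^\ast\Phi(Z')$ and detection gives cartesianess of $\Phi(Z')$. The reduction of cartesianess through iterated total fibers is standard and fine, your functor $q$ does satisfy the three conditions of Lemma \ref{blowupcubes}, and the handling of the $cr_{n+1}$ vanishing matches the paper's (reduce to classical $n$-excision via \ref{isoonorb} and \ref{rem:excision}(i)). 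What you buy is a direct geometric argument avoiding the inductive recursion; what the paper's route buys is that each inductive step is a short, local verification.

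The one real gap, which you yourself flag, is the face‑by‑face verification that $Z=q^\ast Z'$ is $G$-strongly cocartesian; this does hold but your pointer to Lemma \ref{unionintersectionthing} is not the right tool: that lemma gives the pushout decomposition of the \emph{values} $Z_U$, a consequence of strong cocartesianess rather than a criterion for it. The verification that works is: write $Z'_{(T,V_0)}\cong X_T\vee\bigvee_{w'\in W_0}C_{w'}(T,V_0)$ with $C_{w'}(T,V_0)=c_{w'}$ if $w'\notin V_0$ and $\ast$ otherwise, so $Z'$ is a wedge of cubes, and wedges commute with homotopy colimits. For $S\subset J_+$ outside $St(J_+)$, one shows $Z'|_{\mathcal P(q(S))}$ is cocartesian by showing each wedge summand is; the crucial case has $q(S)=R_K\amalg R_W$ with $R_W\neq\emptyset$, for which one applies Proposition \ref{cubeofcartcubes}(ii) splitting off $R_K$ (\emph{not} $R_W$): each face $\widetilde{(X\circ q_K)}_T$ for $T\subset R_K$ is constant in the $R_W$-direction hence cocartesian, and each face $\widetilde{(C_{w'})}_T$ is either constant or the collapse cube, again cocartesian because $R_K\neq\emptyset$ (or $|R_W|\geq 2$ if $R_K=\emptyset$, which is forced since $S\notin St(J_+)$). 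The split into $R_W$-direction fails when $|R_W|=1$ (a single collapse is not cocartesian), which is presumably why a naive assembly via \ref{unionintersectionthing} would run into trouble; the wedge decomposition with the $R_K$-split avoids this.
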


\begin{proof}
We prove the proposition by induction on the cardinality of $W$. If $W$ consists of only one orbit $o$, then
\[cr_{\{o\}}\Phi(c)=\hofib\big(\Phi(c)\to \Phi(\ast)\big)\]
is $J$-excisive in its unique variable.

Now suppose that $W$ contains at least two orbits, and let us study the $o$-variable of $cr_W\Phi$. Choose an orbit $w\in W$ different than $o$. By the proof of \cite[3.3]{calcIII} for every object $\underline{c}$ of $\mathscr{C}^{\times W}$ there is an equivalence
\begin{equation}\label{crind}
(cr_W\Phi\big)(\underline{c})\simeq \Big(cr_{W\backslash w}\hofib\big(\Phi(c_w\vee (-))\to\Phi(-)\big)\Big)(\underline{c}\backslash c_w)
\end{equation}
where $\underline{c}\backslash c_w$ is the object of $\mathscr{C}^{\times (W\backslash w)}$ obtained by removing the $w$-component from $\underline{c}$. We show that the functor $\Phi_{c_w}=\hofib\big(\Phi(c_w\vee (-))\to\Phi(-)\big)$ is $(J\backslash w)$-excisive. By the inductive hypothesis and (\ref{crind}) this shows that $cr_W\Phi$ is
\[((J\backslash w)\backslash \coprod\limits_{x\in W\backslash w}x)\amalg o=J\backslash (\coprod\limits_{x\in W}x)\amalg o\]
excisive in the $o$-variable, which concludes the proof.

Let $H$ be a subgroup of $G$ and let $X$ be a $G$-strongly cocartesian $(J_+\backslash w)|_H$-cube. We need to show that $\Phi_{c_w}(X)$ is cartesian. Let $Y$ be the $(J_+\backslash w\amalg 1)|_H$-cube defined by the pinch map $c_w\vee X\to X$. We start by showing that $Y$ is $G$-strongly cocartesian. Let $T$ be a subset of $J_+\backslash w\amalg 1$ which does not belong to the star category $St(J_+\backslash w\amalg 1)$. If $T$ does not contain $1$, the restriction $Y|_{\mathcal{P}(T)}$ is equal to $X|_{\mathcal{P}(T)}\vee c_w$ which is cocartesian. If $T$ does contains $1$, write $T=S\amalg 1$ for a subset $S$ of $J_+\backslash w$. The homotopy colimit of $Y$ over $\mathcal{P}_1(S\amalg 1)$ is the homotopy pushout
\[
\hocolim\Big(\hocolim_{\mathcal{P}_1(S)}X\longleftarrow \hocolim_{\mathcal{P}_1(S)}c_w\vee X\longrightarrow c_w\vee X_S 
\Big)\]
which is equivalent to $X_S=Y_{S\amalg 1}$. Hence $Y|_{\mathcal{P}(S)}$ is cocartesian, showing that $Y$ is a $G$-strongly cocartesian $(J_+\backslash w\amalg 1)|_H$-cube.

Since the map $J\to J\backslash w\amalg 1$ that collapses $w$ to $1$ is bijective on orbits, $\Phi$ is $(J_+\backslash w\amalg 1)$-excisive (see \ref{isoonorb}). Therefore $\Phi(Y)$ is cartesian, and so is the square
\[\xymatrix{\Phi(c_w\vee X_{\emptyset})\ar[r]\ar[d]&
\Phi(X_{\emptyset})\ar[d]\\
\displaystyle\holim_{\mathcal{P}_0(S)}\Phi(c_w\vee X)\ar[r]&\displaystyle\holim_{\mathcal{P}_0(S)}\Phi(X)
}.\]
The map between the horizontal homotopy fibers of the square $\Phi_{c_w}( X_{\emptyset})\to\holim_{\mathcal{P}_0(S)}\Phi_{c_w}(X)$ is then an equivalence, proving that $\Phi_{c_w}(X)$ is cartesian.

It remains to prove that $cr_{n+1}\Phi$ is trivial. This follows immediately from \cite[3.3]{calcIII}, since $\Phi$ is $n$-excisive by \ref{isoonorb}. 
\end{proof}

We conclude the section by discussing symmetric homotopy functors. This works exactly as in \cite[\S 3]{calcIII}.
The group of automorphisms $\Sigma_{n}$ of the orbits set $J/G=\{o_1,\dots,o_n\}$ acts on the category $\mathscr{C}^{\times n}$ by permuting the components. This defines a category with $\Sigma_n$-action $\Sigma_{n}\to Cat$, and we denote $\Sigma_{n}\wr \mathscr{C}^{\times n}$ its Grothendieck construction. We recall that a symmetric functor is a functor $M\colon \Sigma_{n}\wr \mathscr{C}^{\times n}\to \mathscr{D}^G$, or in the language of equivariant diagrams it is a $\Sigma_n$-diagram in $\mathscr{D}^G$ shaped over $\mathscr{C}^{\times n}$. For example, the $n$-cross-effect of $\Phi\colon \mathscr{C}\to \mathscr{D}^G$ is a symmetric functor, where the $\Sigma_n$-structure is induced by the maps
\[\holim_{U\in\mathcal{P}_0(\underline{n})}\Phi(\bigvee_{i\in \underline{n}\backslash U}c_i)\longrightarrow\holim_{U\in\mathcal{P}_0(\underline{n})}\Phi(\bigvee_{i\in \underline{n}\backslash \sigma^{-1}(U)}c_i)\]
induced by a permutation $\sigma\in\Sigma_n$.
\begin{defn}
A $J$-multilinear symmetric homotopy functor is a symmetric functor $M\colon \Sigma_{n}\wr \mathscr{C}^{\times n}\to \mathscr{D}^G$ such that the underlying functor 
\[\mathscr{C}^{\times n}\longrightarrow\Sigma_{n}\wr \mathscr{C}^{\times n}\longrightarrow \mathscr{D}^G\]
is a $J$-multilinear homotopy functor. The category of $J$-multilinear homotopy functors is denoted $Lin_J(\Sigma_{n}\wr \mathscr{C}^{\times n},\mathscr{D}^G)$.
\end{defn}

The diagonal $\Delta^\ast M$ of a symmetric functor $M\colon \Sigma_{n}\wr \mathscr{C}^{\times n}\to \mathscr{D}^G$ inherits a $\Sigma_n$-action, and we write $\Delta_{h\Sigma_n}^\ast M\colon \mathscr{C}\to\mathscr{D}^G$ for the functor obtained by taking homotopy orbits
\[(\Delta_{h\Sigma_n}^\ast M)(c)=M(c,\dots,c)_{h\Sigma_n}.\]
If the target $G$-model category $\mathscr{D}$ is $G$-stable (for example $G$-spectra), homotopy orbits preserve homogeneous functors (see \ref{PJcommute}). By Corollary \ref{diagJlin} and Proposition \ref{crossexcisive} the functor $\Delta_{h\Sigma_n}^\ast$ and the cross-effect restrict to functors
\[\xymatrix@C=35pt{\Delta_{h\Sigma_n}^\ast\colon Lin_J(\Sigma_{n}\wr \mathscr{C}^{\times n},\mathscr{D}^G)\ar@<.8ex>[r]&\ar@<.8ex>[l] H_{sJ}(\mathscr{C},\mathscr{D}^G)\colon cr_n}\]
between the category of $J$-multilinear symmetric homotopy functors to the category of strongly $J$-homogeneous functors.

\subsection{The classification of strongly homogeneous functors}\label{sec:class}

Let $J$ be a finite $G$-set with $n$-orbits $J/G=\{o_1,\dots,o_n\}$. We combine the results of \S\ref{delooping} and \S\ref{multilinear} to classify strongly $J$-homogeneous functors $\Top_\ast\to\Top_\ast^G$ by $J$-spectra with na\"{i}ve $\Sigma_n$-action. The category of $J$-spectra $\Sp_{J}^G$ is the one introduced in \S\ref{delooping}.

We start by observing that by the results of \cite{Blumberg} a $1$-homogeneous functor $\Phi\colon \Top_\ast\to\Top_\ast^G$ which is finitary and $sSet$-enriched is classified by a na\"{i}ve $G$-spectrum only on the full subcategory of spaces with trivial $G$-action. By this we mean that there is a natural equivalence
\[\Phi(X){\simeq}\Omega^\infty(\Phi(\mathbb{S})\wedge X)\]
if $X$ has the trivial $G$-action. Here $\Phi(\mathbb{S})\in \Sp_{1}^G$ is the $G$-spectrum defined by the $G$-spaces $\Phi(\mathbb{S})_n=\Phi(S^n)$. However, a finitary enriched $G$-homogeneous functor $\Phi\colon \Top_\ast\to\Top_\ast^G$ admits an equivalence
\[\Phi(X)\simeq\Omega^{\infty G}(\Phi(\mathbb{S})\wedge X)\]
for every pointed $G$-space $X$ (the value of the $G$-spectrum $\Phi(\mathbb{S})\in \Sp_{G}^G$ at the $G$-set $G$ is $\Phi(S^G)$, the value of $\Phi$ at the regular representation). This is telling us that the $G$-set $J$ has an effect both on the type of equivariant deloopings and on the subcategory of $\Top_{\ast}^G$ over which the functor is determined by the coefficients spectrum.

\begin{defn}
A $J$-CW-complex is a $G$-CW-complex which is built out of equivariant cells of the form $T_+\wedge D^k$, where $T$ is a transitive $G$-set which admits a $G$-equivariant map $J\to T$. We let $\Top_\ast\langle J\rangle$ be the full subcategory of $\Top_{\ast}^G$ of pointed spaces which are weakly $G$-equivalent to finite $J$-CW-complexes. In particular $\Top_\ast\langle \underline{n}\rangle=\Top_\ast$ and $\Top_\ast\langle nG\rangle=\Top^{G}_\ast$ for every $n\in\mathbb{N}$.
\end{defn}

Let $H_{sJ}(\Top_\ast,\Top^{G}_\ast)$ be the category of strongly $J$-homogeneous homotopy functors $\Top_\ast\to \Top^{G}_\ast$, and let $\Sigma_n$-$\Sp_{J}^G$ be the category of functors $\Sigma_n\to \Sp_{J}^G$ with the projective model structure. An object of $\Sigma_n\to \Sp_{J}^G$ is a symmetric $G\times \Sigma_n$-spectrum, and a map in $\Sigma_n$-$\Sp_{J}^G$ is an equivalence if it is an equivalence when it is restricted to a map of $G$-spectra. We define a pair of equivalence preserving functors
\[\xymatrix@C=35pt{\Sigma_n\mbox{-}\Sp_{J}^G\ar@<.8ex>[r]&\ar@<.8ex>[l] H_{sJ}(\Top_\ast,\Top^{G}_\ast)}.\]
One sends a $G\times \Sigma_n$ spectrum $E$ to the functor $\Omega^{\infty J}(E\wedge X^{\wedge n})_{h\Sigma_n}$. The other sends a homotopy functor $\Phi$ to the spectrum $E_{\Phi}$ defined in level $k$ by the $G\times \Sigma_n$-space
\[(E_{\Phi})_k=\Omega^{k\overline{V}_{n}}cr_{n}\Phi(\Delta S^k)\]
where $\overline{V}_{n}$ is the reduced standard representation of $\Sigma_{n}$.

\begin{theorem}\label{classification} Let $J$ be a finite $G$-set with $n$-orbits. For every $sSet$-enriched finitary strongly $J$-homogeneous homotopy functor $\Phi\colon \Top_\ast\to \Top_{\ast}^G$ there is a natural equivalence
\[\Phi(X)\simeq \Omega^{\infty J}(E_\Phi\wedge X^{\wedge n})_{h\Sigma_{n}}\]
for every object $X\in \Top_\ast\langle J\rangle$. In particular if $\Phi$ is $nG$-homogeneous, there is a $G$-equivalence
\[\Phi(X)\simeq \Omega^{\infty G}(E_\Phi\wedge X^{\wedge n})_{h\Sigma_{n}}\]
for every pointed $G$-space $X$. 
Conversely, for every $E\in \Sigma_n$-$\Sp_{J}^G$ there is a natural equivalence
\[E\simeq cr_n ((E\wedge (-)^{\wedge n})_{h\Sigma_n})(\Delta S^0).\]
\end{theorem}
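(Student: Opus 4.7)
The plan is to combine the cross-effect/diagonal machinery of \S\ref{multilinear} with the $J$-fold delooping construction of \S\ref{delooping}. Starting from a strongly $J$-homogeneous functor $\Phi$, Proposition \ref{crossexcisive} guarantees that $cr_n\Phi\colon \Sigma_n\wr \Top_{\ast}^{\times n}\to \Top^{G}_{\ast}$ is $J$-multilinear, and Corollary \ref{diagJlin} guarantees that $\Delta^{\ast}_{h\Sigma_n}cr_n\Phi$ is again strongly $J$-homogeneous. First I would verify that the classical comparison map $\Delta^{\ast}_{h\Sigma_n}cr_n\Phi\to \Phi$ is an equivalence; this is essentially \cite[3.5]{calcIII} (using that $\Phi$ is $n$-reduced and $|J/G|=n$-excisive, hence determined non-equivariantly by its $n$-th cross-effect) combined with the observation that, by the definition of equivalences in $\Top^{G}_{\ast}$ and the finitary hypothesis, such an equivalence can be checked on the underlying functor to pointed spaces.

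Next I would apply Theorem \ref{Jdelooping} orbit-by-orbit to the $J$-multilinear symmetric functor $cr_n\Phi$, producing a natural $J$-spectrum-valued lift. Concretely, since $cr_n\Phi$ is $o_i$-linear in the $i$-th variable, iterating the deloopings $R_{o_i}$ from \S\ref{delooping} in each variable (and taking the diagonal with $\Sigma_n$-homotopy orbits) yields a $\Sigma_n$-equivariant $J$-spectrum whose associated infinite loop space functor is naturally equivalent to $\Delta^{\ast}_{h\Sigma_n}cr_n\Phi$ on finite $J$-CW-complexes. Evaluating at spheres identifies this spectrum with $E_\Phi$, by the very definition $(E_\Phi)_k=\Omega^{k\overline{V}_n}cr_n\Phi(\Delta S^k)$. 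Combining these two steps gives the desired natural equivalence
\[\Phi(X)\simeq \Delta^{\ast}_{h\Sigma_n}cr_n\Phi(X)\simeq \Omega^{\infty J}(E_\Phi\wedge X^{\wedge n})_{h\Sigma_n}.\]

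The restriction to $\Top_\ast\langle J\rangle$ enters at the level of the delooping comparison: the $J$-delooping $R_J$ of \S\ref{delooping} exhibits $\Phi$ as the infinite loop space functor of a $J$-spectrum only on those $G$-spaces built from orbits $T_+$ admitting a $G$-map $J\to T$, since for such $T$ the sphere $S^J\wedge T_+$ is accessible from the cells of the $J$-universe. For $J=nG$ every transitive $G$-set admits a map from $nG$, so the equivalence extends to all of $\Top^{G}_{\ast}$, giving the $nG$-homogeneous case. The converse is a direct computation: expanding the total homotopy fibre defining $cr_n((E\wedge(-)^{\wedge n})_{h\Sigma_n})(\Delta S^0)$ via the distributivity of smash over wedge, only the ``fully mixed'' summand of $(\bigvee_{i\in S} S^0)^{\wedge n}=\bigvee_{f\colon\underline n\to S}S^0$ survives the iterated fibre, and that summand contributes exactly $(\Sigma_n)_+\wedge E$, whose $\Sigma_n$-homotopy orbits give back $E$.

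The main obstacle will be the careful identification, on $\Top_\ast\langle J\rangle$, of the $J$-spectrum produced by the variable-wise deloopings of $cr_n\Phi$ with the naive construction $(E_\Phi\wedge X^{\wedge n})_{h\Sigma_n}$; this requires checking that the assembly maps built from the permutation spheres $S^{kJ}$ in Theorem \ref{Jdelooping} are compatible with the diagonal $\Sigma_n$-action on the smash power, and that applying $\Omega^{\infty J}$ indeed yields the asserted formula (rather than a priori only an iterated genuine infinite loop space over the individual orbits $o_i$). The compatibility follows from the fact that $J=\amalg_i o_i$ and that the deloopings $R_{o_i}$ are defined as homotopy limits over punctured cubes, which commute up to equivalence with smashing over the fixed pointed space $X^{\wedge n}$ once one restricts to $\Top_\ast\langle J\rangle$.
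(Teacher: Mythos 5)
Your plan has the right ingredients—the delooping of \S\ref{delooping} and the cross-effect/diagonal machinery of \S\ref{multilinear}—but runs them in the wrong order, and this is not a cosmetic issue.

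Your first step claims a space-level equivalence $\Delta^{\ast}_{h\Sigma_n}cr_n\Phi\simeq\Phi$, justified by \cite[3.5]{calcIII}. This does not work at the space level: $\Sigma_n$-homotopy orbits do not preserve $J$-homogeneity unless the target is stable (this is exactly the caveat recorded just after \ref{crossexcisive}, where the adjunction $(\Delta^{\ast}_{h\Sigma_n}, cr_n)$ between multilinear and homogeneous functors is stated only for a $G$-stable target $\mathscr{D}$). Goodwillie's \cite[3.5]{calcIII} recovers a space-valued $n$-homogeneous functor only after delooping, so the comparison map you want has to be produced in the spectrum-valued setting, not before it. The paper's proof is organized to respect this constraint: one first applies Theorem \ref{Jdelooping} to $\Phi$ itself, obtaining $\widehat{\Phi}\colon\Top_\ast\to\Sp_J^G$, then invokes Proposition \ref{classspectra}---which runs Goodwillie's argument in the stable category $\Sp_J^G$ and combines it with the assembly-map Lemma \ref{assembly}---to get $\widehat{\Phi}(X)\simeq ((cr_n\widehat\Phi)(\Delta S^0)\wedge X^{\wedge n})_{h\Sigma_n}$ on $\Top_\ast\langle J\rangle$, and only then applies $\Omega^{\infty J}$. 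The remaining work is to identify $(cr_n\widehat\Phi)(\Delta S^0)$ with $E_\Phi$, which the paper does by checking that both are fibrant $J$-spectra agreeing in level $0$; fibrancy of $E_\Phi$ follows from the $J$-multilinearity of $cr_n\Phi$.

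Your claim that the space-level equivalence ``can be checked on the underlying functor to pointed spaces'' is also incorrect: weak equivalences of functors $\Top_\ast\to\Top^G_\ast$ are tested on all $H$-objects for all $H\leq G$, not just on $G$-trivial spaces. What controls the restriction to $\Top_\ast\langle J\rangle$ is precisely the assembly-map induction of Lemma \ref{assembly} over $J$-CW-cells, not a reduction to the non-equivariant case. Finally, delooping the cross-effect $cr_n\Phi$ orbit-by-orbit in each variable and then reconciling the $\Sigma_n$-action—what you flag as ``the main obstacle''—is exactly the bookkeeping that the paper's ordering (deloop $\Phi$ first, so $cr_n\widehat\Phi$ is automatically spectrum-valued) is designed to avoid; simply asserting that the punctured-cube limits ``commute up to equivalence with smashing'' is not enough to discharge it. Your description of the converse is essentially correct and is the same computation the paper cites from the proof of \cite[3.5]{calcIII}.
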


\begin{rem}\label{rem:classification}
\begin{enumerate}
\item For transitive $G$-sets, Theorem \ref{classification} is part of the classification of $G$-linear functors on non-complete universes of \cite[\S 3.4]{Blumberg}.
\item Theorem \ref{classification} suggests a Quillen equivalence $\Sigma_n\mbox{-}\Sp_{J}^G\rightleftarrows H_{sJ}(\Top_\ast,\Top^{G}_\ast)$, where the equivalences on $H_{sJ}(\Top_\ast,\Top^{G}_\ast)$ are the natural transformations $\Phi\to\Psi$ that induce equivalences $\Phi(X)\to \Psi(X)$ for every $X\in \Top_\ast\langle J\rangle$. A precise model categorical formulation of this Quillen equivalence for the trivial group can be found in \cite[\S 8]{BCR} and \cite[6.17]{BR}.
\item Strongly $J$-homogeneous functors are in particular $n$-homogeneous, and $J$-multilinear functors are $n$-multilinear, by \ref{isoonorb}. Goodwillie's proof of the classification of $n$-homogeneous functors \cite{calcIII} shows that for every strongly $J$-homogeneous functor $\Phi\colon\Top_\ast\to\Top_{\ast}^G$ there is an equivalence
\[\Phi(X)\simeq \Omega^{\infty}(E_\Phi\wedge X^{\wedge n})_{h\Sigma_{n}}\]
on the category $\Top_\ast$ of pointed $G$-spaces with trivial $G$-action. The extra property that $\Phi$ is $J$-excisive gives us $J$-equivariant deloopings, and extends this equivalence to the category $\Top_\ast\langle J\rangle$.
\item At this point the reader might be wondering about the classification of $J$-homogeneous functors (which are not necessarily $n$-reduced). The calculation of the derivatives of the symmetric indexed powers of \ref{DnGsym} and of the identity functor suggests that $nG$-homogeneous functors should be classified by a homotopy theory of ``equivariant'' spectra $Sp^{\mathscr{F}_{G,n}}$ parametrized by the Burnside category of the category $\mathscr{F}_{G,n}$ of $H$-sets with $n-1$ orbits and the trivial $H$-set $n$, where $H$ runs through the subgroups of $G$. Barwick's theory of spectral Mackey functors of \cite{Mackey} provides a convenient model for this homotopy theory in the infinity-categorical context. An $nG$-homogeneous functor $\Phi$ should then decompose as
\[\Phi(X)\simeq\Omega^{\infty G}(C_{\Phi}\wedge \mathcal{N}(\mathbb{S}_G\wedge X))_{h\mathscr{F}_{G,n}}\]
where $C_{\Phi}$ is such a Mackey functor, and $\mathcal{N}\colon Sp^{G}\to Sp^{\mathscr{F}_{G,n}}$ and  $(-)_{h\mathscr{F}_{G,n}}\colon Sp^{\mathscr{F}_{G,n}}\to Sp^{G}$ are respectively the generalizations of the multiplicative norm of \cite{HHR} and of the homotopy orbits functor. In the case of the identity functor the coefficient spectrum $C_\Phi$ decomposes as a consequence of the Snaith-splitting (see \ref{derI}), but we do not expect this to happen in general. Similarly, $J$-homogeneous functors should be classified by an analogous construction for a version of $\mathscr{F}_{G,n}$ where the orbit types of the $H$-sets depend on $J$.
This classification will be investigated in later work.
\end{enumerate}
\end{rem}

%

The proof of \ref{classification} will occupy the rest of the section. We start by classifying strongly $J$-homogeneous homotopy functors $\Top_\ast\to \Sp^{G}_J$ to the category of $J$-spectra, and we will later deduce the result for functors to $\Top^{G}_\ast$ by means of the delooping theorem \ref{delooping}. Let us remind that $J$ has $n$-orbits.

\begin{lemma}\label{assembly}
Let $M\colon (\Top_\ast)^{\times n}\to \Sp_{J}^G$ be a $sSet$-enriched finitary homotopy functor which is $J$-multilinear. Then the assembly map
\[M(S^0,\dots,S^0)\wedge X^{\wedge n}\stackrel{\simeq}{\longrightarrow} M(X,\dots,X)\]
is an equivalence for every $X\in \Top_\ast\langle J\rangle$.
\end{lemma}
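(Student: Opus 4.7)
The plan is to prove a stronger multilinear version and then specialize. The key observation is that $\Top_\ast\langle J\rangle = \bigcap_i \Top_\ast\langle o_i\rangle$, since a transitive $G$-set $T$ receives a $G$-map from $J$ if and only if it receives one from each orbit $o_i$: every $G$-map from $J$ to a transitive $T$ must restrict to a surjection $o_i \to T$ on each orbit, and conversely any compatible collection assembles into a map from $J$. So it suffices to show the multilinear assembly
\[M(S^0,\dots,S^0) \wedge X_1 \wedge \cdots \wedge X_n \longrightarrow M(X_1,\dots,X_n)\]
is an equivalence whenever $X_i \in \Top_\ast\langle o_i\rangle$, and then set $X_1=\cdots=X_n=X\in \Top_\ast\langle J\rangle$ to conclude.

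Both sides of this multilinear assembly are $o_i$-linear in the $i$-th variable. For the target this is precisely the hypothesis that $M$ is $J$-multilinear. For the source, the functor $E \wedge (-)\colon \Top_\ast^G \to \Sp_J^G$ is $o_i$-linear for every $J$-spectrum $E$: it commutes with homotopy colimits (so preserves cocartesian $(o_i)_+$-cubes), and $\Sp_J^G$ is $o_i$-stable because $o_i$ is one of the orbits generating the $J$-universe, so those cocartesian $(o_i)_+$-cubes are automatically cartesian. With $o_i$-linearity of both sides in hand, I would then upgrade the assembly one variable at a time, reducing the statement to the following single-variable claim: if $\alpha\colon F \to G$ is a natural transformation of finitary $sSet$-enriched $o_i$-linear functors $\Top_\ast^G \to \Sp_J^G$ with $\alpha(S^0)$ an equivalence, then $\alpha(Y)$ is an equivalence for all $Y \in \Top_\ast\langle o_i\rangle$. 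By \ref{injonorb} applied to $o_i \to 1$, such functors are in particular $1$-linear and hence preserve cofiber sequences (the target being stable); combined with finitariness, a cellular induction reduces the claim to the generating cells $T_+ \wedge S^k$ with $T$ a transitive $G$-set admitting a $G$-map $o_i \to T$. For each such $T$ the map $o_i \to T$ is bijective on orbits, so \ref{injonorb} further upgrades $F$ and $G$ to $T$-linear functors, and the classification of $T$-linear functors on $\Top_\ast\langle T\rangle$ due to Blumberg (the theorem cited after Example \ref{orbitsstronglyhom}) forces $\alpha$ to be an equivalence on $T_+\wedge S^k$.

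The main technical obstacle is the single-variable step, since Blumberg's classification is customarily stated for functors valued in $G$-spaces or in $T$-spectra rather than in the larger category $\Sp_J^G$. This is resolved by noting that $\Sp_J^G$ is itself $T$-stable for every transitive $T$ that admits a map $o_i \to T$, so the $T$-delooping techniques of \S\ref{delooping} apply verbatim to $T$-linear functors valued in $\Sp_J^G$: the natural map $F(S^0) \wedge Y \to F(Y)$ becomes the assembly of a $T$-delooping whose zero-th level is an equivalence, and is therefore an equivalence after $T$-stabilization on $\Top_\ast\langle T\rangle$. Concatenating the one-variable equivalences across the $n$ slots of the cross-effect and restricting to the diagonal $X \in \Top_\ast\langle J\rangle$ then yields the assembly equivalence of the lemma.
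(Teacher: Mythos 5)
Your reduction to the single-variable case and the cellular induction on cells $T_+\wedge S^k$ match the paper's strategy, and the observation that $\Top_\ast\langle J\rangle = \bigcap_i \Top_\ast\langle o_i\rangle$ is correct (a mild strengthening of the inclusion $\Top_\ast\langle J\rangle\subset\Top_\ast\langle o_i\rangle$ that the paper uses). The gap is in the base case.

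You invoke Blumberg's classification of $T$-linear functors to conclude that $\alpha$ is an equivalence on $T_+\wedge S^k$. But Blumberg's theorem, as cited after Example \ref{orbitsstronglyhom}, classifies finitary reduced homotopy functors with values in $\Top_\ast^G$; your $F$ and $G$ are valued in $\Sp_J^G$. You flag this yourself and propose to repair it via the $T$-stability of $\Sp_J^G$ and the delooping machinery of \S\ref{delooping}, but the sentence ``becomes the assembly of a $T$-delooping whose zero-th level is an equivalence, and is therefore an equivalence after $T$-stabilization'' is not an argument: it reposes the identical question (is the assembly an equivalence?) for a delooped functor valued in a category of $T$-spectra in $\Sp_J^G$. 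It is also close to circular, since the classification you really want for $\Sp_J^G$-valued functors is Proposition \ref{classspectra}, which is proved \emph{using} the present lemma.

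What actually closes the base case, and is what the paper does, is elementary and needs no classification theorem. Both sides of the assembly, $Y\mapsto F(S^0)\wedge Y$ and $Y\mapsto F(Y)$, are $T$-excisive: the first because $S^{T}$ is invertible in $\Sp_J^G$ whenever $T$ receives a $G$-map from $J$, the second by \ref{isoonorb} applied to the orbit map $o_i\to T$. A reduced $T$-excisive functor sends a $T$-indexed wedge to a $T$-indexed product (\cite[3.17]{Gdiags}). Hence both $F(S^0)\wedge T_+$ and $F(T_+)$ map compatibly and by equivalences to $\prod_T F(S^0)$, so $\alpha(T_+)$ is an equivalence; the smash with $S^k$ is then handled by stability of $\Sp_J^G$. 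That is the entire base case, and Blumberg's theorem is not needed.
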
 

\begin{proof}
By looking at the variables of $M$ one at the time, it is sufficient to show that the assembly map
\[\Phi(S^0)\wedge X\stackrel{\simeq}{\longrightarrow} \Phi(X)\]
is an equivalence for every $X\in \Top_\ast\langle J\rangle$, where $\Phi\colon \Top_\ast\to \Sp_{J}^G$ is a $sSet$-enriched finitary homotopy functor which is $o$-homogeneous, for an orbit $o\in J/G$. Since $\Phi$ is a finitary homotopy functor we can assume that $X$ is a finite $J$-CW-complex. The proof is by induction on the skeleton of $X$.

If $X$ is zero-dimensional, it is a pointed finite $G$-set $X=K_+$. The assembly map fits into a commutative diagram
\[\xymatrix{\Phi(S^0)\wedge K_+\ar[r]\ar[d]& \Phi(K_+)\ar[dl]\\
\prod_K\Phi(S^0)
}.\]
We show that the diagonal and the vertical maps in this triangle are equivalences.
Since $K_+$ is a $J$-CW-complex the orbit decomposition of $K$ is of the form $K=T_1\amalg\dots\amalg T_k$ where each $T_i$ receives a $G$-equivariant map form $J$. By composing with the inclusion of $o$ into $J$ we obtain $G$-equivariant maps $o\to T_i$ for every $1\leq i\leq k$. It follows that $\Phi$ is $T_i$-excisive for every $i$. The diagonal map above decomposes as
\[\Phi(K_+)=\Phi(\bigvee_{i=1}^n\bigvee_{T_i}S^0)\longrightarrow
\prod_{i=1}^n\Phi(\bigvee_{T_i}S^0)\longrightarrow \prod_{i=1}^n\prod_{T_i}\Phi(S^0)=\prod_K\Phi(S^0).\]
The second map is an equivalence because $\Phi$ is in particular $1$-excisive, and therefore it sends wedges to products. The first map is an equivalence because $\Phi$ is $T_i$-excisive for every $i$, and therefore it sends $T_i$-indexed wedges to $T_i$-indexed products (see e.g. \cite[3.17]{Gdiags}).
Similarly, the vertical map of the triangle
\[\Phi(S^0)\wedge K_+\longrightarrow \prod_{K}\Phi(S^0)\]
is an equivalence because the identity functor on $\Sp^{G}_{J}$ is $T_i$-homogeneous for every $i$. This is because the permutation representation spheres $S^{T_i}$ are invertible in $\Sp^{G}_{J}$ (see \cite[3.26]{Gdiags}).

Now suppose that the assembly map is an equivalence on $J$-CW-complexes of dimension $n$, and let $X$ have dimension $n+1$. The inclusion of the $n$-skeleton $X^{(n)}\to X$ induces a commutative diagram
\[\xymatrix{\Phi(S^0)\wedge X^{(n)}\ar[r]\ar[d]&\Phi(S^0)\wedge X\ar[r]\ar[d]&\Phi(S^0)\wedge S^{n+1}\wedge K_+\ar[d]\\
\Phi(X^{(n)})\ar[r]&\Phi(X)\ar[r]&\Phi(S^{n+1}\wedge K_+)
}\]
where the vertical maps are assemblies, and $K$ is some finite $G$-set whose orbits receive a $G$-map from $J$. Both rows are fiber sequences. The top one because $\Sp_{J}^G$ is a stable category, and the bottom one because $\Phi$ is $1$-homogeneous. The left vertical map is an equivalence by the inductive hypothesis, and it is therefore sufficient to show that the right vertical map is an equivalence. This map decomposes as
\[\Phi(S^0)\wedge S^{n+1}\wedge K_+\longrightarrow \Phi(K_+)\wedge S^{n+1}\longrightarrow \Phi(S^{n+1}\wedge K_+)\]
where the first map is an equivalence by the same argument used in the zero-dimensional case above. The second map is an equivalence by an easy induction on $n$, using that $\Phi$ is $1$-homogeneous.
\end{proof}

\begin{prop}\label{classspectra}
Let $\Phi\colon \Top_\ast\to \Sp_{J}^G$ be a $sSet$-enriched finitary strongly $J$-homogeneous homotopy functor. There is a natural equivalence
\[\Phi(X)\simeq ((cr_n\Phi)(S^0,\dots,S^0)\wedge X^{\wedge n})_{h\Sigma_{n}}\]
for every object $X\in \Top_\ast\langle J\rangle$.
\end{prop}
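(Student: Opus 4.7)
I would follow a strategy that parallels Goodwillie's classical classification of $n$-homogeneous functors with stable target (calcIII~\S 3), made equivariant by two extra ingredients: Proposition~\ref{crossexcisive}, which upgrades the $n$-th cross-effect to a $J$-multilinear symmetric functor, and the assembly-map Lemma~\ref{assembly}, which propagates multilinear equivalences from trivial-action inputs to all of $\Top_\ast\langle J\rangle$.

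First I construct the comparison map. The inclusion of the total homotopy fiber gives a natural transformation $\iota\colon cr_n\Phi(X,\ldots,X)\to\Phi(\bigvee_n X)$; composing with $\Phi$ applied to the fold $\nabla\colon\bigvee_n X\to X$ yields a map which is $\Sigma_n$-invariant (since $\nabla$ is $\Sigma_n$-equivariant for the permutation action on the source and the trivial one on the target). It therefore descends to a natural transformation
\[\epsilon_\Phi\colon (\Delta^\ast cr_n\Phi)_{h\Sigma_n}(X)\longrightarrow \Phi(X).\]
By Proposition~\ref{crossexcisive} and Corollary~\ref{diagJlin}, combined with the stability of $\Sp_J^G$ so that $\Sigma_n$-homotopy orbits preserve strong $J$-homogeneity, the source is strongly $J$-homogeneous; the target is strongly $J$-homogeneous by assumption.

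When $X$ carries the trivial $G$-action, $\Phi$ restricts to an $n$-homogeneous functor $\Top_\ast\to\Sp_J^G$ into a stable category, so the classical Goodwillie classification identifies $\epsilon_\Phi$ with the canonical classification equivalence; in particular $\epsilon_\Phi(X)$ is an equivalence. To extend to $X\in\Top_\ast\langle J\rangle$, I set $F=\hofib(\epsilon_\Phi)$; by Remark~\ref{PJcommute} it is again strongly $J$-homogeneous, and by the previous sentence it vanishes on $\Top_\ast$. Thus $cr_n F(S^0,\ldots,S^0)\simeq 0$, and since $cr_n F$ is $J$-multilinear (Proposition~\ref{crossexcisive}), Lemma~\ref{assembly} gives $cr_n F(X,\ldots,X)\simeq 0$ for every $X\in\Top_\ast\langle J\rangle$. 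The classical Goodwillie equivalence $F\simeq (cr_n F(X,\ldots,X))_{h\Sigma_n}$, valid on trivial-action inputs and extended across $\Top_\ast\langle J\rangle$ using the $J$-equivariant deloopings of Theorem~\ref{Jdelooping} to propagate the vanishing along a cellular filtration of $X$ by $J$-cells, then forces $F(X)\simeq 0$. A final application of Lemma~\ref{assembly} to $cr_n\Phi$ itself rewrites the source of $\epsilon_\Phi$ as $(cr_n\Phi(S^0,\ldots,S^0)\wedge X^{\wedge n})_{h\Sigma_n}$, yielding the claimed natural equivalence.

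The main obstacle is this extension step: lifting the classical Goodwillie equivalence from $\Top_\ast$ to $\Top_\ast\langle J\rangle$. One cannot simply quote classical Goodwillie on equivariant inputs, so the resolution is to combine Lemma~\ref{assembly} (which provides the cross-effect vanishing on all of $\Top_\ast\langle J\rangle$) with a cellular induction that uses the strong $J$-homogeneity of $F$, and in particular the $J$-excisive deloopings supplied by Theorem~\ref{Jdelooping}, to bootstrap the vanishing from the individual $J$-cells $T_+\wedge D^k$ to arbitrary finite $J$-CW complexes.
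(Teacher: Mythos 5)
Your overall skeleton is sound and the two external inputs you identify (Proposition~\ref{crossexcisive} for $J$-multilinearity of $cr_n\Phi$, and Lemma~\ref{assembly} for the assembly map) are exactly the right ones, but the argument carries an unnecessary detour and, as written, has a gap precisely where you flag the ``main obstacle.''

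The gap: the classical Goodwillie equivalence $\Phi(X)\simeq(cr_n\Phi)(X,\ldots,X)_{h\Sigma_n}$ does \emph{not} need $X$ to carry the trivial $G$-action. Goodwillie's proof of \cite[3.5]{calcIII} is formal and uses only that the source is a pointed (simplicially enriched) model category, that the functor is finitary and $n$-homogeneous, and that the target is a \emph{stable} model category. Applying this to the extension $\Phi\colon\Top^G_\ast\to\Sp_J^G$ --- which is $n$-homogeneous in the classical sense by Proposition~\ref{isoonorb}, and whose target $\Sp_J^G$ is stable --- yields the equivalence for \emph{every} pointed $G$-space $X$ at once. This is in fact the content of the commutative square the paper draws: $H_{sJ}\subset H_n$ and $Lin_J\subset Lin_n$, and the Goodwillie classification operates in the bottom row. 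The genuine, $J$-specific structure of $\Phi$ enters only afterwards, through Lemma~\ref{assembly}, which identifies $(cr_n\Phi)(X,\ldots,X)$ with $cr_n\Phi(S^0,\ldots,S^0)\wedge X^{\wedge n}$ when $X\in\Top_\ast\langle J\rangle$ because $cr_n\Phi$ is $J$-multilinear. Once you know this, your $F=\hofib(\epsilon_\Phi)$ is identically contractible on all of $\Top^G_\ast$, and the detour through $F$ collapses to a single application of Lemma~\ref{assembly} to $cr_n\Phi$ itself --- which is the paper's two-line proof.

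Within your route as written, the ``extension across $\Top_\ast\langle J\rangle$ using the $J$-equivariant deloopings \ldots along a cellular filtration'' is not an argument. It is unclear what inductive step the deloopings of Theorem~\ref{Jdelooping} are supposed to supply: a cellular filtration of $X\in\Top_\ast\langle J\rangle$ has cofibers of the form $\bigvee T_+\wedge S^k$ for transitive $G$-sets $T$, and since $F$ is a priori only known to vanish on spaces with trivial $G$-action, passing to a cell $T_+\wedge S^k$ with $T\neq\ast$ is exactly the point where the induction needs an unstated input. The correct input is what you are implicitly reaching for --- that $F\simeq(cr_nF\circ\Delta)_{h\Sigma_n}$ on all of $\Top^G_\ast$ --- but once that is available, the cellular induction is superfluous. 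So the fix is not to build the cellular argument but to recognize that Goodwillie's classification already applies equivariantly; that recognition eliminates both the need for $F$ and the need for any induction.
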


\begin{proof}
Consider the commutative diagram
\[\xymatrix@C=50pt{H_{sJ}(\Top_\ast,\Sp_{J}^G)\ar@<1ex>[r]^-{cr_{n}}\ar[d]& Lin_J(\Sigma_{n}\wr \Top_{\ast}^{\times n}, \Sp_{J}^G)\ar@<1ex>[l]-^{\Delta^{\ast}_{h\Sigma_{n}}}\ar[d]\\
H_{n}(\Top^{G}_\ast,\Sp_{J}^G)\ar@<1ex>[r]^-{cr_{n}}& Lin_{n}(\Sigma_{n}\wr (\Top^{G}_\ast)^{\times n}, \Sp_{J}^G)\ar@<1ex>[l]^-{\Delta^{\ast}_{h\Sigma_{n}}}
}\]
where the vertical maps are inclusions of subcategories by \ref{isoonorb}.
The fundamental property of the category of spectra allowing Goodwillie's classification of $n$-homogeneous functors $\Top_\ast\to \Sp$ is that $\Sp$ is a stable model category. Since the category of $J$-spectra $\Sp_{J}^G$  is also stable, the proof of \cite[3.5]{calcIII} shows that any strongly $J$-homogeneous $\Phi\colon \Top_\ast\to \Sp^G$ is of the form
\[\Phi(X)\simeq (cr_n\Phi)(X,\dots X)_{h\Sigma_n}\]
for every pointed $G$-space $X$. By Lemma \ref{assembly} this decomposes further as
\[\Phi(X)\simeq ((cr_n\Phi)(S^0,\dots,S^0)\wedge X^{\wedge n})_{h\Sigma_{n}}\]
if $X$ belongs to $\Top_\ast\langle J\rangle$.
\end{proof}


\begin{proof}[Proof of \ref{classification}]
Let $\Phi\colon \Top_\ast\to \Top_{\ast}^G$ be a finitary $sSet$-enriched strongly $J$-homogeneous homotopy functor, and let $\widehat{\Phi}\colon \Top_\ast\to \Sp_{J}^{G}$ be the lift constructed in \ref{delooping}. By \ref{classspectra} there is a natural equivalence
\[\Phi(X)\simeq\Omega^{\infty J}\widehat{\Phi}(X)\simeq\Omega^{\infty J}((cr_n\widehat{\Phi})(\Delta S^0)\wedge X^{\wedge n})_{h\Sigma_{n}}\]
for every $X\in \Top_\ast\langle J\rangle$. Here $\Delta\colon \Top_\ast\to \Top^{\times n}_\ast$ is the diagonal functor.
We need to show that $(cr_{n}\widehat{\Phi})(\Delta S^0)$ is equivalent to the $J$-spectrum with $\Sigma_{n}$-action $E_{\Phi}$ defined in level $k$ by
\[(E_{\Phi})_k=\Omega^{k\overline{V}_{n}}cr_{n}\Phi(\Delta S^k)\]
where $\overline{V}_{n}$ is the reduced standard representation of $\Sigma_{n}$. These two spectra are equal in spectrum level $0$, and since $\widehat{\Phi}$ has fibrant values the spectrum $(cr_{n}\widehat{\Phi})(\Delta S^0)$ is fibrant in $\Sp_{J}^{G}$. Hence it is sufficient to show that $E_{\Phi}$ is fibrant. Its value at the $G$-set $kJ$ is
\[(E_{\Phi})_{kJ}=\Omega^{kJ\overline{V}_{n}}
cr_{n}\Phi(\Delta S^{kJ})\]
where $kJ\overline{V}_{n}=\mathbb{R}[kJ]\otimes \overline{V}_{n}$. We need to show that the map $\sigma_k\colon (E_{\Phi})_{kJ}\to \Omega^J(E_{\Phi})_{(k+1)J}$ is an equivalence of $G$-spaces, for every integer $k\geq 0$. The isomorphism of $G$-representations
\[\mathbb{R}[J]\oplus (\mathbb{R}[(k+1)J]\otimes\overline{V}_{n})\cong \mathbb{R}[nJ]\oplus (\mathbb{R}[kJ]\otimes\overline{V}_{n})\]
defines a commutative triangle
\[\xymatrix{\Omega^{kJ\overline{V}_{n}}cr_{n}\Phi(\Delta S^{kJ})\ar[dr]\ar[r]^-{\sigma_k}&\Omega^J\Omega^{(k+1)J\overline{V}_{n}}
cr_{n}\Phi(\Delta S^{(k+1)J})\ar[d]^{\cong}\\
&\Omega^{kJ\overline{V}_{n}}
\Omega^{nJ}cr_{n}\Phi(\Delta S^{(k+1)J})}.\]
Since the cross-effect functor is $J$-multilinear it is in particular $J$-linear in each variable, and the diagonal map is an equivalence, proving that $E_{\Phi}$ is fibrant.

It remains to show that every $E\in \Sigma_n$-$\Sp_{J}^G$ is equivalent to
\[E\simeq cr_n ((E\wedge (-)^{\wedge n})_{h\Sigma_n})(\Delta S^0).\]
This follows from the first part of the proof of \cite[3.5]{calcIII}, using that $\Sp_{J}^G$ is stable.
\end{proof}

\section{Generalized Tom Dieck-splittings}\label{tomdieckspl}

Let $J$ be a finite $G$-set, and let $\Phi\colon \Top_\ast\to \Top_{\ast}^G$ be a homotopy functor which commutes with fixed-points, in the sense of Definition \ref{def:split}. We show that the fixed points of the differential $D_J\Phi$ decompose as a product, whose factors depend on the differentials $D_{J/H}\Phi$ for normal subgroups $H$ of $G$. Here $J/H$ is always seen as a $G$-set. This decomposition is intimately related to the Tom Dieck-splitting for the identity functor, when $J=G$ (see \ref{transTD} and \ref{nonAb}).

If $H\lhd G$ is a normal subgroup, there is a $G$-equivariant projection map $p_{J,H}\colon J\to J/H$ which is a bijection on $G$-orbits. Let $p_{J,H}^\ast\colon P_{J/H}\Phi\to P_{J}\Phi$ be the induced map on excisive approximations constructed in \S\ref{sec:tree}. 
This map has a retraction on $G$-fixed-points which is constructed in a way similar to the one in the proof of \ref{convergence}, as follows. There is a $G$-equivariant map
\[(T_{J}\Phi(c))^H\stackrel{res}{\longrightarrow}\holim_{\mathcal{P}_0(J/H_+)}\Phi(\Lambda^J(c))^H
\overset{(\eta_p)^{H}_\ast}{\underset{\cong}\longleftarrow}\holim_{\mathcal{P}_0(J/H_+)}\Phi(\Lambda^{J/H}(c))^H = (T_{J/H}\Phi(c))^H.\]
The first map is the fixed-points restriction (\ref{fixedptsdiag}) from \S\ref{sec:fixedpts}. The second map is induced by the natural transformation $\eta_{p_{J,H}}\colon \Lambda_{U/H}^{J/H}(c)\to \Lambda_{U}^J(c)$ of \S\ref{sec:tree}, and it is an isomorphism because $\Phi$ commutes with fixed-points.
By iterating $T_J$ and taking $G$-fixed points this map gives a retraction \[r_{J,H}\colon (P_{J}\Phi)^G\to (P_{J/H}\Phi)^G\]
for the map $p_{J,H}^\ast\colon (P_{J/H}\Phi)^G\to (P_{J}\Phi)^G$. These maps satisfy the following compatibility conditions. If $H$ is a subgroup of a normal subgroup $L\lhd G$, the diagrams
\[\xymatrix{
(P_J\Phi)^G\ar[r]^{r_{J,H}}\ar[dr]_-{r_{J,L}}&(P_{J/H}\Phi)^G\ar[d]^{r_{J/H,L}}\\
&(P_{J/L}\Phi)^G}
\ \ \ \ \ \ \ \ \ \  \mbox{and}\ \ \ \ \ \ \ \ \ \
\xymatrix{
(P_{J/H}\Phi)^G\ar[d]_{r_{J/H,L}}\ar[r]^{p^{\ast}_{J,H}}&(P_J\Phi)^G\\
(P_{J/L}\Phi)^G\ar[ur]_{p^{\ast}_{J,L}}
}
\]
commute. A construction similar to the one at the end of \S\ref{layers} lifts these maps to the level of differentials $r_{J,H}\colon (D_{J}\Phi)^G\to (D_{J/H}\Phi)^G$.

\begin{theorem}\label{higertomdieck}
Let $\Phi\colon \Top_{\ast}\to \Top_{\ast}^{G}$ be a homotopy functor which commutes with fixed-points. For every finite $G$-set $J$ there is a natural equivalence of functors $\Top_{\ast}^G\to \Top_{\ast}$
\[(D_{J}\Phi)^G\simeq \prod_{H\lhd G}\hofib\big((D_{J/H}\Phi)^G\to \holim_{H<L\lhd G}(D_{J/L}\Phi)^G\big)\]
where the product runs over the normal subgroups of $G$, and the limit is taken over the retractions $r_{J/L,L'}\colon (D_{J/L}\Phi)^G\to (D_{J/L'}\Phi)^G$. In particular $(D_{J}\Phi(X))^G$ always splits off $(D_{J/G}\Phi(X))^G\cong D_{J/G}\Phi(X^G)^G$ as a factor.
\end{theorem}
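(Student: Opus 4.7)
The plan is to prove, by downward induction on $H$ in the finite poset $\mathcal{N}(G)$ of normal subgroups of $G$, the strengthened statement that for every $H\lhd G$,
\[(D_{J/H}\Phi)^G \simeq \prod_{K\lhd G,\, K\geq H} F_K, \qquad F_K := \hofib\bigl((D_{J/K}\Phi)^G \to \holim_{K<L\lhd G}(D_{J/L}\Phi)^G\bigr).\]
The theorem in the statement is then recovered as the case $H=\{e\}$.

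First I would organise the data. The retractions $r_{J/H,L/H}$ for $H\leq L$ both normal assemble the fixed-point spaces $(D_{J/H}\Phi)^G$ into a covariant diagram $\mathcal{D}\colon\mathcal{N}(G)\to\Top_\ast$, and the sections $p^{\ast}_{J/H,L/H}$ run in the opposite direction. The two hexagonal compatibility squares displayed just before the theorem statement guarantee that these form coherent retract pairs: the retractions compose along the poset, the sections cocompose along the poset, and $r\circ p^{\ast}=\mathrm{id}$ at every stage.

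The base case $H=G$ is immediate: the poset $\{L\lhd G\mid L>G\}$ is empty, so the corresponding homotopy limit is contractible, $F_G=\mathcal{D}(G)$, and the product collapses to the single factor $F_G$. For the inductive step, assume the decomposition is known for every $K>H$. The structure maps of $\mathcal{D}$ restrict, under the inductive identifications, to projections onto subsets of product factors, and a Fubini argument for homotopy limits therefore yields
\[\holim_{K>H}\mathcal{D}(K) \;\simeq\; \holim_{K>H}\prod_{L\geq K}F_L \;\simeq\; \prod_{L>H}\holim_{\{K\,:\,H<K\leq L\}}F_L \;\simeq\; \prod_{L>H}F_L,\]
since each indexing poset $\{K:H<K\leq L\}$ has $L$ as a terminal object and is hence weakly contractible. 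The fiber sequence $F_H\to\mathcal{D}(H)\to\prod_{L>H}F_L$ is then split by assembling, over all $L>H$, the composites $F_L\hookrightarrow\mathcal{D}(L)\xrightarrow{p^{\ast}_{J/H,L/H}}\mathcal{D}(H)$, where the first map is the inductive inclusion of $F_L$ as a factor of $\mathcal{D}(L)$.

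The main obstacle will be to verify that this assembled candidate section really is a section — equivalently, that the mixed composites $r_{J/H,K/H}\circ p^{\ast}_{J/H,L/H}$, for $K\neq L$ both strictly containing $H$, act trivially on the $F_L$-summand under the inductive identification of $\mathcal{D}(L)$. When $K$ and $L$ are comparable, the hexagonal identities collapse the composite to either $p^{\ast}_{J/K,L/K}$ or $r_{J/L,K/L}$, and the inductive decomposition places the $F_L$-piece in a factor of $\mathcal{D}(K)$ different from the $F_K$-summand; when $K$ and $L$ are incomparable the composite factors through the join $KL\in\mathcal{N}(G)$, so the $F_L$-piece is again routed away from the $F_K$-summand by induction. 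Once this coherence check is in place, the splitting $\mathcal{D}(H)\simeq F_H\times\prod_{L>H}F_L=\prod_{L\geq H}F_L$ completes the inductive step.
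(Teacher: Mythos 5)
Your inductive scheme reproduces, essentially verbatim, the paper's proof of its abstract decomposition Lemma \ref{decomp} — same downward induction on a finite poset, same Fubini rearrangement of the homotopy limit of the already-split pieces, and the same "main obstacle" of checking that the mixed composites of sections and retractions vanish on the off-diagonal. That part of the outline is sound.

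The genuine gap is that you carry out this argument directly in $\Top_\ast$, on the fixed-point spaces $(D_{J/H}\Phi)^G$ themselves, and the key step does not work there. You write that the fiber sequence $F_H\to\mathcal{D}(H)\to\prod_{L>H}F_L$ "is then split by assembling" sections $F_L\to\mathcal{D}(H)$ into a section of the whole projection. In a stable model category a fibration with a section does decompose its total object as a product of fiber and base, and a finite family of maps into a target can be summed; neither of these is available in bare pointed spaces, which have no group structure to add the $\sigma_L$'s or to subtract off $s\circ p$ from the identity of $\mathcal{D}(H)$. This is exactly why the paper's Lemma \ref{decomp} is stated and proved only for diagrams valued in a \emph{stable} model category, and why Theorem \ref{higertomdieck} is not proved by applying that lemma to $(D_{J/\bullet}\Phi)^G$ in $\Top_\ast$.

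What your proposal is missing is the content of Proposition \ref{liftonfixedpts}: one must first use the equivariant delooping theorem \ref{Jdelooping} to replace $D_{J/H}\Phi$ by the lifted functor $\widehat{D}_{J/H}\Phi\colon\Top_\ast\to\Sp^G_{J/H}$, then show that the maps $p^\ast_{J,H}$ and the retractions $r_{J,H}$ can themselves be promoted to split monomorphisms $(\widehat{D}_{J/H}\Phi)^G\to(\widehat{D}_J\Phi)^G$ of functors into $\Sp$. That lift is not formal: one has to redo the $\mathcal{A}^k$-cover delooping of Section \ref{delooping} and chase the restriction-to-fixed-points maps through it. Only after this spectrum-level splitting is in place can the formal poset decomposition of Lemma \ref{decomp} be applied, and one recovers the space-level statement at the end by applying $\Omega^{\infty J}$. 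Your argument would be complete if you inserted this spectral lift as an intermediate step and then ran your induction with spectra, rather than spaces, as the target; as written it asserts a product decomposition in $\Top_\ast$ that does not follow from the section and retraction alone.
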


Before proving this result we describe the summands of this splitting for the identity functor in more familiar terms, recovering the Tom Dieck-splitting. For any subgroup $H$ of $G$, we let $F^{G}_{G/H}\colon \Sp^{G}\to \Sp$ be the $G$-fixed points functor for the universe defined by the permutation representation of $G/H$. It sends a $G$-spectrum $E$ to the spectrum with level $k$ space
\[F^{G}_{G/H}(E)_k=(\Omega^{k\overline{G/H}}E_{kG/H})^G.\]

\begin{cor}\label{transTD}
Let $\Phi\colon \Top_{\ast}\to \Top_{\ast}^G$ be an enriched reduced homotopy functor which commutes with fixed-points, and let $\Phi(\mathbb{S}\wedge X)$ be the $G$-spectrum defined by the spaces $\{\Phi(S^n\wedge X)\}_{n\geq 0}$. The $G$-fixed points spectrum of $\Phi(\mathbb{S}\wedge X)$ splits naturally as
\[\Phi(\mathbb{S}\wedge X)^G\simeq \bigvee_{H\lhd G}\hofib\Big(F_{G/H}^G\big(\Phi(\mathbb{S}\wedge X)\big)\to \holim_{H< K\lhd G}F_{G/K}^G\big(\Phi(\mathbb{S}\wedge X)\big)\Big)\]
for every pointed $G$-space $X$.
When $\Phi\colon \Top_\ast\to\Top^{G}_\ast$ is the inclusion and $G$ is Abelian, this is the Tom Dieck-splitting 
\[(\Sigma^{\infty} X)^G\simeq \bigvee_{H\leq G}\Sigma^{\infty} X^{H}_{hG/H}\]
of \cite{dieck}, for every pointed $G$-space $X$.
\end{cor}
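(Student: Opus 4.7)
The plan is to derive the splitting directly from Theorem \ref{higertomdieck} applied with $J=G$, translating each $G$-fixed-point space in that decomposition into the infinite loop space of an appropriate partial fixed-point spectrum of $\Phi(\mathbb{S}\wedge X)$. For every normal subgroup $H\lhd G$ the quotient $G/H$ is a transitive $G$-set, so since $\Phi$ is reduced (so $P_\emptyset\Phi\simeq\ast$) one has $D_{G/H}\Phi\simeq P_{G/H}\Phi$, and Example \ref{ex:transapprox} gives
\[P_{G/H}\Phi(X)\simeq \hocolim_k \Omega^{kG/H}\Phi(\Sigma^{kG/H}X).\]
Taking $G$-fixed points commutes with the sequential homotopy colimit by (\ref{fixedptsdiag}), and by definition the $k$-th level of $F^G_{G/H}(\Phi(\mathbb{S}\wedge X))$ is $(\Omega^{k\overline{G/H}}\Phi(S^{kG/H}\wedge X))^G$, so the resulting fixed-point space is exactly $\Omega^\infty F^G_{G/H}(\Phi(\mathbb{S}\wedge X))$. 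Applying the theorem to each $S^n\wedge X$ in place of $X$ and assembling over $n$ promotes the space-level splitting to a spectrum-level splitting of $\Phi(\mathbb{S}\wedge X)^G$.

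The key compatibility to verify will be that, under these identifications, the retractions $r_{G,H}$ (and their iterated analogues on differentials) produced in the proof of Theorem \ref{higertomdieck} correspond to the canonical comparison maps $F^G_{G/H}(\Phi(\mathbb{S}\wedge X))\to F^G_{G/L}(\Phi(\mathbb{S}\wedge X))$ induced by the $G$-equivariant projections $G/H\to G/L$ for $H<L$. Both families of maps are constructed from the fixed-points restriction (\ref{fixedptsdiag}) precomposed with the natural transformations $\eta_p$ of \S\ref{sec:tree} coming from the projections of $G$-sets, so this will reduce to a naturality check.

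For the classical Tom Dieck specialization take $\Phi$ to be the inclusion, so that $\Phi(\mathbb{S}\wedge X)=\Sigma^\infty X$. When $G$ is Abelian every subgroup is normal, and the partial fixed-point spectrum $F^G_{G/H}$ sees only those $G$-representations on which $H$ acts trivially; concretely there is a natural equivalence $F^G_{G/H}(\Sigma^\infty X)\simeq F^{G/H}(\Sigma^\infty_{G/H}X^H)$ (by commuting $\Omega^{k\overline{G/H}}$ and $(-)^H$ on $S^{kG/H}\wedge X$), and the classical Tom Dieck-splitting applied to the group $G/H$ then yields
\[F^G_{G/H}(\Sigma^\infty X)\;\simeq\;\bigvee_{H\leq K\leq G}\Sigma^\infty X^K_{h(G/K)}.\]
Under these decompositions the comparison map $F^G_{G/H}\to F^G_{G/L}$ for $H<L$ becomes the projection onto the sub-wedge indexed by $K\geq L$; consequently $\holim_{H<L\lhd G}F^G_{G/L}(\Sigma^\infty X)$ records exactly the summands with $K\supsetneq H$, and the homotopy fiber of the restriction isolates the single remaining summand $\Sigma^\infty X^H_{h(G/H)}$. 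Assembling over all $H\lhd G$ recovers the classical splitting.

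The hard part will be the verification in the second paragraph: identifying the combinatorial retractions produced in the proof of Theorem \ref{higertomdieck} with the geometric forgetful maps between the partial fixed-point spectra $F^G_{G/H}$ in a manner that is coherent with the spectrum structure maps. Once this compatibility is pinned down, the remaining identifications are immediate applications of earlier results and standard manipulations of the iterated Tom Dieck-splitting for Abelian groups.
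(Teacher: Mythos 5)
Your first step coincides with the paper's: apply Theorem \ref{higertomdieck} with $J=G$, observe that for a normal $H\lhd G$ the differential $D_{G/H}\Phi$ reduces to $P_{G/H}\Phi$ since $\Phi$ is reduced, and use Example \ref{ex:transapprox} plus the fixed-points isomorphism (\ref{fixedptsdiag}) to identify $(D_{G/H}\Phi(X))^G$ with $\Omega^\infty F^G_{G/H}\Phi(\mathbb{S}\wedge X)$. Both you and the paper then leave implicit the identification of the retractions $r_{G,H}$ of Theorem \ref{higertomdieck} with the canonical universe-restriction maps $F^G_{G/H}\to F^G_{G/K}$; you correctly flag this as the point requiring a naturality check, whereas the paper simply asserts ``the splitting follows.''

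For the Tom Dieck specialization you take a genuinely different route. You quote the classical Tom Dieck splitting for the quotient group $G/H$ applied to $X^H$, obtaining $F^G_{G/H}(\Sigma^\infty X)\simeq\bigvee_{H\leq K\leq G}\Sigma^\infty X^K_{hG/K}$, and then argue that the comparison maps become projections onto sub-wedges, so that the homotopy fiber isolates the $K=H$ summand $\Sigma^\infty X^H_{hG/H}$. The paper instead reconstructs these terms from scratch: it rewrites $F^G_{G/H}(\Sigma^\infty X)$ as $F^G_G(\widetilde{E\mathcal{F}}_H\wedge\Sigma^\infty X)$ via geometric fixed points, exhibits the homotopy fiber as $F^G_G\big(\Sigma^\infty X\wedge\holim_{H<K}\hofib(\widetilde{E\mathcal{F}}_H\to\widetilde{E\mathcal{F}}_K)\big)$, analyzes the fixed points of this classifying space to identify it with $\widetilde{E\mathcal{F}}_H\wedge EG/H_+$, and finally invokes the Adams isomorphism. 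Your route is shorter, but it leans on the classical Tom Dieck splitting for the quotient groups together with its naturality with respect to projections $G/H\to G/K$ --- a compatibility you would need to spell out, since it is not a formal consequence of the statement. The paper's route is longer and uses more of the classifying-space calculus but is self-contained and in effect \emph{re-derives} the Tom Dieck splitting for Abelian $G$ from Theorem \ref{higertomdieck} rather than merely recognizing a known formula; this is arguably the right posture given that the corollary is meant to exhibit Theorem \ref{higertomdieck} as a genuine generalization.
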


\begin{rem}
If $\Phi$ is $G$-excisive, the $G$-spectra $\Phi(\mathbb{S}\wedge X)$ are fibrant with respect to the complete $G$-universe, and the fixed point spectra $F_{G/H}^G\big(\Phi(\mathbb{S}\wedge X)\big)$ are equivalent for every $H$. The splitting of \ref{transTD} is in this case trivial.
\end{rem}

\begin{proof}[Proof of \ref{transTD}]
For every transitive $G$-set $T$, Example \ref{ex:transapprox} gives an equivalence
\[(D_T\Phi(X))^G\simeq (\Omega^{\infty T}\Phi(\Sigma^{\infty T}X))^G\cong \Omega^\infty F_{T}^G\Phi(\mathbb{S}\wedge X).\]
The splitting follows from Theorem \ref{higertomdieck}.

Now suppose that $\Phi\colon \Top_\ast\to\Top^{G}_\ast$ is the inclusion functor. We need to identify
\[\hofib\big(F_{G/H}^G(\Sigma^\infty X)\to\holim_{H<K}F_{G/K}^G(\Sigma^\infty X)\big)\]
with the suspension spectrum of $X^{H}_{hG/H}$, for every subgroup $H$ of $G$. 
The various fixed points spectra satisfy the identities
\[F_{G/H}^GE= F_{G/H}^{G/H}F_{G/G}^HE \ \ \ \ \ \ \ \ \ \ \ F_{G}^GE\stackrel{\simeq}{\longrightarrow} F^{G/H}_{G/H}F_{G}^HE\]
for every $G$-spectrum $E$. In particular
\[F_{G/H}^G(\Sigma^\infty X)= F_{G/H}^{G/H}F_{G/G}^H(\Sigma^\infty X)=F_{G/H}^{G/H}(\Sigma^\infty X^H).\]
Let $E\mathcal{F}_H$ be a classifying space for the family of subgroups of $G$ which do not contain $H$, and let $\widetilde{E\mathcal{F}}_H$ be the cofiber of $(E\mathcal{F}_H)_+\to S^0$. The $H$-geometric fixed points of $\Sigma^\infty X$ are 
\[F_{G}^H(\widetilde{E\mathcal{F}}_H\wedge \Sigma^\infty X)\simeq \Sigma^{\infty}X^H\] (see \cite[10.2]{Lewis}). Combining this equivalence with the identities above we obtain a natural equivalence
\begin{equation}\label{one}
F_{G/H}^G(\Sigma^\infty X)\simeq F_{G/H}^{G/H}(\Sigma^\infty X^H)\simeq 
F_{G/H}^{G/H}F_{G}^H(\widetilde{E\mathcal{F}}_H\wedge \Sigma^\infty X)\simeq F_{G}^{G}(\widetilde{E\mathcal{F}}_H\wedge \Sigma^\infty X)
\end{equation}
for every subgroup $H$ of $G$. It remains to identify
\[\hofib\big(F_{G}^{G}(\widetilde{E\mathcal{F}}_H\wedge \Sigma^\infty X)\to\holim_{H<K}F_{G}^{G}(\widetilde{E\mathcal{F}}_K\wedge \Sigma^\infty X)\big)\]
with $\Sigma^\infty X^{H}_{hG/H}$. By commuting the constructions appropriately we rewrite this homotopy fiber as
\[F_{G}^{G}\big(\Sigma^\infty X\wedge \holim_{H<K}\hofib(\widetilde{E\mathcal{F}}_H\to \widetilde{E\mathcal{F}}_K)
\big).\]
For every subgroup $L$ of $G$, the $L$-fixed-points of $\hofib(\widetilde{E\mathcal{F}}_H\to \widetilde{E\mathcal{F}}_K)$ are contractible if $L$ is a subgroup of $K$ or if $L$ is not a subgroup of $H$, and they are equivalent to $S^0$ otherwise. The maps induced by inclusions $K\to K'$ of subgroups containing $H$ are either identities or basepoint inclusions. It follows that the fixed-points of the homotopy limit over $K$ are contractible unless all the spaces in the diagram are $S^0$, that is
\[\holim_{H<K}\hofib(\widetilde{E\mathcal{F}}_H\to \widetilde{E\mathcal{F}}_K)^L\simeq\left\{\begin{array}{ll}
S^0&,\ \substack{\mbox{if $H\leq L$ and $K$ is not a}\\\mbox{subgroup of $L$, for all $H<K$}}\\
\\
\ast&, \ \mbox{otherwise}
\end{array}\right.\simeq \left\{\begin{array}{ll}
S^0&,\ \mbox{if $H= L$}
\\
\ast&, \ \mbox{otherwise}
\end{array}\right.
\]
(for the last identification we are using that every subgroup of $G$ is normal, see \ref{nonAb} below).
Another space which is characterized by this property is $\widetilde{E\mathcal{F}}_H\wedge EG/H_+$. It follows that the $H$-summand of the splitting is equivalent to
\[F_{G}^{G}(\Sigma^\infty X\wedge \widetilde{E\mathcal{F}}_H\wedge EG/H_+)\simeq_{(\ref{one})}F_{G/H}^{G/H}(\Sigma^\infty X^H\wedge EG/H_+)\simeq \Sigma^\infty X^{H}_{hG/H}\]
where the last equivalence is the Adams isomorphism.
\end{proof}

\begin{rem}\label{nonAb} If $G$ has a subgroup which is not normal, the decomposition of \ref{transTD} is still part of the Tom Dieck-splitting, in the sense that each $H$-summand of \ref{transTD} splits further. In trying to identify the summands of the decomposition, the argument of the proof of \ref{transTD} applies until the equivalence
\[\holim_{H<K\lhd G}\hofib(\widetilde{E\mathcal{F}}_H\to \widetilde{E\mathcal{F}}_K)^L\simeq\left\{\begin{array}{ll}
S^0&,\ \substack{\mbox{if $H\leq L$ is the largest normal}\\
\mbox{subgroup of $G$ contained in $L$}}\\
\\
\ast&, \ \mbox{otherwise}
\end{array}\right.\]
where now $K$ only runs through the normal subgroups of $G$ which contain $H$ as a proper subgroup. Smashing this space with $\Sigma^\infty X$ and taking fixed-points, we obtain a decomposition of the $H$-summand of \ref{transTD} into $\bigvee_{(S)}\Sigma^{\infty}X^{S}_{h WS}$. Here $(S)$ runs through the conjugacy classes of subgroups of $G$ such that $H\leq S$ is the largest normal subgroup of $G$ contained in $S$. Since every conjugacy class has a unique maximal subgroup (which is not necessarily proper) the set of pairs $(H\lhd G,(S))$ as above is in bijection with the set of conjugacy classes of subgroups of $G$, and this is the full Tom Dieck-splitting.
\end{rem}

The proof of \ref{higertomdieck} is based on the following lemma.

\begin{lemma}\label{decomp}
Let $I$ be a finite poset with an initial object $\emptyset$, and let $P\colon I\to D$ be a functor to a stable model category $D$. Suppose that every map in the diagram $P$ has a section, and that these sections assemble into a diagram $S\colon I^{op}\to D$ such that for every $i<j<k$ in $I$ the diagram
\[\xymatrix{
P_j\ar[d]_{P(j<k)}\ar[r]^{S(i<j)}&P_i\\
P_k\ar[ur]_{S(i<k)}
}\]
commutes. Then the initial vertex $P_\emptyset$ decomposes as
\[P_\emptyset\simeq\bigvee_{i\in I}\hofib\big(P_i\to \holim_{j> i}P_j \big).\]
\end{lemma}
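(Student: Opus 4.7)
The argument naturally proceeds by induction on $|I|$, converting at each step the section data into wedge splittings via the stability of $D$.

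The base case $|I|=1$ is immediate: the indexing set $\{j>\emptyset\}$ is empty, so $\holim_{j>\emptyset}P_j$ is terminal and $Q_\emptyset=\hofib(P_\emptyset\to\ast)\simeq P_\emptyset$, which is the unique summand on the right.

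For the inductive step I would first peel off $Q_\emptyset$. Stability of $D$ gives a cofibre sequence
\[Q_\emptyset\longrightarrow P_\emptyset\stackrel{\alpha}{\longrightarrow}\holim_{j>\emptyset}P_j,\]
which splits as a wedge whenever $\alpha$ admits a section. The key step is to construct such a section from the collection $\{S(\emptyset<j)\}_{j>\emptyset}$. The compatibility, specialised to $i=\emptyset$, reads $S(\emptyset<j)=S(\emptyset<k)\circ P(j<k)$ for $\emptyset<j<k$, which is exactly the data of a cocone on the restricted diagram $\{P_j\}_{j>\emptyset}$ with apex $P_\emptyset$. Using this cocone together with the stability of $D$ (so that finite homotopy limits can be analysed as iterated homotopy fibres of the individual splittings $P_\emptyset\simeq P_j\vee\hofib(P(\emptyset<j))$ produced by each $S(\emptyset<j)$), I would build a map $\sigma\colon\holim_{j>\emptyset}P_j\to P_\emptyset$. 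Verifying $\alpha\sigma\simeq\id$ is a direct diagram chase combining the section property $P(\emptyset<j)\circ S(\emptyset<j)=\id_{P_j}$ with the functoriality of $S$.

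Once $P_\emptyset\simeq Q_\emptyset\vee \holim_{j>\emptyset}P_j$ is in place, I would decompose the second factor by applying the induction to the subposet $I\setminus\{\emptyset\}$. This poset need not have a single initial element, so the induction must be phrased either component-wise or with multiple minimal elements; the restricted data $(P|_{I\setminus\{\emptyset\}},S|_{(I\setminus\{\emptyset\})^{op}})$ still satisfies the hypotheses, and a careful application of the inductive hypothesis identifies the resulting wedge with $\bigvee_{i>\emptyset}Q_i$. Reassembling with the first splitting yields the claim.

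\textbf{Main obstacle.} The delicate step is the construction of the section $\sigma$ of $\alpha$. The hypotheses produce only a cocone (which would yield a map out of $\hocolim_{j>\emptyset}P_j$), whereas a section of $\alpha$ requires a map out of $\holim_{j>\emptyset}P_j$. Bridging the two is where both the stability of $D$ and the full strength of the compatibility on $S$ (not just functoriality) enter in an essential way; I expect that the careful combination of the individual retractions $S(\emptyset<j)P(\emptyset<j)$ into a single coherent retraction of $P_\emptyset$ onto the wedge $\bigvee_{j>\emptyset}P_j$, compatible with the diagram maps $P(j<k)$, is the heart of the proof.
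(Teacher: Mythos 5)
Your outline is right — induct, split off the $\emptyset$-summand via a section of $P_\emptyset\to\holim_{j>\emptyset}P_j$, and reassemble — but the step you flag as "the heart of the proof" is a genuine gap: you never construct the section $\sigma$, and the route you sketch (through individual splittings and homotopy colimits) is a detour that does not obviously close.

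The missing observation is that a cocone \emph{is} a natural transformation to a constant diagram, and holim is functorial in natural transformations, not just cones. Concretely, the maps $S(\emptyset<j)\colon P_j\to P_\emptyset$, together with the commutativity hypothesis $S(\emptyset<j)=S(\emptyset<k)\circ P(j<k)$, define a natural transformation from the restricted diagram $P|_{\{j>\emptyset\}}$ to the constant diagram $\underline{P_\emptyset}$ on the same poset. Applying $\holim_{\{j>\emptyset\}}$ to this natural transformation produces a map
\[
\holim_{j>\emptyset}P_j\longrightarrow \holim_{j>\emptyset}\underline{P_\emptyset},
\]
with no need to pass through a homotopy colimit. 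The target is identified with $P_\emptyset$ by the canonical map $P_\emptyset\to\holim_{j>\emptyset}\underline{P_\emptyset}$, which is an equivalence precisely when the nerve of the poset $\{j>\emptyset\}$ is contractible (e.g.\ when it has a terminal element, as it does in the paper's applications, where $I$ is the lattice of normal subgroups of $G$). Composing and using $P(\emptyset<j)\circ S(\emptyset<j)=\id$ gives the section. Without some contractibility (or terminal-object) hypothesis on the subposets $\{j>i\}$, neither your argument nor the paper's actually produces a section, so this is genuinely used and worth making explicit.

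Two smaller points. First, the paper inducts on the \emph{degree} of a vertex (the length of the longest chain out of it), proving $P_i\simeq\bigvee_{j\geq i}M_j$ for every $i$, rather than on $|I|$; this avoids the issue you raise about $I\setminus\{\emptyset\}$ not having a single initial element. Second, the reassembly step — identifying $\holim_{j>i}\bigvee_{k\geq j}M_k$ with $\bigvee_{j>i}M_j$ once the inductive decompositions are in hand — requires some bookkeeping of which wedge summands occur at which vertices and which are collapsed by the diagram maps; your "careful application of the inductive hypothesis" glosses over this, and it is where the remaining work lives once the section is in place.
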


\begin{proof}
Let the degree of $i$ be the length of the longest sequence of non-identity morphisms starting at $i$. We prove by induction on the degree that
\[P_i\simeq\bigvee_{j\geq i}\hofib\big(P_j\to \holim_{k> j}P_k \big).\]
Let us write $M_j=\hofib\big(P_j\to \holim_{k> j}P_k \big)$. If $i$ has degree zero there are no non-identity maps out of $i$, and the decomposition holds:
\[\bigvee_{j\geq i}M_j=M_i=\hofib\big(P_i\to \holim(\emptyset)\big)=P_i.\]
Now suppose that we proved the decomposition for vertices of degree smaller or equal to $d$, and let $i$ have degree $d+1$. The section $S\colon I^{op}\to D$ induces a section of the canonical map
\[
\xymatrix{P_i\ar[r]^-{\simeq}&\holim_{j>i}P_i\ar[r]& \holim_{j>i}P_j
\ar@/_1.2pc/[l]_S
},\]
which gives a decomposition $P_i\simeq M_i\vee \holim_{j>i}P_j$.
 We need to identify $\holim_{j>i}P_j$ with $\bigvee_{j>i}M_j$. By the inductive hypothesis the homotopy limit decomposes as
\[\holim_{j>i}P_j\simeq \holim_{j>i}\bigvee_{k\geq j}M_k.\]
For vertices $j>i$ with $j$ of degree $d$, the $k=j$ summand of the wedge decomposition $\bigvee_{k\geq j}M_k$ does not receive any map, and it is collapsed by the maps out of $j$. Therefore these summands commute with the homotopy limits
\[\holim_{j>i}P_j\simeq \holim_{j>i}\bigvee_{k\geq j}M_k\simeq \big(\bigvee_{\substack{
j>i\\
\deg(j)=d
}}M_j\big)\vee
\holim_{j>i}\left\{\begin{array}{ll}\bigvee_{k>j}M_k& \ , \ \deg(j)=d\\
\bigvee_{k\geq j}M_k& \ , \ \deg(j)\leq d-1
\end{array}\right.\]
We are left with determining the right-hand homotopy limit.
Each summand of $\bigvee_{k\geq j'}M_k$ for $j'$ of degree smaller than $d-1$ is hit by a summand of $\bigvee_{k\geq j}M_k$ with $j$ of degree $d$. Moreover any common summand of $\bigvee_{k\geq j}M_k$ and $\bigvee_{k\geq l}M_k$ for $j$ and $l$ of degree $d$ maps to $\bigvee_{k\geq j'}M_k$ for some $j'$ of degree smaller than $d-1$. It follows that the limit is determined by the decompositions of vertices of degree $d$, without repetitions:
\[\holim_{j>i}\left\{\begin{array}{ll}\bigvee_{k>j}M_k& \ , \ \deg(j)=d\\
\bigvee_{k\geq j}M_k& \ , \ \deg(j)\leq d-1
\end{array}\right.\simeq \bigvee_{\substack{
j>i\\
\deg(j)=d
}}\bigvee_{k>j}M_k\big).\]
Combining these splittings together we obtain the decomposition
\[P_i\simeq M_i\vee \big(\bigvee_{\substack{
j>i\\
\deg(j)=d
}}M_j\big)\vee 
\big(\bigvee_{\substack{
j>i\\
\deg(j)=d
}}\bigvee_{k>j}M_k\big)=
\bigvee_{j\geq i}M_j.\]
\qedhere
\end{proof}

\begin{ex}\label{classicalTD}
Given two conjugacy classes $(K)$ and $(H)$ of subgroups of $G$, we write $(K)\leq (H)$ if $K$ is a subconjugate of $H$. Let $\mathcal{C}_{(H)}$ be the family of subgroups of $G$ subconjugate to $H$, and let $E\mathcal{C}_{(H)}$ be a functorial classifying space for this family. Given a pointed $G$-space $X$, the assignment $(H)\mapsto (\Sigma^{\infty}X\wedge( E\mathcal{C}_{(H)})_+)^G$ defines a functor from the poset of conjugacy classes of subgroups of $G$ to spectra. For $(H)\leq (K)$, the map $(\Sigma^{\infty}X\wedge( E\mathcal{C}_{(H)})_+)^G\to (\Sigma^{\infty}X\wedge( E\mathcal{C}_{(K)})_+)^G$ is a split monomorphism (\cite[2.1]{Lewis}). It follows by \ref{decomp} that the value at $(G)$ decomposes as
\[(\Sigma^{\infty}X\wedge( E\mathcal{C}_{(G)})_+)^G=(\Sigma^{\infty}X)^G\simeq 
\bigvee_{(H)}\hofib\big((\Sigma^{\infty}X\wedge( E\mathcal{C}_{(H)})_+)^G\!\to\! \holim_{(K)< (H)}(\Sigma^{\infty}X\wedge( E\mathcal{C}_{(K)})_+)^G \big).\]
The homotopy fiber in the $H$-summand is equivalent to $(\Sigma^{\infty}X\wedge(E_{(H)}))^G$, where the $G$-space 
\[E_{(H)}=\holim_{(K)<(H)}\hofib((E\mathcal{F}_{(H)})_+\to (E\mathcal{F}_{(K)})_+)\] has $L$-fixed points equivalent to $S^0$ if $(L)=(H)$, and contractible otherwise.
The fixed-points spectrum $(\Sigma^{\infty}X\wedge E_{(H)})^G$ is then equivalent to $\Sigma^{\infty}X^{H}_{hWH}$, and the decomposition of Lemma \ref{decomp} is the Tom Dieck-splitting.
\end{ex}

The challenge in achieving the splitting of Theorem \ref{higertomdieck} is to lift the maps $p_{J,H}^\ast\colon D_{J/H}\Phi\to D_{J}\Phi$ and their retractions to a suitable category of spectra, in order to reduce the problem to the decomposition criterion of \ref{decomp}.
Let $\Sp_{J}^G$ be the category of $J$-spectra introduced in \S\ref{delooping}. By the delooping theorem \ref{Jdelooping} the respectively $J$-homogeneous and $J/H$-homogeneous functors $D_J\Phi$ and $D_{J/H}\Phi$ deloop equivariantly as
\[D_J\Phi\simeq\Omega^{\infty J}\widehat{D}_J\Phi \ \ \ \ \ \ \ \ \ \ \ \ \ \  D_{J/H}\Phi\simeq \Omega^{\infty J/H}\widehat{D}_{J/H}\Phi \]
for functors $\widehat{D}_J\Phi\colon \Top_\ast\to \Sp_{J}^G$ and $\widehat{D}_{J/H}\Phi\colon \Top_\ast\to \Sp_{J/H}^G$. We need to understand the functoriality of these extensions with respect to the maps $p_{J,H}^\ast$ and $r_{J,H}$. The category $\Sp_{J}^G$ has fixed points functors $(-)^H\colon \Sp_{J}^G\to\Sp$ for every subgroup $H$ of $G$, defined by
\[(E^H)_k=(\Omega^{k\overline{J}}E_{kJ})^H.\]
The category $\Sp_{J/H}^G$ has similar fixed point functors.

\begin{prop}\label{liftonfixedpts}
Let $\Phi\colon \Top_\ast\to \Top_{\ast}^G$ be a homotopy functor which commutes with fixed-points, and let $H$ be a normal subgroup of $G$. The map $p\colon J\to J/H$ induces a split monomorphism
\[\xymatrix@C=40pt{\big(\widehat{D}_{J/H}\Phi\big)^G\ar[r]_{p^\ast}&
\big(\widehat{D}_J\Phi\big)^G\ar@/_1.5pc/[l]_{r}
}\]
of functors $\Top_{\ast}^G\to \Sp$.
\end{prop}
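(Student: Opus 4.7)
The plan is to upgrade the stage-wise recipe that produces the unlifted section-retraction pair $p_{J,H}^\ast\colon (P_{J/H}\Phi)^G \rightleftarrows (P_J\Phi)^G \colon r_{J,H}$ at the start of this section, by checking that it is compatible with every step of the equivariant delooping machinery of \S\ref{delooping}. Since $H \lhd G$, the projection $p\colon J\to J/H$ is $G$-equivariant and bijective on $G$-orbits, so $D_{J/H}\Phi$ is in particular $J$-excisive by Proposition~\ref{isoonorb}. The two key inputs are the naturality of the delooping functors $R_o$ and $\overline{R}_{mJ}$ in the input homotopy functor, and the fact that $\Phi$ commutes with fixed-points, which, as in the proof of Theorem~\ref{convergence}, forces the natural comparison maps $(\eta_p^{(k)})^G_\ast$ to be isomorphisms.

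First I would construct the section. The maps of punctured cubes $\eta_p^{(k)}\colon p^\ast\Lambda^{J/H,k}(c) \to \Lambda^{J,k}(c)$ from \S\ref{sec:tree}, together with precomposition along the image functors $\mathcal{P}_0((J/H)_+)^k \to \mathcal{P}_0(J_+)^k$, give the natural transformation $p^\ast\colon T_{J/H}^{(k)}\Phi \to T_J^{(k)}\Phi$. Because each vertex $R^{\mathcal{A}^k}_W\Phi$ in the construction of $R_o\Phi$, each stabilization zig-zag of Lemma~\ref{wrongway}, and the homotopy limit defining $\overline{R}_{mJ}\Phi$ are natural both in the input functor and in the indexing $G$-set through these image functors and the maps $\eta_p^{(k)}$, the same recipe produces a natural transformation of symmetric $G$-spectra lifting the unlifted $p^\ast$. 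Taking $G$-fixed-points yields the desired map $p^\ast\colon (\widehat{D}_{J/H}\Phi)^G \to (\widehat{D}_J\Phi)^G$.

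Next I would construct the retraction. At each stage, compose the fixed-points restriction map $(\holim_I X)^G \to \holim_{I^G} X^G$ from \S\ref{sec:fixedpts}, applied to the $G$-diagram $\Phi(\Lambda^{J,k}(c))$ indexed on $\mathcal{P}_0(J_+)^k$, with the inverse of $(\eta_p^{(k)})^G_\ast$, which is an isomorphism exactly because $\Phi$ commutes with fixed-points. Under the identifications $(\mathcal{P}_0(J_+)^k)^G \cong \mathcal{P}_0((J/G)_+)^k \cong (\mathcal{P}_0((J/H)_+)^k)^G$ this lands in $(T_{J/H}^{(k)}\Phi)^G$ via the inverse of another fixed-points restriction, which is an isomorphism because the target indexing category has trivial $G$-action. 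Propagating this recipe through every $R_o$, every sequential colimit, and every stage of $\overline{R}_{mJ}$ gives $r\colon (\widehat{D}_J\Phi)^G \to (\widehat{D}_{J/H}\Phi)^G$. The identity $r\circ p^\ast = \id$ then reduces to a stage-wise check identical in spirit to the one in the proof of Theorem~\ref{convergence}: the $(\eta_p^{(k)})^G_\ast$ factors cancel, and the composite of the fixed-points restriction with the image-functor precomposition is the identity on diagrams indexed on a $G$-trivial subcategory.

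The main obstacle will be organizing the intricate zig-zag definition of $\overline{R}_{mJ}\Phi$ at the end of \S\ref{delooping} — involving the interlocking functors $B_J$, $E_J$, and iterates of $R_J$ connected by wrong-way equivalences — so that $p^\ast$, $r$, and the retraction identity remain simultaneously compatible with the spectrum structure maps and the $\Sigma_m$-action permuting iterates of $R_J$. This reduces to checking that every wrong-way equivalence appearing in Lemmas \ref{Aast}, \ref{+vertex}, and \ref{wrongway} is natural both in the input functor and in the indexing $G$-set through the image functors of $p$, which it is, because all of them ultimately arise from cofinality arguments via the same equivariant covers.
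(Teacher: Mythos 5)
Your overall strategy mirrors the paper's (construct $p^\ast$ from image functors plus $\eta_p$, construct $r$ from a fixed-points restriction plus the isomorphism provided by ``$\Phi$ commutes with fixed points'', then propagate through the delooping formalism), but the retraction step has a genuine gap. You restrict on $G$-fixed points right away, landing in $\holim_{\mathcal{P}_0(J/G_+)}\Phi(\Lambda^J)^G$, and then claim to enter $(T^{(k)}_{J/H}\Phi)^G$ by inverting a second fixed-points restriction ``because the target indexing category has trivial $G$-action.'' That is false unless $H=G$: the indexing category of $T^{(k)}_{J/H}\Phi$ is $\mathcal{P}_0((J/H)_+)^k$, on which $G$ acts through $G/H$, which is nontrivial for a proper normal subgroup. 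For instance with $G=\mathbb{Z}/4$, $H=\mathbb{Z}/2$, $J=G$ the restriction from $(T_{J/H}\Phi)^G$ to $\holim_{\mathcal{P}_0(J/G_+)}\Phi(\Lambda^{J/H})^G$ collapses a cube indexed by a genuine $2$-point $G$-set onto a cube indexed by a single point, and is not invertible. So your $G$-fixed-points restriction only lands in the $J/G$-picture, not in the $J/H$-picture, and the composite retraction does not exist as you describe it.

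The fix, which is what the paper actually does, is to take $H$-fixed points first rather than $G$-fixed points. One uses the $H$-fixed-points restriction $(T_J\Phi)^H\to\holim_{\mathcal{P}_0(J/H_+)}\Phi(\Lambda^J)^H$ (exploiting $\mathcal{P}_0(J_+)^H\cong\mathcal{P}_0(J/H_+)$), then inverts the map $(\eta_p)^H_\ast$, which \emph{is} an isomorphism because $\Lambda^{J/H}_{U/H}(c)\to\Lambda^J_U(c)$ is an isomorphism on $H$-fixed points and $\Phi$ commutes with fixed-points. That lands in $(T_{J/H}\Phi)^H$, and only then does one pass to $G$-fixed points of the resulting $G$-equivariant map. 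At the spectrum level the same pattern is carried out using the isomorphisms $(\mathcal{A}^k_W)^H\cong\mathcal{A}^k_{p(W)_+}$, composed with the comparison $(D_J\Phi)^H\to(D_{J/H}\Phi)^H\cong\big(D_{J/H}\Phi(\Lambda^{J/H}_{-/H})\big)^H$; no inverse of a fixed-points restriction is ever needed. The isomorphism you need to invert is $\eta_p$ at the $H$-level, and you should keep it distinct from the restriction maps, neither of which needs to be inverted. Your section construction and the naturality/propagation argument for the delooping zig-zags are on the right track, modulo being explicit that $p^\ast$ also involves the change of functor $D_{J/H}\Phi\to D_J\Phi$ from \S\ref{layers}.
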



\begin{proof}
For every $G$-orbit $o$ in $J/G$ and every homotopy functor $\Psi\colon \Top_\ast\to \Top_{\ast}^G$, let 
\[R_o\Psi=\hocolim_k\holim_{\mathcal{A}_{o_+}^k}\Psi(\Lambda^{J,k})\]
be the construction of \S\ref{delooping}. We recall that when $\Psi$ is $J$-homogeneous this defines an $o$-delooping
\[\Omega^oR_o\Psi\stackrel{\simeq}{\longrightarrow}\hocolim_k\holim_{W\in\mathcal{P}_0(o_+)}\holim_{\mathcal{A}_{o_+}^k}
\Psi(\Lambda^{J,k})\stackrel{\simeq}{\longleftarrow}P_J\Psi(\Lambda^{J,k})\stackrel{\simeq}{\longleftarrow}\Psi\]
of $\Psi$. Let us also recall that the value of the $J$-spectrum $\widehat{\Psi}$ at the $G$-set $J$ was defined formally as a certain homotopy limit with the purpose of fixing the direction of the wrong-way pointing equivalence in the delooping above. To this end, let us denote
\[\widetilde{\Omega}^JR_J\Psi:=\hocolim_{k}\holim_{W_1\in\mathcal{P}_0((o_1)_+)}
\holim_{\mathcal{A}_{W_1}^{k}}\dots \holim_{W_n\in\mathcal{P}_0((o_n)_+)}
\holim_{\mathcal{A}_{W_n}^{k}}\Psi(\Lambda^{J,nk})\]
for a chosen order on the orbits set. Next, we need to describe how to extract the reduced loop space $\Omega^{\overline{J}}R_J\Psi$ from the new loop space model $\widetilde{\Omega}^JR_J\Psi$. Given any finite $G$-set $I$ there is a left-cofinal functor
\[F\colon \mathcal{P}_0(1_+)\times \mathcal{P}_0(I)\longrightarrow \mathcal{P}_0(I_+)\]
that sends $(1,U)$ to $U$, $(+,U)$ to $+$ and $(1_+,U)$ to $U_+$. If $X$ is a pointed $G$-space, the homotopy limit of the $I_+$-cube $\omega^IX$ with terminal vertex $X$ and points everywhere else is homeomorphic to $\Omega^IX$, and there is a commutative diagram
\[\xymatrix{\holim_{\mathcal{P}_0(I_+)}\omega^IX\ar[d]_{F^\ast}^{\simeq}\ar[r]^{\cong}
&\Omega^I X\ar[d]^{\cong}\\
\holim_{\mathcal{P}_0(1_+)}\holim_{\mathcal{P}_0(I)}F^\ast\omega^JX\ar[r]^-{\cong}&
\Omega\Omega^{\overline{I}}X
}.\]
This shows that the reduced loop space $\Omega^{\overline{J}}R_J\Psi$ can be obtained by restricting
$\widetilde{\Omega}^JR_J\Psi$ along the inclusions $\mathcal{P}(o)\to \mathcal{P}(o_+)$ which send $U$ to $U_+$. That is, we define
\[\widetilde{\Omega}^{\overline{J}}R_J\Psi:=\hocolim_{k}\holim_{W_1\in\mathcal{P}_0(o_1)}
\holim_{\mathcal{A}_{(W_1)_+}^{k}}\dots \holim_{W_n\in\mathcal{P}_0(o_n)}
\holim_{\mathcal{A}_{(W_n)_+}^{k}}\Psi(\Lambda^{J,nk}).\]
By naturality of the lifts $\widehat{D}_{J}\Phi$ and $\widehat{D}_{J/H}\Phi$ it is going to be sufficient to define a splitting 
\[\xymatrix@C=40pt{(\widetilde{\Omega}^{\overline{J/H}}R_{J/H}D_{J/H}\Phi)^G\ar[r]_-{p^\ast}&
(\widetilde{\Omega}^{\overline{J}}R_{J}D_{J}\Phi)^G\ar@/_1.5pc/[l]_{r}
}.\]
The map $p^\ast$ is the restriction on $G$-fixed-points of a $G$-equivariant map
which is defined in the following manner, similar to the definition of the map $p^\ast\colon P_{J/H}\Phi\to P_J\Phi$. Observe that the image functor $p\colon \mathcal{P}_0(J_+)^{k}\to \mathcal{P}_0(J/H_+)^{k}$ restricts to a functor $p\colon \mathcal{A}_{W}^{k}\to \mathcal{A}_{p(W)}^{k}$ for every $W\in \mathcal{P}_0(o_+)$ and every orbit $o$ of $J$. The monomorphism of the splitting is the map
\[\xymatrix{
\widetilde{\Omega}^{\overline{J/H}}R_{J/H}D_{J/H}\Phi\ar[r]\ar@{-->}[d]_{p^\ast}
&
\displaystyle\hocolim_{k}\holim_{W_1\in\mathcal{P}_0(o_1)}
\holim_{\mathcal{A}_{(W_1)_+}^{k}}\dots \holim_{W_n\in\mathcal{P}_0(o_n)}
\holim_{\mathcal{A}_{(W_n)_+}^{k}}D_{J/H}\Phi(p^{\ast}\Lambda^{J/H,nk})\ar[d]^{\eta}
\\
\widetilde{\Omega}^{\overline{J}}R_{J}D_{J}\Phi
&
\widetilde{\Omega}^{\overline{J}}R_{J}D_{J/H}\Phi
\ar[l]
}.\]
The top horizontal map is the restriction map on homotopy limits induced by the image functors $p\colon \mathcal{A}_{W}^{k}\to \mathcal{A}_{p(W)}^{k}$ and $p\colon \mathcal{P}_0(o)\to \mathcal{P}_0(p(o))$. The map $\eta$ is induced by the natural transformation of diagrams $\eta_p\colon p^{\ast}\Lambda^{J/H}\to \Lambda^J$ from \S\ref{sec:tree}. The bottom horizontal map is induced by the map $D_{J/H}\Phi\to D_J\Phi$ from \S\ref{layers}.

%

Let us define the retraction $r$. 
Observe that the isomorphism $\mathcal{P}(J_+)^H\cong \mathcal{P}(J/H_+)$ induces an isomorphism $(\mathcal{A}_{W}^{k})^H\cong \mathcal{A}_{p(W)_+}^{k}$ for every $W\in\mathcal{P}_0(o_+)$ and every orbit $o\in J/G$. Hence the fixed-points restriction map (\ref{fixedptsdiag}) from \S\ref{sec:fixedpts} defines a $G$-map
\[(\widetilde{\Omega}^{\overline{J}}R_{J}D_{J}\Phi)^H\longrightarrow
\displaystyle\hocolim_{k}\holim_{V_1\in\mathcal{P}_0(p(o_1))}
\holim_{\mathcal{A}_{(V_1)_+}^{k}}\dots \holim_{V_n\in\mathcal{P}_0(p(o_n))}
\holim_{\mathcal{A}_{(V_n)_+}^{k}}\big(D_{J}\Phi(\Lambda^{J,nk})\big)^H.\]
The retraction $r$ is defined by composing this map with the map induced by
\[\big(D_{J}\Phi(\Lambda_{U}^{J,nk})\big)^H\longrightarrow \big(D_{J/H}\Phi(\Lambda_{U}^{J,nk})\big)^H\cong\big(D_{J/H}\Phi(\Lambda_{U/H}^{J/H,nk})\big)^H\]
for $U\in\mathcal{P}_0(J)^H$, and restricting on $G$-fixed-points. In this last composition the first map is $(D_{J}\Phi)^H\to (D_{J/H}\Phi)^H$ defined at the beginning of the section, and the second map is an isomorphism because $D_{J/H}\Phi$ commutes with $H$-fixed-points.
\end{proof}

\begin{proof}[Proof of \ref{higertomdieck}]
The splittings of Theorem \ref{liftonfixedpts} satisfy the conditions of Lemma \ref{decomp}, by the commutativity of the diagrams before \ref{higertomdieck}.
Thus Lemma \ref{decomp} gives an equivalence of spectra
\[(\widehat{D}_{J}\Phi)^G\simeq
\bigvee_{H\lhd G}\hofib
\big((\widehat{D}_{J/H}\Phi)^G\to \holim_{H<L\lhd G}(\widehat{D}_{J/L}\Phi)^G\big).\]
Its $J$-infinite loop space gives the desired decomposition of $(D_J\Phi)^G$ by the delooping Theorem \ref{delooping}.
\end{proof}

\section{The equivariant derivatives of equivariant homotopy theory}\label{sec:Id}

Let $G$ be a finite group and let $I\colon \Top_\ast\to \Top_{\ast}^G$ be the inclusion of pointed spaces with the trivial $G$-action, which extends to the identity functor on pointed $G$-spaces. We proved in \ref{convergence} that for every pointed $G$-space $X$ the canonical map
\[\holim_{n}P_nI(X)\stackrel{\simeq}{\longrightarrow} \holim_nP_{nG}I(X)\]
is a $G$-equivalence. In particular if all the fixed-points of $X$ are nilpotent, the left-hand limit is $G$-equivalent to $X$. Thus the collection of $nG$-excisive approximations provides a tower of weaker and weaker ``genuine homology theories'' that converges to $X$. The aim of this section is to understand its layers $D_{nG}I=\hofib\big(P_{nG}I\to P_{(n-1)G}I\big)$.

Let $T_{k}$ be the $k$-partition complex of \cite{AM}, with its usual $\Sigma_k$-action. We recall that the $p$-simplices of $T_{k}$ are defined to be a point if $p=0$, two points $\ast_+$ if $p=1$, and for larger $p$ they are the $(p-2)$-simplices of the nerve of the poset of unordered partitions of $\underline{k}=\{1,\dots,k\}$, ordered by refinement, with a disjoint base-point. We consider $T_{k}$ as a $G\times\Sigma_k$-space on which $G$ acts trivially.

We recall that $\mathcal{F}_k$ is the family of graphs of group homomorphisms $\rho\colon H\to \Sigma_k$, for subgroups $H\leq G$. Following the notation of \ref{DnGsym} we let $\mathcal{F}_k(n)$ be the subset of $\mathcal{F}_k$ of the graphs such that the corresponding $H$-set has $n-1$ orbits, for $k>n$. For $k=n$ we let $\mathcal{F}_n(n)$ be the set of graphs for which the corresponding $H$-set is either trivial or it has $n-1$ orbits. For any orthogonal $G\times\Sigma_k$-spectrum $E$ we let $E_{h\mathcal{F}_k(n)}$ denote the corresponding homotopy orbits, the orthogonal $G$-spectrum
\[E_{h\mathcal{F}_k(n)}=E\wedge_{\Sigma_k}\overline{E}\mathcal{F}_k(n),\]
where $\overline{E}\mathcal{F}_k(n)$ is a pointed universal space for the set of subgroups $\mathcal{F}_k(n)$, as defined in \S\ref{indexedpowers}.

\begin{theorem}\label{derI}
For every pointed $G$-space $X$ and positive integer $n$, there is a natural $G$-equivalence
\[D_{nG}I(X)\simeq \Omega^{\infty G}\big(\bigvee_{k=n}^{n|G|}Map_\ast (T_k,\mathbb{S}_G\wedge X^{\wedge k})_{h\mathcal{F}_k(n)}\big)\]
where $\mathbb{S}_G$ denotes $G$-equivariant sphere spectrum.
\end{theorem}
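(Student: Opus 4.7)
\textbf{Proof proposal for Theorem \ref{derI}.} The plan is to follow Arone's strategy \cite{Arone} for analyzing the layers of the Goodwillie tower of the identity on pointed spaces, adapted to the genuine equivariant setting via the tools already developed in the paper. The central idea is to feed a cosimplicial model for $I$ into the genuine equivariant Snaith splitting, apply $D_{nG}$ termwise using Corollary \ref{symtree} and Example \ref{DnGsym}, and then recognize the resulting totalization as the partition complex construction.

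First I would use Theorem \ref{convergence} together with Remark \ref{Carlssonthing} to replace $I$ by the cosimplicial resolution $Q_G^{\bullet+1}(X)$, where $Q_G=\Omega^{\infty G}\Sigma^{\infty G}$, and observe that the tower $\{P_{nG}I\}$ is modeled by the truncations of this cosimplicial $G$-space. Since $D_{nG}$ commutes with finite equivariant homotopy limits and with sequential homotopy colimits (Remark \ref{PJcommute}(iii)), it suffices to compute $D_{nG}$ applied termwise to this cosimplicial object and reassemble via $\mathrm{Tot}$. Next I would apply the genuine equivariant Snaith splitting of \cite[VII-5.7]{LMS}:
\[\Sigma^{\infty G}\Omega^{\infty G}E\simeq \bigvee_{k\geq 1}(E^{\wedge k})_{h\mathcal{F}_k}\]
at each cosimplicial level. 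Iterating this at every degree of $Q_G^{\bullet+1}$, the functor $\Sigma^{\infty G}Q_{G}^{m+1}$ splits as a wedge indexed by sequences $(k_1,\dots,k_{m+1})$ of positive integers, whose summand is the indexed symmetric power functor $\mathcal{S}^{\mathcal{F}_K}_{\mathbb{S}_G}$ for $K$ the product $k_1\cdots k_{m+1}$, equipped with the appropriate iterated $\Sigma_{k_i}$-structure.

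Then I would apply $D_{nG}$ termwise. By Example \ref{DnGsym} and Corollary \ref{symtree}, $D_{nG}\mathcal{S}^{\mathcal{F}_k}_{C}\simeq \mathcal{S}^{\mathcal{F}_k(n)}_{C}$, so $D_{nG}$ kills all summands whose symmetry type does not correspond to a $G$-action with $n-1$ orbits (or to the trivial action when $k=n$). Since $\mathcal{F}_k(n)$ is empty outside the range $n\leq k\leq n|G|$, only finitely many wedge summands survive, giving the explicit range of the wedge in the statement. Using the stability of $\mathrm{Sp}_O^G$ and Remark \ref{PJcommute}(iv), $D_{nG}$ commutes with the wedge and with the homotopy orbits, so the surviving piece in total-degree $k$ is the cosimplicial spectrum assembled from the sub-wedge of sequences $(k_1,\dots,k_{m+1})$ with $k_1\cdots k_{m+1}=k$, each tensored with $\mathbb{S}_G\wedge X^{\wedge k}$ and restricted to the family $\mathcal{F}_k(n)$.

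Finally I would identify the totalization of this cosimplicial object with $Map_\ast(T_k,\mathbb{S}_G\wedge X^{\wedge k})_{h\mathcal{F}_k(n)}$, and then apply $\Omega^{\infty G}$ to pass from spectra to $G$-spaces. This identification is exactly Arone's combinatorial analysis: the cosimplicial object built from sequences $(k_1,\dots,k_{m+1})$ with product $k$ is a reformulation of the Bousfield-Kan cosimplicial replacement of the partition poset of $\underline{k}$, so its totalization is $Map_\ast(T_k,-)$ applied to $\mathbb{S}_G\wedge X^{\wedge k}$. This goes through equivariantly because $G$ acts trivially on the indexing set $\underline{k}$, hence on $T_k$; all the equivariance is carried by $\mathbb{S}_G\wedge X^{\wedge k}$ and the homotopy orbits over $\mathcal{F}_k(n)$. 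The main obstacle I anticipate is the careful bookkeeping required to track the $(G\times\Sigma_k)$-actions through the iterated Snaith splitting and to verify that the combinatorial cosimplicial structure produced by the monad $Q_G$ matches the cosimplicial replacement of the partition poset after restricting along $\mathcal{F}_k(n)$; in particular one must check that the naive product $\Sigma_{k_1}\wr\cdots\wr\Sigma_{k_{m+1}}$-structures on wedge summands recombine correctly into the $\Sigma_k$-action appearing in $Map_\ast(T_k,\mathbb{S}_G\wedge X^{\wedge k})_{h\mathcal{F}_k(n)}$.
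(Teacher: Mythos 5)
Your proposal follows essentially the same strategy as the paper: replace $I$ by the cosimplicial $Q_G^{\bullet}$, split termwise via the iterated genuine Snaith splitting (Lemma \ref{itersnaith}), apply $D_{nG}$ termwise using the derivative calculation of indexed symmetric powers (Example \ref{DnGsym}/Corollary \ref{symtree}), and identify the totalization with $\mathrm{Map}_\ast(T_k,-)_{h\mathcal{F}_k(n)}$. You also correctly anticipate the hardest bookkeeping issue (matching the Snaith-induced cosimplicial structure to the partition-complex one, which is Remark \ref{Gregmodel}'s reliance on Arone--Kankaanrinta). Two points need repair.

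\textbf{The gap: commuting $D_{nG}$ with $\mathrm{Tot}$.} You write that $D_{nG}$ commutes with the totalization because it ``commutes with finite equivariant homotopy limits and with sequential homotopy colimits (Remark \ref{PJcommute}(iii))\,''. But $\mathrm{Tot}$ of a cosimplicial object is an \emph{infinite} homotopy limit, and Remark \ref{PJcommute}(iii) explicitly restricts to finite ones. This is precisely the step the paper isolates as Lemma \ref{commuteDntot}, and it is not formal. The argument has two genuinely new ingredients: first, one uses the symmetric-power derivative computation (your Example \ref{DnGsym} input) and Lemma \ref{layerstot} to show that $D_{nG}$ of the $m$-th layer $L_m$ of the $\mathrm{Tot}$-tower vanishes for $m > n|G|$, so the tower $\{D_{nG}\mathrm{Tot}_m\}$ stabilizes. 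Second, one must separately show that $D_{nG}$ of the \emph{inverse} direction, i.e.\ $D_{nG}$ applied to the fiber $R_q$ of $\mathrm{Tot} \to \mathrm{Tot}_q$, is trivial for large $q$; this requires an explicit connectivity estimate of $T_{nG}^{(l)}R_q$ on $H$-fixed points (using the fixed-point connectivity formulas from \cite{Carlsson} and \cite{GdiagTop}), not a formal commutation property. The fact that $\mathcal{F}_k(n)$ is empty outside $n\leq k\leq n|G|$, which you do invoke, handles the first ingredient but not the second. Without this step, the equivalence $D_{nG}\mathrm{Tot}\,Q_G^\bullet \simeq \mathrm{Tot}\,D_{nG}Q_G^\bullet$ is unjustified.

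\textbf{A minor slip.} After expanding $(\bigvee_l X^{\wedge l}_{h\mathcal{F}_l})^{\wedge m}$ in the iterated Snaith splitting, the total smash power concentrates in degree $k = l_1 + \cdots + l_m$, not the product $l_1\cdots l_m$. The wedge summands are therefore indexed by unordered tuples $(l_1,\dots,l_m)$ with $\sum l_i = k$ (equivalently, partitions of $k$ into $m$ blocks), which is what makes the partition poset of $\underline{k}$ appear. Writing it as a product would disconnect the indexing from the partition complex.
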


Explicitly, the infinite loops space above is the homotopy colimit
\[\hocolim_{m\in\mathbb{N}}\Omega^{mG}\big(\bigvee_{k=n}^{n|G|}Map_\ast (T_k,S^{mG}\wedge X^{\wedge k})_{h\mathcal{F}_k(n)}\big)\]
where $\Sigma_k$ acts on $X^{\wedge k}$ by permuting the smash components, and it acts trivially on the permutation representation $S^{mG}$ of the $G$-set $mG$.

\begin{rem}\label{equivariantpart}
By the decomposition formula \ref{handy} the derivative $D_{nG}I(X)$ is the infinite loop space of a genuine $G$-spectrum whose geometric $H$-fixed-points are equivalent to
\[
\bigvee_{\substack{[H\rcirclearrowright K]\\
K/H=n-1\ \mbox{or}\ K=n
}}
\Phi^H Map_\ast (T_K,\mathbb{S}_G\wedge X^{\wedge K})_{hAut_K}.
\]
The wedge runs over the isomorphism classes of finite $H$-sets $K$ whose corresponding group homomorphism $\rho_K$ belongs to $\mathcal{F}_{k}(n)$, where $k=|K|$. The subscript $(-)_{hAut_K}$ denotes the homotopy orbits of the action of the group $Aut_K$ of equivariant automorphisms of $K$, which acts as a subgroup of $\Sigma_k$. Finally $T_K$ is the partition complex of $k$ on which $H$ acts via $\rho_K$. This $H$-space is built from the partitions of $K$ where $H$ acts by taking the image of the partitioning sets by the $H$-action.
\end{rem}

\begin{rem}
Upon the existence of a classification theorem for $nG$-homogeneous functor suggested in \ref{rem:classification}(iv), this theorem would say that the $nG$-derivative of the identity functor on pointed $G$-spaces is a Spanier-Whitehead dual of the partition complex of the $H$-sets with $n-1$ orbits, parametrized by the families $\mathcal{F}_k(n)$.  
\end{rem}

Before proving Theorem \ref{derI} we identify the terms of the splitting of $(D_{nG}I(X))^G$ given by Theorem \ref{higertomdieck}. For every normal subgroup $H\lhd G$ we let $\mathcal{Q}_{k,H}$ be the family of subgroups of $G\times\Sigma_k$ which are the graphs of $H$-invariant group homomorphisms $\rho\colon L\to \Sigma_k$ for subgroups $H\leq L\leq G$, such that $H$ is maximal among the normal subgroups of $G$ contained in $L$ on which $\rho$ is invariant. That is to say $H$ is the largest subgroup of $L$ normal in $G$ which acts trivially on the corresponding $L$-set. We write $\mathcal{Q}_{k,H}(n)$ for the subset of graphs with the orbit condition of \ref{DnGsym}, and $(-)_{h\mathcal{Q}_{k,H}(n)}=(-)\wedge_{\Sigma_k}\overline{E}\mathcal{Q}_{k,H}(n)$ for the corresponding homotopy orbits functor.

\begin{cor}[Higher Tom Dieck-splitting]\label{fixderI} For every pointed $G$-space $X$ there is a weak equivalence
\[(D_{nG}I(X))^G\simeq \prod_{H\lhd G}\prod_{k=n}^{n|G|}\Big(\Omega^{\infty G/H}Map_\ast\big(T_k,\mathbb{S}_{G/H}\wedge (X^H)^{\wedge k}\big)_{h\mathcal{Q}_{k,H}(n)}\Big)^{G/H}\]
where $\mathbb{S}_{G/H}$ is the $G/H$-sphere spectrum.
\end{cor}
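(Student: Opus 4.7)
The plan is to combine the Generalized Tom Dieck-splitting of Theorem \ref{higertomdieck} (applied to $\Phi=I$ and $J=nG$) with the calculation of the layers of the identity from Theorem \ref{derI}, but now applied to the various quotient groups $G/H$ for $H\lhd G$. Since the inclusion $I\colon\Top_\ast\to\Top_{\ast}^G$ commutes with fixed-points, Theorem \ref{higertomdieck} gives
\[(D_{nG}I(X))^G\simeq \prod_{H\lhd G}\hofib\big((D_{nG/H}I(X))^G\to \holim_{H<L\lhd G}(D_{nG/L}I(X))^G\big),\]
and the task reduces to identifying the $H$-summand for each normal subgroup $H$ of $G$.

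The central step is the identification, for each $H\lhd G$,
\[(D_{nG/H}I(X))^G \simeq (D_{n(G/H)}I_{G/H}(X^H))^{G/H},\]
where $I_{G/H}\colon\Top_\ast\to\Top_{\ast}^{G/H}$ is the inclusion. The $G$-set $nG/H$ is the free $G/H$-set of cardinality $n|G/H|$ pulled back along $G\to G/H$, so $H$ acts trivially on it. Combining the restriction formula of Theorem \ref{restriction} for $H\leq G$ with the fact that $I$ commutes with $H$-fixed-points (which makes the inclusion $X^H\hookrightarrow X$ an isomorphism on $H$-fixed $P_{nG/H}I$), one checks that taking $H$-fixed-points of the $G$-equivariant derivative $D_{nG/H}I(X)$ recovers the $G/H$-equivariant derivative $D_{n(G/H)}I_{G/H}(X^H)$. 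The argument is formally analogous to the construction in \S\ref{sec:convergence} of the retraction $(P_J\Phi)^G\to (P_{J/G}\Phi)^G$, but performed one normal subgroup at a time. Once this identification is established, Theorem \ref{derI} applied to $G/H$ yields
\[(D_{n(G/H)}I_{G/H}(X^H))^{G/H}\simeq \Big(\Omega^{\infty G/H}\bigvee_{k}Map_\ast(T_k,\mathbb{S}_{G/H}\wedge (X^H)^{\wedge k})_{h\mathcal{F}^{G/H}_k(n)}\Big)^{G/H},\]
where $\mathcal{F}^{G/H}_k(n)$ denotes the family of graphs of homomorphisms $K\to\Sigma_k$ with $K\leq G/H$ satisfying the orbit conditions of \ref{DnGsym}.

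It then remains to analyze the homotopy fiber. Via the pullback $G\to G/H$, subgroups $K\leq G/H$ correspond to subgroups $H\leq L\leq G$, and a homomorphism $\rho\colon K\to\Sigma_k$ corresponds to a $\rho\colon L\to\Sigma_k$ with $H\subset\ker\rho$. Under this correspondence the retraction maps $(D_{nG/H}I(X))^G\to (D_{nG/L}I(X))^G$ for $H<L\lhd G$ translate, on the summand-by-summand decomposition from Remark \ref{handy}, into the inclusions of subfamilies $\mathcal{F}^{G/L}_k(n)\subset\mathcal{F}^{G/H}_k(n)$ defined by requiring $L/H$ to lie in $\ker\rho$. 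The iterated homotopy fiber therefore isolates exactly the summands labeled by those $\rho\colon L\to\Sigma_k$ for which $H$ is maximal among the subgroups of $L$ normal in $G$ and contained in $\ker\rho$, i.e., the set $\mathcal{Q}_{k,H}(n)$. The resulting $H$-summand is precisely the displayed expression, and summing over $H\lhd G$ completes the proof. The main obstacle will be the bookkeeping required in the key identification $(D_{nG/H}I(X))^G\simeq (D_{n(G/H)}I_{G/H}(X^H))^{G/H}$ and the verification that the retractions constructed in \S\ref{tomdieckspl} match, under this identification, the family-inclusion maps appearing in the $G/H$-equivariant Snaith decomposition of Theorem \ref{derI}; each ingredient is visible in the proofs of \ref{convergence} and \ref{liftonfixedpts}, but assembling them correctly on the spectrum level requires care.
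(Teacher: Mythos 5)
Your proposal reaches the correct conclusion but reorganizes the argument in a genuinely different way. After the shared first step (Theorem \ref{higertomdieck}), the paper stays entirely on the $G$-equivariant side: it feeds the formula of Theorem \ref{derI} into the $G$-fixed-points spectrum functors $F^{G}_{G/H}$, rewrites $F^{G}_{G/H}\big((-)_{h\mathcal{F}_{k,H}(n)}\big)$ as $F^{G}_G\big((-)\wedge_{\Sigma_k}\mathcal{I}_{k,H}(n)\big)$ with $\mathcal{I}_{k,H}(n)=\overline{E}\mathcal{F}_{k,H}(n)\wedge\widetilde{E\mathcal{F}}_H$, pushes the iterated homotopy fiber inside the smash, characterizes the $G\times\Sigma_k$-space $\holim_{H\leq K\lhd G}\hofib\big(\mathcal{I}_{k,H}(n)\to\mathcal{I}_{k,K}(n)\big)$ by its fixed points to recognize $\overline{E}\mathcal{Q}_{k,H}(n)$, and only then passes through geometric $H$-fixed points to produce the $X^H$ in the statement. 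You instead perform a change-of-group reduction at the outset, asserting $(D_{nG/H}I(X))^G\simeq(D_{n(G/H)}I_{G/H}(X^H))^{G/H}$, apply Theorem \ref{derI} to each quotient group $G/H$, and then track the retractions summand by summand via Remark \ref{handy} as inclusions of families. That inflation identity is true---since $H$ acts trivially on the inflated $G$-set $nG/H$, both maps in (\ref{fixedptsdiag}) are equivalences on $H$-fixed points, and $I$ commuting with fixed-points then gives $(T_{nG/H}I(X))^H\simeq T_{n(G/H)}I_{G/H}(X^H)$ as $G/H$-spaces, hence the same for $P$ and $D$---but it deserves a short standalone argument along these lines rather than an appeal to Theorem \ref{restriction}, which only controls the underlying $H$-equivariant homotopy type, not the residual $G/H$-action on the $H$-fixed points. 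Both routes ultimately require matching the abstractly constructed retractions of \S\ref{tomdieckspl} with the evident classifying-space maps on the Snaith-split pieces; you flag this as the delicate step, and the paper treats it implicitly as well. The trade-off: the paper's universal-space bookkeeping avoids the inflation step, while your route reduces cleanly to Theorem \ref{derI} applied to each $G/H$ once the inflation compatibility of the calculus constructions is established.
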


\begin{proof}
We identify the terms of the splitting Theorem \ref{higertomdieck}. By Theorem \ref{derI} the $G$-fixed-points space of $D_{nG/H}I(X)$, for a normal subgroup $H$ of $G$, is equivalent to the infinite loop space
\[\Omega^{\infty}F_{G/H}^G\Big(\bigvee_{k=n}^{n|G|}Map_\ast\big(T_k,\mathbb{S}\wedge X^{\wedge k}\big)_{h\mathcal{F}_{k,H}(n)}\Big)\]
where $\mathbb{S}$ is the sphere spectrum, $F_{G/H}^G$ is the $G$-fixed-points spectrum with respect to the universe $\bigoplus_{k}\mathbb{R}[G/H]$, and $\mathcal{F}_{k,H}$ is the family of graphs of group homomorphisms $S\to \Sigma_k$ , for subgroups $S\leq G/H$. We let $G\times \Sigma_k$ act on $\overline{E}\mathcal{F}_{k,H}$ via the projection map to $G/H\times\Sigma_k$.
As a $G\times \Sigma_k$ space, $\overline{E}\mathcal{F}_{k,H}$ is a classifying space for the family of subgroups of $G\times\Sigma_k$ which are graphs of group homomorphisms $\rho\colon L\to \Sigma_k$ where $L$ is a subgroup of $G$ and $\rho(H\cap L)=1$. Let us simplify the notation and write $M_k(X)$ for the $G$-spectrum
\[M_k(X):=Map_\ast\big(T_k,\mathbb{S}\wedge X^{\wedge k}\big).\]
Since $H$ acts trivially both on $\overline{E}\mathcal{F}_{k,H}(n)$ and $T_k$, there are $G/H$-equivariant isomorphisms
\[\Phi^H (M_k(X)_{h\mathcal{F}_{k,H}(n)})\cong F_{1}^H (M_k(X)_{h \mathcal{F}_{k,H}(n)})\cong M_k(X^H)_{h \mathcal{F}_{k,H}(n)}.\]
The argument used in the proof of \ref{transTD}(\ref{one}) shows that there is an equivalence
\[F_{G/H}^G(M_k(X)_{h \mathcal{F}_{k,H}(n)})\simeq F_{G}^G( (M_k(X)\wedge \widetilde{E\mathcal{F}}_H)_{h\mathcal{F}_{k,H}(n)})\]
where $\mathcal{F}_H$ is the family of subgroups of $G$ that do not contain $H$ and $\widetilde{E\mathcal{F}}_H$ is the cofiber of the map $(E\mathcal{F}_H)_+\to S^0$, with the trivial $\Sigma_k$-action. The $G\times\Sigma_k$-space $\overline{E}\mathcal{F}_{k,H}(n)\wedge\widetilde{E\mathcal{F}}_H$ is characterized by the following property
\[\big(\overline{E}\mathcal{F}_{k,H}(n)\wedge\widetilde{E\mathcal{F}}_H\big)^\Gamma=\left\{
\begin{array}{ll}
S^0
&
\begin{tabular}{l}
\mbox{if $\Gamma$ is the graph of an $H$-invariant group homomorphism}\\
\mbox{$\rho\colon L\to\Sigma_k$, for a subgroup $H\leq L\leq G$, and $\Gamma\in \mathcal{F}_k(n)$,}
\end{tabular}
\\
\ast&\ \ \mbox{otherwise}.
\end{array}
\right.
\]
Let us denote this $G\times\Sigma_k$-space by $\mathcal{I}_{k,H}(n)$, and we write the equivalence above as
\[F_{G/H}^GM_k(X)_{h \mathcal{F}_{k,H}(n)}\simeq F_{G}^G(M_k(X)\wedge_{\Sigma_k}\mathcal{I}_{k,H}(n)).\]
The $H$-factor of the splitting \ref{higertomdieck} is thus equivalent to the infinite loop space of
\[\bigvee_{k=n}^{n|G|}\hofib\big(
F_{G}^G(M_k(X)\wedge_{\Sigma_k} \mathcal{I}_{k,H}(n))\longrightarrow\holim_{H\leq K\lhd G}F_{G}^G(M_k(X)\wedge_{\Sigma_k} \mathcal{I}_{k,K}(n))
\big).\]
By commuting the constructions appropriately this is equivalent to
\[\bigvee_{k=n}^{n|G|}F^{G}_{G}\big( M_k(X)\wedge_{\Sigma_k} \holim_{H\leq K\lhd G} \hofib(\mathcal{I}_{k,H}(n)\to \mathcal{I}_{k,K}(n))\big).\]
Let us show that the inner homotopy limit satisfies the fixed-points condition of the space $\overline{E}\mathcal{Q}_{k,H}(n)$ of \ref{fixderI}. If $\Gamma\leq G\times\Sigma_k$ is not in $\mathcal{F}_k(n)$ the $\Gamma$-fixed-points of the homotopy limit are clearly contractible. 
Let $\Gamma\in \mathcal{F}_k(n)$ be the graph of $\rho\colon L\to \Sigma_k$, for some subgroup $L$ of $G$. The $\Gamma$-fixed-points of the homotopy fiber are
\[
\hofib\big(\mathcal{I}_{k,H}(n)\to \mathcal{I}_{k,K}(n)\big)^\Gamma\simeq
\left\{\begin{array}{ll}
S^0& \begin{tabular}{ll}\mbox{if $H\leq L$, $\rho$ is $H$-invariant and}\\
\mbox{either $K\not\leq L$ or $\rho$ is not $K$-invariant}
\end{tabular}
\\
\ast & \ \! \ \mbox{otherwise}
\end{array}\right.
\]
and the maps induced by an inclusion $K\leq K'$ are either an identity or the  base-point inclusion $\ast\to S^0$. Thus the only possibility for the $\Gamma$-fixed-points of the homotopy limit over $K$ to be equivalent to $S^0$ is if all the spaces in the diagram are $S^0$. This is the case precisely when $H\leq L$ is maximal among the normal subgroups of $G$ over which $\rho$ is $H$-invariant. This is precisely the condition defining $\mathcal{Q}_{k,H}(n)$. Thus the $H$-term of the splitting Theorem \ref{higertomdieck} is the infinite loop space of the wedge of the spectra
\[F^{G}_{G}\big(M_k(X)_{h\mathcal{Q}_{k,H}(n)}\big).\]
Observe that the map $\overline{E}\mathcal{Q}_{k,H}(n)\to \overline{E}\mathcal{Q}_{k,H}(n)\wedge\widetilde{E\mathcal{F}}_H$ induced by the $G$-fixed point of $\widetilde{E\mathcal{F}}_H$ different than the base-point is a $G\times \Sigma_k$-equivalence.
Thus the term above is equivalent to 
\[F^{G}_{G}\big(M_k(X)_{h\mathcal{Q}_{k,H}(n)}\big)\simeq F_{G/H}^{G/H} \Phi^H\big(M_k(X)_{h\mathcal{Q}_{k,H}(n)}\big).\]
Since $H$ acts trivially on $\overline{E}\mathcal{Q}_{k,H}(n)$ and on $T_k$ the inner geometric fixed-points spectrum is equivalent to $M_k(X^H)_{h\mathcal{Q}_{k,H}(n)}$, and this concludes the proof.
\end{proof}

Our proof of Theorem \ref{derI} is an analysis of the iterated equivariant Snaith-splitting similar to \cite{Arone}. We let
\[Q_G=\Omega^{\infty G}\Sigma^{\infty G}\colon \Top_{\ast}^G\longrightarrow \Top_{\ast}^G\]
be stable equivariant homotopy. We recall from \cite{LMS} that the stable Snaith-splitting on the complete $G$-universe is the $G$-equivariant decomposition
\[Q_GQ_G(X)\simeq Q_G\Big(\bigvee_{k=1}^{\infty}X^{\wedge k}_{h\mathcal{F}_k}\Big)\]
where we denoted $(-)_{h\mathcal{F}_k}=(-)\wedge_{\Sigma_k}(E\mathcal{F}_k)_+$, and $\Sigma_k$ acts on $X^{\wedge k}$ by permuting the smash factors. Iterations of the Snaith-splitting give the following formula. It already appears in \cite{Carlsson}, and in \cite{Arone} for the trivial group. 

\begin{lemma}\label{itersnaith}
For every pointed $G$-space $X$ and every $p\geq 1$, there is a $G$-equivalence
\[Q_{G}^{p+1}X\simeq Q_G\Big(\bigvee_{k=1}^{\infty}(T_{k}^p\wedge X^{\wedge k})_{h\mathcal{F}_k}\Big)\]
where $T_{k}^p$ is the set of $p$-simplices of $T_k$. In particular for every $n\geq 0$ there is an equivalence
\[D_{nG}Q^{p+1}_G(X)\simeq
\Omega^{\infty G}\big(\bigvee_{k=n}^{n|G|}Map_\ast (T^{p}_k,\mathbb{S}_G\wedge X^{\wedge k})_{h\mathcal{F}_k(n)}\big)\]
where $\mathcal{F}_k(n)$ is the family of subgroups of $G\times\Sigma_k$ determined by the $H$-sets with $n-1$-orbits (see \ref{DnGsym}).
\end{lemma}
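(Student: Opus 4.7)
My plan is to prove both parts by induction on $p$, using the equivariant Snaith splitting recalled just above the lemma as the inductive engine. The base case $p=1$ of the first equivalence is exactly the equivariant Snaith splitting: since $T_k^1 = S^0$, we have $T_k^1 \wedge X^{\wedge k} = X^{\wedge k}$ and the claimed formula reduces to $Q_GQ_GX \simeq Q_G(\bigvee_k X^{\wedge k}_{h\mathcal{F}_k})$.

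For the inductive step of the first equivalence, assuming $Q_G^{p+1}X \simeq Q_G(\bigvee_k (T_k^p \wedge X^{\wedge k})_{h\mathcal{F}_k})$, I would apply $Q_G$ to both sides and invoke the Snaith splitting once more on the resulting $Q_GQ_G$ to obtain
\[Q_G^{p+2}X \simeq Q_G\Big(\bigvee_{j\geq 1}\big((\bigvee_k (T_k^p \wedge X^{\wedge k})_{h\mathcal{F}_k})^{\wedge j}\big)_{h\mathcal{F}_j}\Big).\]
Distributing the $j$-fold smash over the inner wedge produces a wedge indexed by tuples $(k_1,\ldots,k_j)$ of positive integers, with summand built from $T_{k_1}^p \wedge \cdots \wedge T_{k_j}^p \wedge X^{\wedge m}$ where $m = \sum k_i$. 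The combinatorial heart of the argument, and the main obstacle, is the natural $G \times \Sigma_m$-equivariant identification of pointed sets
\[T_m^{p+1}\ \cong\ \bigvee_{j\geq 1}\Big(\bigvee_{\substack{(k_1,\ldots,k_j)\\ \sum k_i = m}} (\Sigma_m)_+\wedge_{\Sigma_{k_1}\times\cdots\times\Sigma_{k_j}} \big(T_{k_1}^p\wedge\cdots\wedge T_{k_j}^p\big)\Big)_{\Sigma_j},\]
which re-encodes a $(p-1)$-simplex of the partition poset of $\{1,\ldots,m\}$ as the datum of an unordered decomposition of $\{1,\ldots,m\}$ into labeled blocks $B_1,\ldots,B_j$ of sizes $k_1,\ldots,k_j$ (reading off the coarsest partition of the chain) together with a chain of $p-1$ partitions of each block. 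This is the equivariant enhancement of Arone's description in \cite{Arone}. Combined with the observation that $E\mathcal{F}_j \times \prod_i E\mathcal{F}_{k_i}$ provides a classifying space for the restriction of the family $\mathcal{F}_m$ along the wreath-product inclusion into $G \times \Sigma_m$, this identification reorganizes the right-hand side into $Q_G(\bigvee_m (T_m^{p+1}\wedge X^{\wedge m})_{h\mathcal{F}_m})$, completing the induction. Checking that the nested homotopy-orbit structures recombine correctly is the delicate technical step.

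For the second equivalence, I would start from the first and apply $D_{nG}$. Rewriting $Q_G(-) = \Omega^{\infty G}(\mathbb{S}_G \wedge -)$ turns the first equivalence into
\[Q_G^{p+1}X \simeq \Omega^{\infty G}\bigvee_{k\geq 1}(\mathbb{S}_G\wedge T_k^p\wedge X^{\wedge k})_{h\mathcal{F}_k},\]
where each summand is precisely the symmetric indexed power functor $\mathcal{S}^{\mathcal{F}_k}_{\mathbb{S}_G\wedge T_k^p}(X)$ of Example \ref{ex:excision}(vi). Remark \ref{PJcommute} lets me commute $D_{nG}$ past the outer $\Omega^{\infty G}$ (a sequential colimit of finite equivariant homotopy limits) and past the wedge, which is effectively finite because $\mathcal{F}_k(n)$ is empty outside the stated range. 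Applying the identification $D_{nG}\mathcal{S}^{\mathcal{R}}_C\simeq \mathcal{S}^{\mathcal{R}(n)}_C$ from Example \ref{DnGsym} to each summand yields
\[D_{nG}Q_G^{p+1}(X) \simeq \Omega^{\infty G}\bigvee_{k=n}^{n|G|}(\mathbb{S}_G\wedge T_k^p\wedge X^{\wedge k})_{h\mathcal{F}_k(n)}.\]
Since $T_k^p$ is a finite pointed set and hence Spanier--Whitehead self-dual in spectra, the canonical equivalence $\mathbb{S}_G\wedge T_k^p\simeq Map_\ast(T_k^p,\mathbb{S}_G)$ of $G\times\Sigma_k$-spectra converts this into the stated formula.
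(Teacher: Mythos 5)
Your proposal is correct and follows essentially the same route as the paper: the base case $p=1$ is the equivariant Snaith splitting (using $T_k^1=S^0$), the inductive step applies Snaith once more and reorganizes the resulting nested homotopy orbits, and the layer formula in the second part follows from $D_{nG}\mathcal{S}^{\mathcal{R}}_C\simeq\mathcal{S}^{\mathcal{R}(n)}_C$ (Example \ref{DnGsym}) together with the finiteness of $T_k^p$ (which the paper phrases as the Wirthm\"uller isomorphism and you phrase as self-duality of finite pointed sets). The paper carries out the reorganization concretely only for $p=2$ --- via exactly the fixed-point computation on the universal spaces $E\mathcal{F}_*$ that you flag as the "observation'' and the "delicate step'' --- and invokes induction for larger $p$; your uniform inductive step, encoded in the explicit $G\times\Sigma_m$-equivariant identity for $T_m^{p+1}$, is a clean reformulation of that same induction rather than a different argument.
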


\begin{proof}
The case $p=1$ is the Snaith splitting. We show the case $p=2$. For larger $p$ the proof is by induction, using an analogous argument.

Applying the Snaith splitting twice we get a decomposition
\[Q_{G}^{3}X\simeq Q_G\Big(\bigvee_{m=1}^{\infty}\big(
\bigvee_{l=1}^{\infty}X^{\wedge l}_{h\mathcal{F}_l}
\big)_{h\mathcal{F}_m}^{\wedge m}\Big)\cong
Q_G\Big(\bigvee_{m=1}^{\infty}\big(
\bigvee_{l_1,\dots,l_m\geq 1}X^{\wedge l_1}_{h\mathcal{F}_{l_1}}\wedge\dots\wedge X^{\wedge l_m}_{h\mathcal{F}_{l_m}}
\big)_{h\mathcal{F}_m}\Big).\]
The quotient of a wedge of spaces indexed on a $G$-set $J$ is described by the formula
\[\big(\bigvee_{i\in I} X_i\big)/_G\cong \bigvee_{[i]\in I/G}X_{i}/_{G_i}\]
where $G_i$ is the stabilizer of $i$ in $G$. The action on the wedge is defined from compatible maps $g\colon X_i\to X_{gi}$. This formula for the quotient ${h\mathcal{F}_{l_m}}$ gives a $G$-equivalence
\[Q_{G}^{3}X\simeq Q_G\Big(\bigvee_{m=1}^{\infty}
\bigvee_{[\underline{l}]\in \{l_1,\dots,l_m\}/_{\Sigma_m}}\big(X^{\wedge l_1}_{h\mathcal{F}_{l_1}}\wedge\dots\wedge X^{\wedge l_m}_{h\mathcal{F}_{l_m}}
\wedge (E\mathcal{F}_m)_+ \big)/_{(\Sigma_{m})_{\underline{l}}}\Big).\]
By reordering the terms of the wedge by the sum of the $l_1,\dots,l_m$'s we can rewrite this expression as
\[Q_{G}^{3}X\simeq Q_G\Big(\bigvee_{k=1}^{\infty}
\bigvee_{m=1}^n\bigvee_{[\underline{l}]\in \left\{\substack{l_1,\dots,l_m\\
\sum l_i=k
}\right\}/_{\Sigma_m}}\big(X^{\wedge k}_{h\mathcal{F}_{l_1}\dots h\mathcal{F}_{l_m}}
\wedge (E\mathcal{F}_m)_+ \big)/_{(\Sigma_{m})_{\underline{l}}}\Big)\]
where both $\Sigma_{l_1}\times \dots \times\Sigma_{l_m}$ and $(\Sigma_{m})_{\underline{l}}$ act on $X^{\wedge k}$ via their inclusions in $\Sigma_k$ determined by $\underline{l}$. On the other hand also the $(T_{k}^2\wedge X^{\wedge k})_{h\mathcal{F}_k}$ decompose as
\[Q_G\Big(\bigvee_{k=1}^{\infty}(T_{k}^2\wedge X^{\wedge k})_{h\mathcal{F}_k}\Big)\simeq
Q_G\Big(\bigvee_{k=1}^{\infty}\bigvee_{[\lambda]\in P_{k}/_{\Sigma_k}}( X^{\wedge k}\wedge (E\mathcal{F}_k)_+)/_{(\Sigma_k)_{\lambda}}\Big)
\]
where $P_k$ is the set of unordered partitions of $k$, so that by definition $T^{2}_k=(P_k)_+$. It is therefore sufficient to show that for every $k\geq 1$ there is a $G$-equivalence
\[\bigvee_{m=1}^k\bigvee_{[\underline{l}]\in \left\{\substack{l_1,\dots,l_m\\
\sum l_i=k
}\right\}/_{\Sigma_m}}\big(X^{\wedge k}_{h\mathcal{F}_{l_1}\dots h\mathcal{F}_{l_m}}
\wedge (E\mathcal{F}_m)_+ \big)/_{(\Sigma_{m})_{\underline{l}}}\simeq \bigvee_{[\lambda]\in P_{k}/_{\Sigma_k}}( X^{\wedge k}\wedge (E\mathcal{F}_k)_+)/_{(\Sigma_k)_{\lambda}}.\]
The sets indexing the wedges are canonically isomorphic, by taking the equivalence class of a partition to the sizes of its sets. Thus we are left with finding a natural $G$-equivalence
\[\big(X^{\wedge k}\wedge (E\mathcal{F}_{l_1}\times\dots \times E\mathcal{F}_{l_m}\times E\mathcal{F}_m)_+ \big)/_{(\Sigma_{l_1}\times \dots \times \Sigma_{l_m})\oplus(\Sigma_{m})_{\underline{l}}}\simeq (X^{\wedge k}\wedge (E\mathcal{F}_k)_+)/_{(\Sigma_k)_{\underline{l}}}\]
for every $m$-tuple of positive integers $\underline{l}=(l_1,\dots,l_m)$ whose components have sum $k$. First observe that $(\Sigma_{l_1}\times \dots \times \Sigma_{l_m})\oplus(\Sigma_{m})_{\underline{l}}$ and $(\Sigma_k)_{\underline{l}}$ are equal as subgroups of $\Sigma_k$. For every subgroup $H$ of $G$ the $H$-fixed-points of the respective spaces are
\[\big(\bigvee_{\rho\colon H\to (\Sigma_k)_{\underline{l}}} (X^{\wedge k})^H\wedge (E\mathcal{F}_{l_1}\times\dots \times E\mathcal{F}_{l_m}\times E\mathcal{F}_m)^{H}_+ \big)/_{(\Sigma_k)_{\underline{l}}}\]
and
\[\big(\bigvee_{\rho\colon H\to (\Sigma_k)_{\underline{l}}} (X^{\wedge k})^H\wedge (E\mathcal{F}_k)^{H}_+ \big)/_{(\Sigma_k)_{\underline{l}}}\]
by the decomposition formula of \ref{handy}.
Both $(E\mathcal{F}_{l_1}\times\dots \times E\mathcal{F}_{l_m}\times E\mathcal{F}_m)^{H}$ and $(E\mathcal{F}_k)^{H}$ are contractible for every $\rho\colon H\to (\Sigma_k)_{\underline{l}}$, and $(\Sigma_k)_{\underline{l}}$ acts freely on the wedges. Thus both $G$-spaces are equivalent to the homotopy orbits space
\[\big(\bigvee_{\rho\colon H\to (\Sigma_k)_{\underline{l}}} (X^{\wedge k})^H\big)_{h(\Sigma_k)_{\underline{l}}}.\]

The formula for $D_{nG}Q^{k+1}_G(X)$ now follows easily from Example \ref{DnGsym}. Since $D_{nG}$ commutes with the infinite loops space functor, there are $G$-equivalences
\[D_{nG}Q^{p+1}_G(X)\simeq Q_G\big(\bigvee_{k=n}^{n|G|}(T^{p}_k\wedge X^{\wedge k})_{h\mathcal{F}_k(n)}\big)\simeq
\Omega^{\infty G}\big(\bigvee_{k=n}^{n|G|}Map_\ast (T^{p}_k,\mathbb{S}_G\wedge X^{\wedge k})_{h\mathcal{F}_k(n)}\big)\]
for every $n\geq 0$ and every $p\geq 1$. The second equivalence is the Wirthm\"{u}ller isomorphism Theorem.
\end{proof}

\begin{rem}\label{Gregmodel}
The simplicial structure of the partition complex $T_k$ induces by functoriality a cosimplicial structure on the $G$-spectra $Map_\ast (T^{\bullet}_k,\mathbb{S}_G\wedge X^{\wedge k})_{h\mathcal{F}_k(n)}$. It is unclear from the iterations of the Snaith-splitting if this cosimplicial structure corresponds to the cosimplicial structure of $D_{nG}Q^{\bullet}_G(X)$ induced by the monadic cosimplicial structure of $Q^{\bullet}_G(X)$, under the equivalence of \ref{itersnaith}. It is a highly non-trivial result of Arone and Kankaanrinta \cite{Arone} that for the trivial group at least the co-faces do agree. The constructions of \cite{Arone} are completely functorial, and they can be generalized equivariantly, showing that the thick totalizations of $D_{nG}Q^{\bullet}_G(X)$ and $\Omega^{\infty G}\big(\bigvee_{k=n}^{n|G|}Map_\ast (T^{\bullet}_k,\mathbb{S}_G\wedge X^{\wedge k})_{h\mathcal{F}_k(n)}\big)$ are equivalent.
\end{rem}

The next step of our calculation is to commute the thick totalization of the cosimplicial $G$-space $Q_{G}^{\bullet}$ with $D_{nG}$. This is not automatic since $D_{nG}$ commutes in general only with finite limits, and the thick totalization of a cosimplicial space is an infinite limit. Our argument is almost identical to the non-equivariant proof of \cite[App. A]{Arone}.
The thick totalization of the cosimplicial $G$-space $Q^{\bullet}_G(X)$ is the homotopy limit of a tower
\[Tot Q^{\bullet}_G(X)\simeq \holim_m\big(\dots\to Tot_m Q^{\bullet}_G(X)\stackrel{f_m}{\longrightarrow} Tot_{m-1} Q^{\bullet}_G(X)\stackrel{}{\to}\dots\to Q^{\bullet}_G(X)\big).\]
Let $c_m\colon \mathcal{P}(\underline{m})\to\Delta$ be the functor of \cite[7.2]{Sinha} that sends a subset $U$ of $\underline{m}$ to $m-|U|$. The homotopy fiber $L_m$ of $f_m$ is $G$-equivalent to
\[L_m\simeq \Omega^m T\hofib( U\longmapsto Q^{c_m(U)}_G(X))\]
where $T\hofib$ denotes the total homotopy fiber of the $m$-cube. This is proved in \cite[7.3]{Sinha} for pointed spaces, and it generalizes immediately to pointed $G$-spaces because fixed-points commute with homotopy limits.

\begin{lemma}\label{layerstot}
For every $m\geq 2$, there is a natural $G$-equivalence
\[T\hofib\big( U\longmapsto Q^{c_m(U)}_G(X)\big)\simeq Q_G\big(\bigvee_{k=m-1}^{\infty}(\widetilde{T}_{k}^m\wedge X^{\wedge k})_{h\mathcal{F}_k}\big)\]
where $\widetilde{T}_{k}^m$ is the set of non-degenerate $m$-simplices of the partition complex $T_{k}^m$, together with a disjoint base-point. 
\end{lemma}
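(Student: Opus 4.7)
The proof follows the strategy of Arone's non-equivariant computation of the layers of the Taylor tower of the identity \cite[App.~A]{Arone}, adapted to the equivariant setting with the equivariant Snaith splitting (Lemma~\ref{itersnaith}) in place of its non-equivariant counterpart. The plan has three main steps.

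\emph{Step 1: Diagrammatic iterated Snaith.} I would apply Lemma~\ref{itersnaith} vertex-by-vertex: for every $U$ with $c_m(U) \geq 2$,
\[
Q_G^{c_m(U)}(X) \;\simeq\; Q_G\Big(\bigvee_{k \geq 1}\bigl(T_k^{c_m(U)-1} \wedge X^{\wedge k}\bigr)_{h\mathcal F_k}\Big),
\]
and upgrade these pointwise equivalences to an equivalence of $m$-cubes. The cube structure on the left comes from the monad multiplications $\mu \colon Q_G Q_G \to Q_G$, which under the splittings correspond to the simplicial face maps of $T_k^\bullet$; this correspondence is exactly what the iterated wedge-rearrangements in the proof of Lemma~\ref{itersnaith} produce, once one tracks them carefully. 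The two vertices $|U|=m-1$ and $|U|=m$ sit outside the range of Lemma~\ref{itersnaith}, and I would handle them by a separate argument using the basic (single-step) equivariant Snaith splitting together with the unit $\eta\colon X \to Q_G(X)$: the contribution of these vertices is precisely what collapses the ``degenerate'' part of $T_k^\bullet$ and retains only the non-degenerate simplices in the final formula.

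\emph{Step 2: Commute $Q_G$ past the total fiber.} After Step~1, the cube has the form $U \mapsto Q_G(C(U)) = \Omega^{\infty G}\Sigma^{\infty G} C(U)$ for a cube $C$ of pointed $G$-spaces. Since $\Omega^{\infty G}$ preserves homotopy limits,
\[
T\hofib\bigl(Q_G \circ C\bigr) \;\simeq\; \Omega^{\infty G}\bigl(T\hofib^{\Sp}\bigl(\Sigma^{\infty G} \circ C\bigr)\bigr),
\]
with the right-hand total fiber computed in the stable category of $G$-spectra. Stable total fibers commute with wedges, with the homotopy orbit construction $(-)_{h\mathcal F_k}$, and with smashing by a fixed spectrum; so for each fixed $k \geq 1$ the problem reduces to computing the stable total fiber of the cube $U \mapsto T_k^{c_m(U)-1}$ of pointed sets (smashed with $\Sigma^{\infty G} X^{\wedge k}$ and then with $\mathcal{F}_k$-orbits taken).

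\emph{Step 3: Extract non-degenerate simplices.} The cube $U \mapsto K^{m-|U|-1}$ of face maps associated to a pointed simplicial set $K^\bullet$ has stable total fiber naturally equivalent to the suspension spectrum of $\widetilde{K^m}$, the pointed set of non-degenerate $m$-simplices. This is the dual Dold--Kan / normalization statement: viewing $K^\bullet$ as a cosimplicial pointed set under the face-as-codegeneracy correspondence, the normalized cochains are exactly the non-degenerate simplices, and total fibers of cubes of codegeneracies compute this normalization. Applying this with $K = T_k$ and assembling the pieces yields the stated formula; the range $k \geq m-1$ is immediate from the fact that the partition lattice of $\underline{k}$ has chains of length at most $k-1$, so $\widetilde{T_k^m} = \ast$ whenever $k < m-1$.

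\textbf{Main obstacle.} The principal difficulty lies in Step~1: promoting the pointwise Snaith equivalences to a diagrammatic equivalence of $m$-cubes, with the monad multiplications correctly matched to the simplicial structure of $T_k^\bullet$, and simultaneously handling the two low-dimensional vertices that sit outside the Snaith regime. Non-equivariantly this is the technical heart of \cite[App.~A]{Arone} (compare also Remark~\ref{Gregmodel}); equivariantly one must additionally track the families $\mathcal{F}_k$ and the various $\Sigma_k$-actions through every rearrangement of wedges, but no essentially new ideas beyond the careful bookkeeping of Arone's argument are required.
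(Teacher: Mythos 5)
Your proposal is correct and follows the same strategy as the paper: apply Lemma~\ref{itersnaith} to the vertices of the cube, then identify the total homotopy fiber with the non-degenerate simplices $\widetilde{T}_k^m$ by iterating over the cube maps, exactly the Dold--Kan normalization idea you invoke in Step~3; you also correctly single out the compatibility of the Snaith splitting with the cosimplicial structure maps as the genuine technical point (compare Remark~\ref{Gregmodel}). One small slip: the monad multiplications $\mu$ are the codegeneracies $\sigma^j$ of $Q^\bullet_G$, and since $Map_\ast(T_k^\bullet,-)$ is contravariant these correspond under the splitting to the simplicial \emph{degeneracy} maps $s_j$ of $T_k^\bullet$ (not the face maps), which is why the paper's inductive computation runs over the cofibers of the $s_j$.
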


\begin{proof}
By Lemma \ref{itersnaith} the homotopy fiber of a co-degeneracy map $\sigma^j\colon Q^{p+1}_G(X)\to Q^{p}_G(X)$ is equivalent to
\[\Omega^{\infty G}\Big(\bigvee_{k=1}^{\infty}Map_\ast(cof (T_{k}^{p}\stackrel{s_j}{\longrightarrow} T_{k}^{p+1}),\mathbb{S}_{G}\wedge X^{\wedge k})_{h\mathcal{F}_k}\Big).\]
The cofiber of $s_j$ is the set of sequences of partitions $\underline{\lambda}\in P^{p-1}_k$ such that $\lambda_j\neq \lambda_{j+1}$, with a disjoint base-point.
By calculating the total homotopy fiber of the $m$-cube $Q^{c_m(U)}_G(X)$ inductively, it is easy to see that it is equivalent to
\[Q_G\big(\bigvee_{k=1}^{\infty}(\widetilde{T}_{k}^m\wedge X^{\wedge k})_{h\mathcal{F}_k}\big).\]
Finally, observe that if $k<m-1$ the set $\widetilde{T}_{k}^m$ must be a point (there are no chains of strictly increasing partitions of $k-2$ of length longer than $k-2$).
\end{proof}

\begin{lemma}\label{commuteDntot}
The canonical map $D_{nG}Tot Q^{\bullet}_G(X)\to Tot (D_{nG}Q^{\bullet}_G)(X)$ is a $G$-equivalence.
\end{lemma}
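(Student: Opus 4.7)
The strategy is to reduce both sides of the comparison to a finite partial totalization $Tot_m$, where $D_{nG}$ commutes with the (now finite) homotopy limit automatically, and then to show that the tails of both towers are invisible to $D_{nG}$. The key tool throughout will be Remark~\ref{PJcommute}: $D_{nG}$ commutes with finite homotopy limits, with sequential homotopy colimits, and, for targets in spectra, with stable wedges.

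First I would analyze the layers of the Bousfield--Kan tower via Lemma~\ref{layerstot}. The fiber $L_m(X)$ of $Tot_m Q^{\bullet}_G(X)\to Tot_{m-1}Q^{\bullet}_G(X)$ has the form $\Omega^m Q_G\big(\bigvee_{k\geq m-1}(\widetilde{T}_k^m\wedge X^{\wedge k})_{h\mathcal{F}_k}\big)$. For any pointed $\Sigma_k$-space $Y$, the functor $X\mapsto (Y\wedge X^{\wedge k})_{h\mathcal{F}_k}$ is the diagonal of a $k$-multilinear functor into $G$-spectra, so by Corollary~\ref{diagJlin} it is strongly $kG$-homogeneous; in particular it is $kG$-reduced, so $P_{nG}$ and hence $D_{nG}$ annihilate it whenever $n<k$. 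Commuting $D_{nG}$ past the (stable) wedge, past $\Omega^{\infty G}$ (a filtered colimit of finite limits), and past $\Omega^m$ yields
\[D_{nG}L_m(X)\simeq\ast\qquad\text{for every }m>n+1.\]
Since $Tot_m$ is a finite homotopy limit, Remark~\ref{PJcommute} also gives $D_{nG}Tot_m Q^{\bullet}_G(X)\simeq Tot_m(D_{nG}Q^{\bullet}_G)(X)$ for every finite $m$, and the vanishing of the layer fibers propagates through the fiber sequences $L_m\to Tot_m Q^{\bullet}_G\to Tot_{m-1}Q^{\bullet}_G$ (and the parallel one for $D_{nG}Q^{\bullet}_G$). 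Consequently both towers $\{D_{nG}Tot_m Q^{\bullet}_G(X)\}_m$ and $\{Tot_m(D_{nG}Q^{\bullet}_G)(X)\}_m$ are constant for $m>n+1$, and they agree there.

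The principal remaining step is to show that $D_{nG}$ commutes with the tower limit defining $Tot$, i.e., that the canonical map
\[D_{nG}Tot\,Q^{\bullet}_G(X)\longrightarrow \holim_m D_{nG}Tot_m Q^{\bullet}_G(X)\]
is an equivalence; letting $F(X)=\hofib(Tot\,Q^{\bullet}_G(X)\to Tot_{n+1}Q^{\bullet}_G(X))$, this reduces to proving $D_{nG}F(X)\simeq\ast$. Now $F(X)$ is the inverse limit of a tower of finite stages $F_m(X)$, each of which is an iterated extension of $L_{n+2}(X),\dots,L_m(X)$, so by the layer analysis $D_{nG}F_m(X)\simeq\ast$ for every finite $m$.

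I expect this final interchange of $D_{nG}$ with the sequential inverse limit to be the main technical obstacle, as it is not covered by Remark~\ref{PJcommute}. My plan is to exploit the quantitative connectivity estimate on $L_m(X)$: the smash factor $X^{\wedge k}$ with $k\geq m-1$ ensures that the fixed-point connectivities of $L_m(X)$ grow at least linearly in $m$, uniformly in the subgroup $H$, whenever the fixed-points of $X$ have a bound on their connectivity from below. Combined with the presentation of $P_{nG}$ as a sequential homotopy colimit of finite homotopy limits over the locally finitely presentable $G$-model categories of \S\ref{sec:preliminaries}, this permits the colimit--limit interchange, in the spirit of the non-equivariant argument of \cite[App.~A]{Arone} invoked in Remark~\ref{Gregmodel}. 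Once this is established, stringing the equivalences yields
\[D_{nG}Tot\,Q^{\bullet}_G(X)\simeq D_{nG}Tot_{n+1}Q^{\bullet}_G(X)\simeq Tot_{n+1}(D_{nG}Q^{\bullet}_G)(X)\simeq Tot(D_{nG}Q^{\bullet}_G)(X),\]
completing the proof.
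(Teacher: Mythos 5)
Your overall strategy — analyze layers via Lemma~\ref{layerstot}, kill them with $D_{nG}$ past some finite stage, commute $D_{nG}$ past $Tot_m$ for finite $m$, and then justify interchanging $D_{nG}$ with the sequential inverse limit by a connectivity estimate in the spirit of Arone--Kankaanrinta — is exactly the paper's. However, your layer analysis contains a concrete error that invalidates your threshold and hence the rest of the argument.

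You claim that since $X\mapsto (Y\wedge X^{\wedge k})_{h\mathcal{F}_k}$ is strongly $kG$-homogeneous, ``$P_{nG}$ and hence $D_{nG}$ annihilate it whenever $n<k$,'' yielding $D_{nG}L_m\simeq\ast$ for $m>n+1$. This is false, and it is the central point of the paper's equivariant calculus that it is false. Strongly $kG$-homogeneous means $kG$-excisive and \emph{$k$-reduced}, i.e.\ $P_{m}\Phi\simeq\ast$ for integers $m<k$ indexing \emph{trivial} $G$-sets. That does not imply $P_{nG}\Phi\simeq\ast$ for $n<k$. Proposition~\ref{PnGsympowers} shows that the geometric $H$-fixed-points of $P_{nG}\mathcal{T}^K_E$ are \emph{non-trivial} whenever $n>|K/H|$, even for $n<|K|$, and Example~\ref{DnGsym} gives $D_{nG}\mathcal{S}^{\mathcal{R}}_C\simeq\mathcal{S}^{\mathcal{R}(n)}_C$, where $\mathcal{R}(n)$ — the set of $H$-actions on a $k$-element set with $n-1$ orbits, plus the trivial ones when $k=n$ — is non-empty for $k$ as large as $(n-1)|G|$. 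Consequently $D_{nG}L_m$ does not vanish for $m>n+1$; the correct threshold, as the paper computes, is $m>n|G|$. With the wrong threshold, the step ``both towers are constant and agree for $m>n+1$'' is simply not true, and your subsequent reduction $D_{nG}F(X)\simeq\ast$ is aimed at the wrong fiber $F(X)=\hofib(Tot\,Q^\bullet\to Tot_{n+1}Q^\bullet)$.

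On the genuinely hard step (interchanging $D_{nG}$ with the inverse limit), you correctly identify it and sketch the right kind of connectivity argument. But the connectivity analysis in the paper is more delicate than what you offer: it unpacks the fixed-point connectivities of $R_q(X)=\hofib(\holim_m Tot_m Q^\bullet_G(X)\to Tot_q Q^\bullet_G(X))$ via the explicit formula for $(-)_{h\mathcal{F}_m}$ on $H$-fixed-points, then tracks how $T^{(l)}_{nG}$ loses connectivity through the iterated homotopy limits over $\mathcal{P}_0(nG_+)^{\times l}$ and how the join/suspension structure of $\Lambda^{nG,l}_{\underline{U}}(X)$ from \ref{spelloutlambda} recovers it, arriving at an estimate of the form $c+l\min_K(m/K-n|G/K|)$. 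The crucial observation is that one can choose $q$ large enough so that $m/K>n|G/K|$ for every subgroup $K$ and every $K$-action on a set of $m\geq q$ elements, which requires taking $q$ past $n|G|$ — exactly the threshold you underestimated. A uniform bound-from-below on fixed-point connectivities of $X$, as you suggest, does not by itself suffice; the argument has to engage with the orbit counts per subgroup.

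In summary: the gap is the conflation of ``$k$-reduced'' with ``killed by $P_{nG}$ for $n<k$.'' Fix the vanishing threshold to $m>n|G|$ using Example~\ref{DnGsym} rather than Corollary~\ref{diagJlin}, and carry that threshold into the connectivity analysis of the tower fiber.
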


\begin{proof}
Recall that $L_m$ is the $m$-th layer of the tower computing $Tot Q^{\bullet}_G$.
By Lemma \ref{layerstot} there is an equivalence
\[D_{nG}L_m(X)\simeq\Omega^m\Omega^{\infty G}\bigvee_{k=m-1}^\infty D_{nG}\big((\mathbb{S}_G\wedge\widetilde{T}_{k}^m\wedge X^{\wedge k})_{h\mathcal{F}_k}\big)
.\]
This is contractible for $m>n|G|$, by the calculation of the derivatives of the indexed symmetric powers of \ref{DnGsym}. It follows that for every fixed $n$, the tower
\[\dots\longrightarrow D_{nG}Tot_m Q^{\bullet}_G(X)\stackrel{}{\longrightarrow} D_{nG}Tot_{m-1} Q^{\bullet}_G(X)\stackrel{}{\longrightarrow}\dots\longrightarrow D_{nG}Q^{\bullet}_G(X)\]
stabilizes, and $\holim_m D_{nG}Tot_m Q^{\bullet}_G(X)$ is equivalent to $D_{nG}Tot_q Q^{\bullet}_G(X)$ for every sufficiently large integer $q$.

We claim that also $D_{nG}\holim_m Tot_m Q^{\bullet}_G(X)$ is equivalent to $D_{nG}Tot_q Q^{\bullet}_G(X)$ for a sufficiently large $q$, which will conclude the proof. We show this by a connectivity argument. By \ref{layerstot} the connectivity of $L_m$ increases with $m$. It follows that the homotopy fiber of the restriction map 
\[R_q(X)\colon\holim_m Tot_m Q^{\bullet}_G(X)\to Tot_q Q^{\bullet}_G(X)\]
is as connected as $L_{q+1}$. We show that $D_{nG}R_q(X)$ is an equivalence for sufficiently large $q$ by estimating the connectivity of $T_{nG}R_q(X)$.
By \ref{layerstot} the $H$-fixed-points of $R_{q}(X)$ are
\[\conn R_{q}(X)^H=\min_{m\geq q}\conn (\widetilde{T}_{m}^{q+1}\wedge X^{\wedge m})_{h\mathcal{F}_m}^H-(q+1)\]
connected.
By the fixed-points formula of \cite[I.4]{Carlsson} for $(-)_{h\mathcal{F}_m}$, this connectivity range is
\begin{equation}\label{connL}
\conn R_{q}(X)^H=\min_{m\geq q}\min_{\rho: H\to\Sigma_{m}}\sum_{[i]\in m/H}X^{H_i}-(q+1)
\end{equation}
where $H_i$ and $m/H$ are respectively the stabilizer group and the quotient of the $H$-action $\rho\colon H\to \Sigma_m$. We use this connectivity range to show that the connectivity of $T_{nG}^{(l)}R_{q}$ diverges with $l$, provided $q$ is sufficiently large. This shows that $P_{nG}R_{q}$ and $P_{(n-1)G}R_{q}$ are equivalences for large $q$, and therefore so is $D_{nG}R_{q}$. This is similar to the statement that functors which agree to the order $n$ have equivalent $n$-excisive approximation, from \cite[1.6]{calcIII}. The connectivity of $T_{nG}^{(l)}R_{q}(X)$ on $H$-fixed-points is
\[\min_{K\leq H}\min_{\underline{U}\in(\mathcal{P}_0(nG_+)^{\times l})^K}\conn R_q(\Lambda^{nG,l}_{\underline{U}}(X))^K-\sum_{j=1}^l(|U_j/K|-1).\]
This expression is obtained from the formula for the connectivity of homotopy limits of \cite[A.1.2]{GdiagTop}. By substituting (\ref{connL}) we write this expression as
\[\min_{K\leq H}\min_{\underline{U}\in(\mathcal{P}_0(nG_+)^{\times l})^K}\min_{m\geq q}\min_{\rho: K\to\Sigma_{m}}\sum_{[i]\in m/K}(\Lambda^{nG,l}_{\underline{U}}(X))^{K_i}-(q+1)-\sum_{j=1}^l(|U_j/K|-1).\]
In \ref{spelloutlambda} we showed that the homotopy type of $\Lambda^{nG,l}_{\underline{U}}(X)$ is either that of a point, of an $l$-fold join by finite sets, or of an iteration of gluing of suspensions $\Sigma^G X$. It follows that the minimum over $\underline{U}$ is reached either for $\underline{U}=nG^{\times l}$ or for $\underline{U}=(nG_+)^{\times l}$. Thus the connectivity is
\[\min_{K\leq H}\min_{m\geq q}\min_{\rho}\min\big\{\sum_{[i]\in m/K}(\conn X^{K_i}+l)+1, \sum_{[i]\in m/K}(\conn X^{K_i}+l|G/K_i|)\big\}-(q+1)-ln|G/K|.\]
This is larger than
\begin{equation}\label{range}
c+\min_{K\leq H}\min_{m\geq q}\min_{\rho:K\to\Sigma_m}\ l(m/K-n|G/K|)
\end{equation}
for a certain constant $c$. One can choose $q$ sufficiently large so that $m/K>n|G/K|$ for every $m\geq q$ and every $K$-action on a set with $m$-elements. For such a $q$, the quantity (\ref{range}) diverges with $l$.
\end{proof}

\begin{proof}[Proof of \ref{derI}]
Since mapping spaces turn realizations into totalizations, there is a natural equivalence 
\[\Omega^{\infty G}\big(\bigvee_{k=n}^{n|G|}Map_\ast (T_k,\mathbb{S}_G\wedge X^{\wedge k})_{h\mathcal{F}_k(n)}\big)\simeq Tot\  \Omega^{\infty G}\big(\bigvee_{k=n}^{n|G|}Map_\ast (T^{\bullet}_k,\mathbb{S}_G\wedge X^{\wedge k})_{h\mathcal{F}_k(n)}\big).\]
By \ref{itersnaith} the right-hand $G$-space is equivalent to $Tot D_{nG} Q^{\bullet}_G(X)$, and by \ref{commuteDntot} this is equivalent to $D_{nG} Tot Q^{\bullet}_G(X)$. Since the totalization of $Q^{\bullet}_G(X)$ is equivalent to the limit of the tower of the identity functor (see \ref{Carlssonthing}), this is equivalent to $D_{nG}\holim_kP_kI$. The map $X\to \holim_kP_kI(X)$ is an equivalence on $G$-spaces which have simply-connected fixed-points, and in defining $T_{nG}$ we evaluate the functors at $G$-spaces that have this property. It follows that $D_{nG}\holim_kP_kI$ is equivalent to $D_{nG}I$ for every $n$.
\end{proof}

\appendix
\section{Appendix}

\subsection{Homotopy (co)limits and equivariant covers}
We recall that a cover of a category $I$ is a collection of subcategories $\{I_j\}_{j\in J}$ of $I$ such that for every subset $U\subset I$ the set of morphisms $\cup_{j\in U}\hom_{I_j}$ is closed under composition, and such that $I=\bigcup_{j\in J}I_j$.
 
\begin{defn}\label{def:Gcover}
Let $G$ be a finite group and let $I$ be a category with $G$-action. An equivariant cover of $I$ is a cover $\{I_j\}_{j\in J}$ of $I$ indexed on a $G$-set $J$, with the property that for every $g$ in $G$ the automorphism $g\colon I\to I$ restricts to a functor
\[g\colon I_j\to I_{gj}\]
for every $j\in J$.
\end{defn}

Let $F\in \mathscr{C}^{I}_a$ be a $G$-diagram in a $G$-model category $\mathscr{C}$, and let $\{I_j\}_{j\in J}$ be an equivariant cover of $I$. There is a cube $X$ with vertices
\[X_U=\holim_{\bigcap\limits_{u\in U}I_u}F.\]
The maps of the cube are induced by the restriction maps along the inclusions of categories $\bigcap_{j\in U}I_j\to \bigcap_{j\in V}I_j$, for $V\subset U$. The $G$-structure of $F$ combined with the fact that $\{I_j\}_{j\in J}$ is an equivariant cover gives $X$ the structure of a $J$-cube. The $G$-structure maps are the natural transformations
\[X_U=\holim_{\bigcap\limits_{u\in U}I_u}F\stackrel{(g^{-1})^{\ast}}{\longrightarrow}\holim_{\bigcap\limits_{u\in g(U)}I_u}F\circ g^{-1}\stackrel{g}{\longrightarrow}\holim_{\bigcap\limits_{u\in g(U)}I_u} F=X_{g(U)}\]
where the first map is the restriction along the functor $g^{-1}\colon \bigcap_{j\in g(U)}I_{j}=\bigcap_{j\in U}I_{gj}\to \bigcap_{j\in U}I_{j}$ and the second map is from the $G$-structure of $F$. The following is an equivariant version of \cite[1.9,1.10]{calcII}.

\begin{prop}[Covering Lemma]\label{coverings}
Let $F\in \mathscr{C}^{I}_a$ be a $G$-diagram, and let $\{I_j\}_{j\in J}$ be an equivariant cover of $I$. The $J$-cube $X\in \mathscr{C}^{\mathcal{P}(J)}_a$ defined above
is homotopy cartesian, that is the map 
\[\displaystyle\holim_{I}F\stackrel{\simeq}{\longrightarrow}\holim_{U\in \mathcal{P}_0(J)}\holim_{\bigcap\limits_{j\in U}I_j}F\] is an equivalence in $\mathscr{C}^G$. Dually, there is a cocartesian $J$-cube $Y\in \mathscr{C}^{\mathcal{P}(J)}_a$ with vertices
\[Y_U=\hocolim_{\bigcap\limits_{j\in J\backslash U}I_j}F.\] 
\end{prop}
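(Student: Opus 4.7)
The plan is to reduce the cartesian cube claim to an equivariant cofinality statement for an auxiliary Grothendieck construction, and then to deduce the cocartesian claim by a dual argument. This is essentially the equivariant upgrade of the proof of \cite[1.9]{calcII}.

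First I would assemble the intersections into a $G$-equivariant functor $\Psi\colon \mathcal{P}_0(J)^{op}\to Cat$ sending $U$ to $\bigcap_{u\in U}I_u$, with transition functors given by the inclusions of subcategories. The equivariance of the cover in the sense of Definition \ref{def:Gcover} is exactly what makes $\Psi$ a $G$-diagram of categories, so its Grothendieck construction $\mathcal{G}=\mathcal{P}_0(J)^{op}\wr\Psi$ inherits a canonical $G$-action. Let $\pi\colon\mathcal{G}\to I$ be the $G$-equivariant projection $(U,i)\mapsto i$.

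The key step will be to verify that $\pi$ is equivariantly right-cofinal in the sense of \cite[2.25]{Gdiags}: for every subgroup $H\leq G$ and every $H$-fixed $i\in I^H$, the over category $i/\pi$ must be $H$-equivariantly contractible. One checks that $i/\pi$ is isomorphic to the poset $\mathcal{P}_0(J_i)^{op}$, where $J_i=\{j\in J\mid i\in I_j\}$. Because $\{I_j\}_{j\in J}$ covers $I$ equivariantly, $J_i$ is nonempty and $H$-invariant (if $h\in H$ and $i\in I_j$, then $i=hi\in hI_j=I_{hj}$, so $hj\in J_i$). Hence $J_i$ is itself an $H$-invariant initial object of $\mathcal{P}_0(J_i)^{op}$, and the fixed subcategories $(i/\pi)^K$ for $K\leq H$ all have initial objects, which gives $H$-contractibility.

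With cofinality in hand, combining \cite[2.25]{Gdiags} with the equivariant Fubini theorem \cite[2.26]{Gdiags} yields the chain of $G$-equivalences
\[
\holim_I F \;\simeq\; \holim_{\mathcal{G}}\pi^{*}F \;\simeq\; \holim_{U\in\mathcal{P}_0(J)}\holim_{\bigcap_{u\in U}I_u}F,
\]
which is precisely the statement that the $J$-cube $X$ is homotopy cartesian. The $G$-structure on the right-hand side matches the one defined in the statement because the transition maps in $\Psi$ are induced by inclusions and the $G$-action on $\mathcal{G}$ was built from the equivariance of the cover.

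For the cocartesian cube $Y$ the argument is formally dual: I would consider the Grothendieck construction of the $G$-functor $\mathcal{P}_1(J)\to Cat$ sending $U$ (the complement $J\setminus V$) to $\bigcap_{j\notin U}I_j$, apply the dual equivariant cofinality criterion using under categories in place of over categories (whose $H$-fixed subcategories acquire $H$-invariant terminal objects by the same argument), and invoke the equivariant Fubini theorem for $\hocolim$ from \cite[2.26]{Gdiags}. The main obstacle in both parts is the equivariant contractibility check, which requires not merely that the non-equivariant projection is a weak equivalence but that each fixed-point subcategory admits a fixed initial (resp. terminal) object; this is exactly where the equivariance of the cover in Definition \ref{def:Gcover} is used.
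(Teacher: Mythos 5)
Your overall architecture is right -- Grothendieck construction of the $G$-diagram of intersection categories, equivariant Fubini, and an equivariant cofinality check for the projection -- and this is the same strategy the paper uses. But the cofinality check is where the paper does real work, and your version of it has a gap.

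The comma category that must be shown $G_i$-contractible for left cofinality in the sense of \cite[2.25]{Gdiags} is $\pi/i$ (the paper's $p/_i$), whose objects are \emph{triples} $(U,\,k\in\Gamma(U),\,\alpha\colon k\to i)$ where $\alpha$ is an arbitrary morphism of $I$ -- not just identities. What you have identified with $\mathcal{P}_0(J_i)^{op}$ is only the \emph{fiber} $\pi^{-1}(i)$, i.e.\ the full subcategory of objects with $k=i$ and $\alpha=\mathrm{id}_i$. The projection $\pi$ is not a Grothendieck (op)fibration, so you cannot replace the over category by the fiber. In general $\pi/i$ is considerably larger than $\mathcal{P}_0(J_i)^{op}$, and nothing you have written shows it contracts onto that fiber. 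This is precisely the content of the paper's zig-zag: defining $U_k=\{j: k\in I_j\}$ and $U_\alpha=\{j: \alpha\in I_j\}$, building the two auxiliary functors $\Psi(U,k,\alpha)=(U_k,k,\alpha)$ and $\chi(U,k,\alpha)=(U_\alpha,k,\alpha)$, and exhibiting natural transformations $\mathrm{id}\Rightarrow\Psi\Leftarrow\chi\Rightarrow(U_i,i,\mathrm{id}_i)$ that are $G_i$-equivariant, so that they restrict to contractions of every fixed-point subcategory $(p/_i)^K$. Your observation that $J_i$ is $H$-invariant is correct and is used in the $G_i$-equivariance of the endpoint $(U_i,i,\mathrm{id}_i)$, but it does not by itself settle the contractibility of the full comma category. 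To complete your argument you would need to supply this contracting zig-zag (or some equivalent deformation retraction of $\pi/i$ onto $\pi^{-1}(i)$ compatible with the $G_i$-action); the dual gap is present in your sketch for the cocartesian statement as well.
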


\begin{proof}
We prove the statement about homotopy limits.
Consider the $G$-diagram $\Gamma\colon \mathcal{P}_0(J)^{op}\rightarrow Cat$ defined by the categories $\Gamma(U)=\bigcap_{u\in U}I_u$.
The $G$-structure of $\Gamma$ is induced by the $G$-action on $I$, and it is well-defined because the cover is equivariant. The op-Grothendieck construction of $\Gamma$ is the category $\mathcal{P}_0(J)\wr\Gamma$ with objects the pairs $(U\in \mathcal{P}_0(J)^{op},i\in \Gamma(U))$. The set of morphisms from $(U,i)$ to $(U',i')$ is the set of maps $i\rightarrow i'$ in $\Gamma(U)$ if $U\subset U'$, and the empty-set otherwise. There is a commutative diagram
\[\xymatrix{\displaystyle\holim_{\mathcal{P}_0(J)\wr\Gamma}\overline{F}
\ar[r]^-{\simeq}&\displaystyle\holim_{U\in \mathcal{P}_0(J)}\holim_{\bigcap\limits_{u\in U}I_u}F\\
\displaystyle\holim_{I}F\ar[ur]\ar[u]}\]
where $\overline{F}$ is the functor that sends $(U,i)$ to $F(i)$. The top horizontal map is a $G$-equivalence by the Fubini theorem for homotopy limits. A proof of this statement for the trivial group can be found in \cite[31.5]{CS}, and the equivariant version for the dual statement in terms of homotopy colimits is proved in \cite[2.26]{Gdiags}. The vertical map is restriction along the projection $p\colon\mathcal{P}_0(J)\wr\Gamma\rightarrow I$ sending $(U,i)$ to $i$. Thus $X$ is cartesian if we can show that $p$ is left $G$-cofinal, that is if the categories $p/_{i}$ are $G_i$-contractible for every object $i$ of $I$. For every object $k\in I$ and every morphism $\alpha\in I$ define
\[U_k=\{j\in J\ |\ k\in I_j\} \ \ \ \ \ \  \ \ \  \  \ \ \  U_\alpha=\{j\in J\ |\ \alpha\in I_j\}.\]
These are non-empty subsets of $J$, as the categories $I_j$ cover $I$. We define a zigzag of functors between the identity on $p/_{i}$ and the constant functor that sends every object to $(U_i,i,\id_i)\in p/_{i}$. Consider the functors $\Psi,\chi\colon p/_{i}\rightarrow p/_{i}$ that send a triple $(U,k\in\Gamma(U),(\alpha\colon k\rightarrow i)\in I)$ respectively to
\[\Psi(U,k,\alpha)=(U_k,k,\alpha) \ \ \ \ \ \ \  \ \ \  \ \ \ \chi(U,k,\alpha)=(U_\alpha,k,\alpha).\]
These are well defined because $k$ belongs to $\Gamma(U_k)$ by definition, and because $\Gamma(U_k)$ is a subcategory of $\Gamma(U_\alpha)$ ( since $U_\alpha\subset U_k$). There are inclusions $U\subset U_k\supset U_\alpha\subset U_i$ inducing a zig-zag of morphisms
\[(U,k,\alpha)\stackrel{\id_k}{\longrightarrow} (U_k,k,\alpha)\stackrel{\id_k}{\longleftarrow} (U_\alpha,k,\alpha)\stackrel{\alpha}{\longrightarrow} (U_i,i,\id_i).\]
This defines a zigzag of $G_i$-equivariant natural transformations $\id\rightarrow \Psi\leftarrow\chi\rightarrow \ast$, and a $G_i$-contraction of $p/_i$.
\end{proof}

\bibliographystyle{amsalpha}
\bibliography{bib}

\end{document}